\newcommand{\CRings}	{\operatorname{CRings}}
\newcommand{\TFold}	{\operatorname{TFold}}
\newcommand{\Fold}	{\operatorname{Fold}}
\newcommand{\Graphs}	{\operatorname{Graphs}}
\newcommand{\NCM}       {\operatorname{NCM}}
\newcommand{\DNCM}      {\operatorname{DNCM}}
\newcommand{\Hom}       {\operatorname{Hom}}
\newcommand{\Map}       {\operatorname{Map}}
\renewcommand{\SS}	{\operatorname{SS}}
\newcommand{\dom}	{\operatorname{dom}}
\newcommand{\img}	{\operatorname{img}}
\newcommand{\edg}	{\operatorname{edge}}
\newcommand{\res}	{{\operatorname{res}}}
\newcommand{\rfl}	{{\operatorname{ref}}}
\newcommand{\rot}	{{\operatorname{rot}}}
\renewcommand{\vrt}	{\operatorname{vert}}
\newcommand{\al}        {\alpha}
\newcommand{\bt}        {\beta}
\newcommand{\dl}        {\delta}
\newcommand{\zt}        {\zeta}
\newcommand{\tht}       {\theta}
\newcommand{\ep}        {\epsilon}
\newcommand{\lm}        {\lambda}
\newcommand{\sg}        {\sigma}
\newcommand{\om}        {\omega}
\newcommand{\Sg}        {\Sigma}
\newcommand{\N}         {{\mathbb{N}}}
\newcommand{\Z}         {{\mathbb{Z}}}
\newcommand{\Q}         {{\mathbb{Q}}}
\newcommand{\R}         {{\mathbb{R}}}
\newcommand{\C}         {{\mathbb{C}}}
\newcommand{\sm}        {\setminus}
\newcommand{\sse}       {\subseteq}
\newcommand{\st}        {\;|\;}
\newcommand{\CP}        {{\mathbb{C}P}}
\newcommand{\ip}[1]     {\left\langle #1\right\rangle}
\newcommand{\ot}        {\otimes}
\newcommand{\ov}[1]     {\overline{#1}}
\newcommand{\tm}        {\times}
\newcommand{\un}[1]     {\underline{#1}}
\newcommand{\xra}       {\xrightarrow}
\newcommand{\tp}        {\widetilde{p}}
\newcommand{\uW}        {\underline{W}}
\newcommand{\B}	        {\mathbf{2}}
\newcommand{\tA}	{\widetilde{A}}
\newcommand{\tH}	{\widetilde{H}}
\newcommand{\tV}	{\widetilde{V}}
\newcommand{\tW}	{\widetilde{W}}
\newcommand{\bE}	{\overline{E}}
\newcommand{\bG}	{\overline{G}}
\newcommand{\bV}	{\overline{V}}
\newcommand{\be}	{\overline{e}}
\newcommand{\bom}       {\overline{\omega}}
\newcommand{\psb}[1]    {[\![#1]\!]}
\newcommand{\bcf}[2]{\left(\begin{array}{c}{#1}\\{#2}\end{array}\right)}
\renewcommand{\:}{\colon}
\renewcommand{\ss}{\scriptstyle}
\newtheorem{theorem}{Theorem}[section]
\newtheorem{lemma}[theorem]{Lemma}
\newtheorem{proposition}[theorem]{Proposition}
\newtheorem{corollary}[theorem]{Corollary}
\theoremstyle{definition}
\newtheorem{remark}[theorem]{Remark}
\newtheorem{definition}[theorem]{Definition}
\newtheorem{example}[theorem]{Example}
\newtheorem{construction}[theorem]{Construction}
\begin{document}
\title{Geometry and cohomology of Khovanov-Springer varieties}
\author{Philip Eve and Neil Strickland}

\maketitle

\section{Introduction}
\label{sec-intro}

Let $n$ be a natural number.  We can then consider the vector space
$V(n)=(\C[t]/t^n)^2$ as a module over $\C[t]$, and we define $X(n)$ to
be the set of complete flags
\[ 0 = W_0 < W_1 < \dotsb < W_{2n} = V(n) \]
for which each space $W_i$ is a submodule of $V(n)$.  This defines a
subvariety (which for $n > 1$ is not a manifold) of the usual variety of all
complete flags in $V(n)$.  We call $X(n)$ a \emph{Khovanov-Springer
 variety}.

The cohomology of $X(n)$ (and a more general class of spaces that we
shall not discuss) has been studied extensively using methods of Lie
theory, representation theory and \'etale cohomology of varieties in
characteristic $p$.  Much interest has been driven by the fact that
the cohomology groups have an action of the symmetric group $\Sg_{2n}$
(not arising from an action on the space itself), and the resulting
representations of $\Sg_{2n}$ are of independent interest.  The
paper~\cite{copr:sfc} by De Concini and Procesi is a good entry point
into this literature.  Section~4 of that paper gives a description of
$H^*X(n)$ by generators and relations, with a proof depending on
earlier work of Borho and Kraft~\cite{bokr:ubd}.  (In the notation
of~\cite{copr:sfc}, $X(n)$ is $\mathcal{F}_\eta$, where $\eta$ is the
partition of $2n$ into two blocks of size $n$.)  Dually, the homology
of $X(n)$ has been described by Russell and Tymoczko~\cite{ruty:srk}.
This determines the cohomology additively, but not multiplicatively.

Our main aim in the present paper to give a new proof of the ring
structure of $H^*X(n)$ using a very different set of methods.  This
will reveal some new combinatorial, geometric and algebraic structure
of the spaces $X(n)$.

\begin{definition}
 For $1\leq i\leq 2n$ we let $L_i$ denote the complex line bundle
 over $X(n)$ whose fibre at a flag $\un{W}$ is the quotient
 $W_i/W_{i-1}$.  We write $x_i$ for the Euler class of $L_i$, so
 $x_i\in H^2(X(n))$.  We write $\sg_k$ for the $k$'th elementary
 symmetric function of $x_1,\dotsc,x_{2n}$.
\end{definition}

\begin{definition}
 We say that a subset $J\sse\{1,\dotsc,2n\}$ is \emph{sparse} if for
 each $j\in J$ we have $|J^c_{>j}|>|J_{>j}|$, where
 \begin{align*}
  J_{>j}   &= \{ k\in J \st k>j \} \\
  J^c_{>j} &= \{ k\in \{1,\dotsc,2n\}\sm J\st k>j \}.
 \end{align*}
 That is: $J$ is sparse iff for each $j \in J$, a majority of the elements of
 $\{j + 1, \ldots, 2n\}$ do not belong to $J$. For any such set $J$, we put
 $x_J=\prod_{j\in J}x_j\in H^{2|J|}(X(n))$.
\end{definition}

\begin{theorem}\label{thm-main}
 We have
 \[ H^*(X(n)) =
     \frac{\Z[x_1,x_2,\dotsc,x_{2n}]}{
      (x_1^2,\dotsc,x_{2n}^2,\sg_1,\dotsc,\sg_{2n})
     }.
 \]
 Moreover, the set
 \[ BR(n) = \{ x_J \st J\sse\{1,\dotsc,2n\} \text{ is sparse } \} \]
 is a basis for $H^*(X(n))$ over $\Z$.
\end{theorem}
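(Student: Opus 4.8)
The plan is to present $H^*(X(n))$ as a quotient of $R(n):=\Z[x_1,\dots,x_{2n}]/(x_1^2,\dots,x_{2n}^2,\sg_1,\dots,\sg_{2n})$, to show that the sparse monomials span $R(n)$ over $\Z$ by a straightening argument, to compute $H^*(X(n))$ additively, and to close up with a rank count. Concretely: (i) prove $x_i^2=0$ and $\sg_k=0$ in $H^*(X(n))$, and that $H^*(X(n))$ is generated by the $x_i$, so the tautological classes give a ring surjection $\pi\:R(n)\to H^*(X(n))$ with $\pi(x_J)=x_J$; (ii) prove $R(n)$ is generated as a $\Z$-module by $\{x_J\st J\text{ sparse}\}$ and that there are exactly $\binom{2n}{n}$ sparse subsets of $\{1,\dots,2n\}$; (iii) prove $H^*(X(n))$ is a free $\Z$-module of rank $\binom{2n}{n}$. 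Given these, one has surjections
\[ \Z^{\binom{2n}{n}}\longrightarrow R(n)\longrightarrow H^*(X(n))\cong\Z^{\binom{2n}{n}}, \]
whose composite is a surjective endomorphism of a finitely generated free $\Z$-module, hence an isomorphism; so both maps are isomorphisms, $\pi$ realises the asserted presentation, and $BR(n)$, as the image of a basis, is a $\Z$-basis.

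The relation $\sg_k=0$ is immediate: $X(n)$ lies in the ambient flag manifold $\operatorname{Fl}(V(n))$, the $L_i$ restrict from the tautological bundles there, and $\prod_i(1+x_i)=c(\un{V(n)})=1$. The relations $x_i^2=0$ are the substantive ones and record the Jordan type $(n,n)$ of $t$: since $t$ is nilpotent and $W_i/W_{i-1}$ is a line we have $t(W_i)\sse W_{i-1}$, so $t$ induces bundle maps $L_{i+1}\to L_i$ and each $W_{i+1}/W_{i-1}$ is a $\C[t]/t^2$-module; feeding the rank constraints $\operatorname{rank}(t^k)=2(n-k)$ into the usual Schubert-calculus manipulations with these maps yields, for type $(n,n)$, exactly the vanishing of the $x_i^2$. (Alternatively, restrict to an irreducible component, each of which is an iterated $\CP^1$-bundle on which every $L_i$ is pulled back from a $\CP^1$-factor.) One can also note that $x_{2n}^2=0$ is already forced by $\sg_1=\sg_2=0$ and the other $x_i^2=0$, so only $x_i^2=0$ for $i<2n$ needs geometry. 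For (iii), and for the generation of $H^*(X(n))$ by the $x_i$ used in (i), the input I would use is an affine paving of $X(n)$ refining the Schubert paving of $\operatorname{Fl}(V(n))$: the attracting cells of a generic one-parameter subgroup are affine spaces, there are $\binom{2n}{n}$ of them, and compatibility with the Schubert cells forces $H^*(\operatorname{Fl}(V(n)))\to H^*(X(n))$ to be onto. (This additive data is essentially Russell--Tymoczko's, and should also be extractable from the combinatorial model for $X(n)$ developed in the paper.)

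For the spanning in (ii): in $\Z[x_1,\dots,x_{2n}]/(\sg_1,\dots,\sg_{2n})=H^*(\operatorname{Fl}(\C^{2n}))$ one has $h_r(x_\ell,x_{\ell+1},\dots,x_{2n})=0$ for all $r\geq\ell$ (from $c_j(V/W_{\ell-1})=0$ for $j>2n-\ell+1$ together with the duality automorphism of the flag manifold), and modulo $(x_1^2,\dots,x_{2n}^2)$ every non-squarefree monomial vanishes, so in $R(n)$
\[ e_r(x_\ell,x_{\ell+1},\dots,x_{2n})=0 \qquad(\text{for all }\ell\text{ and all }r\geq\ell). \]
Treating these identities (and $x_i^2=0$) as rewriting rules, every monomial reduces to a $\Z$-linear combination of $x_J$ with $J$ sparse --- the sparseness condition being precisely the pattern avoided by the leading terms of a Gr\"obner basis of the defining ideal for a suitable term order --- so the content here is a confluence/completion check for this rewriting system. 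That, together with the enumeration of sparse subsets, is technical but elementary.

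I expect the genuine obstacle to be the geometric input of the previous paragraphs: obtaining a sufficiently explicit description of $X(n)$ to establish the affine paving (hence freeness, rank, and surjectivity of $\pi$) and the relations $x_i^2=0$. Once that is in hand, the straightening, the counting, and the final assembly are routine.
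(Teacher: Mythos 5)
Your plan is essentially the ``efficient approach'' that the authors explicitly describe in the introduction and deliberately decline to follow: show $\phi\:R(n)\to H^*(X(n))$ is a ring surjection, bound the rank of $R(n)$ by $\binom{2n}{n}$ via a spanning argument, quote an affine paving (Russell--Tymoczko) to get that $H^*(X(n))$ is free of rank exactly $\binom{2n}{n}$, and then conclude by a rank count.  The paper instead gives a self-contained proof: injectivity of $\phi$ is established by constructing the subvarieties $X(n,K)$ (iterated $\CP^1$-bundles indexed by non-crossing matchings), a product of maps $\rho_K\:R(n)\to E(K)$ realised topologically by restriction to $X(n,K)$, and a leading-term argument; surjectivity is then established independently of any paving by an inductive Mayer--Vietoris spectral sequence argument over a cover of $X(n)$ by the ``pinch'' subvarieties $X(n,i)\cong X(n-1)\tm\CP^1$.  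The trade-off is exactly what you'd expect: your route is much shorter if one is willing to import the additive structure of $H^*(X(n))$ as a black box, whereas the paper's route is longer but produces the geometric cover, the graph-theoretic machinery, and the explicit splitting $\rho$ that are its real content.

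There is, however, a genuine gap in your proof of $x_i^2=0$, and this is the one place where I think your argument does not go through as written.  Your primary justification --- ``feeding the rank constraints $\operatorname{rank}(t^k)=2(n-k)$ into the usual Schubert-calculus manipulations'' --- is not an argument: the Tanisaki/De Concini--Procesi ideal for type $(n,n)$ is generated by partial elementary symmetric functions $e_r(x_i\st i\in S)$ for suitable $(r,S)$, and showing this ideal contains $x_i^2$ is a nontrivial rewriting that the paper avoids precisely by giving a geometric proof.  Your fallback --- ``restrict to an irreducible component, each an iterated $\CP^1$-bundle'' --- only shows that each $x_i^2$ restricts to zero on each component $X(n,K)$; to conclude $x_i^2=0$ in $H^*(X(n))$ you would need the restriction map $H^*(X(n))\to\prod_K H^*(X(n,K))$ to be injective, which is not a priori true and is in fact a consequence of (not a step towards) the theorem.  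What the paper actually does is construct a single map $\bom_i\:X(n)\to\CP^1$ defined on all of $X(n)$ using the Cautis--Kamnitzer isometric embedding (Proposition~\ref{prop-isometry}), so that $L_i=\bom_i^*(T)$ globally and $x_i^2=\bom_i^*(x^2)=0$; this global map, not a component-by-component restriction, is what makes the relation hold in $H^*(X(n))$ itself.  Your observation that $x_{2n}^2=0$ follows from $\sg_1=\sg_2=0$ and the other $x_i^2=0$ is correct but of course does not reduce the problem.  The remaining hand-waves (the affine paving, the Gr\"obner/confluence check, the count $|\SS(N_{2n})|=\binom{2n}{n}$) are fillable and match things the paper does prove (Lemma~\ref{lem-BR-spans}, Proposition~\ref{prop-SS-count}), so I would not call those gaps; but you should be aware that the paper's Lemma~\ref{lem-sigma-zero} gives the identities $e_r(x_\ell,\dotsc,x_{2n})=0$ by a one-line algebraic trick ($r_J(t)=r_{J^c}(-t)$) without any appeal to the flag manifold, which is cleaner than the route you sketch.
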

This will be restated and proved as Theorem~\ref{thm-main-bis}.  The
first half can also be obtained by specialising the results
of~\cite{copr:sfc}*{Section 4}.

\begin{remark}\label{rem-main-examples}
 Initial cases are as follows.  The space $X(0)$ is a single point,
 and $R(0)=\Z$.  The space $X(1)$ is just $\CP^1\simeq S^2$, and
 $R(1)=\Z[x_1]/x_1^2$.  It can be shown that there is a pushout square
 \[ \xymatrix{
   \CP^1 \ar@{ >->}[r]^\dl \ar@{ >->}[d]_\dl &
   \CP^1\tm\CP^1 \ar@{ >->}[d] \\
   \CP^1\tm\CP^1 \ar@{ >->}[r] & X(2)
 } \]
 (where $\dl$ is the diagonal map).  We also have
 \[ BR(2) = \{1,x_1,x_2,x_3,x_1x_2,x_1x_3\}.  \]
\end{remark}

We now offer some remarks on the method of proof.  First, the elements
$\sg_k$ are (up to sign) the Chern classes of the bundle
$\bigoplus_{i=1}^{2n}L_i$, but that bundle is easily seen to be
isomorphic to the constant bundle with fibre $V(n)$, so we see that
$\sg_k=0$.  Next, in Section~\ref{sec-squares} we will give an
explicit embedding of $L_i$ as a subbundle of the constant bundle with
fibre $\C^2$; from this it follows easily that $x_i^2=0$.  Thus, we at
least have a map $\phi\:R(n)\to H^*(X(n))$, where
\[ R(n) =
     \frac{\Z[x_1,x_2,\dotsc,x_{2n}]}{
      (x_1^2,\dotsc,x_{2n}^2,\sg_1,\dotsc,\sg_{2n})
     }.
\]

Next, an inductive argument based on the form of the relations shows
that the set $BR(n)$ generates $R(n)$ as an abelian group.  For any
sparse set $K$ of size $n$ we will define a ring map $\rho_K$ from
$R(n)$ to the ring
\[ E(K) = \Z[x_k\st k\in K]/(x_k^2\st k\in K). \]
Collecting these together we obtain a ring map
$\rho\:R(n)\to\prod_KE(K)$.  There is a natural ordered basis for
$\prod_KE(K)$, and the elements $\rho(x_J)$ (for $x_J\in BR(n)$) have
different leading terms with respect to this ordering.  Using this, we
see that $BR(n)$ is actually a basis for $R(n)$.

Next, we can construct subvarieties $X(n,K)\sse X(n)$ and isomorphisms
$H^*(X(n,K))=E(K)$ that are compatible with $\rho_K$ in an evident
sense.  Together these give a topological realisation of the map
$\rho$ and thus a proof that the map $\phi\:R(n)\to H^*(X(n))$ is
injective (and in fact a split monomorphism of abelian groups).

The real issue is now to prove that $\phi$ is surjective.  The most
efficient approach would be to quote the results of Russell and
Tymoczko that determine the homology of $X(n)$.  Using these and some
fairly straightforward combinatorics we could show that $R(n)$ has the
same total rank as $H^*(X(n))$, and the claim would follow.

However, we prefer to give an independent proof which reveals some
interesting additional structure along the way.  For this we need to
fit the subvarieties $X(n,K)$ into a more elaborate system of
subvarieties that we can use for inductive arguments.  To define and
study these subvarieties, we need some algebraic theory of modules
over $\C[t]/t^n$, and some combinatorial theory of bipartite graphs.
In particular, we will consider the graph $C(n)$ which is (the
boundary of) a $2n$-gon.  For any bipartite graph $G$ we will define a
space $Y(G)$ and a ring $S(G)$.  It will be clear that $S(C(n))=R(n)$,
and we will also be able to prove that $Y(C(n))=X(n)$.  Using this we
can define a map $S(G)\to H^*(Y(G))$ generalising our earlier map
$R(n)\to H^*(X(n))$.  We will prove that this is an isomorphism for a
large class of graphs including $C(n)$.  We do not know whether it is
an isomorphism for all $G$.

\section{Embedding line bundles}
\label{sec-squares}

\begin{definition}\label{defn-ip}
 We give $\C^2$ the usual Hermitian inner product
 \[ \ip{(u,v),(x,y)} = u\ov{x}+v\ov{y}. \]
 Any element of $a\in V(n)=(\C[t]/t^n)^2$ can be expressed uniquely as
 $a=\sum_{i=0}^{n-1}a_it^i$ with $a_i\in\C^2$.  We define a Hermitian
 inner product on $V(n)$ by the rule
 \[ \ip{\sum_{i=0}^{n-1}a_it^i,\sum_{i=0}^{n-1}b_it^i} =
     \sum_{i=0}^{n-1}\ip{a_i,b_i} \in\C.
 \]
 We also write
 \[ t^*\!\left(\,\sum_{i=0}^{n-1}a_it^i\!\right) =
     \sum_{i=1}^{n-1}a_it^{i-1},
 \]
 so $t^*\:V(n)\to V(n)$ is adjoint to multiplication by $t$.
\end{definition}

\begin{definition}\label{defn-om}
 We define a $\C$-linear map $\om\:V(n)\to\C^2$ by
 $\om\!\left(\sum_{i=0}^{n-1}a_it^i\right)=\sum_{i=0}^{n-1}a_i$.
\end{definition}

\begin{lemma}\label{lem-t-zero}
 For $\un{W}\in X(n)$ and $i>0$ we have $\dim(W_i)=i$ and
 $tW_i\leq W_{i-1}$.
\end{lemma}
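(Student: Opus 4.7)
The plan is to handle the two claims separately; neither is deep, but the dimension claim is essentially book-keeping while the claim $tW_i \leq W_{i-1}$ uses the nilpotence of $t$.

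For the dimension statement, I would argue by induction that $\dim W_i \geq i$ (using $W_{i-1} < W_i$ so that $\dim W_i \geq \dim W_{i-1} + 1$), and then observe that $\dim W_{2n} = \dim V(n) = 2n$ forces equality throughout the flag. This is just the usual ``complete flag'' observation and deserves only a line or two.

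For the inclusion $tW_i \leq W_{i-1}$, the key point is that both $W_i$ and $W_{i-1}$ are $\C[t]$-submodules by definition of $X(n)$, so multiplication by $t$ descends to a $\C$-linear endomorphism of the quotient $W_i/W_{i-1}$. By the dimension statement this quotient is one-dimensional, so the induced map is multiplication by some scalar $\lambda \in \C$. Since $t$ is nilpotent on $V(n)$ (we have $t^n = 0$), the induced action on $W_i/W_{i-1}$ is also nilpotent, forcing $\lambda = 0$. Hence $tW_i \subseteq W_{i-1}$.

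There is no real obstacle here; the only subtlety is remembering to invoke both conditions from the definition of $X(n)$, namely completeness of the flag (for the dimension count) and the submodule property (to get a well-defined action on the successive quotients). Once both are in hand, the argument fits in a few lines.
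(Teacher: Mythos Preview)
Your proposal is correct and follows essentially the same route as the paper: the dimension count uses the strict chain of $2n+1$ subspaces between $0$ and $V(n)$, and the inclusion $tW_i\leq W_{i-1}$ comes from observing that $t$ acts on the one-dimensional quotient $W_i/W_{i-1}$ by a scalar which must vanish since $t^n=0$. The paper phrases the nilpotence step as $tu=zu\Rightarrow t^nu=z^nu=0\Rightarrow z=0$, which is exactly your $\lambda^n=0$ argument.
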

\begin{proof}
 There are $2n + 1$ of the subspaces $W_i$, each a proper subspace of
 the next, and $\dim(W_0) = 0$, and $\dim(W_{2n}) = \dim(V(n)) = 2n$.
 From this it is clear that we must have $\dim(W_i) = i$ for each $i$.
 It follows that $W_i/W_{i-1}$ is a $\C[t]$-module with
 $\C$--dimension 1, with generator $u$ say.  We must then have $tu=zu$
 for some $z\in\C$, so $t^nu=z^nu$, but $t^n$ acts as zero on $V(n)$,
 so $z^n=0$, so $z=0$, so $tu=0$.  This means that $t$ acts as zero on
 $W_i/W_{i-1}$, or in other words $tW_i\leq W_{i-1}$.
\end{proof}

The following result is due to Cautis and Kamnitzer~\cite{caka:khd}.
\begin{proposition}\label{prop-isometry}
 Suppose we have $\C[t]$-submodules $T\leq U\leq V(n)$ with
 $tU\leq T$, and we put
 \[ P = U\ominus T = U\cap T^\perp. \]
 Then the restriction $\om\:P\to\C^2$ is an isometric embedding.
\end{proposition}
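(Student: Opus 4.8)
The plan is to show that $\om|_P$ is injective and norm-preserving; since $P$ is finite-dimensional, injectivity plus $\|\om(p)\| = \|p\|$ for all $p$ is exactly the statement that $\om|_P$ is an isometric embedding. Write a general element of $V(n)$ as $a = \sum_{i=0}^{n-1} a_i t^i$ with $a_i \in \C^2$. The key structural observation I would extract first is how the hypotheses $tU \le T$ and $P = U \cap T^\perp$ interact with the grading by powers of $t$. Note that multiplication by $t$ sends $\sum a_i t^i$ to $\sum_{i=0}^{n-2} a_i t^{i+1}$ (killing the top coefficient), and $t^*$ shifts the other way. Because $t^* $ is adjoint to $t$, and $T$ is a submodule (so $tT \le T$), the orthogonal complement $T^\perp$ satisfies $t^* T^\perp \le T^\perp$. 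Thus for $p \in P = U \cap T^\perp$ we have $t^* p \in T^\perp$; on the other hand $t^* p \in V(n)$ need not lie in $U$, so I cannot immediately iterate inside $P$ — but I can still use $t^* p \perp T$.

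The heart of the argument is a telescoping/norm computation. For $p = \sum_{i=0}^{n-1} p_i t^i \in P$ I want to compare $\|p\|^2 = \sum_i \ip{p_i,p_i}$ with $\|\om(p)\|^2 = \ip{\sum_i p_i, \sum_j p_j} = \sum_{i,j}\ip{p_i,p_j}$. The cross terms must be shown to vanish. The mechanism: $tp = \sum_{i=0}^{n-2} p_i t^{i+1} \in tU \le T$, so $tp \in T$, hence $\ip{tp, q} = 0$ for every $q \in T^\perp$, in particular for every $q \in P$. Spelling out $\ip{tp, q}$ using the definition of the inner product on $V(n)$ gives $\sum_{i=0}^{n-2}\ip{p_i, q_{i+1}} = 0$ for all $p, q \in P$. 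Equivalently, in terms of the shift, $\ip{p, t^* q} = 0$, i.e. $t^* q \perp U$ for all $q \in P$. Now I would run the standard trick: consider $t^* q$; it lies in $T^\perp$ (shown above) and is orthogonal to $U$, and $t(t^* q)$ differs from $q$ only in the top coefficient, so one can bootstrap a relation forcing each $q_i$ to be "the same vector" — more precisely, the relation $\sum_{i}\ip{p_i, q_{i+1}} = 0$ for all $p,q \in P$, applied with $p = q$, together with iterating $t^*$, will pin down the cross terms.

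Concretely, I expect the clean route is: show by downward induction on $k$ that for $p \in P$ the partial sums $s_k := \sum_{i \ge k} p_i$ satisfy $\ip{s_{k+1}, p_k} = 0$, which immediately gives $\|p\|^2 = \sum_k \|p_k\|^2 = \|s_0\|^2 + \text{(cross terms that cancel)} $... — more carefully, expanding $\|\om(p)\|^2 = \|s_0\|^2$ and comparing, the identity $\|s_0\|^2 = \sum_k \|p_k\|^2$ follows from $\|s_k\|^2 = \|p_k\|^2 + \|s_{k+1}\|^2 + 2\operatorname{Re}\ip{p_k, s_{k+1}}$ once one knows $\ip{p_k, s_{k+1}} = 0$ for each $k$. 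That orthogonality is exactly what the relation "$tp' \in T \perp p$ for all $p' \in P$" delivers after choosing $p'$ appropriately (so that $tp'$ has top-truncated coefficients realizing the tail $s_{k+1}$) — and the needed $p'$ can be taken to be $(t^*)^{j} p$ for suitable $j$, using that $t^*$ preserves $T^\perp$. Injectivity then comes for free: if $\om(p) = 0$ then $s_0 = 0$, so $\|p\|^2 = \sum\|p_k\|^2 = \|s_0\|^2 = 0$.

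The main obstacle is bookkeeping the interaction between the two module conditions: $tU \le T$ is used to land $tp$ in $T$, while $t^*T^\perp \le T^\perp$ (a consequence of $T$ being a submodule) is needed to keep $(t^*)^j p$ inside $T^\perp$ so that the relation can be iterated. Getting the indices in the telescoping sum to line up with the truncation caused by multiplication by $t$ on $V(n)$ is the fiddly part; once the single orthogonality $\ip{p_k, s_{k+1}} = 0$ is established for all $p \in P$ and all $k$, the rest is the routine Pythagorean collapse. I should also double-check the edge behaviour at $i = n-1$ (top coefficient, killed by $t$) separately, since that is where the truncation bites.
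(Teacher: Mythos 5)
Your overall strategy---prove norm preservation on $P$ and polarize---is in the same spirit as the paper's, and your opening moves are correct: $t^kp\in T$ for $p\in P\subseteq U$ (since $tU\le T$ and $T$ is a submodule), while $P\subseteq T^\perp$, so $\ip{t^kp,q}=0$ for all $p,q\in P$ and all $k\ge 1$. But the ``clean route'' you propose is not available. The per-coordinate orthogonality $\ip{p_k,s_{k+1}}=0$ with $s_{k+1}=\sum_{i>k}p_i$ is simply false in general: what the relations $\ip{t^kp,q}=0$ give is only the \emph{aggregate} vanishing $\sum_i\ip{p_i,p_{i+k}}=0$ for each lag $k\ge 1$, and this kills the off-diagonal sum as a whole, not its individual summands. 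Concretely, take $n=3$, $p=(1+t,\,t-t^2)\in V(3)$, $T=\C[t]\cdot tp$, and $U=T+\C[t]\cdot p$. One checks $T$ is a submodule, $tU\le T$, and $p\perp T$, so $p\in P$. Here $p_0=(1,0)$, $p_1=(1,1)$, $p_2=(0,-1)$, so $s_1=p_1+p_2=(1,0)$ and $\ip{p_0,s_1}=1\neq 0$; yet the isometry holds because $\ip{p_0,p_1}+\ip{p_1,p_2}=1-1=0$ and $\ip{p_0,p_2}=0$. The cancellation happens across terms of a fixed lag, not term by term, so there is no Pythagorean telescoping of the kind you describe.

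The mechanism you propose to obtain the per-term orthogonality, taking $p'=(t^*)^jp$, also breaks down for the reason you half-acknowledge: the relation $\ip{tp',q}=0$ needs $p'\in U$ (so that $tp'\in T$), and $(t^*)^jp$ lies in $T^\perp$ but has no reason to lie in $U$. None of this is needed. For $a,b\in P$ one can expand directly:
\[
\ip{\om(a),\om(b)} = \sum_{i,j=0}^{n-1}\ip{a_i,b_j}
 = \ip{a,b} + \sum_{k=1}^{n-1}\bigl(\ip{t^ka,b}+\ip{a,t^kb}\bigr),
\]
and every term on the right apart from $\ip{a,b}$ vanishes because $t^ka,t^kb\in T$ while $a,b\in T^\perp$. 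This yields $\ip{\om(a),\om(b)}=\ip{a,b}$ in one step, which gives both injectivity and the isometry without any intermediate per-coordinate claim.
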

\begin{proof}
 Consider elements $a=\sum_{i=0}^{n-1}a_it^i$ and
 $b=\sum_{i=0}^{n-1}b_it^i$ in $P$.  By direct expansion we have
 \[ \ip{\om(a),\om(b)} = \sum_{i,j=0}^{n-1}\ip{a_i,b_j} =
     \ip{a,b} +
      \sum_{k=1}^{n-1}\left(\ip{t^ka,b}+\ip{a,t^kb}\right).
 \]
 On the right hand side, we have $t^ka\in t^kP\leq tU\leq T$ and
 $b\in P\leq T^\perp$ so $\ip{t^ka,b}=0$, and similarly
 $\ip{a,t^kb}=0$, so $\ip{\om(a),\om(b)}=\ip{a,b}$ as claimed.
\end{proof}

\begin{corollary}\label{cor-Li-embeds}
 There is a linear embedding $\om_i$ from the line bundle $L_i$ to the
 constant bundle with fibre $\C^2$.
\end{corollary}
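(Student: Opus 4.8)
The plan is to build the bundle embedding $\om_i$ fibrewise from Proposition~\ref{prop-isometry} and then check that the fibrewise recipe varies continuously (indeed holomorphically) over $X(n)$. Concretely, fix a point $\un{W}\in X(n)$. By Lemma~\ref{lem-t-zero} we have $tW_i\leq W_{i-1}$, so we may apply Proposition~\ref{prop-isometry} with $T=W_{i-1}$ and $U=W_i$: setting $P_i(\un{W})=W_i\ominus W_{i-1}=W_i\cap W_{i-1}^\perp$, the map $\om\:P_i(\un{W})\to\C^2$ is an isometric (hence injective) $\C$-linear map. Since $\dim W_i=i$ and $\dim W_{i-1}=i-1$, the space $P_i(\un{W})$ is one-dimensional, and the orthogonal projection $V(n)\to P_i(\un{W})$ induces a canonical isomorphism $W_i/W_{i-1}\xra{\sim}P_i(\un{W})$. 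Composing, we get for each $\un{W}$ an injective linear map
\[ \om_i(\un{W})\: L_i|_{\un{W}} = W_i/W_{i-1} \xra{\sim} P_i(\un{W}) \xra{\om} \C^2. \]
This is the desired map on fibres; it remains to organise these into a morphism of bundles.

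Next I would address continuity. The tautological flag bundle over $X(n)$ is (by definition of the flag variety) a chain of holomorphic subbundles $\underline{W}_0<\underline{W}_1<\dotsb<\underline{W}_{2n}$ of the trivial bundle $\underline{V(n)}$. The only potentially non-obvious point is that $\underline{W}_i\ominus\underline{W}_{i-1}$ is again a holomorphic (or at least continuous) subbundle: this is standard, since orthogonal complement of a continuous subbundle inside a Hermitian bundle is continuous, and the fibrewise-constant rank ($=1$, by the dimension count above) guarantees local triviality. Thus $\underline{P}_i:=\underline{W}_i\cap\underline{W}_{i-1}^\perp$ is a complex line subbundle of $\underline{V(n)}$, the quotient map $\underline{W}_i\to\underline{W}_i/\underline{W}_{i-1}=L_i$ restricts to a bundle isomorphism $\underline{P}_i\xra{\sim}L_i$, and $\om$ (being a fixed $\C$-linear map $V(n)\to\C^2$) induces a bundle map $\underline{P}_i\to\underline{\C^2}$. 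Composing the inverse of the first with the second gives the bundle map $\om_i\:L_i\to\underline{\C^2}$, which by Proposition~\ref{prop-isometry} is fibrewise injective, i.e.\ a linear embedding.

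The main obstacle, such as it is, is purely bookkeeping: confirming that the fibrewise orthogonal complement $\underline{W}_i\ominus\underline{W}_{i-1}$ really is a subbundle (not merely a fibrewise-defined family of lines) and that the resulting $\om_i$ is a morphism in whatever category---continuous, smooth, or holomorphic---one wants to work in. Since the Hermitian metric on $\underline{V(n)}$ is the constant one from Definition~\ref{defn-ip} and $\underline{W}_{i-1}$ is a subbundle of constant rank, local orthonormal frames for $\underline{W}_{i-1}$ extend by Gram--Schmidt to local orthonormal frames for $\underline{W}_i$, exhibiting $\underline{P}_i$ as locally trivial; this is the one place a short explicit argument is needed. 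Everything downstream---that $\om_i$ is fibrewise an isometry onto its image, in particular injective---is exactly the content of Proposition~\ref{prop-isometry} applied pointwise, so no further work is required there. (As noted in the introduction, the payoff is that the image of $\om_i$ sits inside the trivial rank-two bundle, and the splitting principle applied to $L_i\hookrightarrow\underline{\C^2}$ with $L_i^2=0$ then forces $x_i^2=0$.)
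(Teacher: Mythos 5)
Your argument is essentially the paper's: use Lemma~\ref{lem-t-zero} to put $tW_i\leq W_{i-1}$, apply Proposition~\ref{prop-isometry} to $T=W_{i-1}$, $U=W_i$ to see that $\om$ restricts to an isometric (hence injective) map on $W_i\ominus W_{i-1}$, and compose with the isomorphism $W_i\ominus W_{i-1}\simeq W_i/W_{i-1}=L_i$. The paper does not spell out continuity of the orthogonal-complement construction, and your remark that local Gram--Schmidt frames exhibit $\underline{W}_i\ominus\underline{W}_{i-1}$ as a genuine subbundle is a reasonable piece of bookkeeping to include.

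One thing is wrong, though: your parenthetical ``(indeed holomorphically)'' near the start. The assignment $\un{W}\mapsto W_i\ominus W_{i-1}$ involves the Hermitian inner product and therefore complex conjugation; written in local holomorphic coordinates it is \emph{not} holomorphic, only real-analytic. This is exactly what Remark~\ref{rem-not-algebraic} points out, and it is the reason the authors caution that the relation $x_i^2=0$ is not known to lift to the Chow ring. Your later hedge (``continuous, smooth, or holomorphic---whatever category one wants'') leaves the wrong option on the table: the construction works continuously and smoothly, but not holomorphically or algebraically. Everything else in your proof stands.
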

\begin{proof}
 Consider a flag $\un{W}\in X(n)$.  The projection
 $\pi\:W_i\ominus W_{i-1}\to W_i/W_{i-1}=(L_i)_{\un{W}}$ is clearly an
 isomorphism, and we define $\om_i$ to be the composite
 $\om\circ\pi^{-1}$.  Using Lemma~\ref{lem-t-zero} and
 Proposition~\ref{prop-isometry} we see that this is injective.
\end{proof}

\begin{corollary}
 The map $\om_i$ induces a map $\bom_i\:X(n)\to\CP^1=S^2$ and an
 isomorphism $L_i\simeq\bom_i^*(T)$, where $T$ is the tautological
 bundle over $\CP^1$.
\end{corollary}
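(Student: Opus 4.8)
The plan is to invoke the universal property of $\CP^1$ as the Grassmannian of lines in $\C^2$: it carries a tautological sub-line-bundle $T$, with fibre $T_\ell=\ell\sse\C^2$ at a line $\ell$, and a line subbundle of the constant bundle $\un{\C^2}$ over any space $Y$ is the same thing as a map $Y\to\CP^1$ together with an identification of the subbundle with the pullback of $T$. By Corollary~\ref{cor-Li-embeds} the bundle map $\om_i\:L_i\to\un{\C^2}$ is injective on each fibre, so for each flag $\un{W}\in X(n)$ the image $\om_i((L_i)_{\un{W}})$ is a one-dimensional subspace of $\C^2$; I define $\bom_i(\un{W})\in\CP^1$ to be this subspace. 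In other words $\bom_i$ is the classifying map of the line subbundle $\img(\om_i)\sse\un{\C^2}$.

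Next I would check that $\bom_i$ is a morphism of varieties (equivalently, a continuous map). The flag subspaces $W_i$ depend algebraically on $\un{W}$, since $X(n)$ sits inside the variety of complete flags and the $W_i$ are the tautological flag subspaces; hence so do $W_{i-1}^\perp$ and the orthogonal-complement subbundle $W_i\ominus W_{i-1}\sse\un{V(n)}$. Composing the (algebraic) isomorphism $\pi^{-1}$ of Corollary~\ref{cor-Li-embeds} with the fixed linear map $\om$, and then with a local trivialisation of $L_i$, exhibits $\bom_i$ locally as the line spanned by a nowhere-zero algebraic section of $\un{\C^2}$, which lands continuously (indeed algebraically) in $\CP^1$. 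The identification $\CP^1=S^2$ is just the standard diffeomorphism and plays no further role.

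Finally I would produce the isomorphism $L_i\simeq\bom_i^*(T)$. By definition of the pullback and of $T$, the fibre of $\bom_i^*(T)$ at $\un{W}$ is $T_{\bom_i(\un{W})}=\om_i((L_i)_{\un{W}})\sse\C^2$. Thus $\om_i$ itself restricts on each fibre to a linear isomorphism $(L_i)_{\un{W}}\xrightarrow{\ \sim\ }(\bom_i^*T)_{\un{W}}$, and since $\om_i$ is a bundle map these assemble into a morphism of line bundles $L_i\to\bom_i^*(T)$ over $X(n)$; a morphism of line bundles that is an isomorphism on every fibre is an isomorphism of bundles. If one equips $L_i$ with the quotient Hermitian metric, then $\pi$ and hence $\om_i$ are isometric by Proposition~\ref{prop-isometry}, so this is even an isometric isomorphism onto $\bom_i^*(T)$ with its inherited metric, though the statement does not ask for that.

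I do not expect a genuine obstacle here: essentially all the content is already packaged in Corollary~\ref{cor-Li-embeds} together with the universal property of $\CP^1$, and the only point needing a line of care is the algebraicity/continuity of the classifying map $\bom_i$, which is routine given that the flag subspaces, the Hermitian inner product, and the map $\om$ all vary algebraically over $X(n)$.
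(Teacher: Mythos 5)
Your argument correctly identifies the content: $\bom_i$ classifies the line subbundle $\om_i(L_i)\sse\un{\C^2}$, and $\om_i$ itself supplies the fibrewise isomorphism $L_i\to\bom_i^*(T)$. The paper treats this corollary as immediate ("Clear.") given Corollary~\ref{cor-Li-embeds}, and your reasoning is exactly the unpacking of that.

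However, there is one genuine error in your continuity paragraph. You claim that $W_i\ominus W_{i-1}$ depends algebraically on $\un{W}$, that $\pi^{-1}$ is an algebraic isomorphism, and hence that $\bom_i$ is a morphism of varieties. This is false, and the paper says so explicitly in Remark~\ref{rem-not-algebraic}: taking Hermitian orthogonal complements involves complex conjugation in local holomorphic coordinates, so $(U,T)\mapsto U\ominus T$ is not a morphism of complex manifolds or varieties, and consequently $\bom_i$ is \emph{not} algebraic. The map is certainly continuous (indeed real-analytic/smooth), which is all the statement needs since it lands in $\CP^1=S^2$ viewed topologically; but the assertion of algebraicity contradicts the paper, and this distinction actually matters later, since the authors note they do not know whether $x_i^2=0$ lifts to the Chow ring. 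You should replace "algebraically" with "continuously" (or "smoothly") throughout that paragraph; the rest of the argument stands.
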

\begin{proof}
 Clear.
\end{proof}

\begin{corollary}\label{cor-squares-zero}
 In $H^*(X(n))$ we have $x_i^2=0$ for $i=1,\dotsc,2n$.
\end{corollary}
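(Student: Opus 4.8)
The plan is to deduce $x_i^2 = 0$ directly from the embedding $\om_i$ constructed in Corollary \ref{cor-Li-embeds}. Recall that $x_i$ is by definition the Euler class of $L_i$, and that the previous corollary exhibits a map $\bom_i \colon X(n) \to \CP^1$ together with an isomorphism $L_i \simeq \bom_i^*(T)$, where $T$ is the tautological line bundle over $\CP^1$. Since Euler classes are natural, this gives $x_i = \bom_i^*(e(T))$, where $e(T) \in H^2(\CP^1)$ is the Euler class of the tautological bundle. Now $e(T)$ is a generator of $H^2(\CP^1) \cong \Z$, and because $H^*(\CP^1) = \Z[e(T)]/e(T)^2$ we have $e(T)^2 = 0$ in $H^4(\CP^1) = 0$. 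Applying the ring homomorphism $\bom_i^*$ yields $x_i^2 = \bom_i^*(e(T))^2 = \bom_i^*(e(T)^2) = \bom_i^*(0) = 0$.

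Alternatively, and even more directly, one can argue from the sub-bundle inclusion $\om_i \colon L_i \hookrightarrow \un{\C^2}$ into the constant (hence trivial) rank-$2$ bundle over $X(n)$. Choosing a Hermitian metric on $\un{\C^2}$ (for instance the constant one from Definition \ref{defn-ip}), we get a splitting $\un{\C^2} \cong L_i \oplus Q_i$ where $Q_i$ is the orthogonal complement line bundle. By the Whitney sum formula, $c(L_i)\,c(Q_i) = c(\un{\C^2}) = 1$, so $c(Q_i) = (1 + x_i)^{-1} = 1 - x_i$ (using that $x_i^2$ and higher powers of a degree-$2$ class on a line bundle are constrained), which already forces the total Chern class of the rank-$2$ bundle $L_i \oplus Q_i$ to vanish in degree $4$; since $c_2(L_i \oplus Q_i) = x_i \cdot c_1(Q_i) = -x_i^2$ and this must be $0$, we get $x_i^2 = 0$. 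Cleaner still: $Q_i$ is a line bundle, so $c_1(Q_i)^2$ would have to lie in the image of a class pulled back from $\CP^1$, but the honest point is that $x_i = c_1(L_i) = c_1(\om_i^* T)$ for the classifying map of $L_i$, and squares of pulled-back generators of $H^2(\CP^\infty)$ restricted through $\CP^1$ vanish.

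I expect essentially no obstacle here: the content is entirely in the preceding results (Proposition \ref{prop-isometry} and Corollary \ref{cor-Li-embeds}), and what remains is the elementary observation that any line bundle admitting a map to $\CP^1$ (equivalently, embeddable in a trivial rank-$2$ bundle) has vanishing square of its Euler class, because $H^4(\CP^1) = 0$. The only thing to be careful about is to phrase the naturality of the Euler class correctly and to note that $\bom_i$ genuinely lands in $\CP^1$ rather than some larger projective space — but that is exactly the content of the corollary immediately preceding this one, which we are entitled to assume. I would therefore keep the proof to one or two sentences: cite the isomorphism $L_i \simeq \bom_i^*(T)$, invoke naturality of Euler classes, and conclude from $e(T)^2 \in H^4(\CP^1) = 0$.
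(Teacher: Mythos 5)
Your first (and third) paragraph is precisely the paper's argument: $x_i = \bom_i^*(x)$ with $x$ the Euler class of $T$ over $\CP^1$, and $x^2 = 0$ since $H^4(\CP^1)=0$, so $x_i^2 = 0$ by naturality. The Whitney-sum alternative in your middle paragraph is also valid (the degree-$4$ part of $(1+x_i)(1+c_1(Q_i))=1$ gives $x_ic_1(Q_i)=-x_i^2=0$ directly), but it is unnecessary given that the pullback argument is already complete.
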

\begin{proof}
 Let $x\in H^2(\CP^1)$ denote the Euler class of $T$, so the previous
 corollary gives $x_i=\bom_i^*(x)$.  It is well-known that $x^2=0$, so
 $x_i^2=0$. 
\end{proof}

\begin{corollary}\label{cor-phi-exists}
 There is a unique homomorphism $\phi\:R(n)\to H^*(X(n))$ such that
 $\phi(x_i)=x_i$ for all $i$.
\end{corollary}
\begin{proof}
 We explained in the introduction that the elementary symmetric
 functions in the generators $x_i$ are zero.  We have now seen that we
 also have $x_i^2=0$, and the claim is immediate from this.
\end{proof}

\begin{remark}\label{rem-not-algebraic}
 The operation $(U,T)\mapsto U\ominus T$, when written in terms of
 local holomorphic coordinates, involves complex conjugation.  It
 follows that this is not a morphism of complex manifolds or of
 complex algebraic varieties.  Similarly, our map
 $\om_i\:X(n)\to\CP^1$ is not a morphism of algebraic varieties.
 Because of this, we do not know whether the relation $x_i^2=0$ lifts
 to the Chow ring of $X(n)$.
\end{remark}

\begin{proposition}\label{prop-om-emb}
 The maps $\bom_i$ combine to give a closed embedding
 $\bom\:X(n)\to\prod_{i=1}^{2n}\CP^1=(S^2)^{2n}$.
\end{proposition}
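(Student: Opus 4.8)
The plan is to exploit compactness. Since $X(n)$ is a closed subvariety of the projective flag variety of $V(n)$, it is compact, and the target $(S^2)^{2n}$ is Hausdorff. The map $\bom$ is continuous (the passage $\un{W}\mapsto W_i\ominus W_{i-1}$ is continuous, being built from orthogonal projections, and $\om$ is linear). A continuous injection from a compact space to a Hausdorff space is automatically a homeomorphism onto its image, and that image is closed; so the whole proposition reduces to the claim that $\bom$ is injective.

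To prove injectivity I would reconstruct an arbitrary flag $\un{W}\in X(n)$ from the tuple $\bom(\un{W})=(\ell_1,\dotsc,\ell_{2n})$, where $\ell_i=\bom_i(\un{W})$ is the line $\om(W_i\ominus W_{i-1})\sse\C^2$. The reconstruction is by induction on $i$. Start with $W_0=0$; suppose $W_{i-1}$ is known, and set
\[ Q_i = \{\, v\in V(n) \st v\perp W_{i-1},\ tv\in W_{i-1} \,\} = W_{i-1}^\perp\cap t^{-1}(W_{i-1}). \]
The line $P_i:=W_i\ominus W_{i-1}$ lies in $Q_i$: it is orthogonal to $W_{i-1}$ by definition, and $tP_i\le tW_i\le W_{i-1}$ by Lemma~\ref{lem-t-zero}. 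The key point is that $\om$ is injective on $Q_i$. Indeed, put $T=W_{i-1}$ and $U=W_{i-1}+Q_i$; then $tU=tW_{i-1}+tQ_i\le W_{i-1}=T$ (using that $W_{i-1}$ is a submodule and the defining property of $Q_i$), and $U\ominus T=Q_i$ since $Q_i\perp W_{i-1}$, so Proposition~\ref{prop-isometry} applies and shows that $\om\colon Q_i\to\C^2$ is an isometric embedding.

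Given this, $P_i$ is forced: it is the unique line in $Q_i$ carried by $\om$ onto $\ell_i$, namely the preimage of $\ell_i$ under the injective map $\om|_{Q_i}$. Hence $W_i=W_{i-1}\oplus P_i$ is determined by $W_{i-1}$ and $\ell_i$, and by induction the entire flag $\un{W}$ is determined by $\bom(\un{W})$. Therefore $\bom$ is injective, and by the first paragraph it is a closed embedding. I expect the only genuine content to be the identification of the auxiliary space $Q_i$ and the verification that Proposition~\ref{prop-isometry} applies to it; the remainder is bookkeeping together with the standard compact-to-Hausdorff argument.
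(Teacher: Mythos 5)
Your proof is correct and follows the same overall strategy as the paper's: reduce to injectivity via the standard compact-to-Hausdorff argument, then reconstruct the flag inductively from the tuple of lines. The paper disposes of the inductive step with the terse phrase ``an evident induction based on Corollary~\ref{cor-Li-embeds},'' but that corollary by itself only says $\om$ is injective on the particular line $W_i\ominus W_{i-1}$; to recover $W_i$ from $W_{i-1}$ and $\ell_i$ one needs $\om$ to be injective on the whole space of \emph{candidate} lines, which is exactly your $Q_i=W_{i-1}^\perp\cap t^{-1}W_{i-1}$. Your identification of $Q_i$, the check that $U=W_{i-1}+Q_i$ is a submodule with $tU\le W_{i-1}$ and $U\ominus W_{i-1}=Q_i$, and the resulting direct appeal to Proposition~\ref{prop-isometry} is precisely the content that the paper's ``evident induction'' compresses, so your write-up is a faithful and somewhat more careful version of the intended argument rather than a different route.
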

\begin{proof}
 If $\bom(\un{W})=\bom(\un{W}')$ then an evident induction
 based on Corollary~\ref{cor-Li-embeds} gives $W_i=W'_i$ for all $i$.
 This proves that $\bom$ is injective, and it is clearly also
 continuous.  Moreover, the target space $\prod_{i=1}^{2n}\CP^1$ is
 clearly compact and Hausdorff.  The same applies to $X(n)$, because
 is is a closed subvariety of a finite product of Grassmannians.  It
 follows that $\bom$ is automatically a closed embedding.
\end{proof}

\section{Symmetries of $X(n)$}
\label{sec-symmetry}

We next explain some automorphisms of the space $X(n)$.  Although we
will not make too much use of them, they will motivate some details of
our subsequent work, which are arranged to respect the symmetry as far
as possible.

\begin{definition}\label{defn-om-periodic}
 We previously defined maps $\bom_i\:X(n)\to\CP^1$ for
 $1\leq i\leq 2n$.  From now on we extend this definition to all
 integers by the rule $\bom_{i+2nk}=\bom_i$ for all $k\in\Z$.
\end{definition}

\begin{proposition}\label{prop-Xn-symmetry}
 There are unique maps $\rfl,\rot\:X(n)\to X(n)$ such that
 \begin{itemize}
  \item[(a)] $\bom_i\rot=\bom_{1+i}$ for all $i$
  \item[(b)] $\bom_i\rfl=\bom_{1-i}$ for all $i$
  \item[(c)] $\rot^{2n}=\rfl^2=(\rfl\rot)^2=1$.
 \end{itemize}
 Thus, these maps give an action of the dihedral group of order $4n$
 on $X(n)$.
\end{proposition}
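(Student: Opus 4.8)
The plan is to derive uniqueness and the relations in~(c) formally from Proposition~\ref{prop-om-emb}, so that everything reduces to exhibiting two continuous self-maps of $X(n)$ satisfying~(a) and~(b); I would then construct the reflection $\rfl$ by an explicit formula and the rotation $\rot$ by a similar but more delicate one.

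For the formal part: since $\bom$ is a closed embedding it is injective, so a self-map $f$ of $X(n)$ is determined by the composites $\bom_i\circ f$; in particular~(a) determines $\rot$ and~(b) determines $\rfl$. Granting existence, iterating~(a) gives $\bom_i\rot^k=\bom_{i+k}$ for all $k\geq 0$, and the periodic convention of Definition~\ref{defn-om-periodic} then gives $\bom_i\rot^{2n}=\bom_{i+2n}=\bom_i$, so $\rot^{2n}=1$ by injectivity of $\bom$. Likewise $\bom_i\rfl^2=\bom_{1-(1-i)}=\bom_i$ gives $\rfl^2=1$, and from $\bom_i\rfl\rot=\bom_{1-i}\rot=\bom_{2-i}$ we get $\bom_i(\rfl\rot)^2=\bom_{2-(2-i)}=\bom_i$, so $(\rfl\rot)^2=1$. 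These are exactly the defining relations of the dihedral group of order $4n$, so the last sentence follows once $\rot$ and $\rfl$ exist.

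To build $\rfl$ I would introduce the \emph{reversal} isometry $\kp\:V(n)\to V(n)$, $\kp(\sum_{i=0}^{n-1}a_it^i)=\sum_{i=0}^{n-1}a_{n-1-i}t^i$, and record that $\kp^2=1$, that $\kp$ is unitary for the inner product of Definition~\ref{defn-ip}, that $\om\circ\kp=\om$, and that $\kp\circ t=t^*\circ\kp$. The last identity says $\kp$ conjugates multiplication by $t$ into $t^*$, hence carries $t^*$-stable subspaces to $t$-stable ones; since $W^\perp$ is $t^*$-stable whenever $W$ is a submodule, $\kp(W^\perp)$ is then a submodule. So I would set $\rfl(\un{W})_i=\kp(W_{2n-i}^\perp)$; as $\dim W_{2n-i}^\perp=i$ and the $W_{2n-i}$ decrease with $i$, this is a strictly increasing chain of submodules with $\rfl(\un{W})_0=\kp(W_{2n}^\perp)=0$ and $\rfl(\un{W})_{2n}=\kp(W_0^\perp)=V(n)$, so $\rfl(\un{W})\in X(n)$, depending continuously on $\un{W}$ since $\kp$ and orthogonal complementation are continuous. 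To check~(b), compute
\[
 \rfl(\un{W})_i\ominus\rfl(\un{W})_{i-1}
  =\kp(W_{2n-i}^\perp)\cap\kp(W_{2n+1-i})
  =\kp(W_{2n+1-i}\ominus W_{2n-i}),
\]
apply $\om$, and use $\om\circ\kp=\om$ and $\bom_{2n+1-i}=\bom_{1-i}$ to conclude $\bom_i\rfl=\bom_{1-i}$.

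The construction of $\rot$ is where I expect the real difficulty. The obvious guess, that $\rot(\un{W})$ should have $i$th graded piece $W_{i+1}\ominus W_i$, forces $\rot(\un{W})_i=W_{i+1}\cap W_1^\perp$, which is not $t$-stable in general and so fails. Instead I would try to obtain $\rot$ as $\rfl_0\circ\rfl$ for a \emph{second} reflection $\rfl_0\:X(n)\to X(n)$ with $\bom_i\rfl_0=\bom_{-i}$; then $\bom_i(\rfl_0\circ\rfl)=\bom_{-i}\circ\rfl=\bom_{1+i}$, which is~(a). Unlike $\rfl$, this $\rfl_0$ must realize the involution $i\mapsto -i$ of indices, which fixes the indices $n$ and $2n$ rather than none, so $\rfl_0$ lies in the other conjugacy class of reflections of the $2n$-gon and cannot come from orthogonal complementation and $\kp$ alone; producing it (equivalently, producing $\rot$ directly) seems to require an auxiliary operation built from the flag $\un{W}$ itself --- for instance replacing some $W_j$ by $W_j\cap W_{2n-1}$, or using a further isometry that interacts with the top submodule $W_{2n-1}$ --- together with a careful verification that the resulting chain is strictly increasing, consists of submodules, and has the correct graded pieces under $\om$. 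I expect this $t$-stability bookkeeping, rather than anything formal, to be the crux of the proof; once $\rfl_0$ and hence $\rot$ are in hand, uniqueness and~(c) follow from the earlier paragraphs.
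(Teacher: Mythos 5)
Your formal derivation of uniqueness and~(c) from the injectivity of $\bom$ (Proposition~\ref{prop-om-emb}) is correct and is the same argument the paper uses. Your construction of $\rfl(\un{W})_i=\kp\bigl(W_{2n-i}^\perp\bigr)$ via the finite reversal isometry $\kp$ on $V(n)$ is also correct, and is a genuinely different route from the paper's, which builds $\rfl$ in the unrolled-flag picture of Definition~\ref{defn-unroll} using the map $\xi$ of Definition~\ref{defn-ref}; your version is purely finite-dimensional and arguably cleaner for this half, and by the uniqueness you have already established the two must agree.

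However, there is a genuine gap: you never construct $\rot$. You correctly diagnose that the naive candidate $W_{i+1}\cap W_1^\perp$ fails to be $t$-stable, and you correctly observe that it would suffice to produce a second reflection $\rfl_0$ with $\bom_i\rfl_0=\bom_{-i}$, but you leave that construction entirely open, offering only speculation (``replacing some $W_j$ by $W_j\cap W_{2n-1}$, or using a further isometry\dots''). Since the existence of $\rot$ is the one piece of nontrivial content not already covered by your $\rfl$, this is exactly where the proof is incomplete. The paper's resolution rests on two ideas absent from your outline: it passes to unrolled flags so that the index shift $i\mapsto i+1$ is defined for all $i\in\Z$, and then applies the flag-dependent $\tA$-linear isometry $\tht_L=(t\cdot 1_L)\oplus 1_{L^\perp}$ with $L=\om(\tW_1\ominus\tW_0)$ to renormalize the shifted sequence $(\tW_{i+1})_i$ back into a sequence with $\rot(\tW)_0=V$ (Definition~\ref{defn-rot}, Proposition~\ref{prop-om-rot}). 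The key point you are missing is that after shifting, the ``zeroth'' space is no longer $V$, so one must correct by an $\om$-preserving, inner-product-preserving, $\tA$-linear automorphism that depends on the flag; nothing in the finite-dimensional manipulations you are contemplating plays this role.
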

\begin{proof}
 We will define $\rot$ in Definition~\ref{defn-rot}, and check
 claim~(a) in Proposition~\ref{prop-om-rot}.  Similarly, claim~(b) is
 covered by Definition~\ref{defn-ref} and
 Proposition~\ref{prop-om-ref}.  It may then be checked that for each $i$ we have
 % $\bom_i\:X(n)\to(S^2)^{2n}$ satisfies
 \[ \bom_i\rot^{2n}=\bom_i\rfl^2=\bom_i(\rfl\rot)^2=\bom_i.
 \]
 This transfers to $\bom$, and as $\bom$ is an embedding we can conclude that~(c) holds.
\end{proof}

\begin{definition}\label{defn-unroll}
 We put
 \begin{align*}
  A &= \C[t] & \tA &= \C[t,t^{-1}] & A(n) &= A/t^n \\
  V &= A^2   & \tV &= \tA^2        & V(n) &= A(n)^2
 \end{align*}
 (so $V(n)\simeq\C^{2n}$ as before).  We define $\om\:\tV\to\C^2$ by
 $\om\!\left(\sum_iv_it^i\right)=\sum_iv_i$, and we define an inner product on
 $\tV$ by $\ip{\sum_iu_it^i,\sum_iv_it^i}=\sum_i\ip{u_i,v_i}$.
 We also write $\pi\:V\to V(n)=V/t^nV$ for the quotient map.

 Now suppose we have a flag $\uW\in X(n)$.  For $0\leq i\leq 2n$ we
 put
 \[ \tW_i = \{v\in t^{-n}V\st \pi(t^nv)\in W_i\}. \]
 In particular, we
 have $\tW_0=V$ and $\tW_{2n}=t^{-n}V$.  We then define $\tW_k$ for
 all $k\in\Z$ by $\tW_{2nj+i}=t^{-nj}\tW_i$.  We call this process
 \emph{unrolling}.  It produces a sequence of spaces
 $(\tW_k)_{k\in\Z}$ such that
 \begin{itemize}
  \item[(a)] $t\tW_k<\tW_{k-1}<\tW_k$ for all $k$, with
   $\dim(\tW_k/\tW_{k-1})=1$.
  \item[(b)] $\tW_{k+2n}=t^{-n}\tW_k$ for all $k$.
  \item[(c)] $\tW_0=V$.
 \end{itemize}
 We call such a sequence an \emph{unrolled flag}.  It is clear that
 unrolling gives a homeomorphism from $X(n)$ to the space of unrolled
 flags.
\end{definition}

\begin{remark}\label{rem-ominus}
 We will need to take complements of various subspaces $P\leq\tV$.
 This is potentially problematic because $\tV$ has infinite dimension
 and so the usual rules such as $P^{\perp\perp}=P$ and
 $\tV=P\oplus P^\perp$ need not hold for arbitrary subspaces.
 However, it is not hard to see that these rules do hold if
 $t^mV\leq P\leq t^{-m}V$ for some $m\geq 0$, because in that case we
 can reduce everything to a calculation in the finite-dimensional
 space $\bigoplus_{k=-m}^{m}t^k.\C^2$.  This observation, and minor
 variants, will cover all the spaces that we need.
\end{remark}
\begin{remark}\label{rem-om-periodic}
 For an unrolled flag $\tW$ and an integer $i$ we can define
 $\bom_i(\tW)=\om(\tW_i\ominus\tW_{i-1})$.  As multiplication by $t$
 preserves inner products (and thus orthogonal complements) and
 $\om(ta)=\om(a)$ we see that this is consistent with
 Definition~\ref{defn-om-periodic}.
\end{remark}

\begin{definition}\label{defn-rot}
 For any one-dimensional subspace $L<\C^2$ we have a splitting
 \[ \tV=(\tA\ot L)\oplus(\tA\ot L^\perp), \]
 and we define
 \[ \tht_L=(t.1_L)\oplus(1_{L^\perp})\:\tV \to \tV. \]
 We note that $\tht_L$ is $\tA$--linear and invertible, that
 it preserves inner products, and that $\om\tht_L=\om\:\tV\to\C^2$.

 Now consider an unrolled flag $\tW$.  Put
 $L=\om(\tW_1\ominus\tW_0)<\C^2$, and define $\rot(\tW)_i=\tht_L(\tW_{i+1})$
 for all $i\in\Z$.  After noting that
 \begin{align*}
  \tW_0 &= (A\ot L)\oplus(A\ot L^\perp) = V \\
  \tW_1 &= (t^{-1}A\ot L)\oplus(A\ot L^\perp),
 \end{align*}
 we see that $\rot(\tW)$ is again an unrolled flag.  We thus have a
 map $\rot\:X(n)\to X(n)$.
\end{definition}

\begin{proposition}\label{prop-om-rot}
 For any unrolled flag $\tW$ and $i\in\Z$ we have
 $\bom_i(\rot(\tW))=\bom_{i+1}(\tW)$.
\end{proposition}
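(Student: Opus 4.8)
The plan is to unwind both sides of the claimed identity against the definitions and reduce everything to the two properties of $\tht_L$ recorded in Definition~\ref{defn-rot}: that it is an $\tA$--linear invertible isometry of $\tV$, and that $\om\tht_L=\om$. Fix the unrolled flag $\tW$ and write $L=\om(\tW_1\ominus\tW_0)<\C^2$, so that $\rot(\tW)_j=\tht_L(\tW_{j+1})$ for every $j\in\Z$, and recall from Definition~\ref{defn-rot} that $\rot(\tW)$ is again an unrolled flag. By Remark~\ref{rem-om-periodic} the value $\bom_i$ of an unrolled flag is computed as $\om$ of the relevant one-dimensional complement, so
\[ \bom_i(\rot(\tW)) = \om\bigl(\rot(\tW)_i\ominus\rot(\tW)_{i-1}\bigr) = \om\bigl(\tht_L(\tW_{i+1})\ominus\tht_L(\tW_i)\bigr). \]
It therefore suffices to prove that this equals $\om(\tW_{i+1}\ominus\tW_i)=\bom_{i+1}(\tW)$.

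The key step is the identity $\tht_L(U)\ominus\tht_L(T)=\tht_L(U\ominus T)$ applied to $T=\tW_i\le\tW_{i+1}=U$. Since $\tht_L$ is a bijective isometry it sends $T^\perp$ to $\tht_L(T)^\perp$, and since it is injective it sends $U\cap T^\perp$ to $\tht_L(U)\cap\tht_L(T)^\perp$; unwinding the definition of $\ominus$, this is exactly the asserted identity. Granting it, the relation $\om\tht_L=\om$ gives at once
\[ \om\bigl(\tht_L(\tW_{i+1})\ominus\tht_L(\tW_i)\bigr) = \om\bigl(\tht_L(\tW_{i+1}\ominus\tW_i)\bigr) = \om(\tW_{i+1}\ominus\tW_i) = \bom_{i+1}(\tW), \]
which is what we wanted.

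The only point that needs care is that $\tV$ is infinite-dimensional, so that $\ominus$ and the manipulations with $(-)^\perp$ above are a priori suspect; this is the step I would expect to spend the most words on, though it is not genuinely difficult. Here one invokes Remark~\ref{rem-ominus}: because $\tW$ is an unrolled flag with $\tW_0=V$ and $\tW_{k+2n}=t^{-n}\tW_k$, every $\tW_k$ satisfies $t^mV\le\tW_k\le t^{-m}V$ for a suitable $m=m(k,n)$, and $\tht_L$ carries any such sandwiched subspace to another sandwiched subspace (enlarging $m$ by at most one). Inside a common finite-dimensional sandwich $\bigoplus_{j=-m}^{m}t^j.\C^2$ all of the orthogonal-complement bookkeeping is the usual linear algebra, so both displayed computations are legitimate. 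Assembling these observations gives the proposition.
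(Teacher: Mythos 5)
Your proof is correct and takes essentially the same route as the paper's: both unwind $\bom_i(\rot(\tW))$ via Remark~\ref{rem-om-periodic}, then use that $\tht_L$ is an isometry (hence commutes with $\ominus$) together with $\om\tht_L=\om$ to conclude. You are simply more explicit about the intermediate identity $\tht_L(U)\ominus\tht_L(T)=\tht_L(U\ominus T)$ and about the infinite-dimensional bookkeeping via Remark~\ref{rem-ominus}, which the paper leaves implicit.
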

\begin{proof}
 Since $\tht_L$ preserves inner products (and thus orthogonal complements)
 and $\om\tht_L=\om$, we see that
 \[ \bom_i(\rot(\tW)) =
     \om(\rot(\tW)_i \ominus \rot(\tW)_{i-1}) =
      \om(\tht_L(\tW_{i+1}) \ominus \tht_L(\tW_i)) =
       \om(\tW_{i+1} \ominus \tW_i) =
        \bom_{i+1}(\tW),
 \]
 as claimed.
\end{proof}

\begin{definition}\label{defn-ref}
 For any element $a=\sum_ia_it^i\in\tA$, we define
 $\xi(a)=\sum_ia_it^{-1-i}$.  This gives a map $\xi\:\tA\to\tA$ with
 $\xi^2=1$ and $\om\xi=\om$ and $\ip{\xi(a),\xi(b)}=\ip{a,b}$.

 Next, for any unrolled flag $\tW$ we define
 \[ \rfl(\tW)_i = \xi(\tW_{-i})^\perp. \]
\end{definition}

\begin{proposition}\label{prop-om-ref}
 This definition gives a map $\rfl\:X(n)\to X(n)$ with
 $\bom_i\rfl=\bom_{1-i}$.
\end{proposition}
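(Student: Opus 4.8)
The plan is to check directly, from the explicit formula of Definition~\ref{defn-ref}, that $\rfl$ carries unrolled flags to unrolled flags, and then to read off its effect on the maps $\bom_i$. Throughout write $P_i=\rfl(\tW)_i=\xi(\tW_{-i})^\perp$, where $\xi$ has been extended from $\tA$ to $\tV=\tA^2$ coordinatewise. I would use repeatedly that this extended $\xi$ is a bijective isometry of $\tV$ with $\xi^2=1$ and $\om\xi=\om$, that $\xi(t^kv)=t^{-k}\xi(v)$ for all $k\in\Z$, and that multiplication by any power of $t$ is a bijective isometry of $\tV$. For a bijective isometry $f$ of an inner product space one has $f(P^\perp)=f(P)^\perp$, and $f$ commutes with intersection, preserves relative dimensions of nested subspaces, and carries a subspace $P$ with $P^{\perp\perp}=P$ to another such. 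Since the $\tW_k$ are of the well-behaved type discussed in Remark~\ref{rem-ominus}, and $\xi$ and multiplication by $t^{\pm1}$ transport that property, every orthogonal complement formed below is of the legitimate kind.

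First I would verify that $(P_i)_{i\in\Z}$ satisfies conditions~(a)--(c) of Definition~\ref{defn-unroll}. For~(c), $P_0=\xi(\tW_0)^\perp=\xi(V)^\perp$; since $\xi(V)$ consists of the elements of $\tV$ supported in strictly negative degrees, its orthogonal complement is $V$, so $P_0=V$. For~(b), property~(b) of $\tW$ gives $\tW_{-i-2n}=t^{n}\tW_{-i}$, hence
\[ P_{i+2n}=\xi(\tW_{-i-2n})^\perp=\bigl(t^{-n}\xi(\tW_{-i})\bigr)^\perp=t^{-n}\xi(\tW_{-i})^\perp=t^{-n}P_i. \]
For~(a), start from the inclusions $t\tW_{-i+1}<\tW_{-i}<\tW_{-i+1}$ supplied by property~(a) of $\tW$. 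Applying the inclusion-preserving bijective isometry $\xi$ and then the inclusion-reversing involution $(-)^\perp$ to $\tW_{-i}<\tW_{-i+1}$ gives $P_{i-1}<P_i$, with
\[ \dim(P_i/P_{i-1})=\dim\bigl(\xi(\tW_{-i+1})/\xi(\tW_{-i})\bigr)=\dim(\tW_{-i+1}/\tW_{-i})=1, \]
the first equality being finite-dimensional duality in a window as in Remark~\ref{rem-ominus}. Applying the same two operations to the equivalent inclusion $\tW_{-i+1}<t^{-1}\tW_{-i}$, and using $\xi(t^{-1}\tW_{-i})=t\,\xi(\tW_{-i})$, gives
\[ tP_i=\bigl(t\,\xi(\tW_{-i})\bigr)^\perp=\xi(t^{-1}\tW_{-i})^\perp<\xi(\tW_{-i+1})^\perp=P_{i-1}. \]
Hence $tP_i<P_{i-1}<P_i$ with $\dim(P_i/P_{i-1})=1$, so~(a) holds and $\rfl(\tW)$ is an unrolled flag. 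Via the homeomorphism of Definition~\ref{defn-unroll} we thus obtain a self-map $\rfl$ of $X(n)$; it is continuous because the operations $Q\mapsto\tW_{-i}$, $Q\mapsto tQ$, $Q\mapsto\xi(Q)$ and $Q\mapsto Q^\perp$ from which it is built are continuous on the relevant Grassmannians (alternatively, continuity follows once the formula $\bom\rfl=(\text{a coordinate permutation})\circ\bom$ below is known, since $\bom$ is a closed embedding by Proposition~\ref{prop-om-emb}).

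It remains to compute $\bom_i\rfl$. By Remark~\ref{rem-om-periodic} and the inclusion $P_{i-1}<P_i$ we have $\bom_i(\rfl(\tW))=\om\bigl(P_i\cap P_{i-1}^\perp\bigr)$, and $P_{i-1}^\perp=\xi(\tW_{-i+1})^{\perp\perp}=\xi(\tW_{-i+1})$, so
\begin{align*}
 \bom_i(\rfl(\tW))
   &= \om\bigl(\xi(\tW_{-i})^\perp\cap\xi(\tW_{-i+1})\bigr)
    = \om\bigl(\xi(\tW_{-i}^\perp)\cap\xi(\tW_{-i+1})\bigr) \\
   &= \om\bigl(\xi(\tW_{-i+1}\cap\tW_{-i}^\perp)\bigr)
    = \om\bigl(\tW_{-i+1}\cap\tW_{-i}^\perp\bigr),
\end{align*}
using successively $\xi(\tW_{-i})^\perp=\xi(\tW_{-i}^\perp)$, the fact that the bijection $\xi$ commutes with intersection, and $\om\xi=\om$. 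Since $\tW_{-i+1}=\tW_{1-i}$ and $\tW_{-i}=\tW_{(1-i)-1}$, the last expression is $\om\bigl(\tW_{1-i}\ominus\tW_{(1-i)-1}\bigr)=\bom_{1-i}(\tW)$, which is the required identity.

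The argument is essentially formal. I expect the only delicate point to be the bookkeeping in condition~(a), where $\xi$ preserves inclusions but conjugates multiplication by $t$ into multiplication by $t^{-1}$ while $(-)^\perp$ reverses inclusions, so these operations must be tracked through the chain without slipping a direction or a power of $t$. One must also keep Remark~\ref{rem-ominus} in view, so that no orthogonal complement in the infinite-dimensional space $\tV$ is formed illegitimately; apart from that, nothing should be hard.
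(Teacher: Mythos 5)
Your proof is correct and follows essentially the same route as the paper: verify conditions (a)--(c) of Definition~\ref{defn-unroll} for $\rfl(\tW)$ and then chase $\bom_i\rfl$ through the isometry $\xi$ and the complement operation. The only cosmetic difference is that the paper packages the last step via the identity $P^\perp\ominus Q^\perp=Q\ominus P$ (applied to $P=\xi(\tW_{-i})$, $Q=\xi(\tW_{1-i})$), whereas you instead use $\xi(Q)^\perp=\xi(Q^\perp)$ and that $\xi$ commutes with intersection, which is formally equivalent; you also spell out the one-dimensional quotient count and continuity, which the paper leaves implicit.
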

\begin{proof}
 First, we have $\xi(V)=\bigoplus_{k<0}t^k.\C^2$ and so
 $\xi(V)^\perp=V$.  This shows that $\rfl(\tW)_0=V$ as required.  We
 also have
 \[ \rfl(\tW)_{i+2n} =
     \xi(\tW_{-i-2n})^\perp =
      \xi(t^n\,\tW_{-i})^\perp =
       t^{-n}\xi(\tW_{-i})^\perp =
        t^{-n}\rfl(\tW)_i.
 \]

 Next, we have $\tW_{-i}\leq\tW_{1-i}\leq t^{-1}\tW_{-i}$.  This gives
 $\xi(\tW_{-i})\leq\xi(\tW_{1-i})\leq t\xi(\tW_{-i})$ and so
 $\xi(\tW_{-i})^\perp\geq\xi(\tW_{1-i})^\perp\geq t\xi(\tW_{-i})^\perp$,
 or in other words $t\rfl(\tW)_i\leq\rfl(\tW)_{i-1}\leq\rfl(\tW)_i$.
 This shows that $\rfl(\tW)$ is an unrolled flag, so we have a map
 $\rfl\:X(n)\to X(n)$.

 Next, put $L=\tW_{1-i}\ominus\tW_{-i}$.  As $\xi$ preserves inner
 products we have $\xi(L)=\xi(\tW_{1-i})\ominus\xi(\tW_{-i})$.  Now
 note that for finite-dimensional spaces $P\leq Q$ we have
 $P^\perp\ominus Q^\perp=P^\perp\cap Q^{\perp\perp}=Q\cap
 P^\perp=Q\ominus P$.  After generalising this in accordance with
 Remark~\ref{rem-ominus} we obtain
 $\rfl(\tW)_i\ominus\rfl(\tW)_{i-1}=\xi(L)$, so
 $\bom_i(\rfl(\tW))=\om(\xi(L))$.  However $\om\xi=\xi$ and
 $\om(L)=\bom_{1-i}(\tW)$ so
 $\bom_i(\rfl(\tW))=\bom_{1-i}(\tW)$ as claimed.
\end{proof}

\section{Some combinatorics}
\label{sec-matchings}

First, a basic piece of notation:
\begin{definition}
 We write $N_k$ for $\{1,\dotsc,k\}$.
\end{definition}

Now let $I$ be a totally ordered set with $|I|=2n$ for some $n\in\N$.
(There is of course a unique order-isomorphism $I\simeq N_{2n}$, but
for some later constructions it will be convenient not to make this
identification.)

\begin{definition}\label{defn-sparse}
 We say that a subset $J\sse I$ is \emph{sparse} if for each $j\in J$
 we have $|J^c_{>j}|>|J_{>j}$, where
 \begin{align*}
  J_{>j}   &= \{ k\in J \st k>j \} \\
  J^c_{>j} &= \{ k\in I\sm J\st k>j \}.
 \end{align*}
 (In general, complements are always taken in the set $I$.)
 We write $\SS(I)$, for the set of all sparse subsets, and $\SS_k(I)$ for
 the set of sparse subsets of size $k$.  We also write $\SS(n)$ and
 $\SS_k(n)$ for $\SS(N_{2n})$ and $\SS_k(N_{2n})$.
\end{definition}

\begin{example}
 The sparse subsets of $\{1,2\}$ are $\emptyset$ and $\{1\}$; the sparse subsets of $\{1,2,3,4\}$ are
   $\emptyset,\{1\},\{2\},\{3\},\{1,2\}$ and $\{1,3\}$.
\end{example}

\begin{lemma}\label{lem-sparse-alt}
 A subset $J\sse I$ is sparse iff for every $i\in I$ we have
 $|J^c_{\geq i}|\geq|J_{\geq i}|$.
\end{lemma}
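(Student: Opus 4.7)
The plan is to prove both directions by a short case analysis on whether the test element lies in $J$, leveraging the observation that $J_{\geq j}$ and $J_{>j}$ differ by one element precisely when $j\in J$, while $J^c_{\geq j}$ and $J^c_{>j}$ coincide in that case.

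For the direction $(\Leftarrow)$, I would simply specialise the hypothesis to $i = j$ for each $j \in J$. Since $j\in J$ we have $J_{\geq j} = \{j\}\sqcup J_{>j}$ and $J^c_{\geq j}=J^c_{>j}$, so the assumed inequality $|J^c_{\geq j}|\geq|J_{\geq j}|$ rewrites as $|J^c_{>j}|\geq 1+|J_{>j}|$, which is exactly the strict inequality $|J^c_{>j}|>|J_{>j}|$ defining sparseness.

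For the direction $(\Rightarrow)$, fix $i\in I$. If $i\in J$, applying sparseness at $j=i$ and running the identities of the previous paragraph backwards yields $|J^c_{\geq i}|\geq|J_{\geq i}|$. If $i\notin J$ and $J_{\geq i}=\emptyset$ the inequality is trivial. Otherwise, let $j$ be the least element of $J_{\geq i}$; then $j>i$ and no element of $J$ lies in the interval $[i,j)$, so $J_{\geq i}=J_{\geq j}$, whereas $J^c_{\geq i}\supseteq J^c_{\geq j}$ (indeed $J^c_{\geq i}$ also contains $i$ and every element of $I\setminus J$ between $i$ and $j$). Applying the already established case $j\in J$ now gives $|J^c_{\geq i}|\geq|J^c_{\geq j}|\geq|J_{\geq j}|=|J_{\geq i}|$, as required.

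This is essentially bookkeeping, and I do not expect any serious obstacle; the only point that needs a little care is confirming that the strict inequality in the definition of sparseness matches up precisely with the ``$+1$'' gained from including $j$ itself in $J_{\geq j}$, and handling the boundary case where $J$ has no element at or above $i$.
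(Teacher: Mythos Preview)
Your proof is correct and follows essentially the same approach as the paper's: both directions hinge on the identities $J_{\geq j}=\{j\}\sqcup J_{>j}$ and $J^c_{\geq j}=J^c_{>j}$ for $j\in J$, and for the forward direction both arguments pass to the least element $j$ of $J_{\geq i}$ and invoke sparseness there. The only cosmetic difference is that you split explicitly into the cases $i\in J$ and $i\notin J$, whereas the paper treats them uniformly via the exact count $|J^c_{\geq i}|-|J_{\geq i}|=(|J^c_{>j}|-|J_{>j}|-1)+(j-i)$.
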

\begin{proof}
 We may assume that $I=N_{2n}$.

 First suppose that $J$ is sparse.  Consider an element $i\in I$.  If
 $J_{\geq i}=\emptyset$ then clearly $|J^c_{\geq i}|\geq|J_{\geq i}|$.
 Suppose instead that the set $J_{\geq i}$ is nonempty, and let $j$ be
 the smallest element.  We then have
 \begin{align*}
  J^c_{\geq i} &= J^c_{>j} \amalg \{i,\dotsc,j-1\}  \\
  J_{\geq i}   &= J_{>j} \amalg \{j\},
 \end{align*}
 so
 \[ |J^c_{\geq i}|-|J_{\geq i}|=(|J^c_{>j}|-|J_{>j}|-1)+(j-i). \]
 The first term is nonnegative by sparsity, and the second is
 nonnegative as $j\in J_{\geq i}$, so the sum is nonnegative as
 required.

 Conversely, suppose we have $|J^c_{\geq i}|\geq|J_{\geq i}|$ for all
 $i\in I$.  For $j\in J$ we have $J^c_{\geq j}=J^c_{>j}$ and
 $J_{\geq j}=J_{>j}\amalg\{j\}$ so the inequality
 $|J^c_{\geq j}|\geq|J_{\geq j}|$ gives $|J^c_{>j}|>|J_{>j}|$ as
 required.
\end{proof}

\begin{lemma}\label{lem-sparse-enumeration}
 Suppose that $J=\{j_1,\dotsc,j_p\}\sse N_{2n}$, with
 $j_1>\dotsb>j_p$.  Then $J$ is sparse iff $j_t<2(n+1-t)$ for all
 $t\in\{1,\dots,p\}$.
\end{lemma}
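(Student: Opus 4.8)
The plan is to unwind the definition of sparsity directly at each element of $J$ and observe that the resulting inequality is exactly the stated bound on $j_t$; no auxiliary machinery (not even Lemma~\ref{lem-sparse-alt}) is needed.

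First, fix $t\in\{1,\dots,p\}$ and look at the element $j_t\in J$. Since $j_1>\dots>j_p$, the elements of $J$ that are strictly greater than $j_t$ are precisely $j_1,\dots,j_{t-1}$, so $|J_{>j_t}|=t-1$. Likewise $\{j_t+1,\dots,2n\}$ has $2n-j_t$ elements, of which exactly $t-1$ (again $j_1,\dots,j_{t-1}$) belong to $J$, so $|J^c_{>j_t}|=(2n-j_t)-(t-1)$. Hence the sparsity inequality $|J^c_{>j_t}|>|J_{>j_t}|$ at $j_t$ reads $(2n-j_t)-(t-1)>t-1$, which rearranges to $j_t<2n+2-2t=2(n+1-t)$.

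To finish, I would note that $J$ is sparse exactly when the above holds for every $j\in J$, i.e.\ for every $t\in\{1,\dots,p\}$, and that for each such $t$ the displayed inequality is equivalent to $j_t<2(n+1-t)$. The two conditions therefore coincide, which is the claim. The argument has no genuine obstacle; the only point needing a little care is the bookkeeping of which elements of $J$ and of $N_{2n}\sm J$ exceed $j_t$, and this is immediate once $J$ is written in strictly decreasing order as in the hypothesis.
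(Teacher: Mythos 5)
Your proof is correct and follows essentially the same route as the paper: compute $|J_{>j_t}|=t-1$ and $|J^c_{>j_t}|=(2n-j_t)-(t-1)$, then observe that the sparsity inequality at $j_t$ rearranges to exactly $j_t<2(n+1-t)$.
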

\begin{proof}
 Clearly $|J_{>j_t}|=t-1$ and
 \[ |J^c_{>j_t}|=|\{j_t+1,\dotsc,2n\}|-|J_{>j_t}| =
      (2n-j_t)-(t-1),
 \]
 so $|J^c_{>j_t}|-|J_{>j_t}|=2(n+1-t)-j_t$.  The claim is clear from
 this.
\end{proof}

\begin{corollary}\label{cor-sparse-small}
 If $J\sse I$ is sparse and $|I|=2n$ then $|J|\leq n$. \qed
\end{corollary}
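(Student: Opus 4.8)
The plan is to read this off directly from one of the two preceding lemmas; there is no real obstacle here, which is exactly why the statement is a corollary. The approach I would take is to invoke Lemma~\ref{lem-sparse-alt}. Since $I$ is totally ordered with $|I| = 2n \geq 1$, it has a least element, which I will call $i_0$. For this element we have $J_{\geq i_0} = J$ and $J^c_{\geq i_0} = I \sm J$, so the characterisation of sparsity in Lemma~\ref{lem-sparse-alt} gives $|I \sm J| \geq |J|$, that is, $2n - |J| \geq |J|$. Rearranging yields $|J| \leq n$.

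Alternatively, and with essentially the same amount of work, one can use Lemma~\ref{lem-sparse-enumeration} instead. Identify $I$ with $N_{2n}$ and write $J = \{j_1, \dotsc, j_p\}$ with $j_1 > \dotsb > j_p$. If $J$ is nonempty and sparse, then the inequality $j_t < 2(n+1-t)$ in that lemma, taken at $t = p$, reads $j_p < 2(n+1-p)$; since $j_p \geq 1$ this forces $2(n+1-p) > 1$, hence $p < n + \tfrac{1}{2}$, hence $p \leq n$ since $p$ is an integer. (The empty set has size $0 \leq n$, so that case is trivial.)

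Of the two, I would present the first: it is marginally shorter, it avoids the identification $I \simeq N_{2n}$, and it makes transparent the "balance" phenomenon — a sparse subset can occupy at most half of a chain of length $2n$ — that underlies the definition of sparsity and, ultimately, the rank bound for the proposed basis $BR(n)$.
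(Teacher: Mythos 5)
Your proposal is correct and matches the intended reasoning: the paper marks the corollary with a bare \qed immediately after Lemma~\ref{lem-sparse-enumeration}, and both of your one-line arguments (the first via Lemma~\ref{lem-sparse-alt} applied at the least element of $I$, the second via Lemma~\ref{lem-sparse-enumeration} at $t=p$) are exactly the kind of immediate read-off the authors had in mind. The only quibble is that your preferred first route tacitly assumes $n\geq 1$ so that $I$ has a least element; the $n=0$ case is of course trivial, and you do cover it in the second route.
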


\begin{proposition}\label{prop-SS-count}
 If $|I|=2n$ and $0\leq p\leq n$ we have
 $|\SS_p(I)|=\bcf{2n}{p}-\bcf{2n}{p-1}$.
\end{proposition}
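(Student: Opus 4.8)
The plan is to translate sparseness into a lattice-path condition by means of Lemma~\ref{lem-sparse-alt}, and then to appeal to the reflection principle. Since sparseness depends only on the order structure of $I$, we may assume $I=N_{2n}$.

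Given a subset $J\sse N_{2n}$ with $|J|=p$, I would form the walk on $\Z$ that starts at $0$ and, as the positions $2n,2n-1,\dotsc,1$ are read in turn, takes a step $+1$ at position $k$ when $k\notin J$ and a step $-1$ when $k\in J$. After $m$ steps this walk stands at $|J^c_{\geq 2n+1-m}|-|J_{\geq 2n+1-m}|$; in particular it has length $2n$, uses exactly $p$ down-steps and $2n-p$ up-steps, and ends at height $2n-2p$. By Lemma~\ref{lem-sparse-alt}, $J$ is sparse if and only if every one of these partial sums is nonnegative, and since the steps are $\pm 1$ and the walk starts at $0$, that is equivalent to the walk never attaining the value $-1$. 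Hence this assignment is a bijection from $\SS_p(N_{2n})$ onto the set $W$ of $\pm 1$-walks of length $2n$ with exactly $p$ down-steps that remain $\geq 0$ throughout.

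It then remains to count $W$. There are $\bcf{2n}{p}$ walks of length $2n$ with exactly $p$ down-steps in all, and each ends at height $2n-2p\geq 0$ because $p\leq n$. Call such a walk \emph{bad} if it attains $-1$, and for a bad walk let $\tau$ be the first time at which it equals $-1$; reflecting the initial segment (from time $0$ to time $\tau$) in the horizontal line of height $-1$ yields a walk of length $2n$ that starts at height $-2$ and still ends at height $2n-2p$. Conversely, any $\pm 1$-walk of length $2n$ starting at $-2$ must cross $-1$, and reflecting its initial segment up to the first such crossing recovers a bad walk; thus reflection is a bijection between the bad walks and the $\pm 1$-walks of length $2n$ running from $-2$ to $2n-2p$. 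The latter have $p-1$ down-steps and $2n-p+1$ up-steps, so there are $\bcf{2n}{p-1}$ of them, and therefore $|W|=\bcf{2n}{p}-\bcf{2n}{p-1}$, which is the asserted value of $|\SS_p(I)|$.

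I do not anticipate any real difficulty here: the statement is essentially a reformulation of the classical ballot problem, and the only points requiring care are checking that the encoding in the second paragraph genuinely corresponds to sparseness (which is exactly what Lemma~\ref{lem-sparse-alt} provides) and computing the heights correctly in the reflection step. One could instead argue directly from the explicit inequalities of Lemma~\ref{lem-sparse-enumeration}, or set up a recursion in $n$ and $p$ and compare it with one for $\bcf{2n}{p}-\bcf{2n}{p-1}$; but the reflection argument seems the most economical. It also shows that the bijection $\SS_p(N_{2n})\cong W$ persists for every $p$, and since $W=\emptyset$ when $p>n$ this re-proves Corollary~\ref{cor-sparse-small} as a by-product.
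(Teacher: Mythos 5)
Your argument is correct, and it takes precisely the route that the paper names but declines to pursue. Immediately before the proof of Proposition~\ref{prop-SS-count}, the authors remark that a subset $J$ can be encoded as a lattice path, that the proposition then becomes a version of the Ballot Problem solvable by reflection, but that ``this requires several layers of reindexing and reinterpretation; after unwrapping those layers, we obtain the more direct proof given below.'' The paper's actual proof is the ``unwrapped'' version: it defines $c_J(i)=|J^c_{\geq i}|-|J_{\geq i}|$, lets $a$ be the smallest $i$ with $c_J(i)<0$, and builds an explicit set-theoretic bijection $\alpha(J)=J_{<a}\amalg J^c_{\geq a}$ from the non-sparse $p$-subsets to the arbitrary $(p-1)$-subsets, verifying directly that it is inverse to an analogously defined $\beta$. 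Your proof is the same idea, but carried out in the lattice-path picture: Lemma~\ref{lem-sparse-alt} turns sparseness into the nonnegativity of a $\pm1$-walk, and the classical reflection about height $-1$ then gives the count. One small divergence is worth noting: translated into walk language, the paper's $\alpha$ flips the steps of the walk up to the \emph{last} time the walk is negative (since the smallest $i$ with $c_J(i)<0$ corresponds, under the time-reversed reading, to the largest $m$ at which the walk dips below $0$), whereas you reflect the initial segment up to the \emph{first} time the walk reaches $-1$; both are standard and correct realisations of the reflection principle. Your version is shorter for a reader who already knows the Ballot Problem; the paper's version is self-contained and stays entirely within the combinatorics of subsets. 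Your parenthetical remark that the bijection persists for $p>n$ (where $W=\emptyset$) and so re-proves Corollary~\ref{cor-sparse-small} is a pleasant bonus, though the paper obtains that corollary more directly from Lemma~\ref{lem-sparse-enumeration}.
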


Any subset $J\sse N_{2n}$ with $|J|=p$ gives a path in $\N^2$ from
$(2n-p,p)$ to $(0,0)$ by the rule
$i\mapsto(|J^c_{\geq i}|,|J_{\geq i}|)$.  Using this kind of
construction we can show that the proposition is equivalent to a
version of the well-known Ballot Problem in combinatorics, which can
be solved using a path reflection principle.  However, this requires
several layers of reindexing and reinterpretation; after unwrapping
those layers, we obtain the more direct proof given below.

\begin{proof}
 The cases $p=0$ and $p=1$ are trivial.  Let $2 \leq p \leq n$.  We
 may again assume $I=N_{2n}$.  Put
 \begin{align*}
  P &= \{ \text{non-sparse subsets } J\subset N_{2n}
           \text{ with } |J|=p \} \\
  Q &= \{ \text{arbitrary subsets } K \subset N_{2n}
           \text{ with } |K|=p-1 \},
 \end{align*}
 so $|P|=\bcf{2n}{p}-|\SS_p(n)|$ and $|Q|=\bcf{2n}{p-1}$.  It will
 suffice to construct a bijection $P\simeq Q$.

 For any set $T\sse I$ and $i\in I$ we put
 $c_T(i)=|T^c_{\geq i}|-|T_{\geq i}|$.  Lemma~\ref{lem-sparse-alt}
 tells us that for $J\in P$ there must exist $i$ with $c_J(i)<0$.  We
 let $a$ denote the smallest such $i$, and then put
 \[ \al(J)=J_{<a}\amalg J^c_{\geq a}\sse I. \]

 Similarly, for $K\in Q$ we note that there exists at least one $i\in I$
 such that $c_K(i)\leq 1$. (For example $2n$ always satisfies this,
 regardless of the choice of $K$.)  We let $b$ denote the smallest
 such $i$, and then put 
 \[ \bt(K)=K_{<b}\amalg K^c_{\geq b}\sse I. \]

 Suppose we have $J\in P$ and we define $a$ as before.  Note that
 $c_J(1)=2(n-p)\geq 0$ so we must have $a>1$.  By the minimality of
 $a$ we have $c_J(a-1)\geq 0$ but $c_J(a)<0$.  Note that 
 \[ c_J(i) - c_J(i+1) = |\{i\}\cap J^c| - |\{i\}\cap J| =
     \begin{cases}
      -1 & \text{ if } i \in J \\
       1 & \text{ if } i \not\in J.
     \end{cases}
 \]
 Using this, we see that $a-1\not\in J$ and $c_J(a-1)=0$ and
 $c_J(a)=-1$.  If we put $q=|J_{\geq a}|$ it follows that
 $1\leq q\leq p$ and 
 \begin{align*}
  |J_{\geq a-1}| &= q     & |J^c_{\geq a-1}| &= q \\
  |J_{\geq a}|   &= q     & |J^c_{\geq a}|   &= q-1 \\
  |J_{< a-1}|    &= p-q   & |J^c_{< a-1}|    &= 2n-p-q \\
  |J_{< a}|      &= p-q   & |J^c_{< a}|      &= 2n+1-p-q.
 \end{align*}
 Now put $K=\al(J)=J_{<a}\amalg J^c_{\geq a}$, so $|K|=(p-q)+(q-1)=p-1$,
 so $K\in Q$.  Note that $K^c_{\geq a}=J_{\geq a}$ and
 $K_{\geq a}=J^c_{\geq a}$ so $c_K(a)=-c_J(a)=1$.  On the other hand,
 for $i<a$ we have
 \begin{align*}
  K_{\geq i}     &= (J_{\geq i} \sm J_{\geq a}) \amalg J^c_{\geq a} \\
  K^c_{\geq i}   &= (J^c_{\geq i} \sm J^c_{\geq a}) \amalg J_{\geq a} \\
  |K_{\geq i}|   &= |J_{\geq i}| - q + (q-1) = |J_{\geq i}| - 1 \\
  |K^c_{\geq i}| &= |J^c_{\geq i}| - (q-1) + q = |J^c_{\geq i}| + 1 \\
  c_K(i) &= c_J(i)+2.
 \end{align*}
 By the definition of $a$ we have $c_J(i)\geq 0$ in this range, so
 $c_K(i)>1$.  This shows that the number $b$ entering in the
 definition of $\bt(K)$ is the same as $a$, and it follows immediately
 that $\bt(K)=J$.

 A very similar analysis shows that for all $K\in Q$ we have
 $\bt(K)\in P$ and $\al\bt(K)=K$, as required.
\end{proof}

\begin{corollary}
 The total number of sparse sets of all sizes is
 \[ \left|\coprod_{p=0}^n SS_p(I)\right| = \bcf{2n}{n}. \qed \]
\end{corollary}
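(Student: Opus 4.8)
The plan is to read this off immediately from Proposition~\ref{prop-SS-count}. By Corollary~\ref{cor-sparse-small}, every sparse subset of $I$ has size at most $n$, so the disjoint union on the left is indexed by $p=0,1,\dotsc,n$, and
\[ \left|\coprod_{p=0}^n \SS_p(I)\right| = \sum_{p=0}^n |\SS_p(I)| = \sum_{p=0}^n\left(\bcf{2n}{p}-\bcf{2n}{p-1}\right). \]
This last sum telescopes: for each $p<n$ the term $+\bcf{2n}{p}$ cancels against the term $-\bcf{2n}{(p+1)-1}$, leaving only $\bcf{2n}{n}$ (from $p=n$) and $-\bcf{2n}{-1}$ (from $p=0$); the latter is $0$ under the usual convention that binomial coefficients with a negative lower index vanish. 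Hence the total equals $\bcf{2n}{n}$, as claimed.

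There is no real obstacle here, since all the content is already in Proposition~\ref{prop-SS-count}. I would also point out, as a remark, that the identity has a transparent bijective meaning: the maps $\al,\bt$ built in the proof of that proposition match the non-sparse subsets of $N_{2n}$ of each size $p\le n$ with the arbitrary subsets of size $p-1$, so the non-sparse subsets of size $\le n$ are in bijection with all subsets of size $<n$. Subtracting this count from the total $\sum_{p=0}^n\bcf{2n}{p}$ and using the symmetry $\bcf{2n}{p}=\bcf{2n}{2n-p}$ again yields $\bcf{2n}{n}$ for the number of sparse sets. I would nonetheless keep the short telescoping computation above as the actual proof, since it is the most economical.
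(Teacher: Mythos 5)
Your proof is correct and is exactly the argument the paper leaves implicit (the corollary is stated with an immediate \qed): by Corollary~\ref{cor-sparse-small} all sparse sets have size at most $n$, and the sum $\sum_{p=0}^n\left(\bcf{2n}{p}-\bcf{2n}{p-1}\right)$ from Proposition~\ref{prop-SS-count} telescopes to $\bcf{2n}{n}$. The extra bijective remark is a nice observation but, as you say, the telescoping computation is the whole proof.
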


\begin{corollary}
 We have the generating function
 \[ \sum_{n \geq 0} \;\, \sum_{0\leq k\leq n} |\SS_k(n)|s^nt^k =
     \frac{2t}{(t-1)+(t+1)\sqrt{1-4st}}.
 \]
\end{corollary}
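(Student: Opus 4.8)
The plan is to reinterpret $|\SS_k(n)|$ as a count of lattice paths confined to a half-plane, to package all such paths into a single two-variable generating function with a closed form expressed through the Catalan series, and then to read off the required sum as an ``even part''.

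First I would set up the path model. By Proposition~\ref{prop-SS-count} we have $|\SS_k(n)|=\bcf{2n}{k}-\bcf{2n}{k-1}$, and under the correspondence $J\mapsto\bigl(i\mapsto(|J^c_{\ge i}|,|J_{\ge i}|)\bigr)$ sketched before that proposition, together with Lemma~\ref{lem-sparse-alt}, $|\SS_k(n)|$ is exactly the number of monotone lattice paths in $\N^2$ from $(0,0)$ to $(2n-k,k)$, with unit steps $R=(1,0)$ and $U=(0,1)$, that remain in the region $\{x\ge y\}$. (The reflection principle reproves the ballot formula here.) Accordingly, let
\[ W(u,v)=\sum_{P}u^{a(P)}v^{b(P)}, \]
the sum over all finite such paths $P$ from the origin remaining in $\{x\ge y\}$, where $(a(P),b(P))$ denotes the endpoint of $P$ (so $u$ records the number of right steps and $v$ the number of up steps). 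The coefficient of $u^{2n-k}v^{k}$ in $W$ is then precisely $|\SS_k(n)|$.

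Next I would obtain a closed form for $W$. Let $C(z)=\sum_{m\ge0}C_mz^m=\dfrac{1-\sqrt{1-4z}}{2z}$ be the Catalan generating function, so that $C(z)=1+zC(z)^2$. Decompose a path $P$ at its last contact with the diagonal $\{x=y\}$, say at $(m,m)$. The initial segment of $P$ up to $(m,m)$ is a monotone path from $(0,0)$ to $(m,m)$ staying in $\{x\ge y\}$; there are $C_m$ of these and each carries weight $(uv)^m$, contributing the factor $C(uv)$. The remaining segment never touches the diagonal again, so it is either empty, or it begins with a right step (a $U$ step from $(m,m)$ would leave the region) and thereafter stays strictly above the diagonal. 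A path from $(m+1,m)$ with running value $x-y\ge1$ throughout is, step for step and weight for weight, the same as an arbitrary path from the origin with running value $x-y\ge0$ throughout, i.e.\ a path counted by $W$; so this tail contributes $1+uW$. Hence $W=C(uv)(1+uW)$, that is,
\[ W(u,v)=\frac{C(uv)}{1-u\,C(uv)}. \]

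Finally I would extract the generating function. The sum in question collects exactly those monomials $u^{a}v^{b}$ of $W$ of even total degree: substituting $u=q$, $v=qt$ and writing $s=q^2$,
\[ \sum_{n\ge0}\sum_{k=0}^{n}|\SS_k(n)|\,s^nt^k=\tfrac12\bigl(W(q,qt)+W(-q,-qt)\bigr). \]
Since $C(q^2t)$ depends on $q$ only through $q^2=s$, we have $W(q,qt)=C(st)/(1-q\,C(st))$; writing $\gamma=C(st)$ this gives $\tfrac12\bigl(\tfrac{\gamma}{1-q\gamma}+\tfrac{\gamma}{1+q\gamma}\bigr)=\gamma/(1-s\gamma^2)$, so that
\[ \sum_{n\ge0}\sum_{k=0}^{n}|\SS_k(n)|\,s^nt^k=\frac{C(st)}{1-s\,C(st)^2}. \]
Substituting $C(st)=\dfrac{1-\sqrt{1-4st}}{2st}$ and simplifying (using $\bigl(\sqrt{1-4st}\bigr)^2=1-4st$) turns the right-hand side into $\dfrac{2t}{(t-1)+(t+1)\sqrt{1-4st}}$, as claimed. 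The only delicate point is the decomposition of $W$ — in particular the claim that after the last diagonal contact and one right step the remainder of the path is an arbitrary $W$-path with the same weight; the parity extraction and the final algebraic simplification are routine, and the latter can be cross-checked by specialising to $t=1$, where both sides collapse to $\sum_n\bcf{2n}{n}s^n=(1-4s)^{-1/2}$.
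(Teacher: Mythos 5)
Your proof is correct, but it takes a genuinely different route from the one in the paper. You build the lattice-path model explicitly (which the paper deliberately avoids — it remarks that the ballot connection ``requires several layers of reindexing'' and gives a direct bijection for Proposition~\ref{prop-SS-count} instead), then apply the standard ``last return to the diagonal'' Catalan decomposition to get the two-variable closed form $W(u,v)=C(uv)/(1-uC(uv))$, extract the even part, and simplify using $zC(z)^2=C(z)-1$ and $C(z)=2/(1+\sqrt{1-4z})$. The paper's own proof is purely formal: it sets $p(s,t)=\sum|\SS_k(n)|s^nt^k$ and $q(s,t)$ equal to the claimed closed form, shows both satisfy the functional equation $p(s,t)-t\,p(st^2,1/t)=(1-t)/(1-s(1+t)^2)$ by a direct computation with the binomial formula for $|\SS_k(n)|$, and then argues that membership in $\Q\psb{s,t}$ together with the symmetry $a_{ni}=a_{n,2n+1-i}$ forces $p=q$. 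Your approach is more conceptual and makes the appearance of the Catalan series transparent, at the cost of having to justify the decomposition (which you do correctly, including the translation argument showing the tail after the first $R$-step is again a $W$-path); the paper's approach is shorter once the functional equation is guessed, avoids any appeal to path combinatorics, but offers no explanation of why that particular functional equation should hold. Both your verification that $C(st)/(1-sC(st)^2)$ reduces to the stated expression and your parity-extraction step check out; the argument as written is complete.
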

\begin{proof}
 Put
 \begin{align*}
  p(s,t) &= \sum_{n \geq 0} \;\, \sum_{0\leq k\leq n} |\SS_k(n)|s^nt^k
          = \sum_{n \geq 0} \;\, \sum_{0\leq k\leq n}
              \left(\bcf{2n}{k}-\bcf{2n}{k-1}\right) s^nt^k \\
  q(s,t) &= \frac{2t}{(t-1)+(t+1)\sqrt{1-4st}} \\
  p^*(s,t) &= p(st,1/t)
          = \sum_{n \geq 0} \;\, \sum_{0\leq k\leq n}
              \left(\bcf{2n}{k}-\bcf{2n}{k-1}\right) s^nt^{n-k} \\
  q^*(s,t) &= q(st,1/t)
            = \frac{2}{(1-t)+(1+t)\sqrt{1-4s}}.
 \end{align*}
 The claim is that $p(s,t)=q(s,t)$.

 It is straightforward to check that $1/q(s,t)$ and $1/q^*(s,t)$ are
 formal power series in $\Q\psb{s,t}$ which reduce to $1$ when $s=0$.
 It follows that all four of the above series lie in $\Q\psb{s,t}$.

 Next, it is formal to check that
 \[ q(s,t) - t\,q(st^2,1/t) = \frac{1-t}{1-s(1+t)^2}. \]
 We claim that $p(s,t)$ has the same property.  Indeed, we have
 \[ t\,p(st^2,1/t) = \sum_{n \geq 0} \;\, \sum_{0\leq k\leq n}
     \left(\bcf{2n}{k}-\bcf{2n}{k-1}\right) s^nt^{2n+1-k}.
 \]
 We can rewrite this in terms of $j=2n+1-k$, using
 $\bcf{2n}{m}=\bcf{2n}{2n-m}$, to get
 \[ t\,p(st^2,1/t) = - \sum_{n \geq 0} \;\, \sum_{n+1\leq j\leq 2n+1}
     \left(\bcf{2n}{j}-\bcf{2n}{j-1}\right) s^nt^j.
 \]
 It follows that
 \[ p(s,t) - t\,p(st^2,1/t) =
    \sum_{n\geq 0}s^n\left(
     \sum_{0\leq k\leq 2n+1}\bcf{2n}{k}t^k -
     \sum_{0\leq k\leq 2n+1}\bcf{2n}{k-1}t^k
    \right).
 \]
 In the first of the inner sums we can drop the $k=2n+1$ term and the
 total is $(1+t)^{2n}$.  In the second of the inner sums we can drop
 the $k=0$ term and shift the indexing to see that the total is
 $t(1+t)^{2n}$.  This just leaves a geometric progression with sum
 $(1-t)/(1-s(1+t)^2)$ as claimed.

 We now put $r(s,t)=p(s,t)-q(s,t)=\sum_{n,i\geq 0}a_{ni}s^nt^i$ say.
 We see that both $r(s,t)$ and $r^*(s,t)=r(st,1/t)$ lie in
 $\Q\psb{s,t}$, which means that $a_{ni}=0$ unless $0\leq i\leq n$.
 We also have $r(s,t)-t\,r(st^2,1/t)=0$, which means that
 $a_{ni}=a_{n,2n+1-i}$.  This clearly gives $r=0$ as required.
\end{proof}

Khovanov in \cite{kh:cmc} introduces the notion of a
\emph{crossingless matching}.  Russell and Tymoczko in \cite{ruty:srk} use
the term \emph{non--crossing matching}.

\begin{definition}\label{defn-matching}
 A \emph{non--crossing matching} on $I$ is a permutation $\tau\:I\to I$
 such that
 \begin{itemize}
  \item[(a)] $\tau^2=1$, and for all $i\in I$ we have $\tau(i)\neq i$.
  \item[(b)] There is no pair $i,j\in I$ with $i<j<\tau(i)<\tau(j)$.
 \end{itemize}
 We write $\NCM(I)$ for the set of non-crossing matchings, or
 $\NCM(n)$ for $\NCM(N_{2n})$.
\end{definition}

\begin{remark}\label{rem-matching-diagram}
 We can draw diagrams for non-crossing matchings as follows.  We draw
 a $2n$-gon with edges labelled by $I$, and we join the midpoint of
 the edge labelled $i$ to the edge labelled $\tau(i)$.  The
 non-crossing condition means that these joining lines do not
 intersect each other.  In the common case where $I=\{1,\dotsc,2n\}$
 we label the vertices with the elements $\{0,\dotsc,2n-1\}$ of
 $\Z/2n$, and the edge joining $i-1$ to $i$ is labelled $i$.  It will
 become clear later why these are natural conventions.  For example,
 the picture
 \begin{center}
  \begin{tikzpicture}[scale=3]
   \def\r{0.924}
   \draw (0:1) -- (45:1) -- (90:1) -- (135:1) --
         (180:1) -- (225:1) -- (270:1) -- (315:1) -- (0:1);

   \foreach \i in {0,...,7} {
    \fill (45*\i : 1) circle(0.02);
   }

   \foreach \i in {1,...,8} {
    \draw (-22.5+45*\i :1.01) node{$\ss\i$};
   }

   \draw[thick,red] (22.5:\r) -- ( 67.5:\r);
   \draw[thick,red] (112.5:\r) -- (337.5:\r);
   \draw[thick,red] (157.5:\r) -- (202.5:\r);
   \draw[thick,red] (247.5:\r) -- (292.5:\r);
  \end{tikzpicture}
 \end{center}
 corresponds to the permutation $(1\;2)(3\;8)(4\;5)(6\;7)$.  From this
 perspective it is clear that there is a natural dihedral group action
 on $\NCM(n)$.  However, for some purposes it is better to break the
 symmetry and draw the points $1,\dotsc,2n$ on the $x$-axis, with arcs
 joining $i$ and $\tau(i)$ in the upper half plane.  This results in
 a diagram of the kind introduced by Khovanov in \cite{kh:cmc}*{Figure 1}:
 \begin{center}
  \begin{tikzpicture}
   \foreach \i in {1,...,8} {
    \fill (\i,0) circle(0.04);
    \draw (\i,0) node[anchor=north] {$\i$};
   }
   \draw[red] (1.5,0) (2,0) arc(0:180:0.5);
   \draw[red] (5.5,0) (8,0) arc(0:180:2.5);
   \draw[red] (4.5,0) (5,0) arc(0:180:0.5);
   \draw[red] (6.5,0) (7,0) arc(0:180:0.5);
   \draw[white] (0,3) -- (1,3);
  \end{tikzpicture}
 \end{center}
\end{remark}

The following observation will be useful in many places.
\begin{lemma}\label{lem-NCM-parity}
 Let $\tau$ be a non-crossing matching.  Then $\tau(i)-i$ is odd for
 all $i$.
\end{lemma}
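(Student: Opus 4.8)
The plan is to fix $i\in I$ and study the set $S=\{k\in I\st i<k<\tau(i)\}$ of elements lying strictly between $i$ and $\tau(i)$; under the order-isomorphism $I\simeq N_{2n}$ the claim ``$\tau(i)-i$ is odd'' translates exactly into ``$|S|$ is even'', so it suffices to prove the latter. By symmetry we may assume $i<\tau(i)$: if not, replace $i$ by $\tau(i)$, which is harmless since $\tau^2=1$ and $\tau(i)\neq i$ by property~(a) of Definition~\ref{defn-matching}.

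The heart of the argument is to show that $\tau$ carries $S$ into itself. Let $k\in S$. Property~(a) gives $\tau(k)\neq k$. Also $\tau(k)\neq i$, for otherwise $k=\tau(i)$, contradicting $k<\tau(i)$; and $\tau(k)\neq\tau(i)$, for otherwise $k=i$. So it remains to exclude $\tau(k)>\tau(i)$ and $\tau(k)<i$. If $\tau(k)>\tau(i)$ then $i<k<\tau(i)<\tau(k)$, which is precisely the forbidden pattern of property~(b) for the pair $(i,k)$. If $\tau(k)<i$, set $a=\tau(k)$, so that $\tau(a)=k$; then $a<i<k<\tau(i)$ reads $a<i<\tau(a)<\tau(i)$, again the forbidden pattern of~(b), now for the pair $(a,i)$. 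Hence $\tau(k)\in S$. Thus $\tau$ restricts to a self-map of $S$, which is a bijection (being the restriction of an involution) with no fixed points; therefore $|S|$ is even, and we are done.

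The whole proof is short and purely combinatorial; the only step where care is needed is the case $\tau(k)<i$, where one must compare the \emph{pair} $(\tau(k),i)$ --- rather than $(i,k)$ --- against the non-crossing condition. I do not anticipate a genuine obstacle. (An alternative, slightly heavier route would induct on $n$: one first shows that some arc is ``short'', i.e. $\tau(j)$ is the immediate successor of $j$ for some $j$, removes that arc, and observes that $\tau$ restricts to a non-crossing matching on the remaining $2n-2$ elements; but the direct argument above seems cleanest, and it also yields the stronger fact that $\tau$ restricts to a non-crossing matching on $S$ itself, which will be convenient later.)
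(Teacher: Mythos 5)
Your proof is correct and takes essentially the same approach as the paper's: the key observation in both is that the non-crossing condition forces $\tau$ to preserve the interval strictly between $i$ and $\tau(i)$, which therefore has even size since $\tau$ restricts to a fixed-point-free involution on it. The paper asserts this preservation in one line, whereas you spell out the case analysis (ruling out $\tau(k)>\tau(i)$ via the pair $(i,k)$ and $\tau(k)<i$ via the pair $(\tau(k),i)$), which is exactly the verification the paper leaves implicit.
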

\begin{proof}
 After replacing $i$ by $\tau(i)$ if necessary, we may assume that
 $i<\tau(i)$.  The non-crossing condition means that $\tau$ must
 preserve the interval $A=\{i+1,i+2,\dotsc,\tau(i)-1\}$.  As $\tau$ is
 an involution without fixed points, it follows that $|A|$ is even.
 However, we have $|A|=\tau(i)-i-1$, so $\tau(i)-i$ is odd.
\end{proof}

\begin{proposition}\label{prop-NCM-SS}
 There is a bijection $\lm\:\NCM(I)\to\SS_n(I)$ given by
 $\lm(\tau)=\{i\in I\st\tau(i)>i\}$.
\end{proposition}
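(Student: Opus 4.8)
The plan is to show that $\lm$ is well-defined (lands in $\SS_n(I)$), and then construct an explicit inverse. Throughout we may assume $I = N_{2n}$ by the unique order-isomorphism.

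First I would check that $\lm(\tau)$ has size $n$: since $\tau$ is a fixed-point-free involution, $I$ is partitioned into $n$ two-element orbits $\{i,\tau(i)\}$, and exactly one element of each orbit is the smaller one, so $|\lm(\tau)| = n$. Next, to see $\lm(\tau)$ is sparse, fix $j \in J := \lm(\tau)$, so $j < \tau(j)$. By the non-crossing condition, $\tau$ restricts to a fixed-point-free involution of the interval $B = \{j+1, \dotsc, \tau(j)-1\}$ and also of the interval $C = \{\tau(j)+1, \dotsc, 2n\}$ together with whatever lies to the left --- more precisely, the key point is that no arc joins a point $> \tau(j)$ to a point in $B \cup \{j, \tau(j)\}$, so $\tau$ preserves $\{j+1,\dotsc,2n\}\sm(B\cup\{\tau(j)\}) = C$ once we know it preserves $B$. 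On $B$, the smaller-endpoint elements number $|B|/2$ and are all in $J$, while the larger-endpoint elements number $|B|/2$ and lie in $J^c$; on $C$ the same balance holds. Adding $\tau(j) \in J^c_{>j}$ itself, we get $|J^c_{>j}| = |J_{>j}| + 1 > |J_{>j}|$. Hence $J$ is sparse, so $\lm$ is well-defined into $\SS_n(I)$.

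For injectivity and surjectivity I would build the inverse directly. Given $J \in \SS_n(I)$, think of scanning $1, 2, \dotsc, 2n$ and using the balanced-parenthesis interpretation: elements of $J$ are "open brackets" and elements of $J^c$ are "close brackets". The sparsity condition, via Lemma~\ref{lem-sparse-alt}, says $|J^c_{\geq i}| \geq |J_{\geq i}|$ for all $i$, i.e. reading from the \emph{right} every suffix has at least as many closers as openers; since $|J| = |J^c| = n$, this is exactly the condition that the bracket string is balanced. So there is a unique way to match each $j \in J$ with a later $\tau(j) \in J^c$ in nested (non-crossing) fashion --- the standard "innermost bracket" matching --- and this defines an involution $\tau$ with $\tau(j) > j$ for $j \in J$. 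One checks $\tau$ is a non-crossing matching (the nesting is built in) and that $\lm(\tau) = J$ by construction, and conversely that $\tau \mapsto$ (its matched bracket string) recovers $\lm(\tau)$, giving a two-sided inverse.

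The main obstacle is making the "balanced parentheses" argument rigorous on both sides: one must verify carefully that the sparsity condition for $J$ (in the $\geq$ form of Lemma~\ref{lem-sparse-alt}) is equivalent to the bracket string $J$-vs-$J^c$ being well-nested when scanned in the appropriate direction, and conversely that a non-crossing matching always produces such a string --- this is essentially the content of the size and sparsity check in the second paragraph, reused in reverse. A clean way to package this is to define $\tau$ on $J$ recursively: let $j$ be the largest element of $J$; sparsity forces $j + 1 \leq 2n$ and $j+1 \in J^c$ (else $c_J(j+1) < 0$ would already fail, or more simply $J_{>j} = \emptyset$ forces the next slot to be a closer), set $\tau(j) = j+1$, delete both, and note the remaining set is still sparse in $I \sm \{j, j+1\}$, so induct. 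This recursion simultaneously proves well-definedness of the inverse and that $\lm$ is a bijection, and it dovetails with Lemma~\ref{lem-NCM-parity} and the interval-preservation property of non-crossing matchings used above.
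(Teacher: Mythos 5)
Your overall plan matches the paper's: verify $\lm$ is well-defined via an orbit count plus a sparsity check, then build a greedy inverse and check it is two-sided. The paper defines the inverse by decreasing recursion on $j$ with $\tau(j)=\min(J^c_{>j}\sm\tau(J_{>j}))$; your ``match $\max(J)$ to $\max(J)+1$, delete both, recurse'' produces the same matching, and your bracket-string gloss is a standard and reasonable way to see why, so that part is fine modulo the hand-waving you yourself flag.

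There is, however, a genuine error in your sparsity check. You assert that $\tau$ preserves $C=\{\tau(j)+1,\dotsc,2n\}$, deducing $|J^c_{>j}|=|J_{>j}|+1$. Neither claim is true. An arc may join a vertex of $C$ to a vertex $<j$: this is a nesting, not a crossing, and the non-crossing condition allows it. (Concretely, with $n=2$ and $\tau=(1\;4)(2\;3)$ take $j=2$: then $\tau(2)=3$, $C=\{4\}$, and $\tau(4)=1\notin C$. Here $J=\{1,2\}$ and $|J^c_{>2}|=2$ but $|J_{>2}|=0$, so the gap is $2$, not $1$.) Your argument only rules out arcs from $C$ into $B\cup\{j,\tau(j)\}$, which leaves open exactly these arcs leaving the interval to the left. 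The correct accounting is: every element of $C$ that is matched outside $C$ lands strictly below $j$, hence in $J^c$, so $|C\cap J^c|\geq|C\cap J|$ with a possible surplus $a\geq 0$, and $|J^c_{>j}|-|J_{>j}|=1+a\geq 1$. Alternatively, the paper avoids this bookkeeping entirely by observing that $k\mapsto\tau(k)$ gives an injection $J_{\geq j}\to J^c_{>j}$ (if $k\geq j$ is in $J$ then $\tau(k)>k\geq j$ and $\tau(k)\notin J$), which gives $|J^c_{>j}|\geq|J_{\geq j}|=|J_{>j}|+1$ in one line. You should replace your interval argument with one of these; the rest of the proof goes through once this is repaired.
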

\begin{remark}
 It follows that
 \[ |\NCM(n)| = |\SS_n(n)| = \bcf{2n}{n} - \bcf{2n}{n-1} =
     \frac{1}{n+1} \bcf{2n}{n},
 \]
 which is the $n$'th Catalan number $C_n$.  Of course there are an
 enormous number of combinatorially defined sets that are known
 to have size $C_n$, many of them listed in the exercises to Chapter~6
 of Stanley's ``Enumerative Combinatorics II''~\cite{st:ecii}.  Some
 of these are visibly in bijection with $\NCM(n)$, for example the set
 of properly-nested bracketings of a word of length $n+1$.
\end{remark}
\begin{proof}
 First suppose we have $\tau\in\NCM(I)$, and we put
 $J=\lm(\tau)=\{i\in I\st \tau(i)>i\}$.  Condition~(a) in
 Definition~\ref{defn-matching} tells us that $I$ divides into $n$
 orbits, each of size two, under the action of $\tau$.  The set $J$
 contains the smaller element of each orbit.  This shows that
 $|J|=n$.  Next, suppose we have $j\in J$.  It is easy to see that for
 $k\in J_{\geq j}$ we have $\tau(k)\in J^c_{>j}$, so $\tau$ gives an
 injective map $J_{\geq j}\to J^c_{>j}$, so
 $|J^c_{>j}|\geq|J_{\geq j}|>|J_{>j}|$.  This proves that
 $J\in\SS_n(I)$, so we at least have a well-defined map
 $\lm\:\NCM(I)\to\SS_n(I)$.

 In the opposite direction, suppose we start with a set
 $J\in\SS_n(I)$.  We define a map $\tau\:J\to J^c$ by decreasing
 recursion using the formula
 \[ \tau(j) = \min(J^c_{>j}\sm\tau(J_{>j})). \]
 More explicitly, if $J=\{j_1<j_2<\dotsb<j_n\}$, we define
 $k_n=\tau(j_n)$ to be the smallest element in $J^c_{>j_n}$, then we
 define $k_{n-1}=\tau(j_{n-1})$ to be the smallest element in
 $J^c_{>j_{n-1}}\sm\{k_n\}$, and so on.  For this to be valid we need
 to know that $J^c_{>j}\sm\tau(J_{>j})$ is nonempty, but that holds
 because
 \[ |J^c_{>j}\sm\tau(J_{>j})| \geq |J^c_{>j}|-|\tau(J_{>j})| =
     |J^c_{>j}|-|J_{>j}| > 0.
 \]
 By construction we have $\tau(j)\not\in\tau(J_{>j})$, which means
 that $\tau\:J\to J^c$ is injective.  As $|J|=|J^c|=n$, we see that
 $\tau$ is actually a bijection from $J$ to $J^c$.  We can thus extend
 $\tau$ over all of $I$ by putting $\tau(\tau(j))=j$, and this gives
 an involution without fixed points.

 Suppose we have elements $i,j$ in $I$ with $i<j<\tau(i)<\tau(j)$.  By
 construction we have $\tau(p)>p$ for all $p\in J$, and so $\tau(p)<p$
 for all $p\in J^c$.  It follows that $i,j\in J$ and
 $\tau(i),\tau(j)\in J^c$.  Next, by construction we have
 $\tau(i)\in\tau(J_{>i})^c\sse\tau(J_{>j})^c$, and by assumption we
 have $\tau(i)>j$, so $\tau(i)\in J^c_{>j}\sm\tau(J_{>j})$, so
 \[ \tau(i) \geq \min(J^c_{>j}\sm\tau(J_{>j})) = \tau(j), \]
 contrary to assumption.  It follows that no such $i$ and $j$ can
 exist, so $\tau\in\NCM(I)$.  We can thus define $\mu\:\SS_n(I)\to\NCM(I)$
 by $\mu(J)=\tau$.  It is clear from the above remarks that
 $\lm(\mu(J))=J$.

 Suppose again that we start with an element $\tau\in\NCM(I)$, and put
 $J=\lm(\tau)$.  Consider an element $k\in J$, and put
 $A=J^c_{>k}\sm\tau(J_{>k})$.  It is clear from the definition of $J$
 that $\tau(k)>k$ and $\tau(k)\not\in J$, so $\tau(k)\in J^c_{>k}$.
 Moreover, as $\tau$ is injective we have
 $\tau(k)\not\in\tau(J_{>k})$, so $\tau(k)\in A$.  Consider another
 element $p\in A$ with $p\neq\tau(k)$.  As $p\in J^c_{>k}$ we have
 $p>k$ and $p\in J^c$, so $p=\tau(j)$ for some $j\in J$.  As
 $p\not\in\tau(J_{>k})$ (because $p\in A$) and $p\neq\tau(k)$ (by
 assumption) we must have $j<k$.  We now have $j<k<\tau(k)$ and
 $\tau(j)=p>k$, so by axiom~(b) in Definition~\ref{defn-matching}, we
 must have $p\geq\tau(k)$.  This proves that
 $\tau(k)=\min(A)=\min(J^c_{>k}\sm\tau(J_{>k}))$.  As $k$ was
 arbitrary, we have $\tau=\mu(\lm(\tau))$ as required.
\end{proof}

\section{A basis for $R(n)$}

In this section we will prove the following result:
\begin{theorem}\label{thm-BR-basis}
 The set
 \[ BR(n) = \{x_J \st J\sse N_{2n} \text{ sparse } \} \]
 is a basis for $R(n)$ over $\Z$.
\end{theorem}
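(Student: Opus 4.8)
The plan is to prove the two halves of the statement separately: that $BR(n)$ spans $R(n)$ as an abelian group, and that it is $\Z$-linearly independent. I expect the independence half to be a clean triangularity argument once the right auxiliary homomorphisms are in place, and the spanning half to contain the main combinatorial work.

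\emph{Spanning.} Since $R(n)$ is a quotient of $\Z[x_1,\dotsc,x_{2n}]/(x_1^2,\dotsc,x_{2n}^2)$, it is generated as an abelian group by the squarefree monomials $x_J$ for \emph{all} $J\sse N_{2n}$, so it suffices to rewrite each non-sparse $x_J$ as an integer combination of sparse ones. The basic tool is that for any $A\sse N_{2n}$ and any $m\geq 1$, multiplying the relation $\sg_m=0$ by $x_A$ and discarding the monomials with a repeated factor yields the relation $\sum_{S\sse N_{2n}\sm A,\ |S|=m}x_{A\cup S}=0$ in $R(n)$. The plan is to fix a well-founded total order on subsets of $N_{2n}$ --- refining the datum of the position of the first failure of the inequality in Lemma~\ref{lem-sparse-alt}, as exploited in the proof of Proposition~\ref{prop-SS-count} --- and, for each non-sparse $J$, to choose $A\sse J$ with $m=|J|-|A|$ so that $J$ is one of the sets $A\cup S$ above while every other such $A\cup S$ is strictly smaller. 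Then $x_J$ can be solved for in terms of strictly smaller monomials, and iterating terminates in an expression of $x_J$ over $BR(n)$. Pinning down an order and a rule for choosing $(A,m)$ with this strict-decrease property is elementary but is the fussiest ingredient, and is where I expect the real obstacle to lie.

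\emph{Linear independence.} I would realise $R(n)$ inside a product of small rings. For a sparse $n$-set $K\in\SS_n(n)$, let $\tau\in\NCM(n)$ be the matching with $\lm(\tau)=K$ provided by Proposition~\ref{prop-NCM-SS}; since $\lm(\tau)=\{i\st\tau(i)>i\}$, the involution $\tau$ restricts to a bijection $K^c\to K$ with $\tau(j)<j$ for $j\in K^c$. Define $\rho_K\:R(n)\to E(K)=\Z[x_k\st k\in K]/(x_k^2\st k\in K)$ on generators by $\rho_K(x_k)=x_k$ for $k\in K$ and $\rho_K(x_j)=-x_{\tau(j)}$ for $j\in K^c$. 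This respects the relations: clearly $\rho_K(x_i)^2=0$, and since $\rho_K(x_k)=-\rho_K(x_{\tau(k)})$ on each matched pair, $\rho_K(\sg_m)$ is the $m$'th elementary symmetric function of the list $(x_k,-x_k)_{k\in K}$, whose generating function $\prod_{k\in K}(1+x_ks)(1-x_ks)=\prod_{k\in K}(1-x_k^2s^2)$ equals $1$ modulo the relations $x_k^2=0$; hence $\rho_K(\sg_m)=0$ for all $m\geq 1$. Assembling these gives a ring homomorphism $\rho\:R(n)\to\prod_{K\in\SS_n(n)}E(K)$.

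Since each $E(K)$ is free over $\Z$ on the monomials $x_L$ ($L\sse K$), the product is free on the pairs $(K,L)$. Writing $\kp_K(j)=j$ for $j\in K$ and $\kp_K(j)=\tau(j)$ for $j\in K^c$, one has $\kp_K(j)\leq j$ always, with equality iff $j\in K$, and $\rho_K(x_J)=(-1)^{|J\sm K|}x_{\kp_K(J)}$ when $\kp_K$ is injective on $J$, and $\rho_K(x_J)=0$ otherwise. For each sparse $J$ I would pick a sparse $n$-set $K(J)\supseteq J$: such a set exists because if $J$ is sparse with $|J|<n$ then $J\cup\{\min(J^c)\}$ is again sparse, as one checks directly from Lemma~\ref{lem-sparse-alt}, so one can enlarge $J$ one element at a time. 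Since $J\sse K(J)$ we get $\rho_{K(J)}(x_J)=x_J$. Now order the sparse sets by refining the preorder ``$|J'|=|J|$ and $\sum_{j\in J'}j\leq\sum_{j\in J}j$'' to a total order, and form the square matrix whose $(J,J')$ entry is the coefficient of the basis element $(K(J'),J')$ in $\rho(x_J)$. Its diagonal entries are $1$ (the coefficient of $x_J$ in $\rho_{K(J)}(x_J)=x_J$), while a nonzero off-diagonal $(J,J')$ entry forces $\rho_{K(J')}(x_J)=\pm x_{J'}$, hence $\kp_{K(J')}$ injective on $J$ with $\kp_{K(J')}(J)=J'$, so that $|J'|=|J|$ and $\sum_{j\in J'}j=\sum_{j\in J}\kp_{K(J')}(j)\leq\sum_{j\in J}j$ with equality only if $J\sse K(J')$, i.e.\ only if $J=J'$. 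Thus the matrix is lower unitriangular, hence invertible over $\Z$, so $\{\rho(x_J)\st J\text{ sparse}\}$ is $\Z$-linearly independent, and therefore so is $BR(n)$ in $R(n)$. Combined with the spanning statement, this shows that $BR(n)$ is a $\Z$-basis of $R(n)$.
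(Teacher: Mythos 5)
Your linear-independence argument is sound and is essentially the paper's Proposition~\ref{prop-BR-basis}: you construct the same homomorphisms $\rho_K$ (the paper's Proposition~\ref{prop-rho-K}), use the same enlargement $J\mapsto J\cup\{\min(J^c)\}$ to produce the distinguished sparse $n$-set $K(J)$ (the paper's $\ov{J}$ from Definition~\ref{defn-bar}), and run a leading-term/unitriangularity argument. The only cosmetic difference is the total order --- you filter by $(|J|,\sum_{j\in J}j)$ while the paper uses the lexicographic order on $J$ refined by reverse lex on $K$ --- and either works.

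The spanning half, which you yourself flag as the ``real obstacle,'' has a genuine gap, and it is worth being precise about where. Your proposed basic tool is the family of relations $x_A\sg_m(I)=0$ with $A\sse J$, i.e.\ $\sum_{S\sse I\sm A,\,|S|=m}x_{A\cup S}=0$, and you hope that for each non-sparse $J$ some choice of $A\sse J$ makes $J$ the strict maximum among the $A\cup S$. This can fail: take $n=2$ and $J=\{2,3\}$ with the lexicographic order. Checking $A=\emptyset,\{2\},\{3\}$ in turn, the lex-maximal monomial in the corresponding relation is $x_{34}$, $x_{24}$, $x_{34}$ respectively --- never $x_{23}$. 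The paper sidesteps this with a stronger technical lemma (Lemma~\ref{lem-sigma-zero}): if $M\sse I$ and $k+|M|>|I|$ then $\sg_k(M)=0$ in $R(I)$, proved via the non-obvious identity $r_M(t)=r_{M^c}(-t)$ (which uses $x_i^2=0$). This gives relations that are \emph{not} single terms $x_A\sg_m(I)$ --- e.g.\ $\sg_2(\{1,2,3\})=x_{12}+x_{13}+x_{23}=0$, whose lex-maximal monomial is exactly $x_{23}$. Concretely, the paper's Lemma~\ref{lem-BR-spans} takes $j$ to be the \emph{largest} element of $J$ violating sparsity, sets $K=J_{<j}$, $L=J_{\geq j}$, $p=|L_{>j}|$, $M=L\amalg I_{<j}$, checks that $p+1+|M|=|I|+1$, and observes that the lex-leading term of $x_K\sg_{p+1}(M)$ is $x_J$. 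Without Lemma~\ref{lem-sigma-zero} --- or some equivalent device allowing $M$ to range over proper subsets of $I$ that straddle the failure point --- a single relation of your proposed form will not in general have $x_J$ as its leading term, and the reduction does not close. (Also note your ordering heuristic references the smallest-$i$ failure of the inequality of Lemma~\ref{lem-sparse-alt}, as in Proposition~\ref{prop-SS-count}, whereas the working argument pivots on the \emph{largest} $j\in J$ violating sparsity; these are different statistics.)
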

\begin{proof}
 Combine Propositions~\ref{prop-BR-spans} and~\ref{prop-BR-basis}.
\end{proof}

\begin{definition}
 For any finite totally ordered set $I$, we put
 \[ E(I) = \Z[x_i\st i\in I]/(x_i^2\st i\in I). \]
 We regard this as a graded ring with $|x_i|=2$.  The set
 \[ BE(I) = \{x_J\st J\sse I\} \]
 is evidently a basis for $E(I)$ over $\Z$.
\end{definition}

\begin{definition}
 We write $\sg_k(I)$ for the $k$'th elementary symmetric function in
 the variables $x_i$, and we put
 \[ r_I(t) = \prod_{i\in I} (1+tx_i) = \sum_{k=0}^{|I|} \sg_k(I)t^k.
 \]
 We also put
 \[ R(I) = E(I)/(\sg_1(I),\dotsc,\sg_{|I|}(I)). \]
 (so the ring $R(n)$ in the introduction is $R(N_{2n})$.)  We write
 \[ BR(I) = \{x_J\st J\sse I \text{ is sparse }\}. \]
\end{definition}

\begin{lemma}\label{lem-sigma-zero}
 If $J\sse I$ and $k+|J|>|I|$ then $\sg_k(J)=0$ in $R(I)$.
\end{lemma}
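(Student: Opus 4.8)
The plan is to work entirely inside the ring $R(I)=E(I)/(\sg_1(I),\dots,\sg_{|I|}(I))$ and exploit the multiplicative structure of the generating function $r_I(t)=\prod_{i\in I}(1+tx_i)$. Write $m=|I|$ and $p=|J|$, and split the product defining $r_I(t)$ as
\[
 r_I(t) = r_J(t)\cdot r_{I\sm J}(t).
\]
Since $r_I(t)=\sum_{k=0}^m \sg_k(I)\,t^k$ and every $\sg_k(I)$ with $1\le k\le m$ is zero in $R(I)$, the left-hand side reduces to the constant $1$ in $R(I)[t]$ (more precisely in $R(I)[t]/(t^{m+1})$, but since both sides have degree at most $m$ in $t$ this is no loss). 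Therefore in $R(I)[t]$ we have the identity
\[
 r_J(t)\cdot r_{I\sm J}(t) = 1.
\]
This exhibits $r_J(t)$ as a unit in $R(I)[t]$ with inverse $r_{I\sm J}(t)$, the latter being a polynomial of degree at most $|I\sm J| = m-p$.

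Now I would extract the coefficient of $t^k$ for $k > m-p$. On the right-hand side of $r_J(t) = r_{I\sm J}(t)^{-1}$, note that $r_{I\sm J}(t)$ is a polynomial of degree $m-p$ whose constant term is $1$; hence its inverse in the power series ring $R(I)[[t]]$ is again a power series with constant term $1$, but there is no a priori reason its higher coefficients vanish. So instead I would argue directly from $r_J(t)\,r_{I\sm J}(t)=1$: the coefficient of $t^\ell$ on the left for $\ell\ge 1$ is $\sum_{a+b=\ell}\sg_a(J)\sg_b(I\sm J)$, and this must be $0$. This gives a recursion. For $\ell = m-p+1$, since $\sg_b(I\sm J)=0$ whenever $b>m-p$, the only surviving terms have $b\le m-p$, i.e. $a\ge 1$; one then induces on $k$ to show $\sg_k(J)=0$ for all $k$ with $k+p>m$, i.e. $k>m-p$. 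Concretely: suppose inductively that $\sg_a(J)=0$ for all $a$ with $m-p < a < k$ (vacuous when $k=m-p+1$). The vanishing of the coefficient of $t^k$ reads $\sum_{a+b=k}\sg_a(J)\sg_b(I\sm J)=0$; every term with $a>m-p$ and $a<k$ vanishes by induction, every term with $b>m-p$ vanishes because $\sg_b(I\sm J)=0$ in $E(I\sm J)$ hence in $R(I)$, leaving precisely $\sg_k(J)\cdot\sg_0(I\sm J)=\sg_k(J)$, so $\sg_k(J)=0$.

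The main obstacle — really the only subtlety — is bookkeeping: one must be careful that $\sg_b(I\sm J)=0$ in $R(I)$ holds for the right reason (it holds already in $E(I\sm J)\subseteq E(I)$ when $b > |I\sm J| = m-p$, simply because there is no squarefree monomial of that degree in those variables, so it maps to $0$ in $R(I)$ too), and that the constant term $\sg_0(I\sm J)=1$ lets the induction close. One should also note at the outset that $k+|J|>|I|$ forces $k\le |J|$ to be irrelevant — the statement is about which $\sg_k(J)$ vanish, and $\sg_k(J)$ is automatically $0$ in $E(J)$ for $k>p$ anyway, so the content is exactly the range $m-p < k \le p$, handled by the recursion above. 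Wrapping these remarks around the coefficient extraction gives the result.
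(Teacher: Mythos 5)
Your first step — splitting $r_I(t)=r_J(t)\,r_{I\sm J}(t)$ and using $r_I(t)=1$ in $R(I)[t]$ — is exactly how the paper starts. But the inductive coefficient-extraction that follows has a genuine gap. After discarding terms with $a>m-p$, $a<k$ (induction hypothesis) and terms with $b>m-p$ (since $\sg_b(I\sm J)=0$ trivially), what remains is not just $\sg_k(J)\sg_0(I\sm J)$: every pair $(a,b)$ with $a+b=k$ and $k-(m-p)\le a\le m-p$ also survives, because then $1\le a\le m-p$ (so neither the induction hypothesis nor triviality kills $\sg_a(J)$) and $1\le b\le m-p$ (so $\sg_b(I\sm J)$ is not automatically zero). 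This happens for every $k$ in the range $m-p<k\le 2(m-p)$, which is nonempty whenever $m-p>0$. Concretely, take $|I|=6$, $|J|=4$, $k=3$: the coefficient of $t^3$ gives
\[
 \sg_3(J)+\sg_2(J)\sg_1(I\sm J)+\sg_1(J)\sg_2(I\sm J)=0,
\]
and your argument has no way to dispose of the two cross terms. So the recursion does not close even at its base case $k=m-p+1$.

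The observation you set aside — wondering whether the inverse of $r_{I\sm J}(t)$ might be a polynomial — is in fact the whole point. Because $x_i^2=0$, each factor satisfies $(1+tx_i)(1-tx_i)=1$, so $r_{I\sm J}(t)$ has the explicit \emph{polynomial} inverse $r_{I\sm J}(-t)$, of degree $\le |I\sm J|=m-p$. Multiplying $r_J(t)\,r_{I\sm J}(t)=1$ by $r_{I\sm J}(-t)$ gives $r_J(t)=r_{I\sm J}(-t)$ outright, and the lemma follows immediately by comparing degrees. That is the paper's proof; your version can be repaired by supplying this one line in place of the coefficient recursion.
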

\begin{proof}
 It is clear that $r_J(t)r_{J^c}(t)=r_I(t)=1$ in $R(I)[t]$.  Note also
 that $(1+tx_i)(1-tx_i)=1-t^2x_i^2=1$, which implies that
 $r_{J^c}(t)r_{J^c}(-t)=1$.  We can thus multiply both sides of our first
 relation by $r_{J^c}(-t)$ to get $r_J(t)=r_{J^c}(-t)$.  If
 $k>|I|-|J|=|J^c|$ then the coefficient of $t^k$ on the right hand
 side is certainly zero, so the same is true on the left hand side,
 which means that $\sg_k(J)=0$.
\end{proof}

\begin{proposition}\label{prop-BR-spans}
 $R(I)$ is generated as an abelian group by $BR(I)$.
\end{proposition}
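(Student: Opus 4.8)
The plan is to show that any basis element $x_K \in BE(I)$ with $K$ not sparse can be rewritten, modulo the ideal generated by the $\sg_k(I)$, as a $\Z$-linear combination of $x_L$'s with each $L$ ``closer to sparse'' in a suitable sense, so that an induction collapses everything onto $BR(I)$. The natural measure of non-sparseness comes from Lemma~\ref{lem-sparse-alt}: for $K \sse I$ put $d(K) = \min\{\,|K_{\geq i}| - |K^c_{\geq i}| \st i \in I\,\}$ if this minimum is positive (so $K$ fails to be sparse exactly when $d(K) > 0$, by Lemma~\ref{lem-sparse-alt}), together with some secondary statistic such as the largest index $i$ achieving a violation. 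I would set up a well-founded order on pairs $(d(K), \text{position})$ and argue by induction.

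The engine of the rewriting is Lemma~\ref{lem-sigma-zero}: if $J \sse I$ and $k + |J| > |I|$ then $\sg_k(J) = 0$ in $R(I)$. Concretely, suppose $K$ is not sparse and let $i$ be, say, the largest element of $I$ for which $|K_{\geq i}| > |K^c_{\geq i}|$; write $K = K_{<i} \amalg K_{\geq i}$ and set $J = K_{\geq i}$, $p = |J|$, and let $m$ be the number of elements of $I$ that are $\geq i$. The inequality $|K_{\geq i}| > |K^c_{\geq i}|$ says $p > m - p$, i.e. $2p > m$, which rearranges to $(m - p + 1) + p > m + 1$; interpreting $J$ as sitting inside the totally ordered set $I_{\geq i}$ of size $m$, this is exactly the hypothesis of Lemma~\ref{lem-sigma-zero} applied with $k = m - p + 1$ and ambient set $I_{\geq i}$... but I must be careful, since the relations available in $R(I)$ are the $\sg_k(I)$ for the \emph{full} set $I$, not for $I_{\geq i}$. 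So instead I would use the relation $r_J(t) = r_{J^c}(-t)$ in $R(I)[t]$ (established inside the proof of Lemma~\ref{lem-sigma-zero}): this lets me express the ``top'' elementary symmetric functions $\sg_k(J)$ for $k > |J^c| = |I| - |K_{\geq i}| - \dotsb$ — wait, rather $\sg_k(J)$ vanishes for $k > |I| - |J|$ — in a way that relates the squarefree monomial $x_J = \sg_p(J)$ to symmetric combinations of $x_j$ with $j \notin J$. Since $|J^c| = |I| - p < p$ would be needed and that is not automatic, the real lever is: choose the violating $J$ so that $p = |J|$ satisfies $p + p > $ (size of the relevant tail), then $\sg_p(J)$ lies in the range where $r_J(t) = r_{J^c}(-t)$ forces $\sg_p(J) = \pm\sg_p(J^c \cap (\text{tail}))$ plus lower terms — trading the monomial $x_J$ for monomials supported lower down and with the same or smaller $d$, but with strictly smaller secondary statistic. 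Multiplying through by the (squarefree, hence harmless) factor $x_{K_{<i}}$ recovers an identity rewriting $x_K$.

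The main obstacle I anticipate is bookkeeping: making the ``trade $x_J$ for things supported on lower indices'' step actually decrease the chosen complexity measure, rather than just shuffling the violation around. The subtlety is that replacing part of $K_{\geq i}$ by elements of $K^c_{\geq i}$ can create new violations at indices below $i$; one must verify that either $d$ strictly drops or $d$ is unchanged while the set of violating indices moves strictly downward (and is eventually exhausted). I would handle this by always picking the \emph{minimal} $i$ at which a violation of the refined inequality occurs (mirroring the choice of $a$ in the proof of Proposition~\ref{prop-SS-count}), computing the $c_K$-values exactly as in that proof to see that the substitution cleans up the violation at $i$ without introducing one below it, and checking that each step strictly decreases $\sum_{j \in K} j$ or an equivalent monotone quantity. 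Once the induction is set up with the right statistic, each individual rewriting is a short manipulation of $r_J(t) = r_{J^c}(-t)$, and the proposition follows.
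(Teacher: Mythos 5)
Your core idea is the paper's: pick a violating tail of $K$, use Lemma~\ref{lem-sigma-zero} (via the identity $r_J(t)=r_{J^c}(-t)$ from its proof) to trade a squarefree monomial for others, multiply by the head $x_{K_{<i}}$, and induct. But as written there is a genuine gap: you never pin down a well-founded order under which the rewriting step strictly decreases, and the one concrete statistic you offer, $\sum_{j\in K}j$, is not strictly monotone. For example with $I=\{1,2,3,4\}$ and $K=\{2,3\}$, your recipe (maximal violating $i=2$, $J=K$, $\sg_2(J)=\sg_2(J^c)$) gives $x_{\{2,3\}}=x_{\{1,4\}}$, and $1+4=2+3$. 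The worry you raise about violations drifting to lower indices is also a red herring; the paper sidesteps it entirely by using the lexicographic order of Definition~\ref{defn-order}, which is a total order on the finitely many monomials of each degree in $E(I)$, so well-foundedness is automatic, and the only thing to check is that every replacement monomial is strictly lex-smaller than $x_K$. That one sentence is what your sketch is missing, and your later suggestion to mimic the minimal-index choice of Proposition~\ref{prop-SS-count} adds complication without supplying it.

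The paper's Lemma~\ref{lem-BR-spans} also resolves your explicit worry that Lemma~\ref{lem-sigma-zero} sees subsets of $I$, not of a tail $I_{\geq i}$: it takes $j$ to be the largest index in $J$ at which sparsity fails, sets $L=J_{\geq j}$ (so $|L|=p+1$, and one checks $|L^c_{>j}|=p$), and pads to $M=L\amalg I_{<j}$. Then $(p+1)+|M|=|I|+1$, so $\sg_{p+1}(M)=0$ in $R(I)$ by the Lemma as stated, and the lex-leading term of $x_{J_{<j}}\sg_{p+1}(M)$ is visibly $x_{J_{<j}}x_L=x_J$. Your alternative via $\sg_p(J)=(-1)^p\sg_p(J^c)$ for $J=K_{\geq i}$ does in fact also close, but you must supply the lex-decrease: the surviving monomials are $x_{K_{<i}\amalg S}$ with $S\sse I\sm K$, $|S|=p$, and since $|K^c_{\geq i}|<p$ each such $S$ must dip below $\min(J)$, whence the smallest element distinguishing $K_{<i}\amalg S$ from $K$ lies in $S$, making $K_{<i}\amalg S<K$. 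Without that verification the induction has no floor.
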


The proof will use an ordering of the basis $BE(I)$, which we now
describe.
\begin{definition}\label{defn-order}
 Again let $I$ be a finite, totally ordered set.  We will use the
 lexicographic ordering on subsets of $I$.  In more detail, suppose we
 have subsets $J,K\sse I$ with $J\neq K$.  We list the elements in
 order and thus regard $J$ and $K$ as increasing sequences of elements
 of $I$.  Let $U$ be the longest possible initial segment of $J$ that
 is also an initial segment of $K$.  ($U$ may of course be empty.)
 This means that $J$ is $U$
 followed by some (possibly empty) sequence $J_1$, and $K$ is $U$
 followed by some (possibly empty) sequence $K_1$, where either
 \begin{itemize}
  \item[(a)] $J_1$ is empty and $K_1$ is not; or
  \item[(b)] $K_1$ is empty and $J_1$ is not; or
  \item[(c)] the first entry in $J_1$ is smaller than the first entry
   in $K_1$; or
  \item[(d)] the first entry in $K_1$ is smaller than the first entry
   in $J_1$.
 \end{itemize}
 In cases~(a) and~(c) we declare that $J<K$, and in cases~(b) and~(d)
 we declare that $J>K$.  We transfer this ordering to $BE(I)$ by
 declaring that $x_J<x_K$ iff $J<K$.
\end{definition}

\begin{remark}\label{rem-order}
 In most cases we will only compare sets of the same size.  In this
 situation cases~(a) and~(b) cannot occur, and the rule can be
 restated as follows: we have $J<K$ iff the set
 $D=(J\sm K)\cup(K\sm J)$ is nonempty, and the smallest element of $D$
 lies in $J$.
\end{remark}

Proposition~\ref{prop-BR-spans} follows easily by induction from the
following sharper statement.
\begin{lemma}\label{lem-BR-spans}
 If $J\sse I$ is not sparse then $x_J$ can be expressed in $R(I)$ as a
 $\Z$-linear combination of monomials that are smaller with respect to
 our ordering on $BE(I)$.
\end{lemma}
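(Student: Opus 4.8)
Since $J$ is not sparse, Lemma~\ref{lem-sparse-alt} gives an element $i\in I$ with $|J^c_{\geq i}|<|J_{\geq i}|$; let us pick the largest such $i$, and set $L=J_{\geq i}$. The relation we want to exploit is $\sg_k(L)=0$ for suitable $k$, which holds in $R(I)$ by Lemma~\ref{lem-sigma-zero} whenever $k+|L|>|I|$. Concretely I would take $k=|J^c_{\geq i}|+1$: since $|L|=|J_{\geq i}|>|J^c_{\geq i}|$ and the elements below $i$ number $|I|-|J_{\geq i}|-|J^c_{\geq i}|$, one checks $k+|L|=|J_{\geq i}|+|J^c_{\geq i}|+1>$ the number of elements of $I$ not in $L$... wait — more carefully, $|L|+k = |J_{\geq i}| + |J^c_{\geq i}| + 1 = |I_{\geq i}| + 1 > |I_{\geq i}| \geq$ ... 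I need $|L|+k>|I|$, i.e. $|J_{\geq i}|+|J^c_{\geq i}|+1>|I|$, which need not hold in general. So the right choice is instead to apply the relation to $L$ together with enough of the elements of $J$ lying below $i$: actually the cleanest route is to work with the relation $\sg_k(I)=0$ directly, or with $\sg_k(J)=0$ when $k+|J|>|I|$. I would set $m=|J|$ and use $\sg_{|I|-m+1}(J)=0$ in $R(I)$.

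**Extracting the leading term.** Now $\sg_{k}(J)=\sum_{|S|=k,\,S\subseteq J} x_S$, and multiplying this relation by $x_{J\setminus T}$ for an appropriate complementary piece, using $x_i^2=0$ to kill terms, should isolate $\pm x_J$ plus a sum of other monomials. The key point is to choose which symmetric-function relation, and which multiplier, so that $x_J$ appears and every other surviving monomial $x_{K}$ satisfies $K<J$ in the lexicographic order of Definition~\ref{defn-order}. By Remark~\ref{rem-order}, for sets of equal size $K<J$ means the least element of the symmetric difference lies in $J$. Writing $j_1<\dots<j_m$ for the elements of $J$ in increasing order, the failure of sparsity at the largest bad index $i$ should translate (via Lemma~\ref{lem-sparse-enumeration}) into the existence of some $t$ with $j_t$ ``too large'': $j_{m+1-t}\geq 2(n+1-t)$ in the enumeration of that lemma, equivalently, too many of $j_t,\dots,j_m$ lie among the top elements of $I$. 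This lets one run the exchange: replace the large elements $j_s,\dots,j_m$ of $J$ by smaller elements of $I$ (some in $J^c$), each such replacement strictly decreasing a high-index element and hence moving us downward in lex order, since the new element inserted is smaller than the one removed and lies below it.

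**The main obstacle.** The delicate part is organizing the bookkeeping so that after using $x_i^2=0$ the surviving monomials are unambiguously $<J$: when we expand $x_{J\setminus T}\cdot\sg_k(J)$, a term $x_{J\setminus T}\cdot x_S$ is nonzero only when $S\cap(J\setminus T)=\emptyset$, i.e. $S\subseteq T$; among these $S=T$ gives back $x_J$, and every other choice $S\subsetneq T$ with $|S|=k$ corresponds to replacing the $|T|-k$ elements of $T\setminus S$ by... nothing — that just gives a shorter monomial, which in lex order need not be smaller. So a single symmetric function will not suffice; I expect to need a relation of the form $\sg_k(J) = -(\text{terms involving }J^c)$ derived, as in the proof of Lemma~\ref{lem-sigma-zero}, from $r_J(t)=r_{J^c}(-t)$. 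Comparing coefficients of $t^{|J^c_{\geq i}|+1}$ in $r_{J_{\geq i}}(t)=r_{J^c_{\geq i}}(-t)$ (after checking this localized identity holds in $R(I)$, which it does not literally — but the full identity does), and then multiplying by the monomial on the remaining variables, should express $x_{J}$ as $\pm x_{J_{<i}}$ times a sum of monomials in $x_{j}$ for $j<i$ and $j\in J^c_{\geq i}$, each of total the right degree; every such monomial differs from $J$ by swapping some top elements of $J$ for elements of $J^c_{\geq i}$ or for elements below $i$ — genuinely smaller indices — hence is $<J$. Verifying that the relevant truncated generating-function identity is valid modulo the ideal $(\sg_1(I),\dots,\sg_{|I|}(I))$, and that the largest-bad-index choice makes the degree count come out exactly so that $x_J$ itself is the unique ``top'' monomial, is where the real work lies; I would do this by the same manipulation $r_J(t)r_{J^c}(t)=1$ used in Lemma~\ref{lem-sigma-zero}, restricted to the appropriate range of degrees.
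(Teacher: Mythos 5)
Your proposal identifies the right general strategy — exploit a vanishing elementary symmetric function $\sg_k(\cdot)$ in $R(I)$ together with $x_i^2=0$, multiply by a monomial, and argue that every surviving term lies strictly below $x_J$ in lex order — but you do not carry it through, and the point at which you stall is precisely where the argument's one real idea lives.

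Concretely: you first try $\sg_k$ applied to $J_{\geq i}$ or to $J$ itself, correctly observe that the degree inequality $k+|\cdot|>|I|$ from Lemma~\ref{lem-sigma-zero} fails or leads to extra terms that are not obviously lower in lex order, and then wave at a ``truncated generating-function identity'' that you concede does not literally hold. The fix is not to look for a restricted version of $r_J(t)r_{J^c}(t)=1$; it is to change which set you apply $\sg_{p+1}$ to. The paper takes $j$ to be the \emph{largest element of $J$} at which sparsity fails, writes $K=J_{<j}$ and $L=J_{\geq j}$, and — after checking $|L_{>j}|=|L^c_{>j}|$ (call it $p$) — forms the enlarged set $M=L\amalg I_{<j}$, which includes \emph{all} of $I$ below $j$, not just the part in $J$. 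One then has $|M|+(p+1)=|I|+1$, so $\sg_{p+1}(M)=0$ by Lemma~\ref{lem-sigma-zero}. The lex-highest $(p+1)$-subset of $M$ is $L$ (it is the set of the $p+1$ largest elements of $M$), so the highest surviving term of $x_K\,\sg_{p+1}(M)$ is $x_Kx_L=x_J$; every other nonvanishing term $x_Kx_S$ has $S\sse M$, $S\neq L$, $S\cap K=\emptyset$, and since $S\sm L\sse I_{<j}$ is nonempty the smallest element of the symmetric difference with $J$ lies in $K\cup S$, forcing $K\cup S<J$. Without the observation that one should pad $L$ out to $M=L\amalg I_{<j}$ (so that the degree count works and yet $L$ remains the top term), the argument does not close, and your writeup explicitly acknowledges it has not closed. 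So this is a genuine gap, not a stylistic difference.
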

\begin{proof}
 We will work everywhere with homogeneous terms so only the purely
 lexicographic part of the ordering will be relevant.  Let $j$ be the
 largest index in $J$ for which the sparsity condition is violated.
 Put $K=J_{<j}$ and $L=J_{\geq j}$, so $L$ is still not sparse.  Put
 $p=|L_{>j}|$ and $q=|L^c_{>j}|$ so we must have $q\leq p$.  We claim
 that in fact $p=q$.  This is clear if $p=0$, so suppose that $p>0$,
 so we can let $k$ be the smallest element of $L_{>j}$.  By
 assumption, the sparsity condition is satisfied for $k$, so
 $|L_{>k}|<|L^c_{>k}|$.  Here $|L_{>k}|=p-1$ and $|L^c_{>k}|\leq q$,
 and this can only be consistent if $p=q=|L^c_{>k}|$.  Note that this
 gives $|L|=p+1$ and $|I_{\geq j}|=p+q+1=2p+1$.

 Now put $m=|I_{<j}|$, so $|I|=|I_{<j}|+|I_{\geq j}|=m+1+2p$.  Put
 $M=L\amalg I_{<j}$, and observe that $p+1+|M|=2p+2+m=|I|+1$, so
 $\sg_{p+1}(M)$ maps to zero in $R(I)$ by Lemma~\ref{lem-sigma-zero}.
 If we work in $E(I)$, then the highest term in $\sg_{p+1}(M)$ is
 $x_L$, so the highest term in $x_K\sg_{p+1}(M)$ is $x_Kx_L=x_J$.
 This gives the required relation in $R(I)$.
\end{proof}

\begin{proposition}\label{prop-rho-K}
 Let $K\subset I$ be a sparse set of size $n=|I|/2$, and let $\tau$ be
 the corresponding non-crossing matching.  Then there is a canonical
 isomorphism
 \[ \rho_K\: R(I)/(x_k+x_{\tau(k)}\st k\in K) \xra{\simeq}
     E(K)
 \]
 with $\rho_K(x_k)=-\rho_K(x_{\tau(k)})=x_k$ for all $k\in K$.
\end{proposition}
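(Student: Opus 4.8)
The plan is to write $\rho_K$ down explicitly as a ring homomorphism defined on the polynomial ring, check that it factors successively through $E(I)$, through $R(I)$, and through the quotient by the relations $x_k+x_{\tau(k)}$, and finally to prove it is an isomorphism by exhibiting a spanning set of the source that $\rho_K$ carries bijectively onto the basis $BE(K)$ of $E(K)$.

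First I would define a ring homomorphism $\psi\colon\Z[x_i\st i\in I]\to E(K)$ by $\psi(x_k)=x_k$ for $k\in K$ and $\psi(x_\ell)=-x_{\tau(\ell)}$ for $\ell\in I\sm K$; this is legitimate because, by Proposition~\ref{prop-NCM-SS}, $K=\lm(\tau)$, so $\tau$ restricts to a bijection $K\to I\sm K$ and each $\ell\in I\sm K$ has a unique partner $\tau(\ell)\in K$. Since $\psi(x_i)^2=0$ in $E(K)$ for every $i$, the map $\psi$ descends to $E(I)$. Grouping the factors of $r_I(t)=\prod_{i\in I}(1+tx_i)$ into the $\tau$-orbits $\{k,\tau(k)\}$ with $k\in K$ gives $\psi(r_I(t))=\prod_{k\in K}(1+tx_k)(1-tx_k)=\prod_{k\in K}(1-t^2x_k^2)=1$, so $\psi(\sg_j(I))=0$ for all $j\geq1$ and $\psi$ factors through $R(I)$. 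Since $\psi(x_k+x_{\tau(k)})=x_k-x_k=0$, it factors further through $Q:=R(I)/(x_k+x_{\tau(k)}\st k\in K)$, giving a ring map $\rho_K\colon Q\to E(K)$ with the stated values on generators. Uniqueness is automatic, as $\{k,\tau(k)\st k\in K\}=I$ so those values pin $\rho_K$ down on every $x_i$.

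It remains to show $\rho_K$ is an isomorphism, which is the only step doing real work. Surjectivity is clear since each $x_k$ (for $k\in K$) lies in the image. For injectivity I would prove that $Q$ is spanned as an abelian group by $\{x_J\st J\sse K\}$. Indeed $E(I)$, and hence $R(I)$ and $Q$, is spanned by the monomials $x_J$ with $J\sse I$; in $Q$ the relation $x_{\tau(k)}=-x_k$ shows that such a monomial vanishes if $J$ contains both $k$ and $\tau(k)$ for some $k\in K$ (it then acquires a repeated factor $x_k^2=0$), and otherwise equals $\pm x_{J'}$, where $J'\sse K$ is obtained by replacing each element of $J\sm K$ by its $\tau$-partner. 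Now $\rho_K$ restricts to the identity on $\{x_J\st J\sse K\}$, which is exactly $BE(K)$; so if $\sum c_Jx_J=0$ in $Q$ (sum over $J\sse K$), then applying $\rho_K$ forces all $c_J=0$, whence $\{x_J\st J\sse K\}$ is in fact a $\Z$-basis of $Q$ and $\rho_K$ is injective.

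I do not expect a genuine obstacle here. The points needing a little care are the two structural facts already flagged: that $\tau$ pairs $K$ bijectively with its complement, which is part of Proposition~\ref{prop-NCM-SS}, and that grouping the factors of $r_I(t)$ into $\tau$-orbits yields $\psi(r_I(t))=1$; both are short. The injectivity argument is the substantive step only in the sense that it is where the content lies, but it is brief once the normal form for monomials in $Q$ is in hand.
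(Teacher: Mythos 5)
Your proof is correct, and the heart of the argument---grouping the factors of $r_I(t)$ into $\tau$-orbits $\{k,\tau(k)\}$ and observing that each pair contributes $(1+tx_k)(1-tx_k)=1$---is exactly the computation the paper uses. The difference is purely organizational, but it is worth noting because the paper's version is a bit slicker. The paper first identifies $E(K)$ \emph{as} the ring $T=E(I)/(x_k+x_{\tau(k)}\st k\in K)$ (using $I=K\amalg\tau(K)$), so that the left-hand side of the proposition is literally $T$ modulo the ideal generated by the coefficients of $r_I(t)-1$. Showing $r_I(t)=1$ in $T[t]$ then finishes the proof in one stroke, with no separate surjectivity or injectivity argument. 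You instead build $\rho_K$ as a map out to $E(K)$ and have to pay for it with the normal-form argument at the end (showing $\{x_J\st J\sse K\}$ spans $Q$ and maps bijectively to $BE(K)$). That argument is correct and not hard, but it is work the paper avoids by choosing to present $E(K)$ as a quotient rather than a target. Both proofs are fine; if you wanted to match the paper's economy, the trick is to reorganize so that you are quotienting $T$ by extra relations and showing those relations already hold, rather than mapping into $E(K)$ and checking the kernel is zero.
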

\begin{proof}
 As $I=K\amalg\tau(K)$, the ring $E(K)$ on the right can be identified
 with the ring
 \[ T = E(I)/(x_k+x_{\tau(k)}\st k\in K). \]
 The left hand side is the quotient of $T$ by the ideal generated by
 the coefficients of $r_I(t)-1$.  It will therefore suffice to show
 that $r_I(t)$ is already equal to 1 in $T[t]$.  It is clear that
 $I$ is the disjoint union of the sets $\{k,\tau(k)\}$ for $k\in K$,
 so $r_I(t)$ is the product of the terms
 $u_k=(1+tx_k)(1+tx_{\tau(k)})$.  As $x_k^2=0$ and $x_{\tau(k)}=-x_k$
 in $T$, we find that $u_k=1$ and so $r_I(t)=1$ as claimed.
\end{proof}

\begin{definition}\label{defn-QI}
 Put $Q(I)=\prod_{K\in\SS_n(I)}E(K)$, and let $\ep_K\in Q(I)$ be the
 idempotent that is $1$ in the $E(K)$ factor and $0$ in all other
 factors, so the set
 \[ BQ(I) = \{x_J\ep_K\st J\sse K\sse I,\; K\in\SS_n(I)\} \]
 is a basis for $Q(I)$ over $\Z$.   The maps $\rho_K$ combine
 to give a single homomorphism $\rho\:R(I)\to Q(I)$, given by
 \[ \rho(a) = \sum_{K\in\SS_n(I)} \rho_K(a)\ep_K. \]
\end{definition}

\begin{lemma}\label{lem-J-plus}
 Suppose that $|I|=2n$ and that $J\sse I$ is sparse with $|J|=m<n$.
 Then the set $J^+=J\cup\{\min(J^c)\}$ is also sparse.
\end{lemma}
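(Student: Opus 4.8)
The plan is to verify sparsity of $J^+$ by means of the alternative characterisation in Lemma~\ref{lem-sparse-alt}, according to which a set $T\sse I$ is sparse iff $|T^c_{\geq i}|\geq|T_{\geq i}|$ for every $i\in I$. This criterion is better suited to the present situation than the original Definition~\ref{defn-sparse}, because inserting the single element $a=\min(J^c)$ into $J$ only perturbs the inequalities at indices $i\leq a$, and the criterion makes transparent exactly which those are and by how much they move. Note that $a$ exists, since $|J|=m<n<2n=|I|$ forces $J^c\neq\emptyset$.

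The first case is $i>a$. Here $a$ lies in neither $(J^+)_{\geq i}$ nor $(J^+)^c_{\geq i}$, so $(J^+)_{\geq i}=J_{\geq i}$ and $(J^+)^c_{\geq i}=J^c_{\geq i}$, and the inequality $|(J^+)^c_{\geq i}|\geq|(J^+)_{\geq i}|$ is precisely the one supplied by sparsity of $J$ (again via Lemma~\ref{lem-sparse-alt}). So nothing has to be done in this range.

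The second case is $i\leq a$, and this is where the hypothesis $m<n$ is used. Since every element of $J^c$ is $\geq a\geq i$, we have $(J^+)^c_{\geq i}=J^c\sm\{a\}$, of size $2n-m-1$; and $(J^+)_{\geq i}=J_{\geq i}\cup\{a\}$, of size $|J_{\geq i}|+1\leq m+1$. Hence $|(J^+)^c_{\geq i}|\geq|(J^+)_{\geq i}|$ follows from $2n-m-1\geq m+1$, i.e.\ from $m\leq n-1$. With both cases covered, Lemma~\ref{lem-sparse-alt} shows that $J^+$ is sparse. I do not expect any real obstacle: the only point needing a moment's thought is the decision to argue through Lemma~\ref{lem-sparse-alt} rather than the definition, after which the case $i\leq a$ collapses to the observation that a sparse set of size strictly less than $n$ leaves enough room in its complement to absorb one more element.
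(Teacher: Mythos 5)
Your proof is correct. The main structural difference from the paper is that you route the argument through Lemma~\ref{lem-sparse-alt} (the non-strict criterion $|T^c_{\geq i}|\geq|T_{\geq i}|$ for all $i\in I$), whereas the paper works directly from Definition~\ref{defn-sparse} (the strict criterion $|J^c_{>j}|>|J_{>j}|$ for $j\in J^+$ only). As a result, in your critical case $i\leq a$ you can afford the crude bound $|(J^+)_{\geq i}|\leq m+1$ and finish from $2n-m-1\geq m+1$, while the paper computes $|(J^+)_{>j}|=m+1-j$ exactly, using the observation that $N_j\sse J^+$, and obtains the sharper identity $|(J^+)^c_{>j}|-|(J^+)_{>j}|=2(n-m-1)+j$. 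The two approaches trade off against one another: using Lemma~\ref{lem-sparse-alt} means you must a priori check all $i\in I$ rather than only $j\in J^+$, but in exchange the critical inequality collapses to a one-line size count and you avoid any dependence on the position of $j$; the paper's version keeps the quantifier small and makes the arithmetic exact, at the cost of needing the decomposition $J^+=N_j\amalg(J^+)_{>j}$. Both are equally valid, and the case split (large versus small index, with $m<n$ doing all the work in the small-index case) is the same in spirit.
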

\begin{proof}
 It will be harmless to suppose that $I=N_{2n}=\{1,\dotsc,2n\}$.  Put
 $k=\min(J^c)$, so the interval $N_{k-1}=\{1,\dotsc,k-1\}$ is
 contained in $J$, and $J^+=J\cup\{k\}$.  Consider an element
 $j\in J^+$.  If $j>k$ then the sparsity condition for $j\in J$
 immediately implies the sparsity condition for $j\in J^+$.  Suppose
 instead that $1\leq j\leq k$.  Note that $J^+=N_j\amalg(J^+)_{>j}$ so
 $m+1=|J^+|=j+|(J^+)_{>j}|$, so $|(J^+)_{>j}|=m+1-j$.  On the other
 hand, we have $(J^+)^c_{>j}=J^c\sm\{k\}$, so $|(J^+)^c_{>k}|=2n-m-1$,
 so $|(J^+)^c_{>j}|-|(J^+)_{>j}|=2(n-m-1)+j$.  As $n>m$ and $j>0$ we
 see that this is strictly positive, as required.
\end{proof}

\begin{definition}\label{defn-bar}
 Given a sparse set $J\sse I$ with $|J|=m \leq n=|I|/2$, we write $\ov{J}$
 for the sparse set obtained by applying the operation $K\mapsto K^+$
 to $J$, repeated $n-m$ times.  This is easily seen to be
 lexicographically smallest among the sparse sets of size $n$
 containing $J$.
\end{definition}

\begin{proposition}\label{prop-BR-basis}
 If $|I|$ is even then the set $BR(I)$ is linearly independent over
 $\Z$.  Moreover, the map $\rho\:R(I)\to Q(I)$ is a split monomorphism
 of abelian groups.
\end{proposition}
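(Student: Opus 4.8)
The plan is to exhibit a triangularity statement for the matrix of the homomorphism $\rho$ with respect to the natural bases $BR(I)$ on the source and $BQ(I)$ on the target, ordered appropriately. First I would recall from Proposition~\ref{prop-rho-K} that $\rho_K(x_k)=x_k$ and $\rho_K(x_{\tau(k)})=-x_k$ for $k\in K$, and that $\rho_K(x_i)=0$ whenever $i\notin K\cup\tau(K)$ — but since $I=K\amalg\tau(K)$, every index lies in one of the two halves. So for a sparse set $J\sse I$ we get $\rho_K(x_J)=\pm x_{J\cap K}\cdot x_{\tau(J\cap\tau(K))}$, rewritten via $\tau$ inside the factor $E(K)$; concretely $\rho_K(x_J)=\pm x_{\psi_K(J)}$ where $\psi_K(J)=(J\cap K)\cup\tau(J\cap\tau(K))\sse K$. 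The key point is that $\rho(x_J)=\sum_K\pm x_{\psi_K(J)}\ep_K$ is a signed sum of basis elements of $BQ(I)$.

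Second, I would single out, for each sparse $J$, a distinguished term of $\rho(x_J)$: I claim the component indexed by $K=\ov J$ (the lexicographically least sparse set of size $n$ containing $J$, from Definition~\ref{defn-bar}) contributes exactly $\pm x_J\ep_{\ov J}$, because $J\sse\ov J$ forces $J\cap K=J$ and $J\cap\tau(K)=\emptyset$, hence $\psi_{\ov J}(J)=J$. So $\rho(x_J)$ contains $\pm x_J\ep_{\ov J}$ with coefficient $\pm1$. Next I would put a total order on $BQ(I)$ — say, order the pairs $(K,J)$ with $J\sse K$ by taking $K$ lexicographically least first, and among equal $K$, order $J$ by the lexicographic order of Definition~\ref{defn-order} — and designate $\pm x_J\ep_{\ov J}$ as the \emph{leading term} of $\rho(x_J)$. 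The crux is then to show this assignment $x_J\mapsto$ (leading term) is injective on $BR(I)$ and that all other terms of $\rho(x_J)$ are strictly smaller in the chosen order. Injectivity of $J\mapsto(\ov J,J)$ is immediate since $J$ is recovered from the pair. For the "strictly smaller" part: any other term of $\rho(x_J)$ is $\pm x_{\psi_K(J)}\ep_K$ with either $K\neq\ov J$, in which case I must show $K>_{\mathrm{lex}}\ov J$, or $K=\ov J$ but $\psi_{\ov J}(J)$ arises from a cross term — but as noted $\psi_{\ov J}(J)=J$ is the only term in the $\ov J$-component, so the whole issue is confined to showing that whenever $\rho_K(x_J)\ne0$ we have $K\geq_{\mathrm{lex}}\ov J$, with equality only for $K=\ov J$. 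Since $\rho_K(x_J)\neq0$ automatically (it is $\pm$ a monomial, never zero in $E(K)$), this reformulates as: for \emph{every} sparse $K$ of size $n$, $\rho_K(x_J)\ne0$, so I actually need the components to sit in distinct "rows" cleverly — the honest statement is that among all $K$ with $J\sse K$, $\ov J$ is lex-least, but for $K\not\supseteq J$ the term $x_{\psi_K(J)}\ep_K$ has $|\psi_K(J)|<|J|$... wait, no: $|\psi_K(J)|=|J|$ always since $\psi_K$ pairs up via the bijection $\tau:K\to\tau(K)$. So the leading-term argument must instead run: fix the target grading degree $2|J|$; in degree $2|J|$ the element $\rho(x_J)$ lives in $\bigoplus_K E(K)_{2|J|}$, and I order the basis $\{x_S\ep_K: |S|=|J|,\ S\sse K\}$ of this finite group by $(K,S)$ lex as above. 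Then the leading term of $\rho(x_J)$ in this order is $\pm x_J\ep_{\ov J}$ provided I can show: for all sparse $K\supsetneq$-incomparable or containing $J$, either $K>\ov J$, or $K=\ov J$ forcing $S=J$; and if $K$ is lex-\emph{smaller} than $\ov J$ then necessarily $J\not\subseteq K$, and I must bound the resulting $S=\psi_K(J)$.

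The main obstacle, then, is precisely this last combinatorial lemma: \emph{if $K$ is a sparse set of size $n$ with $K<_{\mathrm{lex}}\ov J$, then the pair $(K,\psi_K(J))$ is still $<$ the pair $(\ov J,J)$ in the chosen order} — and more delicately, that no collision occurs, i.e. if $(K,\psi_K(J))=(K',\psi_{K'}(J'))$ as pairs with $J,J'$ sparse then $J=J'$. Collisions of the first kind ($K=K'$) reduce to injectivity of $\psi_K$ on $\{J\in BR(I): J\sse K\text{ or }\dots\}$, which follows because $\psi_K$ is built from the fixed bijection $\tau$ and is therefore invertible: $J=\psi_K^{-1}(S)$ is determined. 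So I would structure the proof as: (i) $\rho(x_J)=\sum_K \ep_J^K\, x_{\psi_K(J)}\ep_K$ with $\ep_J^K\in\{\pm1\}$ and $\psi_K$ a bijection on size-$m$ subsets for each $K$; (ii) the $\ep_{\ov J}$-component equals $\pm x_J\ep_{\ov J}$ and $\ov J$ is lex-least among $K\supseteq J$ (Definition~\ref{defn-bar}); (iii) order $BQ(I)$ so that $\ep_K$ with smaller $K$ come first, then smaller $S$; (iv) check that for $K<\ov J$ (hence $J\not\subseteq K$) the term $x_{\psi_K(J)}\ep_K$ is $<x_J\ep_{\ov J}$ — this is automatic once $K$-block ordering is the primary key and we have arranged that a smaller block is always "lower", EXCEPT that we want $\pm x_J\ep_{\ov J}$ to be the leading (largest) term, so the block order must be \emph{reversed}: larger $K$ first. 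Re-examining, the clean choice is: declare $x_S\ep_K > x_{S'}\ep_{K'}$ iff $K>_{\mathrm{lex}}K'$, or $K=K'$ and $S>S'$ in Definition~\ref{defn-order}'s order; then the leading term of $\rho(x_J)$ is the one with lex-\emph{largest} $K$ among those occurring, and I instead single out that term. With that convention the honest distinguished term is $x_{\psi_{K_{\max}}(J)}\ep_{K_{\max}}$ where $K_{\max}$ is the lex-largest size-$n$ sparse set (which is the same for all $J$!), so \emph{that} cannot separate the $x_J$'s. Hence the correct and only workable route is the one in the paper's own sketch: use $\ov J$, take block-order with \emph{smaller} $K$ as larger, so $\ep_{\ov J}$ is the leading block of $\rho(x_J)$, giving leading term $\pm x_J\ep_{\ov J}$; then the genuinely substantive step is showing every other occurring block $K\ne\ov J$ satisfies $K<\ov J$ i.e.\ $K>_{\mathrm{lex}}\ov J$, equivalently $\ov J$ is lex-minimal among $\{K:\rho_K(x_J)\ne 0\}=\{$all sparse $K$ of size $n\}$ — which is \emph{false}. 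Therefore the real fix, and the part I expect to be hardest to get exactly right, is to restrict attention inside each $\ep_K$-block to the lex order on $S=\psi_K(J)$ and prove a \emph{global} triangularity: order all pairs $(K,S)$ with $S\sse K$, $|S|=m$ by $S$ first (Definition~\ref{defn-order} order on the subset $S\sse I$) and $K$ second; then the $(K,S)$ with $S$ lex-largest occurring in $\rho(x_J)$ is $S=J$ itself, attained (only) at $K=\ov J$, because for any $K$, $\psi_K(J)\le J$ in lex order with equality iff $J\sse K$ (as $\psi_K$ replaces elements of $J$ lying in $\tau(K)$ by their $\tau$-images in $K$, which are strictly larger... or smaller — Lemma~\ref{lem-NCM-parity} and the non-crossing structure control the direction). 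Pinning down that $\psi_K(J)\le J$ with the stated equality condition, and deducing both the triangularity and the non-collision $J\mapsto(J,\ov J)$, is the crux; once it is in hand, the matrix of $\rho$ in these ordered bases is upper-triangular with $\pm1$ diagonal on the sub-basis $BR(I)$, giving simultaneously that $BR(I)$ is linearly independent over $\Z$ and that $\rho$ restricted to the span of $BR(I)$ is a split injection of abelian groups onto a direct summand of $Q(I)$, hence $\rho$ itself is a split monomorphism.
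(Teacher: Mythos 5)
You eventually converge, after much back-and-forth, on the paper's own strategy: order the basis $BQ(I)$ by the subset $S$ first (lexicographically) and by the block $K$ second (reversed, so smaller $K$ counts as higher), and show that the leading term of $\rho(x_J)$ is $\pm x_J\ep_{\ov J}$. But along the way you make a false assertion that sends you on a long detour: you claim ``$\rho_K(x_J)\neq 0$ automatically (it is $\pm$ a monomial, never zero in $E(K)$).'' This is wrong. Writing $\tau$ for the matching corresponding to $K$ and $\pi\colon I\to K$ for the map which is the identity on $K$ and equal to $\tau$ on $\tau(K)=K^c$, we have $\rho_K(x_J)=\pm x_{\pi(J)}$ \emph{only when $\pi|_J$ is injective}; if $J$ contains some pair $\{k,\tau(k)\}$ then the corresponding factor is $x_k\cdot(-x_k)=0$. (Concretely: $n=2$, $K=\{1,3\}$, $\tau=(1\;2)(3\;4)$, $J=\{1,2\}$ gives $\rho_K(x_1x_2)=x_1(-x_1)=0$.) Your map ``$\psi_K$'' therefore is not a bijection on $m$-element subsets, and one must separate the vanishing case before doing any triangularity bookkeeping.

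The second and larger gap is that the crux lemma you identify --- that $\pi(J)\le_{\mathrm{lex}}J$, with equality iff $J\sse K$ --- is stated but never proved, and you are even unsure of the direction (``which are strictly larger\dots or smaller''). The direction is forced: since $K=\{i:\tau(i)>i\}$, for every $j\in\tau(K)$ we have $\pi(j)=\tau(j)<j$, so $\pi$ only ever lowers elements of $J$. That it lowers them \emph{lexicographically}, i.e.\ $\pi(J)<_{\mathrm{lex}}J$ whenever $J\not\sse K$ and $\pi|_J$ is injective, is the step that actually needs an argument: let $m$ be the least element of the symmetric difference of $J$ and $\pi(J)$. If $m\in J\sm\pi(J)$ and $m\in K$ then $\pi(m)=m\in\pi(J)$, a contradiction; if $m\in J\sm\pi(J)$ and $m\in\tau(K)$ then $\tau(m)\in\pi(J)$ with $\tau(m)<m$, so by minimality of $m$ also $\tau(m)\in J$, and then $\pi(m)=\pi(\tau(m))=\tau(m)$ contradicts injectivity of $\pi|_J$. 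Hence the minimum lies in $\pi(J)\sm J$, giving $\pi(J)<_{\mathrm{lex}}J$. Only with this in hand (together with $K\ge_{\mathrm{lex}}\ov J$ when $J\sse K$) do you get the upper-triangularity with $\pm1$ on the diagonal, and from it both linear independence of $BR(I)$ and the splitting of $\rho$ as in the paper's proof.
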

\begin{proof}
 Let $FR(I)$ denote the subgroup of $E(I)$ freely generated by
 $BR(I)$.  The evident map $FR(I)\to R(I)$ is surjective by
 Proposition~\ref{prop-BR-spans}.  We must prove that it is also
 injective.  To see this, we define a total order on the set $BQ(I)$
 as follows: we order the subsets of $I$ lexicographically as before,
 then we declare that $x_J\ep_K<x_{J'}\ep_{K'}$ iff either $J<J'$, or
 ($J=J'$ and $K>K'$).

 We claim that for each sparse set $J$, the highest term in
 $\rho(x_J)$ is $x_J\ep_{\ov{J}}$.  To see this, consider a sparse set
 $K$ of size $n$, and the corresponding permutation
 $\tau=\lm^{-1}(K)\in\NCM(K)$.  Define $\pi\:I\to K$ by $\pi(i)=i$ for
 $i\in K$, and $\pi(i)=\tau(i)<i$ for $i\in I\sm K=\tau(K)$.  Recall
 that $\rho_K(x_i)=x_i$ for $i\in K$, and $\rho_K(x_i)=-x_{\tau(i)}$
 for $i\in I\sm K$.  It follows that $\rho_K(x_J)=0$ if $\pi|_J$ is
 not injective.  On the other hand, if $\pi|_J$ is injective, then
 $\rho_K(x_J)=\pm x_{\pi(J)}$.  If $J\not\sse K$ then $x_{\pi(J)}$ is
 lexicographically strictly smaller than $x_J$.  If $J\sse K$ then
 $\rho_K(x_J)=x_J$, and $K$ is lexicographically at least as large as
 $\ov{J}$.  The claim about highest terms is clear from this.  Now let
 $P$ be the subgroup of $Q(I)$ generated by all basis elements in
 $BQ(I)$ not of the form $x_J\ep_{\ov{J}}$.  We can define a map
 $FR(I)\oplus P\to Q(I)$ by $(a,b)\mapsto\rho(a)+b$, and it is now
 easy to see that this is an isomorphism.  It follows that
 $FR(I)\to R(I)$ is an isomorphism and $\rho\:R(I)\to Q(I)$ is a split
 monomorphism.
\end{proof}

We next make some comments about the relationship between our approach
and that of Russell and Tymoczko.  The issue that we want to explain
is algebraic and combinatorial rather than topological, so we will
just assume for the moment that the map $\phi\:R(n)\to H^*(X(n))$ is
an isomorphism.  Consider a non-crossing matching $\tau\in\NCM(n)$.  A
\emph{dotting} of $\tau$ is a subset $S\sse\lm(\tau)$.  In terms of the
pictures that we drew previously, the left hand end of each arc is an
element of $\lm(\tau)$, and we draw a dot on the arc if the left hand
end lies in $S$.  Thus, in the following picture we have
$\lm(\tau)=\{1,3,4,6\}$ and $S=\{1,4\}$.
\begin{center}
 \begin{tikzpicture}
  \foreach \i in {1,...,8} {
   \fill (\i,0) circle(0.04);
   \draw (\i,0) node[anchor=north] {$\i$};
  }
  \draw[red] (1.5,0) (2,0) arc(0:180:0.5);
  \draw[red] (5.5,0) (8,0) arc(0:180:2.5);
  \draw[red] (4.5,0) (5,0) arc(0:180:0.5);
  \draw[red] (6.5,0) (7,0) arc(0:180:0.5);
  \fill (1.5,0.5) circle(0.04);
  \fill (4.5,0.5) circle(0.04);
  \draw[white] (0,3) -- (1,3);
 \end{tikzpicture}
\end{center}
We write $\DNCM(n)$ for the set of all dotted non-crossing matchings.
Consider an element $\al=(\tau,S)\in\DNCM(n)$, and put
$K=\lm(\tau)\in\SS_n(n)$.   For any $a\in R(n)$ we let
$\tht_\al(a)$ denote the coefficient of $x_{K\sm S}$ in
$\rho_K(a)$.  This defines an element
$\tht_\al\in\Hom(R(n),\Z)\simeq H_*(X(n))$, of degree $2(n-|K|)$.
\begin{itemize}
 \item[(a)] We say (following Russell and Tymoczko) that $\al$ is
  \emph{standard} if no dotted arc is nested under any undotted arc,
  or equivalently there is no pair $(i,j)$ with $i<j<\tau(j)<\tau(i)$
  and $j\in S$.
 \item[(b)] We say that $\al$ is \emph{costandard} if
  $\al=(\mu(\ov{J}),\ov{J}\sm J)$ for some sparse set $J$, where
  $\ov{J}$ is the lexicographically smallest sparse set of size $n$
  containing $J$, as before.
\end{itemize}
It is clear from our proof of Proposition~\ref{prop-BR-basis} that
$\{\tht_\al\st\al\text{ is costandard }\}$ is a basis for the dual of
$R(n)$.  On the other hand, it follows from the work of Russell and
Tymoczko that $\{\tht_\al\st\al\text{ is standard }\}$ is also a
basis.  We conjecture that $\al$ is standard iff it has the form
$(\mu(J^*),J^*\sm J)$ for some sparse set $J$, where $J^*$ is
lexicographically \emph{largest} among sparse sets of size $n$
containing $J$.  We have checked this by exhaustive computer search
for $n\leq 5$ but we have not attempted to find a proof.

\section{The subvarieties $X(n,K)$}

\begin{definition}
 Let $M$ be a module over $\C[t]/t^n$.  We say that $M$ is
 \emph{balanced} if it is isomorphic to $(\C[t]/t^p)^2$ for some
 $p\leq n$.
\end{definition}

We will investigate this and related concepts in much greater detail
in Section~\ref{sec-torsion}, but the above will do for the moment.

\begin{definition}\label{defn-XK}
 Let $K\sse I$ be a sparse set of size $n$, and let $\tau$ be the
 corresponding non-crossing matching (so $K=\{i\st \tau(i)>i\}$).  We
 say that a flag $\un{W}\in X(n)$ is \emph{$K$-balanced} if for each
 $i\not\in K$ the quotient $W_i/W_{\tau(i)-1}$ is balanced.  We let
 $X(n,K)\sse X(n)$ denote the space of $K$-balanced flags.  One can
 check that this is a closed subvariety of $X(n)$.  We also define
 \[ \bom_K\:X(n,K)\to \prod_{k\in K} \CP^1 \simeq (S^2)^n \]
 by
 \[ \bom_K(\un{W})_k = \bom_k(\un{W}) =
      \om(W_k\ominus W_{k-1})
 \]
 (where $\om$ is as in Definition~\ref{defn-om}).
\end{definition}

\begin{remark}
 In terms of the map $\bom\:X(n)\to(S^2)^{2n}$ defined in
 Proposition~\ref{prop-om-emb}, our map $\bom_K$ is the composite
 \[ X(n,K) \xra{\text{inc}} X(n)
     \xra{\bom} \prod_{i\in N_{2n}}\CP^1
     \xra{\text{proj}} \prod_{k\in K}\CP^1.
 \]
\end{remark}

\begin{proposition}\label{prop-XK}
 The map $\om_K\:X(n,K)\to\prod_{k\in K}\CP^1$ is a homeomorphism.
\end{proposition}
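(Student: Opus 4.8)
The plan is to build an explicit inverse to $\bom_K$. Given a point $(\ell_k)_{k\in K} \in \prod_{k\in K}\CP^1$, I want to reconstruct a unique $K$-balanced flag $\un{W} \in X(n,K)$ with $\bom_k(\un{W}) = \ell_k$ for all $k \in K$. The key observation is that the non-crossing matching $\tau$ organises the indices into nested pairs $\{k,\tau(k)\}$ with $k < \tau(k)$, and Lemma~\ref{lem-NCM-parity} tells us $\tau(k) - k$ is odd. For the innermost arcs (those with $\tau(k) = k+1$) the data $\ell_k$ specifies a line $L_k < \C^2$, and one builds the corresponding piece of the flag directly; for outer arcs one proceeds by induction on the nesting depth, at each stage using the $K$-balanced condition $W_{\tau(k)-1}/\ldots$ being balanced together with the module structure over $\C[t]/t^n$ to pin down $W_k$ from $W_{k-1}$, $W_{\tau(k)-1}$ and the line $\ell_k$. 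Concretely, once $W_{k-1}$ is known and the submodule $W_{\tau(k)-1}$ filling the gap is known, the balanced hypothesis forces $W_{\tau(k)}/W_{k-1}$ to be a rank-one free $\C[t]/t^p$-module, and $\om$ restricted to $W_k \ominus W_{k-1}$ is an isometric embedding by Proposition~\ref{prop-isometry} (applicable since $tW_k \le W_{k-1}$ by Lemma~\ref{lem-t-zero}), so the line $\ell_k \subseteq \C^2$ together with the module structure determines $W_k$ uniquely.

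The steps, in order: (1) set up the induction on nesting depth of arcs in $\tau$, reducing the reconstruction of the full flag to reconstructing each "block" $W_{k-1} < W_k < \cdots < W_{\tau(k)} $ from the boundary data $W_{k-1}$, $W_{\tau(k)}$ and the lines $\ell_j$ for $j \in K$ strictly inside the arc; (2) handle the base case of an innermost arc $\tau(k) = k+1$, where $W_{k}/W_{k-1}$ is a line in a two-dimensional space and is determined by $\ell_k$; (3) for the inductive step, use that $K$-balancedness makes $W_{\tau(k)}/W_{k-1} \cong (\C[t]/t^p)^2$, identify $W_k/W_{k-1}$ as the unique cyclic submodule of $t$-torsion mapping to the specified line under $\om$, and recurse into the sub-arcs; (4) check that the flag so constructed is genuinely $K$-balanced and lies in $X(n)$ — i.e. each $W_i$ is a $\C[t]$-submodule and the dimensions step up by one — and that $\bom_K$ of it returns the original tuple; (5) observe that continuity of the inverse is automatic, or alternatively invoke compactness: $X(n,K)$ is compact Hausdorff (closed subvariety) and $\prod_{k\in K}\CP^1$ is Hausdorff, so a continuous bijection is a homeomorphism, exactly as in Proposition~\ref{prop-om-emb}.

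The main obstacle I expect is step (3): making precise how the $K$-balanced condition, applied to all the nested quotients simultaneously, forces the intermediate spaces to be uniquely determined. One has to be careful that the recursion is consistent — that the sub-flag one builds inside an arc really does have $W_{\tau(k)-1}$ as its top space with the right module structure, and that "balanced" propagates correctly down the nesting tree. It may be cleanest to phrase this in terms of the module $t^{-?}V / \ldots$ language of unrolled flags (Definition~\ref{defn-unroll}), or to prove a slightly stronger statement by induction: that for any balanced module $M$ over $\C[t]/t^p$ of rank $2$, the $K'$-balanced flags in $M$ (for the induced matching $K'$ on the relevant index interval) are in bijection with tuples of lines in $\C^2$ via $\om$. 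Granting that strengthening, the surjectivity and injectivity of $\bom_K$ both fall out of the inductive construction, and the homeomorphism statement follows from compactness.
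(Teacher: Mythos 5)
Your proposal is on the right track and identifies the correct mechanism---Proposition~\ref{prop-isometry} to see $\om$ as an isometric embedding, the balanced condition to pin down $W_{\tau(k)}$ from $W_{k-1}$, and a compactness argument at the end---but it is organised quite differently from the paper and leaves substantive gaps, some of which you acknowledge yourself.

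The paper does a \emph{linear} induction on the flag length. It introduces intermediate varieties $X(n,K,m)$ of partial flags $(W_0,\dotsc,W_m)$ (Definition~\ref{defn-XKm}) and proves that the forgetful map $\pi_m\colon X(n,K,m)\to X(n,K,m-1)$ is an isomorphism when $m\notin K$ (Lemma~\ref{lem-pi-iso}: $W_m$ is forced to be $t^{-d}W_{m-2d}$ where $\tau(m)=m-2d+1$) and that $(\pi_m,\bom_m)$ is a homeomorphism onto $X(n,K,m-1)\tm\CP^1$ when $m\in K$ (Lemma~\ref{lem-pi-PU}: $W_m$ is parametrised by a line in $(t^{-1}W_{m-1})/W_{m-1}$, which $\om$ identifies with $\CP^1$). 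Composing all $2n$ steps gives the homeomorphism directly. You instead propose a \emph{tree-structured} recursion on the nesting depth of arcs, building each block between $W_{k-1}$ and $W_{\tau(k)}$ from its boundary data and the interior $\ell_j$'s. Both schemes can be made to work, but the paper's linear version avoids the consistency issue you flag in step (3): it never has to check that separately-built sub-blocks are mutually compatible, because each new $W_m$ is placed relative to all $W_i$ with $i<m$ at once.

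Beyond that organisational difference, the main gap in your proposal is in step (3) itself. You invoke Proposition~\ref{prop-isometry} to say $\om$ is an isometric \emph{embedding} on $W_k\ominus W_{k-1}$, which is true, but for the reconstruction you actually need $\om$ to be an isometric \emph{isomorphism} from $(t^{-1}W_{k-1})\ominus W_{k-1}$ onto $\C^2$, so that you can invert it and recover $W_k$ from $\ell_k$. This requires $\dim(t^{-1}W_{k-1})=\dim(W_{k-1})+2$, i.e.\ $W_{k-1}\leq tV(n)$, which is not automatic. The paper proves this via a counting argument (Lemma~\ref{lem-W-exponent}, which uses Lemma~\ref{lem-count-K}): for $m\in K$ one has $|K_{\leq m-1}|<n$, hence $t^{n-1}W_{m-1}=0$, hence $W_{m-1}\leq tV(n)$. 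An analogous dimension/torsion control is also needed to show $\dim(W_m)=m$ in the step $m\notin K$. These estimates are the real content of the proof and your proposal does not supply them; they would need to be threaded carefully through your nesting recursion, where the bookkeeping is heavier than in the paper's linear scheme.
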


The proof will follow after some preliminaries.
\begin{definition}\label{defn-XKm}
 For $0\leq m\leq 2n$ we let $X(n,K,m)$ be the space of partial flags
 \[ 0 = W_0 < W_1 < \dotsb < W_m \]
 such that
 \begin{itemize}
  \item[(a)] $\dim(W_i)=i$ for all $i$ with $0\leq i\leq m$.
  \item[(b)] $tW_i\leq W_{i-1}$ for all $i$ with $0<i\leq m$.
  \item[(c)] $W_i/W_{\tau(i)-1}$ is balanced for all $i\not\in K$ with
   $0<i\leq m$.
 \end{itemize}
 We let $\pi_m\:X(n,K,m)\to X(n,K,m-1)$ be the obvious projection.
\end{definition}

Note that $X(n,K,0)$ is a point and $X(n,K,2n)=X(n,K)$.  It will thus be
sufficient to prove that $\pi_m$ is an homeomorphism for $m\not\in K$,
and that
\[ (\pi_m,\bom_m)\:X(n,K,m) \to X(n,K,m-1)\tm \CP^1 \]
is an homeomorphism for $m\in K$.  This will be done in
Lemmas~\ref{lem-pi-iso} and~\ref{lem-pi-PU}.

\begin{lemma}\label{lem-count-K}
 Suppose that $1\leq m\leq 2n$ and $m\not\in K$, and put $p=\tau(m)$.
 Then
 \begin{itemize}
  \item[(i)] $p=m-2d+1$ for some $d>0$.
  \item[(ii)] $\tau$ preserves the intervals $\{p,\dotsc,m\}$
   and $\{p+1,\dotsc,m-1\}$.
  \item[(iii)] If $|K_{<m}|=r$ then $|K_{<p}|=r-d$.
  \item[(iv)] If $d>1$ then $m-1\not\in K$.
 \end{itemize}
\end{lemma}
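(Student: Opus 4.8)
We are given $m \notin K$ with $1 \le m \le 2n$, and $p = \tau(m)$. Since $m \notin K = \{i : \tau(i) > i\}$, we have $\tau(m) < m$, i.e. $p < m$. The plan is to exploit two facts established earlier: Lemma~\ref{lem-NCM-parity} (that $\tau(m) - m$ is odd) and the non-crossing property of $\tau$.

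\begin{proof}
Since $m\notin K$ we have $p=\tau(m)<m$. By Lemma~\ref{lem-NCM-parity}, $m-p$ is odd, so $m-p=2d-1$ for some integer $d$, and $d>0$ since $p<m$; this gives $p=m-2d+1$, proving~(i).

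For~(ii), the non-crossing condition applied to the pair $\{p,m\}$ (with $p=\tau(m)<m$) forces $\tau$ to map the open interval $A=\{p+1,\dotsc,m-1\}$ to itself: if some $i\in A$ had $\tau(i)\notin A$ then, since $\tau(i)\neq i$, we would have $\tau(i)<p$ or $\tau(i)>m$, and in either case one checks directly (taking the pair $\{i,m\}$ or $\{p,i\}$ in the appropriate order) that the four indices $p<i<m$ and $\tau(i)$ would violate axiom~(b) of Definition~\ref{defn-matching}. Hence $\tau(A)=A$, and since $\tau$ also swaps $p$ and $m$, it preserves $\{p,\dotsc,m\}=A\amalg\{p,m\}$ as well.

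For~(iii), note that $\tau$ restricts to a fixed-point-free involution on the interval $B=\{p,\dotsc,m\}$ of size $m-p+1=2d$. The number of indices $i\in B$ with $\tau(i)>i$ is therefore exactly $d$ (one from each of the $d$ orbits), i.e. $|K\cap B|=d$. Now $K_{<m}=K_{<p}\amalg(K\cap\{p,\dotsc,m-1\})$; since $m\notin K$, the second piece equals $K\cap B$ and has size $d$, so $|K_{<m}|=|K_{<p}|+d$. Thus $|K_{<p}|=r-d$.

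For~(iv), suppose $d>1$, so $m-1$ lies in the nonempty interval $A=\{p+1,\dotsc,m-1\}$. By~(ii), $\tau(m-1)\in A$, and in particular $\tau(m-1)\le m-1$. If $\tau(m-1)=m-1$ this contradicts that $\tau$ is fixed-point-free; hence $\tau(m-1)<m-1$, which means $m-1\notin K$, as required.
\end{proof}

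The main point requiring care is the non-crossing argument in~(ii): one must handle both possible ways the offending index $\tau(i)$ could escape the interval and check that each produces a genuine crossing of the prescribed form $i'<j'<\tau(i')<\tau(j')$. Everything else is bookkeeping with the orbit structure of the involution on the interval $\{p,\dotsc,m\}$.
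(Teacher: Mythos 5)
Your proof is correct and essentially matches the paper's: the non-crossing argument for interval preservation in~(ii), the orbit count for~(iii), and the ``largest element of $P$'' observation for~(iv) are all the same. The only organizational difference is that you establish~(i) by invoking Lemma~\ref{lem-NCM-parity} directly, whereas the paper derives it afterwards from~(ii) (via evenness of $|P|$ and $|Q|$); these amount to the same parity argument, and either order is fine.
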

\begin{proof}
 As $m\not\in K$ we must have $p<m$.  The non-crossing condition
 implies that $\tau$ preserves the set $P=\{p+1,\dotsc,m-1\}$.  As
 $\tau$ exchanges $p$ and $m$ we see that it also preserves the set
 $Q=\{p,\dotsc,m\}$.  As $\tau$ is an involution without fixed points
 we see that $|P|$ and $|Q|$ must be even, so $p=m-2d+1$ for some
 $d>0$.  Using the description $K=\{k\st\tau(k)>k\}$ we also see that
 $|Q\cap K|=|Q|/2=d$.  It follows that
 \[ |K_{<p}| = |K_{<m}| - |Q\cap K| = r-d. \]
 Finally, if $d>1$ then $m-1$ is the largest element in the nonempty
 interval $P$, and $\tau$ preserves $P$, so we must have
 $\tau(m-1)<m-1$, so $m-1\not\in K$.
\end{proof}

\begin{lemma}\label{lem-W-exponent}
 Let $\un{W}$ be a point in $X(n,K,m)$ and put $r=|K_{\leq m}|$.  Then
 $t^rW_m=0$.
\end{lemma}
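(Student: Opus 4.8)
*Let $\un{W}$ be a point in $X(n,K,m)$ and put $r=|K_{\leq m}|$. Then $t^rW_m=0$.*

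The plan is to prove the statement by induction on $m$, with the base case $m=0$ being trivial ($W_0=0$ and $r=0$). For the inductive step I would distinguish the two cases $m\in K$ and $m\notin K$, mirroring the case split in Lemma~\ref{lem-count-K} and the eventual analysis of the maps $\pi_m$.

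When $m\in K$ the bookkeeping is easy: here $r=|K_{\leq m}|=|K_{\leq m-1}|+1$, and property~(b) of Definition~\ref{defn-XKm} gives $tW_m\leq W_{m-1}$, so $t^rW_m=t^{|K_{\leq m-1}|}(tW_m)\leq t^{|K_{\leq m-1}|}W_{m-1}$, which is zero by the inductive hypothesis for $m-1$. When $m\notin K$ I would set $p=\tau(m)$ and invoke Lemma~\ref{lem-count-K}: it gives $p=m-2d+1$ for some $d>0$ and, since $|K_{<m}|=|K_{\leq m}|=r$, also $|K_{\leq p-1}|=|K_{<p}|=r-d$. Property~(c) says $W_m/W_{p-1}$ is balanced, and a dimension count shows it has $\C$-dimension $m-(p-1)=2d$, hence is isomorphic to $(\C[t]/t^d)^2$; in particular $t^d$ annihilates it, i.e.\ $t^dW_m\leq W_{p-1}$. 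Then $t^rW_m=t^{r-d}(t^dW_m)\leq t^{r-d}W_{p-1}=t^{|K_{\leq p-1}|}W_{p-1}$, which vanishes by the inductive hypothesis applied to $p-1<m$.

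I do not expect any serious obstacle here: the content is carried entirely by Lemma~\ref{lem-count-K} (for the arithmetic of the $K$-counts) together with the elementary observation that a balanced module of $\C$-dimension $2d$ is killed by $t^d$. The only thing to watch is that the exponents stay non-negative, which is automatic since $r-d=|K_{<p}|\geq 0$.
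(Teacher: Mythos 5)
Your proof is correct and follows essentially the same inductive argument as the paper: the same case split on $m\in K$ versus $m\notin K$, the same use of Lemma~\ref{lem-count-K}(iii) to track $|K_{<p}|=r-d$, and the same observation that a balanced quotient of dimension $2d$ is killed by $t^d$. (Your $p-1$ is the paper's $m-2d$, and your $|K_{\leq p-1}|$ is the paper's $s$; these are just notational choices.)
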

\begin{proof}
 First suppose that $m\in K$, so $|K_{\leq m-1}|=r-1$.  By induction we
 have $t^{r-1}W_{m-1}=0$, and by axiom~(b) we have $tW_m\leq W_{m-1}$,
 so $t^rW_m=0$ as required.

 Suppose instead that $m\not\in K$.  Then we have $\tau(m)=m-2d+1$ for
 some $d>0$, and $W_m/W_{m-2d}$ is balanced, so
 $t^dW_m\leq W_{m-2d}$.  By induction we have $t^sW_{m-2d}=0$, where
 $s=|K_{\leq m-2d}|$, so $t^{d+s}W_m=0$.  Moreover,
 Lemma~\ref{lem-count-K}(iii) gives $s=r-d$, so $t^rW_m=0$ as claimed.
\end{proof}

\begin{lemma}\label{lem-interval}
 Let $\un{W}$ be a point in $X(n,K,m)$.  Suppose that $0\leq p<q\leq m$
 and that $\tau$ preserves the interval $\{p+1,\dotsc,q\}$.  Then
 $q-p$ is even and $W_q/W_p$ is balanced (so $t^{(q-p)/2}W_q\leq W_p$).
\end{lemma}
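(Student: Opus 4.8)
The plan is to deduce Lemma~\ref{lem-interval} from the combinatorial structure of $\tau$ together with Lemma~\ref{lem-W-exponent}. First I would observe that since $\tau$ restricts to a fixed-point-free involution of the interval $\{p+1,\dots,q\}$, that interval has even size, so $q-p=2e$ for some $e\geq 0$; this also tells us that $e$ of the elements of $\{p+1,\dots,q\}$ lie in $K$ (namely the smaller element of each $\tau$-orbit). The case $e=0$ is trivial, so assume $e>0$.

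The heart of the argument is to control the module $M=W_q/W_p$, which is a module over $\C[t]/t^e$ of $\C$-dimension $2e$ (the dimension count uses axiom~(a) of Definition~\ref{defn-XKm}, and $t^eM=0$ will follow below). I would first establish $t^eW_q\leq W_p$: applying Lemma~\ref{lem-W-exponent} at index $q$ gives $t^{r}W_q=0$ where $r=|K_{\leq q}|$, and applying it at index $p$ gives $t^{s}W_p=0$ where $s=|K_{\leq p}|$; since the $e$ indices of $K$ lying strictly between $p$ and $q$ account for exactly $r-s=e$... but this only yields $t^{s+e}W_q = t^rW_q = 0$, which controls $W_q$ absolutely, not $W_q$ relative to $W_p$. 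The cleaner route is to peel the interval apart using its $\tau$-orbit structure: among $\{p+1,\dots,q\}$, consider the innermost configuration. If $q\notin K$, then $\tau(q)=q-2d+1$ for some $d\geq 1$ with $\{q-2d+1,\dots,q\}$ a $\tau$-stable sub-interval (by the non-crossing condition), and balancedness of $W_q/W_{q-2d}$ gives $t^dW_q\leq W_{q-2d}$; if $q\in K$ then $\tau(q)>q$, but $\tau$ preserves $\{p+1,\dots,q\}$ forces $\tau(q)\leq q$, a contradiction, so in fact $q\notin K$ always. So we may strip off the $\tau$-orbit $\{q-2d+1,\dots,q\}$ at the top, recurse on the stable sub-interval $\{p+1,\dots,q-2d\}$ to learn $W_{q-2d}/W_p$ is balanced (so $t^{e-d}W_{q-2d}\leq W_p$), and compose to get $t^eW_q\leq t^{e-d}W_{q-2d}\leq W_p$. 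This is the induction on $q-p$.

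Once $t^eW_q\leq W_p$ is known, $M=W_q/W_p$ is a module over $\C[t]/t^e$ with $\dim_\C M=2e$. To see it is balanced, i.e. isomorphic to $(\C[t]/t^e)^2$, I would use the classification of finitely generated torsion modules over the PID $\C[t]$ — promised to be developed in Section~\ref{sec-torsion} but elementary here — which writes $M\cong\bigoplus_j \C[t]/t^{e_j}$ with $e_1\geq\cdots\geq e_k\geq 1$ and $\sum e_j=2e$, $e_1\leq e$. The submodule $W_{q-2d}/W_p\subseteq M$ has $t^{e-d}$ acting as zero and $\C$-dimension $2(e-d)$, and the quotient $W_q/W_{q-2d}\cong(\C[t]/t^d)^2$ by the balancedness hypothesis feeding the recursion; a short bookkeeping argument with these exact dimensions and exponents (each $e_j$ satisfies $e_j\leq d+(e-d)=e$, and the total dimension forces at least two summands of full length $e$, hence exactly two) pins down $M\cong(\C[t]/t^e)^2$. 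The main obstacle I anticipate is precisely this last step: extracting balancedness of the middle quotient $M$ from balancedness of the top and bottom pieces is a genuine extension problem for $\C[t]/t^n$-modules, and one must rule out "staircase" modules like $\C[t]/t^{e+1}\oplus\C[t]/t^{e-1}$ that have the right dimension and the right top quotient but are not balanced — this is where the precise exponent bounds coming from the $\tau$-orbit decomposition (not just a crude dimension count) are essential.
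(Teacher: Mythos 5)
Your first half — the inductive peeling of the top $\tau$-orbit — matches the paper's argument exactly. You correctly argue that $\tau$ preserving $\{p+1,\dots,q\}$ forces $q\notin K$, that $\tau(q)=q-2d+1$ gives a $\tau$-stable sub-interval at the top with $W_q/W_{q-2d}$ balanced by axiom~(c), and that recursion on $\{p+1,\dots,q-2d\}$ plus chaining the $t$-power inclusions yields $t^{(q-p)/2}W_q\leq W_p$.

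The gap is in the final deduction of balancedness, and it is a real one. You write that the classification yields $M\cong\bigoplus_j\C[t]/t^{e_j}$ with $e_1\leq e$ and $\sum e_j=2e$, and then assert that ``the total dimension forces at least two summands of full length $e$, hence exactly two.'' That inference does not follow. For example $M\cong\C[t]/t^e\oplus\C[t]/t^{e-1}\oplus\C[t]/t$ has $\dim_\C M=2e$, is killed by $t^e$, satisfies all the exponent bounds you list, and receives a surjection onto $(\C[t]/t^d)^2$ whenever $d\leq e-1$ — yet it is not balanced. Your parenthetical worry about ``staircase modules like $\C[t]/t^{e+1}\oplus\C[t]/t^{e-1}$'' is aimed at the wrong danger: those are already excluded by $t^eM=0$. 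The genuine obstruction is modules with three or more cyclic summands. What you are missing is the observation the paper uses at precisely this point: $W_q/W_p$ is a subquotient of $V(n)=(\C[t]/t^n)^2$, hence has at most two cyclic summands (it is \emph{thin} in the terminology of Section~\ref{sec-torsion}, Remark~\ref{rem-rank}). Once you know $M\cong\C[t]/t^a\oplus\C[t]/t^b$ with $\max(a,b)\leq e$ and $a+b=2e$, the conclusion $a=b=e$ is immediate and the ``extension problem'' you were bracing for dissolves. So the structure of your proof is right, but the bookkeeping step as written is not valid; you need to inject the thinness of $W_q/W_p$ to close it.
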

\begin{proof}
 As $\tau$ preserves the interval and has no fixed points we must have
 $\tau(q)<q$, so $q\not\in K$.  Put $r=\tau(q)-1$ so $p\leq r<q$.  By
 axiom~(c) we see that $W_q/W_r$ is balanced, so $q-r$ is even and
 $t^{(q-r)/2}W_q\leq W_r$.  If $r=p$ then we are done.  Otherwise, the
 non-crossing condition implies that $\tau$ preserves the interval
 $\{p+1,\dotsc,r\}$ so we may assume by induction that $W_r/W_p$ is
 balanced, so $r-p$ is even and $t^{(r-p)/2}W_r\leq W_p$.  It follows
 that $t^{(q-p)/2}W_q\leq W_p$.  As $W_q/W_p$ is a subquotient of
 $V(n)$ it must be isomorphic to $\C[t]/t^a\oplus\C[t]/t^b$ for some
 $a$ and $b$.  As $\dim(W_q/W_p)=q-p$ and $t^{(q-p)/2}(W_q/W_p)=0$ we
 must have $a=b=(q-p)/2$, so $W_q/W_p$ is balanced.  (Some of this is
 explained in more detail in Section~\ref{sec-torsion}.)
\end{proof}

\begin{lemma}\label{lem-pi-iso}
 Suppose that $m\not\in K$, so $\tau(m)=m-2d+1$ for some $d>0$.  Then
 $\pi_m\:X(n,K,m)\to X(n,K,m-1)$ is an isomorphism of varieties.
 Moreover, if $d>1$ then for $(W_0,\dotsc,W_m)\in X(n,K,m)$ we have
 $t^{n-d}W_{m-2d}=0$ and $W_m=t^{-d}W_{m-2d}$ and
 $W_{m-1}=t^{1-d}W_{m-2d+1}$.
\end{lemma}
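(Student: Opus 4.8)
The plan is to prove that, when $m\notin K$, the top space of a flag in $X(n,K,m)$ is determined by the rest: writing $p=\tau(m)$ and using Lemma~\ref{lem-count-K}(i) to get $p=m-2d+1$ with $d>0$ (so $p-1=m-2d\geq 0$, and $p\in K$ since $\tau(p)=m>p$), I claim that $W_m=t^{-d}W_{p-1}$, where $t^{-d}U$ denotes the preimage $\{v\in V(n)\st t^dv\in U\}$ (the only sensible meaning, $t$ not being invertible on $V(n)$). Granting this, $\pi_m$ is a bijection with the evidently algebraic inverse $\un{W}\mapsto(\un{W},t^{-d}W_{p-1})$. Throughout I set $r=|K_{\leq m}|=|K_{<m}|$, so that Lemma~\ref{lem-count-K}(iii) gives $|K_{<p}|=r-d$ and hence $|K_{\leq p}|=r-d+1$, while $r\leq|K|=n$.

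The one algebraic input I would isolate first is: a $\C[t]$-submodule $M\leq V(n)$ with $t^sM=0$ (for $0\leq s\leq n$) satisfies $M\leq t^{n-s}V(n)$ --- write $M\cong\bigoplus_i\C[t]/t^{a_i}$, note $a_i\leq s$, and observe that a generator $w=(u,v)$ of a cyclic summand with $t^{a_i}w=0$ has $u,v\in t^{n-a_i}(\C[t]/t^n)\sse t^{n-s}(\C[t]/t^n)$. Combined with Lemma~\ref{lem-W-exponent} this yields the two dimension counts I need. Since $t^{r-d}W_{p-1}=0$ (Lemma~\ref{lem-W-exponent} applied to the truncation $(W_0,\dots,W_{p-1})$) and $r-d\leq n-d$, we get $W_{p-1}\leq t^dV(n)$, so $t^{-d}W_{p-1}$ surjects under $t^d$ onto $W_{p-1}$ with kernel $\ker(t^d\colon V(n)\to V(n))$ of dimension $2d$; hence $\dim t^{-d}W_{p-1}=(p-1)+2d=m$. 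Likewise $t^{r-d+1}W_p=0$ with $r-d+1\leq n-d+1$ gives $W_p\leq t^{d-1}V(n)$ and $\dim t^{1-d}W_p=p+2(d-1)=m-1$.

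Now the two directions. If $(W_0,\dots,W_m)\in X(n,K,m)$, axiom~(c) of Definition~\ref{defn-XKm} at $i=m$ makes $W_m/W_{p-1}$ balanced of dimension $2d$, hence $\cong(\C[t]/t^d)^2$, hence $t^dW_m\leq W_{p-1}$, i.e.\ $W_m\leq t^{-d}W_{p-1}$; the dimension count forces equality, so $W_m$ is determined by $\pi_m(W_0,\dots,W_m)$ and $\pi_m$ is injective. Conversely, starting from $\un{W}=(W_0,\dots,W_{m-1})\in X(n,K,m-1)$, I put $W_m=t^{-d}W_{p-1}$ and check the axioms at $i=m$: (a) is the dimension count; Lemma~\ref{lem-interval} applied to the $\tau$-invariant interval $\{p+1,\dots,m-1\}$ (Lemma~\ref{lem-count-K}(ii)) gives $t^{d-1}W_{m-1}\leq W_p$, whence $t^dW_{m-1}\leq tW_p\leq W_{p-1}$ shows $W_{m-1}\leq W_m$, and together with $\dim t^{1-d}W_p=m-1$ shows $W_{m-1}=t^{1-d}W_p$, from which $tW_m\leq W_{m-1}$ follows since $t^dW_m\leq W_{p-1}\leq W_p$; finally $W_m/W_{p-1}$ is a subquotient of $V(n)$, hence $\cong\C[t]/t^a\oplus\C[t]/t^b$, and being of dimension $2d$ and killed by $t^d$ it is $(\C[t]/t^d)^2$, so axiom~(c) holds; the other axioms are inherited. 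Thus $\pi_m$ is a bijection with inverse $\un{W}\mapsto(\un{W},t^{-d}W_{p-1})$; this inverse is a morphism because $U\mapsto t^{-d}U=\ker(V(n)\xra{t^d}V(n)\to V(n)/U)$ is algebraic on the closed locus, containing the $W_{p-1}$-image of $X(n,K,m-1)$, where $U\leq t^dV(n)$ and the output dimension is constantly $m$. Hence $\pi_m$ is an isomorphism of varieties, and the ``moreover'' clause simply collects $W_m=t^{-d}W_{m-2d}$, $W_{m-1}=t^{1-d}W_{m-2d+1}$ and $t^{n-d}W_{m-2d}=0$ (the last from $t^{r-d}W_{p-1}=0$, $r-d\leq n-d$, and $W_{m-2d}=W_{p-1}$).

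The main obstacle is bookkeeping rather than depth: one must carry the torsion exponents from Lemma~\ref{lem-W-exponent} and Lemma~\ref{lem-count-K} carefully enough that they land at $\leq n-d$ (resp.\ $\leq n-d+1$), so that the counts $\dim t^{-d}W_{p-1}=m$ and $\dim t^{1-d}W_p=m-1$ come out exactly; and one should confirm that the degenerate case $d=1$ (empty interval $\{p+1,\dots,m-1\}$, $W_{m-1}=W_p$, vacuous ``moreover'' formulas) is not an exception --- the argument above being uniform in $d\geq 1$.
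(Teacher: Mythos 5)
Your proof is correct and takes essentially the same approach as the paper: it identifies $W_m=t^{-d}W_{m-2d}$ as the unique possible extension, verifies the dimension via Lemmas~\ref{lem-count-K} and~\ref{lem-W-exponent}, and controls $W_{m-1}$ via Lemma~\ref{lem-interval}. The only difference is organisational: the paper splits into the cases $d=1$ and $d>1$ (using axiom~(b) directly for $d=1$ and Lemma~\ref{lem-interval} for $d>1$), whereas you handle both uniformly by noting that the $d=1$ specialisations of the relevant inequalities are trivially true; both routes rest on the same key lemmas.
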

\begin{proof}
 Consider a point $\un{W}\in X(n,K,m-1)$.  As $m\not\in K$ we have
 $\tau(m)=m-2d+1$ for some $d>0$.  The only possible way to
 construct a preimage in $X(n,K,m)$ is to take
 \[ W_m = t^{-d}W_{m-2d} = \{v\in V(n)\st t^dv\in W_{m-2d}\}. \]
 We must check that this satisfies $tW_m\leq W_{m-1}\leq W_m$ and
 $\dim(W_m)=m$.  Put $r=|K_{\leq m}|\leq |K|=n$.  By
 Lemmas~\ref{lem-count-K} and~\ref{lem-W-exponent} we have $r\geq d$
 and $t^{r-d}W_{m-2d}=t^rW_m=0$, so $t^{n-d}W_{m-2d}=0$, so
 $W_{m-2d}\leq t^dV(n)$.  This means we have a short exact sequence
 \[ t^{n-d}V(n) \xra{} W_m \xra{t^d} W_{m-2d} \]
 showing that $\dim(W_m)=\dim(W_{m-2d}) + 2d = m$.

 For the rest of the proof we separate the cases $d=1$ and $d>1$.
 Suppose first that $d=1$, so $\tau(m)=m-1$ and $W_m=t^{-1}W_{m-2}$.
 As the original sequence $\un{W}$ is in $X(n,K,m-1)$ we have
 $tW_{m-1}\leq W_{m-2}$, which gives $W_{m-1}\leq t^{-1}W_{m-2}=W_m$.
 As $\un{W}\in X(n,K,m-1)$ we also have $W_{m-2}\leq W_{m-1}$, so
 $tW_m\leq W_{m-2}\leq W_{m-1}$.  It follows that the extended
 sequence $\un{W}^+=(W_0,\dotsc,W_m)$ lies in $X(n,K,m)$.

 Suppose instead that $d>1$.  Now $\tau$ preserves the nonempty
 interval $\{m-2d+2,\dotsc,m-1\}$, so by Lemma~\ref{lem-interval} we
 have $W_{m-1}=t^{1-d}W_{m-2d+1}$.  Thus, if $v\in W_{m-1}$ we have
 $t^dv=tt^{d-1}v\in tW_{m-2d+1}\leq W_{m-2d}$, so
 $v\in t^{-d}W_{m-2d}=W_m$.  On the other hand, for $v\in W_m$ we have
 $t^{d-1}tv=t^dv\in W_{m-2d+1}$ so $tv\in t^{1-d}W_{m-2d+1}=W_{m-1}$.
 This proves that $tW_m\leq W_{m-1}<W_m$ as required, so again
 $\un{W}^+\in X(n,K,m)$.

 We now see that $\pi_m$ is a bijection.  The key ingredient in the
 inverse is the map
 $\text{Grass}_{m-2d}(t^dV(n))\to\text{Grass}_m(V(n))$ given by
 $U\mapsto t^{-d}U$.  It is standard that this is a morphism of
 varieties, and it follows that $\pi_m$ is an isomorphism of
 varieties.
\end{proof}

\begin{lemma}\label{lem-pi-PU}
 Suppose that $m\in K$.  Then there is a two-dimensional algebraic
 vector bundle $U$ over $X(n,K,m-1)$ with fibres
 $U_{\un{W}}=(t^{-1}W_{m-1})/W_{m-1}$, and $X(n,K,m)$ is isomorphic (as
 a variety) to the associated projective bundle $PU$.  Moreover, $U$ is
 topologically trivial, so $X(n,K,m)$ is homeomorphic to
 $X(n,K,m-1)\tm\CP^1$.
\end{lemma}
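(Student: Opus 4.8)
Lemma \ref{lem-pi-PU} asserts two things: an algebraic isomorphism $X(n,K,m)\simeq PU$ for the rank-2 bundle $U$ with fibres $(t^{-1}W_{m-1})/W_{m-1}$, and the topological triviality of $U$. Let me sketch how I would prove each.

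The plan is to realise $\pi_m$ as a projective bundle: for $m\in K$ the only extra datum needed to extend a flag in $X(n,K,m-1)$ to one in $X(n,K,m)$ is a choice of line $W_m/W_{m-1}$ inside $(t^{-1}W_{m-1})/W_{m-1}$, so $X(n,K,m)$ should be $PU$, and afterwards one trivialises $U$ topologically by means of $\om$. I will carry this out in four steps: (1)~the fibres of $U$ have dimension exactly $2$; (2)~$U$ is an algebraic bundle; (3)~$X(n,K,m)\cong PU$ as varieties; (4)~$U$ is topologically trivial.

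For step~(1), fix $\un{W}\in X(n,K,m-1)$ and put $r=|K_{\leq m-1}|$. Since $m\in K$ we have $r+1=|K_{\leq m}|\leq|K|=n$, hence $r\leq n-1$. By Lemma~\ref{lem-W-exponent} we have $t^rW_{m-1}=0$, so $W_{m-1}$ lies in the kernel of $t^r\:V(n)\to V(n)$, which equals $t^{n-r}V(n)\sse tV(n)$. In particular $W_{m-1}$ lies in the image of multiplication by $t$, so $t\:t^{-1}W_{m-1}\to W_{m-1}$ is surjective; its kernel is $\ker(t\:V(n)\to V(n))=t^{n-1}V(n)$, which is two-dimensional and is automatically contained in $t^{-1}W_{m-1}$. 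Therefore $\dim(t^{-1}W_{m-1})=(m-1)+2$, and $\dim\bigl((t^{-1}W_{m-1})/W_{m-1}\bigr)=2$ as required.

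For steps~(2) and~(3), note that over $X(n,K,m-1)$ there is the tautological rank-$(m-1)$ subbundle $\mathcal{W}_{m-1}$ of the constant bundle $\underline{V(n)}$, and that $t^{-1}\mathcal{W}_{m-1}$ is the kernel of the composite $\underline{V(n)}\xrightarrow{t}\underline{V(n)}\to\underline{V(n)}/\mathcal{W}_{m-1}$. The rank computation in step~(1) shows this composite has constant rank $2n-m-1$, so its kernel is a subbundle, and it contains $\mathcal{W}_{m-1}$ because $tW_{m-1}\leq W_{m-2}\leq W_{m-1}$. Hence $U=(t^{-1}\mathcal{W}_{m-1})/\mathcal{W}_{m-1}$ is a quotient of algebraic subbundles of $\underline{V(n)}$ and so is an algebraic vector bundle of rank $2$. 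The identification $X(n,K,m)\cong PU$ is then routine bookkeeping with tautological bundles: for $\un{W}\in X(n,K,m-1)$ the fibre of $\pi_m$ over $\un{W}$ consists exactly of the subspaces $W_m$ with $W_{m-1}<W_m\leq t^{-1}W_{m-1}$ and $\dim W_m=m$ --- conditions~(a) and~(b) of Definition~\ref{defn-XKm} are immediate, and condition~(c) for indices $\leq m$ involves only spaces $W_i$ with $i<m$ since $m\in K$ --- that is, the fibre is the set of lines in $U_{\un{W}}$; one then checks that $\un{W}\mapsto(\pi_m(\un{W}),\,W_m/W_{m-1})$ and the evident tautological construction in the reverse direction are mutually inverse morphisms of varieties.

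For step~(4), apply Proposition~\ref{prop-isometry} with $T=W_{m-1}$ and $U'=t^{-1}W_{m-1}$: these are $\C[t]$-submodules with $tU'=W_{m-1}\leq T$ (using $W_{m-1}\sse tV(n)$ from step~(1)), so $\om$ restricts to an isometric embedding of $P=(t^{-1}W_{m-1})\ominus W_{m-1}$ into $\C^2$, and since $\dim P=2$ this is an isometric isomorphism. These isomorphisms depend continuously on $\un{W}$ and assemble into a topological trivialisation $U\cong\underline{\C^2}$; hence $X(n,K,m)\cong PU$ is homeomorphic to $X(n,K,m-1)\tm\CP^1$, and tracing through the identifications one finds the homeomorphism is $(\pi_m,\bom_m)$. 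I expect the real work to lie in steps~(1)--(3) --- pinning down that the fibres have dimension exactly $2$, and that the bijection $X(n,K,m)\leftrightarrow PU$ is algebraic in both directions --- after which the ``moreover'' clause is essentially immediate from Proposition~\ref{prop-isometry}. Note that the trivialisation is only topological, since forming orthogonal complements is not algebraic (cf.\ Remark~\ref{rem-not-algebraic}); this is precisely why $X(n,K,m)$ and $X(n,K,m-1)\tm\CP^1$ need not be isomorphic as varieties.
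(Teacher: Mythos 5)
Your proposal is correct and follows essentially the same route as the paper: obtain $W_{m-1}\leq tV(n)$ from Lemma~\ref{lem-W-exponent}, deduce $\dim(t^{-1}W_{m-1})=m+1$, identify the fibre of $\pi_m$ with lines in $U_{\un{W}}$, and trivialise topologically via $\om$ and Proposition~\ref{prop-isometry}. The only difference is that you spell out the ``standard techniques'' (realising $U$ as a quotient of the constant-rank kernel subbundle) which the paper leaves implicit, and you add the observation about non-algebraicity of the trivialisation, matching Remark~\ref{rem-not-algebraic}.
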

\begin{proof}
 As $m\in K$ we have $|K_{\leq m-1}|<|K|=n$, so
 Lemma~\ref{lem-W-exponent} gives $t^{n-1}W_{m-1}=0$ and so
 $W_{m-1}\leq tV(n)$.  Given this we see that
 $\dim(t^{-1}W_{m-1})=m+1$ and standard techniques show that there is
 an algebraic vector bundle $U$ with fibres as described.  For any
 $\un{W}\in X(n,K,m-1)$ we see that the preimages in $X(n,K,m)$ biject
 with the spaces $W_m$ satisfying $W_{m-1}<W_m<t^{-1}W_{m-1}$, and
 thus with the one-dimensional subspaces of $U_{\un{W}}$.  This gives
 a bijection from $X(n,K,m)$ to $PU$, and again it is a standard piece
 of algebraic geometry to see that this is an isomorphism of
 varieties.  In the topological category we can identify $U_{\un{W}}$
 with $(t^{-1}W_{m-1})\ominus W_{m-1}$, and
 Proposition~\ref{prop-isometry} tells us that the map $\om$ gives an
 isomorphism from this space to $\C^2$.  This gives the required
 trivialisation of $U$, proving that $X(n,K,m)=X(n,K,m-1)\tm\CP^1$.
\end{proof}

\begin{proof}[Proof of Proposition~\ref{prop-XK}]
 Combine Lemmas~\ref{lem-pi-iso} and~\ref{lem-pi-PU}.
\end{proof}

\begin{definition}\label{defn-chi}
 We define $\chi\:\CP^1\to\CP^1$ by $\chi(L)=L^\perp$, so
 $\chi([z:w])=[-\ov{w}:\ov{z}]$.  There is a standard homeomorphism
 $f\:\CP^1\to S^2$ given by
 \[ f([z:w]) =
     (2\text{Re}(z\ov{w}),2\text{Im}(z\ov{w}),|w|^2-|z|^2)/
       (|w|^2+|z|^2).
 \]
 If we use this to identify $\CP^1$ with $S^2$, we find that
 $\chi(u)=-u$.
\end{definition}

\begin{lemma}\label{lem-tau-chi}
 Suppose that $\un{W}\in X(n,K)$.  Then for $1\leq m\leq 2n$ we have
 $\bom_{\tau(m)}(\un{W})=\chi(\bom_m(\un{W}))$.
\end{lemma}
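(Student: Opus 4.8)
The plan is to show that the line bundle $L_m$ and the line bundle $L_{\tau(m)}$ (pulled back appropriately) are related by the fibrewise orthogonal-complement operation $\chi$ on $\CP^1$. Concretely, I want to identify, for $\un{W}\in X(n,K)$, the two lines $\bom_m(\un{W})=\om(W_m\ominus W_{m-1})$ and $\bom_{\tau(m)}(\un{W})=\om(W_{\tau(m)}\ominus W_{\tau(m)-1})$ inside $\C^2$, and prove they are orthogonal. By symmetry (replacing $m$ by $\tau(m)$ if necessary, since $\chi$ is an involution and $\chi(u)=-u$ under the standard identification with $S^2$), it suffices to treat the case $m\in K$, so $\tau(m)>m$; write $p=\tau(m)$, so $p=m+2d-1$ for some $d>0$ by Lemma~\ref{lem-count-K} applied at $p\not\in K$ (with the roles suitably transposed). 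Then $\tau$ preserves the interval $\{m+1,\dotsc,p-1\}$ and the interval $\{m,\dotsc,p\}$.

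First I would use Lemma~\ref{lem-interval} to control the balanced subquotients: since $\tau$ preserves $\{m+1,\dotsc,p-1\}$ we get that $W_{p-1}/W_m$ is balanced, so $t^{d-1}W_{p-1}\leq W_m$; and since $\tau$ preserves $\{m,\dotsc,p\}$ we get $W_p/W_{m-1}$ balanced, so $t^dW_p\leq W_{m-1}$. Next I would set $U=W_p\ominus W_{p-1}$ and $T=W_m\ominus W_{m-1}$, so that $\bom_p(\un{W})=\om(U)$ and $\bom_m(\un{W})=\om(T)$, both isometric images of one-dimensional spaces in $\C^2$ by Proposition~\ref{prop-isometry} (using Lemma~\ref{lem-t-zero}). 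The key computation is to show $\om(U)\perp\om(T)$ in $\C^2$. For this I would pick a generator $u\in U$ and a generator $v\in T$ and expand $\ip{\om(u),\om(v)}$ using the identity from the proof of Proposition~\ref{prop-isometry}:
\[ \ip{\om(u),\om(v)} = \ip{u,v} + \sum_{k\geq 1}\bigl(\ip{t^ku,v}+\ip{u,t^kv}\bigr). \]
The point is that $u\in W_p$, and $t^dW_p\leq W_{m-1}$, while $v\in W_m\ominus W_{m-1}\leq W_{m-1}^\perp$; so for $k\geq d$ the term $\ip{t^ku,v}=0$. For $1\leq k<d$, $t^ku\in t^kW_p$; I would check that $t^ku\in W_{p-2k}$ (a balanced-subquotient statement: $W_p/W_{p-2k}$ is balanced since $\tau$ preserves $\{p-2k+1,\dotsc,p\}$, which it does because $p-2k+1\geq m+1$), and that for $1\leq k<d$ we have $p-2k\geq m$, so $t^ku\in W_{p-2k}\leq$ (something containing $W_m$)—no wait; more carefully, $t^ku\in W_{p-2k}$ and $p-2k\geq m-1$ forces $t^ku\in W_{p-2k}$; I need it orthogonal to $v\in W_m\ominus W_{m-1}$, so I want $t^ku\in W_{m-1}$, i.e.\ $p-2k\leq m-1$, i.e.\ $k\geq d$—so this range does NOT immediately die, and I must instead pair $v$ rather than $u$ across the balanced structure. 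The cleaner route: also use $u\in W_p\ominus W_{p-1}\leq W_{p-1}^\perp$ and $t^kv\in t^kW_m$; since $t^{d-1}W_{p-1}\leq W_m\leq W_{p-1}$ (as $m\leq p-1$), and more relevantly $t^kW_m\leq W_{m-2k}\leq W_{p-1}$ whenever $m-2k \le p-1$, which is automatic, so $\ip{u,t^kv}=0$ for all $k\geq 1$ by $u\perp W_{p-1}$. Symmetrically $\ip{t^ku,v}$: I have $t^ku\in t^kW_p\leq W_{p-2k}$, and I want this in $W_{m-1}=W_{p-2d}$; that needs $p-2k\leq p-2d$, i.e.\ $k\geq d$, handled above; for $1\leq k<d$ I instead observe $\ip{t^ku,v}=\ip{u,(t^k)^*v}=\ip{u,(t^*)^kv}$ and use that $(t^*)^kv$—hmm, this reintroduces $t^*$. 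The honest fix is to run the orthogonality the other way: by the same identity $\ip{\om(v),\om(u)}=\ip{v,u}+\sum_{k\geq1}(\ip{t^kv,u}+\ip{v,t^ku})$, and now $\ip{t^kv,u}=0$ since $t^kv\in W_{m-2k}\leq W_{p-1}$ and $u\perp W_{p-1}$, while $\ip{v,t^ku}=0$ since $t^ku\in W_{p-2k}$; for $k\geq d$, $W_{p-2k}\leq W_{p-2d}=W_{m-1}\perp v$; for $1\le k<d$, note $t^ku\in W_{p-2k}$ but also $u\in U=W_p\ominus W_{p-1}$ means $u$ is a single generator, and we can compute $t^k u$ lies in the cyclic submodule generated by $u$'s image; the subquotient $W_p/W_{p-1}$ has $t$ acting as zero, so $tu\in W_{p-1}$, hence $t^ku\in t^{k-1}W_{p-1}\leq W_{p-1}$ (since $t^{k-1}W_{p-1}\le W_{p-1}$ trivially for $k\ge1$, indeed $\le W_{p-k}$), so in fact $\ip{v,t^ku}=\ip{v',t^ku}$ with everything landing in $W_{p-1}$; but $v\perp W_{p-1}$? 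No—$v\in W_m\ominus W_{m-1}$, and $W_m\leq W_{p-1}$, so $v$ is NOT perpendicular to $W_{p-1}$. The resolution that actually works: $tu\in W_{p-1}$ and more precisely $t u \in t W_p \le W_{p-1}$, so $t^k u\in t^{k-1}W_{p-1}$; iterating the balanced estimates gives $t^{k-1}W_{p-1}\le W_{p-1-2(k-1)} = W_{p-2k+1}$; for $k\ge d$ this is $\le W_{p-2d+1}=W_m$, still not quite $W_{m-1}$; so pair instead $\ip{t^ku,v}$: for $k\ge d$, $t^ku\in W_{p-2k+1}\le W_m$, and also $t\cdot t^{k-1}u\in tW_m$ when $k-1\ge \dots$—I'm going in circles, which tells me precisely where the real difficulty lies.

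The main obstacle, then, is this orthogonality computation: tracking which balanced-subquotient containment is strong enough to kill each cross term $\ip{t^ku,v}$ and $\ip{u,t^kv}$ uniformly in $k$. I expect the clean statement to be: for $\un W\in X(n,K)$ and $m\in K$ with $\tau(m)=p$, one has $W_p = W_{m-1} + (t^*)^d$-type complement, or more usably, the natural map $W_p\ominus W_{p-1}\to (W_p/W_{m-1})\ominus(\text{stuff})$ realizes $\om(U)$ as $\om$ of a vector in the top graded piece of the balanced module $W_p/W_{m-1}\cong(\C[t]/t^d)^2$, while $\om(T)$ is $\om$ of a vector in the bottom graded piece; and in a balanced module $(\C[t]/t^d)^2$ with its inner product, $\om$ of a top-degree vector equals (up to the earlier isometry bookkeeping) the "leading coefficient" while $\om$ of a socle vector picks out a different coefficient, and orthogonality of $\om(U)$ and $\om(T)$ reduces to: the image of $t^{d-1}$ on the generator of $U$ generates $T$ (so $L_m$ is the socle line of the $t$-module $W_p/W_{m-1}$ generated by $L_p$), whence $\om(U)$ and $\om(T)$ span the same line — giving $\bom_m=\bom_p$, not $\chi$! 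So I must have the degree bookkeeping backwards, and the correct picture uses the adjoint $t^*$: $\om$ restricted to $W_p\ominus W_{p-1}$ and $\om$ restricted to the $t^*$-generated structure are what differ by $\perp$. Once I pin down the right reformulation — most likely "for a balanced module $M\le V(n)$ with $M=(\C[t]/t^d)^2$, writing $M_{\mathrm{bot}}=M\cap t^{d-1}V(n)$ isometrically embedded and $M_{\mathrm{top}}=M\ominus tM$, the lines $\om(M_{\mathrm{top}})$ and $\om(M_{\mathrm{bot}}\ominus t M_{\mathrm{bot}}$-complement$)$ are orthogonal" — the rest is bookkeeping. So: Step 1, reduce to $m\in K$; Step 2, extract the balanced module $W_p/W_{m-1}\cong(\C[t]/t^d)^2$ and lift it to $N\le V(n)$ via $N=W_p\cap W_{m-1}^\perp$, noting $tN\le$ itself appropriately; Step 3, identify $\om(W_m\ominus W_{m-1})$ and $\om(W_p\ominus W_{p-1})$ with the images under $\om$ of the socle and top of $N$; Step 4, prove these two lines in $\C^2$ are orthogonal by a direct computation in $(\C[t]/t^d)^2$ using Definition~\ref{defn-ip}; Step 5, translate $L^\perp=\chi(L)$ and conclude $\bom_{\tau(m)}(\un W)=\chi(\bom_m(\un W))$.
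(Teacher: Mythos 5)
Your proposal correctly identifies the goal — show $\om(W_m\ominus W_{m-1})$ and $\om(W_{\tau(m)}\ominus W_{\tau(m)-1})$ are orthogonal lines in $\C^2$ — and correctly uses Proposition~\ref{prop-isometry} as the bridge between the inner product on $V(n)$ and the one on $\C^2$. But the proof has a genuine gap at exactly the point you flag: the cross-term computation does not close, and your proposed reformulation in Steps 3--4 rests on an intermediate claim that is \emph{false}. You assert that $t^{d-1}$ applied to a generator of the top complement $W_p\ominus W_{p-1}$ gives a generator of the bottom complement $W_m\ominus W_{m-1}$ (making the two $\om$-images coincide), notice this gives the wrong answer, and then guess that ``$t^*$ fixes it.'' In fact what is true is the opposite relation: $t^{d-1}(W_p\ominus W_{p-1})$ is the \emph{orthogonal complement} of $W_m\ominus W_{m-1}$ inside a rank-two space (namely $T=(t^{-1}W_{m-1})\ominus W_{m-1}$, in your indexing), not the same line. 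The subtlety you're fighting is that $W_m\ominus W_{m-1}$ is an orthogonal complement and not a $\C[t]$-module, so it has no reason to agree with the socle of the $t$-module $W_p/W_{m-1}$; testing the case $n=2$, $K=\{1,2\}$, $m=1$, $p=4$ already shows $t(W_4\ominus W_3)$ is perpendicular to $W_1$ rather than equal to it.

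The missing ingredient, which the paper supplies (Lemma~\ref{lem-tau-chi}, transposed to your convention so $m\in K$, $p=\tau(m)=m+2d-1$, $d>1$), is the following chain: (i) from Lemma~\ref{lem-pi-iso} one has $W_p = t^{-d}W_{m-1}$ and $W_{p-1}=t^{1-d}W_m$ and $t^{n-d}W_{m-1}=0$; (ii) consequently any $a\in M_p:=W_p\ominus W_{p-1}$ has $a=\sum_{i=0}^{n-d}a_it^i$, so $t^{d-1}a\neq 0$ and $\om(t^{d-1}a)=\om(a)$; (iii) to show $t^{d-1}M_p\perp W_m$, given $b\in W_m$ one uses $t^{n-d+1}b=0$ to write $b=t^{d-1}b'$ with $b'\in t^{1-d}W_m=W_{p-1}$, and then the coefficientwise inner product gives $\ip{t^{d-1}a,b}=\ip{a,b'}=0$ since $a\perp W_{p-1}$. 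This $b\mapsto b'$ manoeuvre \emph{is} exactly the $(t^*)^{d-1}$ trick you suspected was needed, but it has to be applied to lift $W_m$ into $W_{p-1}$, not to restructure the balanced module. Once (iii) is in hand, $t^{d-1}M_p$ is orthogonal to $W_m=W_{m-1}\oplus M_m$ and lands in $T=(t^{-1}W_{m-1})\ominus W_{m-1}$, so it equals $T\ominus M_m$, and applying the isometry $\om\colon T\to\C^2$ finishes the argument. Separately, the case $d=1$ (where $\tau(m)=m\pm 1$) must be handled directly: then $W_{p}=t^{-1}W_{m-1}$, the two complements $M_m$ and $M_p$ are already orthogonal complements inside $W_p\ominus W_{m-1}$, and $\om$ restricted to that space is an isometric isomorphism by Proposition~\ref{prop-isometry}; your proposal does not address this case.

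One more structural remark: you reduce to $m\in K$, whereas the supporting lemmas in the paper (especially Lemma~\ref{lem-pi-iso}) are stated for $m\not\in K$. This is harmless as long as you are explicit that you are re-deriving their conclusions on the other side, but it is another piece of bookkeeping your plan leaves implicit.
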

\begin{proof}
 We write $M_i=W_i\ominus W_{i-1}$, so the claim is that
 $\om(M_{\tau(m)})=\chi(\om(M_m))$.  As $\chi^2=1$ and $\tau^2=1$ and
 $\tau$ exchanges $K$ and $K^c$, it will suffice to treat the case
 where $m\not\in K$.  We then have $\tau(m)=m-2d+1$ for some $d>0$.

 First consider the case where $d=1$, so $\tau(m)=m-1$.  We then have
 $W_m=t^{-1}W_{m-2}$, so the spaces $M_{m-1}$ and $M_m$ are orthogonal
 complements to each other in the space $U=W_m\ominus W_{m-2}$.
 Proposition~\ref{prop-isometry} implies that $\om\:U\to\C^2$ is an
 isometric isomorphism, so $\om(M_{m-1})$ and $\om(M_m)$ are
 orthogonal complements, as required.

 Now suppose instead that $d>1$, and put $p=\tau(m)=m-2d+1$.  By the
 second half of Lemma~\ref{lem-pi-iso} we have $t^{n-d}W_{p-1}=0$ and
 $W_m=t^{-d}W_{p-1}$ and $W_{m-1}=t^{1-d}W_p$.  Put
 $T=(t^{-1}W_{p-1})\ominus W_{p-1}$, so $M_p\leq T$.  Note that $M_m$
 is orthogonal to $W_{m-1}=t^{1-d}W_p$ and therefore also to
 $t^{1-d}\{0\}=t^{n-d+1}V(n)$.  It follows that any generator
 $a\in M_m$ can be written as $a=\sum_{i=0}^{n-d}a_it^i$ with
 $a_i\in\C^2$.  From this it follows that $t^{d-1}M_m\neq 0$ and also
 that $\om(t^{d-1}M_m)=\om(M_m)$.
 
 We claim that $t^{d-1}M_m$ is orthogonal to $W_p$.  To see this, consider
 an element $b\in W_p$ and an element $a \in t^{d-1}M_m$.
 As $t^{n-d}W_{p-1}=0$ we have $t^{n-d+1}b=0$, so $b$ can be written
 as $b=\sum_{i=d-1}^nb_it^i$ for some $b_i\in\C^2$.  Put
 $b'=\sum_{i=d-1}^nb_it^{i-d+1}$, so that $t^{d-1}b'=b$ and so
 $b'\in t^{1-d}W_p=W_{m-1}$.  Now $M_m$ is orthogonal to $W_{m-1}$ so
 $\ip{a,b'}=0$, but it is clear from the defining formulae that
 $\ip{t^{d-1}a,b}=\ip{a,b'}$ so $\ip{t^{d-1}a,b}=0$ as required.
 
 On the other hand, we have $t^dM_m\leq t^dW_m=W_{p-1}$, so
 $t^{d-1}M_m\leq t^{-1}W_{p-1}$.  As $t^{d-1}M_m$ is contained in
 $t^{-1}W_{p-1}$ and orthogonal to $W_p=W_{p-1}\oplus M_p$ we see that
 $t^{d-1}M_m$ is contained in $T$ and in fact is equal to
 $T\ominus M_p$.  As $\om$ gives an isometric isomorphism $T\to\C^2$
 we deduce that $\om(M_m)=\om(t^{d-1}M_m)=\chi(\om(M_p))$ as required.
\end{proof}

\begin{corollary}\label{cor-XK}
 The maps $\bom_i$ together give a homeomorphism from $X(n,K)$ to
 the space
 \[ X'(n,K) = \{u\:\{1,\dotsc,2n\}\to\CP^1\st
               u(\tau(i))=\chi(u(i)) \text{ for all } i\}.
 \]
\end{corollary}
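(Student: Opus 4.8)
The plan is to combine Proposition~\ref{prop-XK} with Lemma~\ref{lem-tau-chi} to identify $X(n,K)$ with $X'(n,K)$ through the map $u \mapsto (\bom_i)_{i}$. First I would observe that by Proposition~\ref{prop-XK}, the restricted map $\bom_K \: X(n,K) \to \prod_{k \in K}\CP^1$ is already a homeomorphism; so the only question is whether sending $\un{W}$ to the full tuple $(\bom_i(\un{W}))_{i=1}^{2n}$ lands in $X'(n,K)$ and whether this lift is again a homeomorphism.

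Next I would check that the image lies in $X'(n,K)$. This is exactly Lemma~\ref{lem-tau-chi}: for any $\un{W}\in X(n,K)$ and any $m$ we have $\bom_{\tau(m)}(\un{W}) = \chi(\bom_m(\un{W}))$, which says precisely that the function $i\mapsto \bom_i(\un{W})$ satisfies the defining constraint of $X'(n,K)$. So we do get a continuous map $\bom \: X(n,K)\to X'(n,K)$.

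To see this is a homeomorphism I would set up the inverse. Since $\tau$ exchanges $K$ and $K^c$ and $\chi^2 = 1$, a function $u \: \{1,\dotsc,2n\}\to\CP^1$ satisfying $u(\tau(i)) = \chi(u(i))$ is determined freely and without constraint by its restriction $u|_K$ to $K$; restriction therefore gives a homeomorphism $X'(n,K)\xra{\simeq}\prod_{k\in K}\CP^1$, and under this identification our map $\bom$ becomes exactly $\bom_K$. Now invoke Proposition~\ref{prop-XK}: $\bom_K$ is a homeomorphism, hence so is $\bom \: X(n,K)\to X'(n,K)$. (Alternatively, one can argue without naming the inverse: $\bom$ is a continuous bijection — bijectivity follows because projecting to the $K$-coordinates recovers the bijection $\bom_K$ — from the compact Hausdorff space $X(n,K)$ to the Hausdorff space $X'(n,K)$, and is therefore a homeomorphism, as in the proof of Proposition~\ref{prop-om-emb}.)

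I do not expect a serious obstacle here: all the real content has been pushed into Lemma~\ref{lem-tau-chi} (the compatibility of $\tau$ with the antipodal/orthogonal-complement map $\chi$) and Proposition~\ref{prop-XK} (that the $K$-coordinates already determine everything). The only point requiring a moment's care is the bookkeeping observation that the constraint $u(\tau(i)) = \chi(u(i))$ is consistent and imposes no condition on $u|_K$ — i.e. that it pairs up coordinates $\{k,\tau(k)\}$ with $k\in K$ and is automatically symmetric because $\chi$ is an involution — so that $X'(n,K)$ really is homeomorphic to $\prod_{k\in K}\CP^1$ via restriction.
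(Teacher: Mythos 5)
Your proposal is correct and matches the paper's proof exactly: the paper also proves this corollary by combining Proposition~\ref{prop-XK} (that $\bom_K$ is a homeomorphism onto $\prod_{k\in K}\CP^1$) with Lemma~\ref{lem-tau-chi} (that $\bom_{\tau(m)}=\chi\circ\bom_m$). You have simply written out the bookkeeping step — that restriction to $K$ identifies $X'(n,K)$ with $\prod_{k\in K}\CP^1$ — which the paper leaves implicit.
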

\begin{proof}
 Combine Proposition~\ref{prop-XK} and Lemma~\ref{lem-tau-chi}.
\end{proof}

\begin{corollary}
 We have a commutative diagram as follows:
 \[ \xymatrix{
  R(n) \ar[r]^{\rho_K} \ar@{ >->}[d]_{\phi} &
  E(K) \ar[d]^{\simeq} \\
  H^*(X(n)) \ar[r]_{\res} &
  H^*(X(n,K))
 } \]
 Moreover, the map $\phi\:R(n)\to H^*(X(n))$ is a split monomorphism
 of abelian groups.
\end{corollary}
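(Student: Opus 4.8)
The plan is to evaluate the two composites around the square on the ring generators $x_i$, and then to combine the squares for all sparse $K$ of size $n$ into a single triangle that deduces the final clause from Proposition~\ref{prop-BR-basis}.

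First I would pin down the right-hand isomorphism. By Proposition~\ref{prop-XK}, $\bom_K$ is a homeomorphism $X(n,K)\xra{\simeq}\prod_{k\in K}\CP^1$; since $H^*(\CP^1)$ is free, the Künneth theorem identifies $H^*\!\left(\prod_{k\in K}\CP^1\right)$ with $\bigotimes_{k\in K}H^*(\CP^1)$, and taking the Euler class of the tautological bundle $T$ as generator of each $H^2(\CP^1)=\Z$ gives $H^*(\CP^1)\cong\Z[x]/x^2$. This yields an isomorphism $E(K)\xra{\simeq}H^*(X(n,K))$ sending $x_k$ to $\bom_k^*(e(T))$. Recall that $L_i\simeq\bom_i^*(T)$ (established just after Corollary~\ref{cor-Li-embeds}), so $\bom_k^*(e(T))=e(L_k|_{X(n,K)})=\res(\phi(x_k))$; thus this isomorphism carries $x_k$ to $\res\phi(x_k)$.

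Next I would verify commutativity. Both maps $R(n)\to H^*(X(n,K))$ obtained by traversing the square are ring homomorphisms, so it suffices to compare them on each generator $x_i$. For $i=k\in K$ the top--right route sends $x_k\mapsto\rho_K(x_k)=x_k\mapsto\res\phi(x_k)$ by the previous paragraph, which agrees with the left--bottom route $x_k\mapsto\phi(x_k)\mapsto\res\phi(x_k)$. For $i=\tau(k)$ with $k\in K$, Proposition~\ref{prop-rho-K} gives $\rho_K(x_{\tau(k)})=-x_k$, so the top--right route gives $-\res\phi(x_k)=-e(L_k|_{X(n,K)})$. The left--bottom route gives $\res\phi(x_{\tau(k)})=e(L_{\tau(k)}|_{X(n,K)})=\bom_{\tau(k)}^*(e(T))$, and Lemma~\ref{lem-tau-chi} gives $\bom_{\tau(k)}=\chi\circ\bom_k$ on $X(n,K)$; since $\chi$ is the antipodal map of $S^2$ by Definition~\ref{defn-chi}, $\chi^*$ acts as $-1$ on $H^2(S^2)$, whence $\bom_{\tau(k)}^*(e(T))=\bom_k^*\chi^*(e(T))=-\bom_k^*(e(T))=-e(L_k|_{X(n,K)})$. (Equivalently, $\chi^*T\cong T^\perp$ and $T\oplus T^\perp$ is trivial, so $e(T^\perp)=-e(T)$.) The two routes therefore agree, so the square commutes.

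Finally, for the split-monomorphism claim, I would run these squares in parallel over all sparse $K\sse N_{2n}$ of size $n$. Combining the restriction maps with the isomorphisms $H^*(X(n,K))\cong E(K)$ gives a homomorphism $\psi\:H^*(X(n))\to\prod_K E(K)$, and the commuting squares say precisely that $\psi\circ\phi=\rho$, the map of Definition~\ref{defn-QI}. Proposition~\ref{prop-BR-basis} asserts that $\rho$ is a split monomorphism of abelian groups; if $r$ is a retraction with $r\circ\rho=1$, then $r\circ\psi$ is a retraction of $\phi$, so $\phi$ is itself a split monomorphism of abelian groups. The only point requiring care is the sign bookkeeping in the $i=\tau(k)$ case --- matching $\rho_K(x_{\tau(k)})=-x_k$ with the degree $-1$ of the antipodal map on $S^2$; given Proposition~\ref{prop-XK} and Lemma~\ref{lem-tau-chi}, everything else is formal.
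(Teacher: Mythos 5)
Your proof is correct and follows essentially the same route as the paper: both arguments rest on Lemma~\ref{lem-tau-chi} together with the fact that $\chi$ has degree $-1$ on $\CP^1$ (to get $\res(x_i+x_{\tau(i)})=0$), Proposition~\ref{prop-XK} (to identify $H^*(X(n,K))$ with $E(K)$), and Proposition~\ref{prop-BR-basis} (to transport the splitting of $\rho$ to a splitting of $\phi$). The only difference is presentational: you verify commutativity by direct evaluation on the generators $x_i$, whereas the paper invokes the presentation of $E(K)$ as $R(n)/(x_k+x_{\tau(k)})$ from Proposition~\ref{prop-rho-K} to obtain the filling-in map by a universal property; these are two ways of saying the same thing.
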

\begin{proof}
 The map $\chi\:\C\cup\{\infty\}\to\C\cup\{\infty\}$ is the composite of
 the holomorphic isomorphism $z\mapsto -1/z$ (which has degree one)
 with the map $z\mapsto\ov{z}$ (which has degree $-1$), so
 $\text{deg}(\chi)=-1$.  Given this, we deduce from
 Lemma~\ref{lem-tau-chi} that $\res(x_i+x_{\tau(i)})=0$ in $H^2(X(n,K))$
 for all $i$.  In combination with Proposition~\ref{prop-rho-K},
 this implies that there is a unique map $E(K)\to H^*(X(n,K))$ making
 the diagram commute.  It is clear from Proposition~\ref{prop-XK} that
 this map is an isomorphism.  We know from
 Proposition~\ref{prop-BR-basis} that the composite
 \[ R(n) \xra{\phi} H^*(X(n)) \xra{\res} \prod_K H^*(X(n,K)) \]
 is a split monomorphism.  We can compose any splitting with the map
 $\res$ to get a splitting of $\phi$.
\end{proof}

\section{The subvarieties $X(n,i)$}
\label{sec-pinch}

\begin{definition}
 For any $i$ with $0<i<2n$, we put
 \[ X(n,i) =
     \{\un{W}\in X(n) \st W_{i+1}/W_{i-1}\text{ is balanced} \} =
     \{\un{W}\in X(n) \st tW_{i+1}=W_{i-1}\}.
 \]
\end{definition}

We will show that $X(n,i)$ is homeomorphic to $X(n-1)\tm\CP^1$.  For
this we let $\dl$ be the evident isomorphism $tV(n)\to V(n-1)$ given
by $\dl(t^{i+1}a)=t^ia$ for all $i<n-1$ and $a\in\C^2$.  We also let
$\pi\:V(n)\to V(n-1)$ be the evident projection (so $\pi(v)=\dl(tv)$).

\begin{lemma}\label{lem-Xni}
 If $\un{W}\in X(n,i)$ then
 \begin{itemize}
  \item[(a)] $W_j\leq tV(n)=\dom(\dl)$ for all $j<i$
  \item[(b)] $W_j\geq t^{n-1}V(n)=\ker(\pi)$ for all $j>i$
  \item[(c)] $W_{i-1}=tW_{i+1}$
  \item[(d)] $W_{i+1}=t^{-1}W_{i-1}$.
 \end{itemize}
\end{lemma}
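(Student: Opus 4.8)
The plan is to lean entirely on the second description of $X(n,i)$, namely that $\un{W}\in X(n,i)$ iff $W_{i-1}=tW_{i+1}$; this is exactly part~(c), so (c) costs nothing, and the remaining parts all follow from it by elementary dimension counts in $V(n)$, using Lemma~\ref{lem-t-zero} for $\dim(W_j)=j$. (The one point that is not quite a triviality is the equivalence of the two descriptions in the definition: ``$W_{i+1}/W_{i-1}$ balanced'' only says $t$ kills this $2$-dimensional quotient, i.e.\ $tW_{i+1}\leq W_{i-1}$, and a dimension count — essentially the one used for (b) below — is needed to upgrade this to an equality; but since the definition already asserts the two descriptions coincide, I will simply take the relation $W_{i-1}=tW_{i+1}$ as given.)

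Before anything else I would record the two standard facts about multiplication by $t$ on $V(n)=(\C[t]/t^n)^2$: writing a general element as $\sum_{j=0}^{n-1}a_jt^j$ with $a_j\in\C^2$ and computing $t\cdot\sum_j a_jt^j=\sum_{j\geq 1}a_{j-1}t^j$, one reads off that $\ker(t\:V(n)\to V(n))=t^{n-1}V(n)$ has $\C$-dimension $2$, and that $\img(t)=tV(n)$ has dimension $2n-2$.

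For (a): if $j<i$ then $W_j\leq W_{i-1}$ (with $W_0=0$ in the extreme case), and $W_{i-1}=tW_{i+1}\leq tV(n)=\dom(\dl)$ by (c); hence $W_j\leq\dom(\dl)$. For (d): the inclusion $W_{i+1}\leq t^{-1}W_{i-1}$ is immediate from $tW_{i+1}=W_{i-1}$. For the reverse inclusion, note that by (a) we have $W_{i-1}\leq tV(n)=\img(t)$, so $t$ restricts to a surjection $t^{-1}W_{i-1}\to W_{i-1}$ whose kernel is $t^{n-1}V(n)$; thus $\dim(t^{-1}W_{i-1})=(i-1)+2=i+1=\dim(W_{i+1})$, and since $W_{i+1}\leq t^{-1}W_{i-1}$ with both finite-dimensional of the same dimension, they coincide. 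For (b): from $tW_{i+1}=W_{i-1}$ together with $\dim(W_{i+1})=i+1$ and $\dim(W_{i-1})=i-1$ we get $\dim(\ker(t|_{W_{i+1}}))=2$; but $\ker(t|_{W_{i+1}})=W_{i+1}\cap t^{n-1}V(n)$ is a subspace of the $2$-dimensional space $t^{n-1}V(n)$, so it must equal all of $t^{n-1}V(n)$, i.e.\ $t^{n-1}V(n)\leq W_{i+1}$; and $W_{i+1}\leq W_j$ for every $j>i$, which gives (b).

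The natural order is therefore (c), then (a), then (d), then (b), since the dimension count behind (d) relies on $W_{i-1}$ lying in $\img(t)$, which is precisely what (a) supplies, and the one behind (b) reuses the identification of $\ker t$. There is no serious obstacle here: the only thing that needs care is keeping the chain of inclusions and the dimension bookkeeping in the right order, and remembering that $\ker t$ and $\img t$ on $V(n)$ have dimensions $2$ and $2n-2$ rather than the $n-1$ one might carelessly write.
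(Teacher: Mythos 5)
Your proof is correct, and the underlying dimension arithmetic is the same as the paper's (the kernel of $t\colon V(n)\to V(n)$ has dimension $2$, and $\dim(t^{-1}U)=\dim U+2$ when $U\leq tV(n)$), but the organization differs in a way worth noting. The paper's proof starts from the first description in the definition (``$W_{i+1}/W_{i-1}$ is balanced''), which directly gives only $W_{i+1}\leq t^{-1}W_{i-1}$, and then does a single dimension count on $t^{-1}W_{i-1}$ that forces $W_{i-1}\leq tV(n)$ \emph{and} $W_{i+1}=t^{-1}W_{i-1}$ simultaneously; parts (a), (b), (c) then all drop out as one-line corollaries. You instead take the second description $tW_{i+1}=W_{i-1}$ (part (c)) as the starting point, derive (a) in one line, and then need two separate dimension counts: one on $t^{-1}W_{i-1}$ for (d) and one on $\ker(t|_{W_{i+1}})$ for (b). This is logically sound given that the definition asserts the two descriptions coincide, and you honestly flag that you are offloading that equivalence to the definition and that a dimension count of the same flavor would close it. In effect the paper's lemma is also where the equivalence of the two descriptions is actually established, so if you wanted your write-up to be self-contained in the way the paper's is, you should run the balanced-to-equality step explicitly rather than citing it as given.
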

\begin{proof}
 By assumption, the quotient $W_{i+1}/W_{i-1}$ is balanced, so
 $W_{i+1}\leq t^{-1}W_{i-1}$.  From the short exact sequence
 $t^{n-1}V(n)\to V(n)\xra{t}tV(n)$ we see that
 $\dim(t^{-1}W_{i-1})=2+\dim(W_{i-1}\cap tV(n))$.  However, as
 $\un{W}\in X(n)$ we have $\dim(W_{i+1})=i+1$.  This can only be
 consistent if $W_{i-1}\leq tV(n)$ and $W_{i+1}=t^{-1}W_{i-1}$, in
 which case $W_{i+1}\geq t^{-1}\{0\}=t^{n-1}V(n)$.  The inclusion
 $W_{i-1}\leq tV(n)$ also implies that
 $W_{i-1}=tt^{-1}W_{i-1}=tW_{i+1}$.  We have now proved~(c), (d), the
 case $j=i-1$ of~(a) and the case $j=i+1$ of~(b).  The remaining cases
 of~(a) and~(b) follow because $W_j\leq W_{i-1}$ when $j<i$, and
 $W_j\geq W_{i+1}$ when $j>i$.
\end{proof}

\begin{proposition}\label{prop-Xni-bundle}
 There is a morphism of varieties $\lm\:X(n,i)\to X(n-1)$ given by
 \[ \lm(W_0,\dotsc,W_{2n}) =
     \left(\dl(W_0),\dotsc,\dl(W_{i-1})=\pi(W_{i+1}),
           \pi(W_{i+2}),\dotsc,\pi(W_{2n})\right).
 \]
 This lifts to give an isomorphism $X(n,i)\to PU$, where $U$ is the
 vector bundle over $X(n-1)$ whose fibre at $\un{W}'$
 is $\pi^{-1}(W'_{i-1})/\dl^{-1}(W'_{i-1})$.  Moreover, this bundle is
 topologically trivial, so $X(n,i)$ is homeomorphic to
 $X(n-1)\tm\CP^1$.
\end{proposition}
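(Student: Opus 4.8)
The plan is to follow the pattern of Lemmas~\ref{lem-pi-iso} and~\ref{lem-pi-PU}: first check that $\lm$ is a well-defined morphism, then compute its fibres, package this as a projective bundle $PU$, and finally trivialise $U$ topologically.

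First I would check that $\lm$ lands in $X(n-1)$. For $\un{W}\in X(n,i)$, Lemma~\ref{lem-Xni}(a) gives $W_j\leq tV(n)=\dom(\dl)$ for $j<i$, so $\dl(W_j)$ makes sense; Lemma~\ref{lem-Xni}(b) gives $W_k\geq t^{n-1}V(n)=\ker(\pi)$ for $k>i$, so $\pi^{-1}\pi(W_k)=W_k$ there; and Lemma~\ref{lem-Xni}(c) gives $\pi(W_{i+1})=\dl(tW_{i+1})=\dl(W_{i-1})$, so the two prescriptions for the $(i-1)$st term agree. Since $\dl$ is a $\C[t]$-linear isomorphism and $\pi$ is a $\C[t]$-linear surjection with two-dimensional kernel, the images $\dl(W_j)$ ($j\leq i-1$) and $\pi(W_k)$ ($k\geq i+1$) are submodules of $V(n-1)$ of dimensions $j$ and $k-2$ respectively, and with $\dl(W_0)=0$, $\pi(W_{2n})=V(n-1)$; hence $\lm(\un{W})$ is a complete flag of submodules, i.e. a point of $X(n-1)$. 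That $\lm$ is a morphism of varieties is clear, as it is assembled from the standard operations $U\mapsto\dl(U)$ and $U\mapsto\pi(U)$ on subspaces.

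Next I would compute the fibre over a point $\un{W}'\in X(n-1)$. Any preimage $\un{W}$ must satisfy $W_j=\dl^{-1}(W'_j)$ for $j\leq i-1$ and $W_k=\pi^{-1}(W'_{k-2})$ for $k\geq i+1$ (the latter using Lemma~\ref{lem-Xni}(b)), so in particular both $W_{i-1}=\dl^{-1}(W'_{i-1})$ and $W_{i+1}=\pi^{-1}(W'_{i-1})$ are determined by $\un{W}'$. Writing $B=\dl^{-1}(W'_{i-1})$ and using $\pi=\dl\circ t$, we get $\pi^{-1}(W'_{i-1})=t^{-1}B$, and since $B$ is a submodule of $V(n)$ this gives $B\leq t^{-1}B$ with $\dim(t^{-1}B/B)=2$ (by the rank count for multiplication by $t$ on $V(n)$). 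The only remaining freedom is the choice of $W_i$ with $W_{i-1}<W_i<W_{i+1}$; any such subspace has dimension $i$, is automatically a submodule because $tW_i\leq tW_{i+1}=W_{i-1}\leq W_i$, and the resulting flag automatically satisfies $tW_{i+1}=t(t^{-1}B)=B=W_{i-1}$, hence lies in $X(n,i)$. Thus the fibre is precisely the (nonempty) set of lines in $t^{-1}B/B$. Standard arguments — since $W'_{i-1}$, and hence $\dl^{-1}(W'_{i-1})$ and $\pi^{-1}(W'_{i-1})$, vary in algebraic families of constant dimension — give a two-dimensional algebraic vector bundle $U$ with the stated fibre, and the fibrewise description produces a bijection $X(n,i)\to PU$ over $X(n-1)$; exactly as in Lemmas~\ref{lem-pi-iso} and~\ref{lem-pi-PU}, the inverse is built from the Grassmannian morphisms $U\mapsto\dl^{-1}(U)$ and $U\mapsto t^{-1}U$, so this bijection is an isomorphism of varieties.

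Finally I would prove topological triviality by applying Proposition~\ref{prop-isometry} with $T=B=\dl^{-1}(W'_{i-1})$ and $U=t^{-1}B=\pi^{-1}(W'_{i-1})$: here $T\leq U\leq V(n)$ and $tU=t(t^{-1}B)=B=T$, so $\om$ restricts to an isometric embedding of $U\ominus T$ into $\C^2$, which is an isomorphism for dimension reasons. This identifies each fibre of $U$ continuously and isometrically with $\C^2$, so $U$ is topologically trivial, $PU\cong X(n-1)\tm\CP^1$, and therefore $X(n,i)\cong X(n-1)\tm\CP^1$. I expect the main obstacle to be the fibre-and-bundle bookkeeping of the previous paragraph — in particular keeping the index shift and the two identifications $\dl^{-1}(-)$ and $\pi^{-1}(-)$ straight — while the well-definedness and the triviality are routine adaptations of arguments already given for the spaces $X(n,K)$.
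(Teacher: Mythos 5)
Your proposal is correct and follows essentially the same route as the paper: well-definedness via Lemma~\ref{lem-Xni}, identification of the fibre with lines in $\pi^{-1}(W'_{i-1})/\dl^{-1}(W'_{i-1})$, and trivialisation via Proposition~\ref{prop-isometry} applied with $T=\dl^{-1}(W'_{i-1})$ and $U=t^{-1}T$. Your extra checks that any intermediate $W_i$ is automatically a $\C[t]$-submodule and that the reassembled flag lies in $X(n,i)$ are details the paper leaves to the reader, and they are accurate.
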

\begin{proof}
 Part~(a) of Lemma~\ref{lem-Xni} shows that $\dl(W_j)$ is defined for
 $j<i$, and $\dl\:tV(n)\to V(n-1)$ is an isomorphism so
 $\dim(\dl(W_j))=j$ in this range.  Part~(c) together with the
 relation $\pi(v)=\dl(tv)$ shows that $\dl(W_{i-1})=\pi(W_{i+1})$, as
 indicated above.  Part~(b) implies that for $j>i$ we have
 $\dim(\pi(W_j))=j-2$.  It is now clear that we have a morphism
 $\lm\:X(n,i)\to X(n-1)$ as described.  Now write $\un{W}'$ for
 $\lm(\un{W})$.  For $j<i$ we find that $W_j=\dl^{-1}(W'_j)$, and for
 $j>i$ we have $W_j=\pi^{-1}W'_{j-2}$, so all these spaces are determined
 by $\lm(\un{W})$.  However, $W_i$ can be any space with
 $\dl^{-1}(W'_{i-1})<W_i<\pi^{-1}(W'_{i-1})$, and such subspaces
 biject with one-dimensional subspaces of $U_{\un{W}'}$.  This shows
 that $\lm$ lifts to a bijection $X(n,i)\to PU$; we leave it to the
 reader to check that this is actually an isomorphism of varieties.
 Moreover, it follows from Proposition~\ref{prop-isometry} that
 $\om\:U_{\un{W}'}\to\C^2$ is an isomorphism for all $\un{W}'$, which
 gives the required trivialisation.
\end{proof}

\begin{lemma}\label{lem-Xni-cover}
 $X(n)=\bigcup_{i=1}^{n}X(n,i)$.
\end{lemma}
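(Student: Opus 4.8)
The plan is to prove the equivalent statement that every flag $\un{W}\in X(n)$ (with $n\geq 1$) belongs to $X(n,i)$ for at least one $i\in\{1,\dotsc,n\}$; since each $X(n,i)$ is by construction a subset of $X(n)$, this gives the asserted equality. (As written, the statement presumes $n\geq 1$; for $n=0$ the right-hand side is an empty union, but that case is not needed.)

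The first step is a local dichotomy at each index. Fix $i$ with $1\leq i\leq 2n-1$ and consider $M=W_{i+1}/W_{i-1}$, a $\C[t]$-module of $\C$-dimension $2$. By Lemma~\ref{lem-t-zero} we have $tW_{i+1}\leq W_i$ and $tW_i\leq W_{i-1}$, so $t^2M=0$; hence $M$ is a module over $\C[t]/t^2$ and is therefore isomorphic to $\C[t]/t^2$ or to $(\C[t]/t)^2$. In the second case $M$ is balanced (with $p=1$) and $tW_{i+1}\leq W_{i-1}$, i.e.\ $\un{W}\in X(n,i)$; in the first case $tM$ is the unique one-dimensional submodule, namely $W_i/W_{i-1}$, so that $W_i=W_{i-1}+tW_{i+1}$. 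Thus for every such $i$, either $\un{W}\in X(n,i)$, or else $W_i=W_{i-1}+tW_{i+1}$.

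Next I would argue by contradiction. Suppose $\un{W}\notin X(n,i)$ for all $i\in\{1,\dotsc,n\}$, so that $W_i=W_{i-1}+tW_{i+1}$ for each such $i$. A short induction on $i$ then gives $W_i=tW_{i+1}$ for $i=1,\dotsc,n$: the base case $i=1$ reads $W_1=W_0+tW_2=tW_2$ since $W_0=0$, and for the inductive step we use $W_{i-1}=tW_i\leq tW_{i+1}$ to absorb the first summand into the second. Iterating, $W_1=tW_2=t^2W_3=\dotsb=t^nW_{n+1}$, which is $0$ because $t^n$ acts as zero on $V(n)$; this contradicts $\dim W_1=1$. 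Hence some $i\in\{1,\dotsc,n\}$ has $\un{W}\in X(n,i)$, completing the argument.

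I expect no genuine obstacle here: the one point carrying real content is the classification of the two-dimensional $\C[t]/t^2$-module $W_{i+1}/W_{i-1}$, together with the observation that its being balanced is equivalent to $tW_{i+1}\leq W_{i-1}$ (so the crux is simply distinguishing the balanced module $(\C[t]/t)^2$ from the non-balanced $\C[t]/t^2$). Everything else is elementary bookkeeping. If one prefers, the same dichotomy can be recast in terms of $\dim(W_{i+1}\cap\ker t)$, but the module-theoretic formulation is the cleanest.
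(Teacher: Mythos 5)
Your proof is correct, but it takes a genuinely different route from the paper. The paper's argument is direct and constructive: it takes $i$ to be the largest index in $\{1,\dotsc,n\}$ with $W_i$ a cyclic $\C[t]$-module (such an $i$ exists because $W_1$ is cyclic while $W_{n+1}$, having dimension $n+1>n$, cannot be cyclic over $\C[t]/t^n$), and then builds an explicit $t$-annihilated basis of $W_{i+1}/W_{i-1}$, showing this quotient is balanced. You instead set up a local dichotomy at each index and argue by contradiction: if $W_{i+1}/W_{i-1}$ is unbalanced for every $i\in\{1,\dotsc,n\}$, then $W_i = W_{i-1}+tW_{i+1}$ at each such $i$, which telescopes to $W_i=tW_{i+1}$ and hence $W_1=t^nW_{n+1}=0$, an absurdity. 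The two arguments are close under the hood: your telescoping condition $W_j=tW_{j+1}$ for $j\leq i$ is equivalent (since $W_{j+1}/tW_{j+1}$ being $1$-dimensional forces $W_{j+1}$ cyclic) to $W_1,\dotsc,W_{i+1}$ all being cyclic, which is exactly the chain the paper tracks. Both hinge on the same classification of $2$-dimensional $\C[t]/t^2$-modules, and both correctly exploit that $i\leq n$ suffices because the failure of $W_{n+1}$ to be cyclic forces a transition somewhere below it. The paper's version gives slightly more information (it names the working $i$ as the cyclic-to-noncyclic transition point), while yours is a tidy self-contained contradiction that avoids choosing explicit generators.
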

For our applications later it would be sufficient to know that
$X(n)$ is the union of all the subvarieties $X(n,i)$ for
$1\leq i\leq 2n$, but in fact we need only the first $n$ of these, and
the proof is no harder.
\begin{proof}
 Consider a point $\un{W}\in X(n)$.  It is clear that $W_1$ is a
 cyclic $\C[t]$-module, but $W_{n+1}$ cannot be.  Let $i$ be the
 largest index such that $W_i$ is cyclic, so $1\leq i\leq n$.  Choose
 a generator $a\in W_i$, and note that $ta$ must generate $W_{i-1}$.
 Then choose $b\in W_{i+1}\sm W_i$.  Note that $tb\in W_i$, but $tb$
 cannot generate $W_i$ otherwise $W_{i+1}$ would be cyclic.  This
 means that $tb$ must lie in $tW_i$, so after adjusting our choice of
 $b$ we can assume that $tb=0$.  It now follows that $a$ and $b$ give
 a basis for $W_{i+1}/W_{i-1}$, and both of them are annihilated by $t$, so
 $W_{i+1}/W_{i-1}$ is balanced, so $\un{W}\in X(n,i)$.
\end{proof}

\begin{proposition}\label{prop-XK-union}
 $X(n)$ is the union of the subvarieties $X(n,K)$ (as $K$ runs over
 all sparse subsets of size $n$).
\end{proposition}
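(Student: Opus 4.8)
The plan is to reduce the statement to the combinatorial fact that every flag lies in some $X(n,i)$, which we already have as Lemma~\ref{lem-Xni-cover}, together with an inductive description of $X(n,i)$ supplied by Proposition~\ref{prop-Xni-bundle}. First I would set up the induction on $n$: for $n=0$ and $n=1$ the space $X(n)$ is a point or $\CP^1$ and the unique sparse set of size $n$ gives $X(n,K)=X(n)$, so the base cases are trivial. For the inductive step, fix $\un{W}\in X(n)$. By Lemma~\ref{lem-Xni-cover} there is an index $i$ with $1\leq i\leq n$ such that $\un{W}\in X(n,i)$, i.e. $tW_{i+1}=W_{i-1}$. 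Using the isomorphism $\lm\:X(n,i)\to PU$ over $X(n-1)$ from Proposition~\ref{prop-Xni-bundle}, the flag $\un{W}$ projects to a flag $\un{W}'=\lm(\un{W})\in X(n-1)$, and by the inductive hypothesis $\un{W}'$ is $K'$-balanced for some sparse set $K'\sse N_{2n-2}$ of size $n-1$.

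The key step is then to promote $K'$ to a sparse set $K\sse N_{2n}$ of size $n$ such that $\un{W}$ is $K$-balanced. Because $\un{W}\in X(n,i)$ pinches the flag at position $i$ — deleting $W_i$ and reindexing merges the two edges $i$ and $i+1$ of the $2n$-gon into a single edge of the $(2n-2)$-gon — the natural candidate is to take the non-crossing matching $\tau'$ corresponding to $K'$, relabel its $2n-2$ points as $N_{2n}\sm\{i,i+1\}$ by the order-preserving bijection, and then adjoin the new arc $\{i,i+1\}$ (which is an innermost arc, so the non-crossing condition is preserved); set $K=\lm(\tau)$ for the resulting $\tau\in\NCM(n)$. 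Concretely $K$ is obtained from the relabelled copy of $K'$ by inserting $i$. I would check two things: (1) $K$ is sparse of size $n$ — this is the content of verifying that $\tau$ is a genuine non-crossing matching and invoking Proposition~\ref{prop-NCM-SS}, where innermost-arc insertion clearly keeps property~(b) of Definition~\ref{defn-matching}; and (2) $\un{W}$ is $K$-balanced, i.e. for each $j\notin K$ the quotient $W_j/W_{\tau(j)-1}$ is balanced. For the new arc, $j=i+1$ and $\tau(j)-1=i-1$, and $W_{i+1}/W_{i-1}$ is balanced precisely because $\un{W}\in X(n,i)$. For the old arcs, $\tau(j)$ and $j$ both avoid $\{i,i+1\}$ (since $\{i,i+1\}$ is its own $\tau$-orbit), so under the identifications of Lemma~\ref{lem-Xni} the quotient $W_j/W_{\tau(j)-1}$ is carried by $\dl$ or by $\pi$ to the corresponding quotient for $\un{W}'$ and $K'$, which is balanced by the inductive hypothesis; here one uses that $\dl$ is an isomorphism of $\C[t]$-modules and that $\pi$ with kernel $t^{n-1}V(n)\leq W_{\tau(j)-1}$ also carries the relevant subquotients faithfully.

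The main obstacle I anticipate is the bookkeeping in step~(2): one must be careful about whether a given old arc of $\tau'$ lies entirely to the left of $i$, entirely to the right of $i+1$, or straddles the pinch point, since these three cases use different parts of Lemma~\ref{lem-Xni} (parts~(a), (b), and the combination via $\pi(v)=\dl(tv)$ respectively) to transport the balancedness condition. A straddling arc $\{j,\tau(j)\}$ with $j<i<i+1<\tau(j)$ is the delicate one: there $W_{\tau(j)-1}$ contains $W_{i+1}\geq t^{n-1}V(n)=\ker\pi$, while $W_j\leq W_{i-1}\leq tV(n)=\dom\dl$, so the quotient $W_{\tau(j)}/W_{\tau(j)-1}$ and more generally $W_{\tau(j)}/W_j$ must be compared across the two normalisations $\dl$ and $\pi$ simultaneously; the point is that on the overlap $t^{n-1}V(n)\leq \text{(space)}\leq tV(n)$ the maps $\dl$ and $\pi$ agree up to the identification $\pi=\dl\circ t$, and multiplication by $t$ preserves balancedness of subquotients. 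Once this compatibility is recorded, balancedness of every quotient for $(\un{W},K)$ follows from balancedness of the corresponding quotient for $(\un{W}',K')$, completing the induction. (Alternatively, one could avoid the graph language and argue directly that $K=\{j<i\st j\in K'\text{-relabelled}\}\cup\{i\}\cup\{j>i+1\st\dots\}$ is sparse by the enumeration criterion of Lemma~\ref{lem-sparse-enumeration}, but the non-crossing-matching picture makes the insertion transparent.)
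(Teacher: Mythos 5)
Your proposal is essentially identical to the paper's proof: induction on $n$, choosing a pinch point $i$ via Lemma~\ref{lem-Xni-cover}, projecting by $\lm$ to $X(n-1)$, extending the non-crossing matching for $K'$ by inserting the innermost arc $\{i,i+1\}$, and checking $K$-balancedness case by case via $\dl$ and $\pi$. The paper's handling of the straddling case is slightly more explicit than your sketch --- it passes through the intermediate submodule $tW_j$, showing $tW_j/W_{\tau(j)-1}$ is balanced via $\dl$ and $W_j/tW_j\simeq A_1^2$ is balanced because $t^{n-1}V(n)\leq W_j$, and then concludes $W_j/W_{\tau(j)-1}$ is balanced by Lemma~\ref{lem-gap} --- but this is exactly the content of your remark that $\pi=\dl\circ t$ and that multiplication by $t$ is compatible with balancedness.
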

\begin{proof}
 Consider a point $\un{W}\in X(n)$.  Choose $i$ such that
 $\un{W}\in X(n,i)$, and put $\un{W}'=\lm(\un{W})\in X(n-1)$.  By
 induction on $n$, we may assume that $\un{W}'\in X(n-1,K')$ for some
 sparse set $K'$, corresponding to a non-crossing matching $\tau'$ on
 $N_{2n-2}$.  Let $\sg$ be the order-preserving bijection
 $N_{2n}\sm\{i,i+1\}\to N_{2n-2}$ and define $\tau\:N_{2n}\to N_{2n}$
 by
 \[ \tau(j) =
     \begin{cases}
      \sg^{-1}\tau'\sg(j) & \text{ if } j\not\in \{i,i+1\} \\
      i+1 & \text{ if } j=i \\
      i   & \text{ if } j=i+1.
     \end{cases}
 \]
 One can check that this is a non-crossing matching, corresponding to
 the sparse set $K=\sg^{-1}(K')\amalg\{i\}$.  We claim that
 $\un{W}\in X(n,K)$.  To see this, consider a point
 $j\in N_{2n}\sm K$, and put $k=\tau(j)\in K$ (so $k<j$).  If both $j$
 and $k$ are less than $i$ then $j\in K'$ and $\dl$ induces an
 isomorphism $W_j/W_{k-1}\to W'_j/W'_{k-1}$ so $W_j/W_{k-1}$ is
 balanced.  If $j=i+1$ then $k=i$ so $W_j/W_{k-1}=W_{i+1}/W_{i-1}$,
 which is balanced because $\un{W}\in X(n,i)$.  If both $j$ and $k$
 are larger than $i+1$ then $k-2\in K'$ with $\tau'(k-2)=j-2$, and
 $\pi$ induces an isomorphism $W_j/W_{k-1}\to W'_{j-2}/W'_{k-3}$, so
 $W_j/W_{k-1}$ is again balanced.  This just leaves the case where
 $j>i+1$ but $k<i$.  Here $k\in K'$ with $\tau'(k)=j-2$ so
 $W'_{j-2}/W'_{k-1}$ is balanced.  Moreover, we have
 $W'_{k-1}=\dl(W_{k-1})$ and $W'_{j-2}=\pi(W_j)=\dl(tW_j)$, so $\dl$
 induces an isomorphism $tW_j/W_{k-1}\to W'_{j-2}/W'_{k-1}$, so
 $tW_j/W_{k-1}$ is balanced.  We also know from Lemma~\ref{lem-Xni}
 that $W_j$ contains $t^{n-1}V(n)=t^{-1}\{0\}$, and it follows from
 this that $W_j/tW_j$ is balanced.  As $W_j/tW_j$ and $tW_j/W_{k-1}$
 are balanced we see that $W_j/W_{k-1}$ is also balanced.  This
 completes the proof that $\un{W}\in X(n,K)$, as required.
\end{proof}

\begin{corollary}\label{prop-X-prime}
 Put $X'(n)=\bigcup_KX'(n,K)$, where $K$ runs over the sparse sets of
 size $n$ in $N_{2n}$, and $X'(n,K)$ is as in Corollary~\ref{cor-XK}.
 Then $\bom$ gives a homeomorphism $X(n)\to X'(n)$.
\end{corollary}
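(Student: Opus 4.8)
The idea is simply to glue together the three pieces of structure already in place: Proposition~\ref{prop-om-emb} tells us $\bom\:X(n)\to(\CP^1)^{2n}=(S^2)^{2n}$ is a closed embedding, Proposition~\ref{prop-XK-union} tells us $X(n)=\bigcup_K X(n,K)$ over the sparse sets $K$ of size $n$, and Corollary~\ref{cor-XK} tells us that the maps $\bom_i$ together carry each $X(n,K)$ homeomorphically onto $X'(n,K)$. Since $\bom$ is a closed embedding it is in particular a homeomorphism of $X(n)$ onto its image $\bom(X(n))\sse(S^2)^{2n}$, so the whole content of the corollary reduces to identifying this image with $X'(n)$.

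The one point to be spelled out is that the ``ambient'' map $\bom=(\bom_1,\dotsc,\bom_{2n})$ of Proposition~\ref{prop-om-emb} and the per-$K$ map of Corollary~\ref{cor-XK} are literally the same assignment. Under the tautological identification of a function $u\:N_{2n}\to\CP^1$ with the tuple $(u(1),\dotsc,u(2n))\in(\CP^1)^{2n}$, the subset $X'(n,K)$ of Corollary~\ref{cor-XK} becomes a subspace of $(S^2)^{2n}$, and the restriction $\bom|_{X(n,K)}$ is by inspection of the definitions exactly the homeomorphism $X(n,K)\to X'(n,K)$ furnished by that corollary. Hence $\bom(X(n,K))=X'(n,K)$ for every sparse $K$ of size $n$.

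Assembling these observations, Proposition~\ref{prop-XK-union} gives
\[ \bom(X(n))=\bigcup_K\bom(X(n,K))=\bigcup_K X'(n,K)=X'(n), \]
and, as $\bom$ is a homeomorphism onto its image, we conclude that $\bom\:X(n)\to X'(n)$ is a homeomorphism. There is no genuine obstacle here: this corollary is a bookkeeping statement that repackages the geometric work done in Proposition~\ref{prop-XK}, Lemma~\ref{lem-tau-chi} and Proposition~\ref{prop-XK-union}, and the only care needed is the routine check that the two occurrences of ``$\bom$'' coincide so that the images of the pieces glue correctly.
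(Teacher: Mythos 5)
Your proof is correct and takes exactly the approach the paper intends: the paper's own proof is the one-line instruction to combine Proposition~\ref{prop-om-emb}, Corollary~\ref{cor-XK} and Proposition~\ref{prop-XK-union}, and you have simply spelled out how those three pieces assemble. The observation that the restriction of $\bom$ to $X(n,K)$ is literally the map of Corollary~\ref{cor-XK}, so images glue correctly, is the right bookkeeping point to make explicit.
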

\begin{proof}
 Combine Proposition~\ref{prop-om-emb}, Corollary~\ref{cor-XK} and
 Proposition~\ref{prop-XK-union}.
\end{proof}

\section{Torsion modules}
\label{sec-torsion}

In the previous sections we defined subvarieties $X(n,K)$ and $X(n,i)$
of $X(n)$.  By taking various intersections of these we can define
many more subvarieties.  To understand all of these we need some
algebraic theory of torsion modules, which will be developed in the
present section, and some graph theory.

\begin{definition}\label{defn-torsion}
 Put $A=\C[t]$.  By a \emph{torsion module} we mean an $A$-module $M$
 such that $\dim_\C(M)<\infty$ and $t^nM=0$ for some $n$.  For any
 such module we put $M[k]=\{m\in M\st t^km=0\}$.  It is well-known
 that any torsion module is isomorphic to a direct sum of copies of
 the modules $A_p=A/t^p$.  The number of summands is
 \[ \rho(M) = \dim_\C(M/tM) = \dim_\C(M[1]). \]
 We call this number the \emph{rank} of $M$.
\end{definition}

\begin{remark}\label{rem-rank}
 Suppose we have torsion modules $K\leq L$.  As $K[1]\leq L[1]$ we can
 use the description $\rho(M)=\dim(M[1])$ to see that
 $\rho(K)\leq\rho(L)$.  Similarly, we can use the description
 $\rho(M)=\dim(M/tM)$ to see that $\rho(L/K)\leq\rho(L)$.
\end{remark}

\begin{definition}\label{defn-thin}
 We say that a torsion module $M$ is \emph{thin} if $\rho(M)\leq 2$.
\end{definition}

Remark~\ref{rem-rank} shows that submodules and quotients of thin
modules are thin.  Thus, for any point $\uW\in X(n)$ and any
$i\leq j\leq 2n$ we see that $W_i$ and $W_j/W_i$ are thin.  From the
general classification of torsion modules, we see that any thin module
is isomorphic to $A_p\oplus A_{p+q}$ for a unique pair $p,q\geq 0$.

\begin{definition}\label{defn-imbalance}
 Let $M\simeq A_p\oplus A_{p+q}$ be a thin module.  We put
 \begin{align*}
  \eta(M) &= \min\{k\st t^kM=0\} = p+q \\
  \bt(M) &= \max\{k\st\dim(M[k])=2k\}
          = \max\{k\st \dim(M/t^kM)=2k\} = p \\
  \dl(M) &= 2\eta(M)-\dim(M) = q.
 \end{align*}
 We call $\eta(M)$ and $\dl(M)$ the \emph{exponent} and
 \emph{imbalance} of $M$.  We say that $M$ is \emph{balanced} if
 $\dl(M)=0$.
\end{definition}

\begin{remark}\label{rem-beta}
 For any thin module $M$ it is clear that
 $\dim(M[k]),\dim(M/t^kM)\leq 2k$.  Thus, the definition of $\bt$
 could be rewritten as
 \[ \bt(M) = \max\{k\st\dim(M[k])\geq 2k\}
           = \max\{k\st \dim(M/t^kM)\geq 2k\}.
 \]
\end{remark}

\begin{lemma}\label{lem-beta}
 Let $K\leq L$ be thin modules.  Then $\bt(K),\bt(L/K)\leq\bt(L)$.
\end{lemma}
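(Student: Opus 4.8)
The plan is to use the characterisation $\bt(M) = \max\{k \st \dim(M[k]) \geq 2k\}$ from Remark~\ref{rem-beta} for the submodule statement, and the dual characterisation $\bt(M) = \max\{k \st \dim(M/t^kM) \geq 2k\}$ for the quotient statement. Both inequalities then reduce to the monotonicity facts already recorded in Remark~\ref{rem-rank}, applied one power of $t$ at a time rather than just to $t^1$.

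First I would handle $\bt(K) \leq \bt(L)$. Set $k = \bt(K)$, so $\dim(K[k]) = 2k$ (using that $K$ is thin, $\dim(K[k]) \leq 2k$ always, and equality holds by definition of $\bt$ for the maximal such $k$; in fact I only need $\dim(K[k]) \geq 2k$). Since $K \leq L$ we have $K[k] = K \cap L[k] \leq L[k]$, so $\dim(L[k]) \geq \dim(K[k]) \geq 2k$. By the characterisation in Remark~\ref{rem-beta}, this forces $\bt(L) \geq k = \bt(K)$.

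Next I would handle $\bt(L/K) \leq \bt(L)$. Set $k = \bt(L/K)$, so $\dim\big((L/K)/t^k(L/K)\big) \geq 2k$. Now $(L/K)/t^k(L/K) = L/(K + t^kL)$, which is a quotient of $L/t^kL$, so $\dim(L/t^kL) \geq \dim\big(L/(K+t^kL)\big) \geq 2k$. Again by Remark~\ref{rem-beta} this gives $\bt(L) \geq k = \bt(L/K)$.

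I do not expect a genuine obstacle here — both halves are short diagram-chasing arguments once the right formula for $\bt$ is selected. The only point requiring a little care is making sure everything in sight stays thin so that the relevant dimensions are actually bounded by $2k$ and the equivalences in Remark~\ref{rem-beta} apply; but this is exactly the content of the observation after Definition~\ref{defn-thin} that submodules and quotients of thin modules are thin, so $K$, $L/K$ and the intermediate quotient $L/(K+t^kL)$ are all thin. So the write-up is essentially: pick $k$, exhibit the relevant subspace or quotient space of dimension $\geq 2k$ inside $L$ (resp.\ inside a quotient of $L$), and invoke Remark~\ref{rem-beta}.
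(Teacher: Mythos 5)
Your proof is correct and coincides with the paper's: both halves use exactly the same characterisations of $\bt$ (via $M[k]$ for the submodule, via $M/t^kM$ for the quotient) and the same inclusion/quotient comparisons. The only cosmetic difference is that you spell out the identification $(L/K)/t^k(L/K)=L/(K+t^kL)$, which the paper leaves implicit.
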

\begin{proof}
 Put $p=\bt(K)$, so $\dim(K[p])=2p$.  As $K[p]\leq L[p]$ we have
 $\dim(L[p])\geq 2p$, so $\bt(L)\geq p$.  Now put $M=L/K$ and
 $q=\bt(M)$, so $\dim(M/t^qM)=2q$.  Here $M/t^qM$ is a quotient of
 $L/t^qL$, so $\dim(L/t^qL)\geq 2q$, so $\bt(L)\geq q$.
\end{proof}

\begin{lemma}\label{lem-gap}
 Suppose we have thin modules $K\leq L\leq M$ such that
 $\dim(L/K)=2d$.  Then the following are equivalent:
 \begin{itemize}
  \item[(a)] $L/K$ is balanced.
  \item[(b)] $\bt(L)\geq d$ and $K=t^dL$.
  \item[(c)] $\bt(M/K)\geq d$ and $L=t^{-d}K$.
 \end{itemize}
 Moreover, if these conditions hold then we have inclusions
 \[ \xymatrix{
  0 \ar@{ >->}[d] \ar@{ >->}[r] &
  K \ar@{ >->}[d] \ar@{ >->}[r] &
  t^dM  \ar@{ >->}[d] \\
  M[d]  \ar@{ >->}[r] &
  L  \ar@{ >->}[r] &
  M
 } \]
 and the following statements also hold:
 \begin{align*}
  \bt(M) & \geq d \\
  \dl(M) &= \dl(t^dM) = \dl(M/M[d]) \\
  \dl(K) &= \dl(L) = \dl(L/M[d]) \\
  \dl(M/L) &= \dl(M/K) = \dl(t^dM/K).
 \end{align*}
\end{lemma}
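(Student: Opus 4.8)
The plan is to reduce the whole statement to the structure theory of thin modules from Section~\ref{sec-torsion}, in particular Definition~\ref{defn-imbalance} and Remark~\ref{rem-beta}; there is no geometry or analysis involved. First I would dispose of the case $d=0$, where $K=L$ and every assertion is trivial, so assume $d\geq 1$. The one computational fact I would isolate at the outset is this: if $N$ is any thin module with $\bt(N)\geq k$, then writing $N\cong A_p\oplus A_{p+q}$ with $p=\bt(N)\geq k$ one has $t^kN\cong A_{p-k}\oplus A_{p+q-k}$, and $t^k$ induces an isomorphism $N/N[k]\xra{\simeq}t^kN$; hence $\dl(t^kN)=\dl(N/N[k])=\dl(N)$. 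This single line will yield all three rows of imbalance identities once the relevant $\bt$-inequalities are known.

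Next I would establish the equivalence of (a), (b), (c). For (a)$\Rightarrow$(b): since $L/K$ is balanced of dimension $2d$ it is a copy of $A_d^2$, hence killed by $t^d$, so $t^dL\sse K\sse L$; Remark~\ref{rem-beta} gives $\dim(L/t^dL)\leq 2d$, while the inclusion $t^dL\sse K$ gives $\dim(L/t^dL)\geq\dim(L/K)=2d$, so $\dim(L/t^dL)=2d$. By Remark~\ref{rem-beta} this says $\bt(L)\geq d$, and comparing dimensions in $t^dL\sse K$ forces $K=t^dL$. For (b)$\Rightarrow$(a): if $\bt(L)\geq d$ then the coordinate description gives $L/t^dL\cong A_d^2$, and $L/t^dL=L/K$, so $L/K$ is balanced. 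The equivalence (a)$\iff$(c) is the same argument carried out inside the thin module $M/K$: (a) says $L/K$ is a balanced submodule of $M/K$ of dimension $2d$, hence $L/K\sse(M/K)[d]$; since $\dim(M/K)[d]\leq 2d$ by Remark~\ref{rem-beta} this forces $(M/K)[d]=L/K$, i.e.\ $t^{-d}K=L$, and also $\bt(M/K)\geq d$; conversely $\bt(M/K)\geq d$ makes $(M/K)[d]$ a copy of $A_d^2$, and $(M/K)[d]=t^{-d}K/K$, so (a) holds when $L=t^{-d}K$.

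For the ``moreover'' clause I would work under the standing consequences $K=t^dL$, $L=t^{-d}K$, $\bt(L)\geq d$, $\bt(M/K)\geq d$ just obtained. The inclusions in the displayed diagram are immediate: $K=t^dL\sse t^dM$ and $M[d]\sse t^{-d}\{0\}\sse t^{-d}K=L$, the remaining arrows are the evident inclusions into $M$, and the square commutes because every arrow in it is an inclusion. The crucial remaining point, and the only spot where all three of $K\leq L\leq M$ are used simultaneously, is $\bt(M)\geq d$: the map $t^d\colon L\to L$ has image $t^dL=K$ and kernel $L[d]$, so $\dim L[d]=\dim L-\dim K=\dim(L/K)=2d$; but $L[d]\sse M[d]$ and $\dim M[d]\leq 2d$, so $M[d]=L[d]$ has dimension $2d$, which is exactly $\bt(M)\geq d$. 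With $\bt$ at least $d$ for $M$, $L$ and $M/K$, the three rows of imbalance identities follow from the opening observation: for $N=M$ it gives $\dl(M)=\dl(t^dM)=\dl(M/M[d])$; for $N=L$ it gives $\dl(L)=\dl(t^dL)=\dl(L/L[d])$, which becomes the second row after substituting $t^dL=K$ and $L[d]=M[d]$; for $N=M/K$ it gives $\dl(M/K)=\dl(t^d(M/K))=\dl((M/K)/(M/K)[d])$, which becomes the third row after substituting $(M/K)[d]=L/K$ (so $(M/K)/(M/K)[d]=M/L$) and $t^d(M/K)=(t^dM+K)/K=t^dM/K$ (using $K\sse t^dM$).

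I expect no serious obstacle: once the case $d=0$ is removed, the argument is bookkeeping with the classification of thin modules. The one step worth stating explicitly is $M[d]=L[d]$, since this is what upgrades the hypotheses on $L$ and $M/K$ to the conclusion $\bt(M)\geq d$; after that, keeping track of the identifications $K=t^dL$, $M[d]=L[d]$, $(M/K)[d]=L/K$ and $t^d(M/K)=t^dM/K$ is all that is needed.
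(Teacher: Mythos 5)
Your proof is correct and follows essentially the same approach as the paper: reduce to the classification of thin modules, prove the equivalences by dimension counting with Remark~\ref{rem-beta}, and derive the imbalance identities from the observation that $t^k$ induces $N/N[k]\simeq t^kN$ with $\dl$ preserved whenever $\bt(N)\geq k$. The one pleasant simplification you make is deriving $\bt(L)\geq d$, $\bt(M/K)\geq d$, and $\bt(M)\geq d$ directly from dimension counts (in particular noting $\dim L[d]=2d$ and $L[d]\leq M[d]$, forcing $M[d]=L[d]$), whereas the paper routes these through Lemma~\ref{lem-beta}; both are fine, and you recover the identification $M[d]=L[d]$ as a byproduct exactly where it is needed.
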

\begin{proof}
 Suppose that $L/K$ is balanced, so $L/K\simeq A_d\oplus A_d$, so
 $\bt(L/K)=d$.  Using Lemma~\ref{lem-beta} we see that
 $\bt(L)\geq d$.  Similarly, as $L/K$ is a submodule of $M/K$ we see
 that $\bt(M/K)\geq d$.  Using either the inclusion $L\to M$ or the
 quotient map $M\to M/K$ we can now deduce that $\bt(M)\geq d$.  Next,
 as $L/K\simeq A_d\oplus A_d$ we have $t^d.(L/K)=0$, so $t^dL\leq K$.
 On the other hand, $\dim(t^dL)=\dim(L)-\dim(L[d])$, and
 $\dim(L[d])=2d$ because $\bt(L)\geq d$, so $\dim(t^dL)=\dim(L)-2d$.
 Moreover, we also have $\dim(K)=\dim(L)-\dim(L/K)=\dim(L)-2d$, so
 $K=t^dL$.  This implies that $L\leq t^{-d}K=\{m\in M\st t^dm\in K\}$.
 As $K=t^dL\leq t^dM$ we see that $K=t^dt^{-d}K$.  This means that
 there is a short exact sequence $M[d]\xra{}t^{-d}K\xra{t^d}K$, which
 gives $\dim(t^{-d}K)=\dim(K)+2d=\dim(L)$, so $L=t^{-d}K$.  This
 proves that~(a) implies both~(b) and~(c).

 Suppose instead that we assume~(b).  As $K=t^dL$ we have
 $L/K=L/t^dL$, and $L/t^dL$ is balanced because $d\leq\bt(L)$, so~(a)
 holds.  Alternatively, suppose that we assume~(c).  As $L=t^{-d}K$ we
 have $L/K=t^{-d}K/K=(M/K)[d]$, and this is balanced because
 $d\leq\bt(M/K)$, so again~(a) holds.  We now see that~(a), (b)
 and~(c) are equivalent.
 
 Suppose that~(a), (b) and~(c) hold.  We have seen
 that $\bt(M)\geq d$, so $M\simeq A_p\oplus A_{p+q}$ for some
 $p\geq d$.  From this decomposition it is clear that
 $t^dM\simeq A_{p-d}\oplus A_{p-d+q}$, so $\dl(t^dM)=q=\dl(M)$.
 Moreover, multiplication by $t^d$ gives an isomorphism
 $M/M[d]\to t^dM$, so $\dl(M/M[d])=q$.  As $\bt(L)$ is also at least
 $d$, the same logic gives $\dl(L)=\dl(t^dL)=\dl(L/L[d])$.  Here
 $t^dL=K$.  Moreover, $L[d]$ is a subgroup of $M[d]$ with the same
 dimension, so it must be the same as $M[d]$.  We thus have
 $\dl(L)=\dl(K)=\dl(L/M[d])$.  Now consider the quotient $N=M/K$.  We
 have seen that $\bt(N)\geq d$, so again
 $\dl(N)=\dl(t^dN)=\dl(N/N[d])$.  Here $N[d]=(t^{-d}K)/K=L/K$, so
 $N/N[d]=M/L$.  We also have $t^dN=(K+t^dM)/K$, but $K=t^dL\leq t^dM$,
 so $t^dN=(t^dM)/K$.  Putting this together,
 we get $\dl(M/K)=\dl(t^dM/K)=\dl(M/L)$.
\end{proof}

\begin{lemma}\label{lem-triangle}
 Let $M$ be a thin $A$-module, and let $N$ be a submodule.  Then each
 of the numbers $\dl(M)$, $\dl(N)$ and $\dl(M/N)$ is at most the sum
 of the other two.
\end{lemma}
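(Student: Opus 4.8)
My plan is to reduce the three triangle inequalities to inequalities between the invariants $\eta$ and $\bt$, and then to prove those directly. For any thin module $L$ one has $\dl(L)=\eta(L)-\bt(L)$ and $\dim_\C(L)=\eta(L)+\bt(L)$ straight from Definition~\ref{defn-imbalance}; combining these with the additivity of $\dim_\C$ along the sequence $0\to N\to M\to M/N\to 0$, a routine manipulation shows that the inequalities $\dl(M)\le\dl(N)+\dl(M/N)$, $\dl(N)\le\dl(M)+\dl(M/N)$ and $\dl(M/N)\le\dl(M)+\dl(N)$ are equivalent, respectively, to
\[ \eta(M)\le\eta(N)+\eta(M/N),\qquad
   \bt(M)\le\bt(N)+\eta(M/N),\qquad
   \bt(M)\le\eta(N)+\bt(M/N). \]
So it is enough to establish these three.

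The first is standard: with $a=\eta(N)$ and $b=\eta(M/N)$ we have $t^bM\le N$, hence $t^{a+b}M\le t^aN=0$, so $\eta(M)\le a+b$. For the other two the key point is that $M[\bt(M)]$ is a thin module of dimension $2\bt(M)$ (Remark~\ref{rem-beta}) killed by $t^{\bt(M)}$, which forces $M[\bt(M)]\simeq A_{\bt(M)}\oplus A_{\bt(M)}$; I will exploit this ``square'' submodule in two slightly different ways.

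For $\bt(M)\le\bt(N)+\eta(M/N)$, put $b=\eta(M/N)$, so $t^bM\le N$; then $t^b\bigl(M[\bt(M)]\bigr)$ is a submodule of $N$ isomorphic to $A_r\oplus A_r$ with $r=\max(\bt(M)-b,0)$, which gives $\dim_\C(N[r])\ge 2r$ and hence $\bt(N)\ge r\ge\bt(M)-\eta(M/N)$. For $\bt(M)\le\eta(N)+\bt(M/N)$, I will instead push $M[\bt(M)]$ forward along $M\to M/N$: its image $J$ is the quotient of $A_{\bt(M)}\oplus A_{\bt(M)}$ by the submodule $N\cap M[\bt(M)]$, which is killed by $t^{\eta(N)}$, so $J$ admits a surjection onto $A_s\oplus A_s$ with $s=\max(\bt(M)-\eta(N),0)$; this forces $\dim_\C(J/t^sJ)\ge 2s$, so $\bt(J)\ge s$ by Remark~\ref{rem-beta} and $\bt(M/N)\ge\bt(J)\ge\bt(M)-\eta(N)$ by Lemma~\ref{lem-beta}. (The third inequality can alternatively be deduced from the second by applying the exact contravariant functor $\Hom_\C(-,\C)$, which preserves the isomorphism type of a thin module and reverses the sequence $0\to N\to M\to M/N\to 0$.)

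I expect the only real obstacle to be the identification and use of the square submodule $M[\bt(M)]\simeq A_{\bt(M)}\oplus A_{\bt(M)}$ and the control of its behaviour under multiplication by $t^{\eta(M/N)}$ and under the projection to $M/N$; once that is in hand the rest is formal bookkeeping with the identities $\dl=\eta-\bt$, $\dim_\C=\eta+\bt$ and the additivity of $\dim_\C$.
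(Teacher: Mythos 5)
Your proof is correct and takes a genuinely different route from the paper's. You translate the three $\dl$-inequalities into three equivalent $\eta$/$\bt$-inequalities via $\dl=\eta-\bt$, $\dim_\C=\eta+\bt$ and additivity of $\dim_\C$ along the short exact sequence (I verified the algebra; the equivalences do hold), and then establish those three by tracking the balanced submodule $M[\bt(M)]\simeq A_{\bt(M)}\oplus A_{\bt(M)}$: multiplying it by $t^{\eta(M/N)}$ to land inside $N$ gives the second inequality, and pushing it forward to $M/N$ gives the third; all your appeals to Remark~\ref{rem-beta} and Lemma~\ref{lem-beta} are legitimate, and the $\Hom_\C(-,\C)$-duality alternative for the third inequality is valid and elegant, since that duality is exact and preserves the isomorphism type of each $A_p$ and hence all of $\eta$, $\bt$, $\dl$. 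The paper instead invokes Lemma~\ref{lem-gap} to quotient away balanced pieces until $N\simeq A_p$ and $M/N\simeq A_s$ are both cyclic, and then bounds $\dl(M)$ directly from the extension $A_p\to M\to A_s$. Your route is more uniform (one device handles all three target inequalities, with duality as a shortcut) and does not use Lemma~\ref{lem-gap} at all; the paper's is a shade shorter once the reduction is in place, though its printed chain $\max(p,s)\le\eta(M)=\dl(M)\le p+s$ contains a small slip, since $M$ need not be cyclic after the reduction (e.g.\ $M=A_1^2$, $N\simeq M/N\simeq A_1$ has $\eta(M)=1\ne 0=\dl(M)$); the intended bounds $|p-s|\le\dl(M)\le p+s$ still follow from $\max(p,s)\le\eta(M)\le\dim(M)$ together with $\dl(M)=2\eta(M)-\dim(M)$, and in any case your argument sidesteps this point entirely.
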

\begin{proof}
 If $N$ is balanced, then the claim is just that $\dl(M)=\dl(M/N)$,
 and this follows from Lemma~\ref{lem-gap}.  Similarly, if $M/N$ is
 balanced, then the claim is just that $\dl(N)=\dl(M)$, which again
 follows from Lemma~\ref{lem-gap}.

 Now consider the general case.  We have $N\simeq A_n\oplus A_{n+p}$
 for some $n,p\geq 0$.  This contains a balanced submodule
 $N_0\simeq A_n^2$, and the first special case shows that nothing will
 change if we replace $N$ by $N/N_0$ and $M$ by $M/N_0$.  We may thus
 assume that $N\simeq A_p$ (so $\dl(N)=p$).  Next, we have
 $M/N\simeq A_m\oplus A_{m+s}$ for some $m,s\geq 0$, which gives
 $(M/N)/t^m(M/N) = M/(t^mM+N) \simeq A_m^2$.  Using Lemma~\ref{lem-gap}
 (with $M:=M$, $L:=M$, $K:=t^mM+N$) we see that
 $N\leq t^mM$ and that nothing relevant changes if we replace $M$ by
 $t^mM$.  We may thus assume that $M/N\simeq A_s$ (so $\dl(M/N)=s$).
 We now have a short exact sequence $A_p\to M\to A_s$.  It is clear
 from this that $\max(p,s)\leq\eta(M) = \dl(M)\leq p+s=\dim(M)$, and thus that
 $2\max(p,s)-p-s\leq\dl(M)\leq p+s$.  A check of cases shows that
 $2\max(p,s)-p-s=|p-s|$, and the claim follows from this.
\end{proof}

\begin{lemma}\label{lem-one-step-a}
 Let $0=M_0<M_1<\dotsb<M_d$ be a chain of thin modules with $d>0$ and
 $\dim(M_i)=i$ for all $i$.  Then either $M_d\simeq A_d$ or there
 exists $i$ with $0\leq i-1<i+1\leq d$ such that $M_{i+1}/M_{i-1}$ is
 balanced and $M_{i-1}=tM_{i+1}$.
\end{lemma}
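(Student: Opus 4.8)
Let $0=M_0<M_1<\dotsb<M_d$ be a chain of thin modules with $d>0$ and $\dim(M_i)=i$ for all $i$. Then either $M_d\simeq A_d$ or there exists $i$ with $0\leq i-1<i+1\leq d$ such that $M_{i+1}/M_{i-1}$ is balanced and $M_{i-1}=tM_{i+1}$.

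The plan is to mimic, in the purely algebraic setting, the argument already used for Lemma~\ref{lem-Xni-cover}. The first thing to record is that $tM_i\le M_{i-1}$ for every $i$: the quotient $M_i/M_{i-1}$ is one-dimensional over $\C$, so $t$ acts on it as a scalar, which must be $0$ because $M_i$ is a torsion module (this is the reasoning of Lemma~\ref{lem-t-zero}). Consequently each $M_i$ with $i\ge 1$ has $\rho(M_i)\in\{1,2\}$, and $M_i$ is cyclic exactly when $\rho(M_i)=1$; a cyclic torsion module of $\C$-dimension $i$ is forced to be $A/t^i=A_i$. In particular $M_1$ is one-dimensional and hence cyclic, so if $M_d$ happens to be cyclic then $M_d\simeq A_d$ and we are in the first alternative.

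Next I would treat the remaining case: $M_d$ not cyclic. Let $i$ be the largest index for which $M_i$ is cyclic; then $1\le i\le d-1$, so $0\le i-1<i+1\le d$ as required by the statement. Write $M_i=Aa$ with $a$ a generator, so that $M_i\simeq A_i$; then $ta$ generates $tM_i$, and a dimension count ($\dim(tM_i)=\dim(M_i)-\dim(M_i[1])=i-1=\dim(M_{i-1})$, using that $M_i[1]$ is one-dimensional) shows that in fact $tM_i=M_{i-1}$, so $ta$ generates $M_{i-1}$.

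Now I would pick $b\in M_{i+1}\sm M_i$, so that $M_{i+1}=M_i+Ab=Aa+Ab$, and note $tb\in M_i$. If $tb$ were a generator of $M_i$ then $M_i\le Ab$ and hence $M_{i+1}=Ab$ would be cyclic, contradicting the maximality of $i$; so $tb$ is a non-generator of $M_i\simeq A_i$, and the non-generators of $A_i$ are precisely the elements of $tM_i$. Thus $tb\in tM_i$, say $tb=ta'$ with $a'\in M_i$, and replacing $b$ by $b-a'$ (which still lies outside $M_i$) we may assume $tb=0$. Then $t$ annihilates both $\ov a$ and $\ov b$ in the two-dimensional module $M_{i+1}/M_{i-1}$, so $M_{i+1}/M_{i-1}\simeq A_1^2$ is balanced; and $tM_{i+1}=Ata+Atb=Ata=M_{i-1}$, which is the second required conclusion.

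I do not anticipate any real obstacle: the whole argument is the structure theory of thin modules together with bookkeeping already rehearsed for Lemma~\ref{lem-Xni-cover}. The one point that needs a little care is the identification $tM_i=M_{i-1}$ (an equality, not a proper inclusion), and the parallel final equality $tM_{i+1}=M_{i-1}$; both reduce to the fact that $\dim(M_i[1])=1$ when $M_i$ is cyclic of positive dimension.
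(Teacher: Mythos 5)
Your proof is correct and takes essentially the same route as the paper: both identify the largest $i$ with $M_i$ cyclic, observe $M_{i-1}=tM_i$, and produce an element of $M_{i+1}\sm M_i$ killed by $t$ to conclude that $M_{i+1}/M_{i-1}\simeq A_1^2$. The only cosmetic difference is that the paper picks $v\in M_{i+1}[1]\sm M_i[1]$ directly (after invoking $M_{i+1}\simeq A_1\oplus A_i$), whereas you pick an arbitrary $b$ and then adjust it to lie in the kernel of $t$, exactly as in the paper's own proof of Lemma~\ref{lem-Xni-cover}.
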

\begin{proof}
 It is automatic that $M_0\simeq A_0$ and $M_1\simeq A_1$. Let $i$ be
 maximal such that $M_i\simeq A_i$.  If $i=d$ then the first case
 applies.  Suppose instead that $i<d$, and choose an element $u$ such
 that $M_i=A_i.u$.  Note that we must have $M_{i-1}=tM_i=A_{i-1}.tu$,
 because this is the only submodule of codimension one.  Now $M_{i+1}$
 contains $A_i$ and has dimension $i+1$ and is not isomorphic to
 $A_{i+1}$, so it must be isomorphic to $A_1\oplus A_i$ instead.  In
 particular, we see that $M_{i+1}[1]\simeq A_1^2$, which is strictly
 larger than $M_i[1]=A_i.t^{i-1}u$.  If we choose
 $v\in M_{i+1}[1]\sm M_i[1]$ we find that $M_{i+1}=A_i.u\oplus A_1.v$.
 This gives
 \[ \frac{M_{i+1}}{M_{i-1}} =
     \frac{A_i.u\oplus A_1.v}{A_{i-1}.tu\oplus 0} = A_1u \oplus A_1v,
 \]
 which is balanced and is annihilated by $t$.
\end{proof}

\begin{lemma}\label{lem-one-step-b}
 Let $0=M_0<M_1<\dotsb<M_d$ be a chain of thin modules with $d>0$ and
 $\dim(M_i)=i$ for all $i$.
 \begin{itemize}
  \item[(a)] Suppose that $0\leq k\leq\dl(M_d)$.  Then there exists
   $i$ with $\dl(M_i)=k$ and $\dl(M_d/M_i)=\dl(M_d)-k$.
  \item[(b)] Suppose that $\dl(M_d)=1$ and $d>1$.  Then there is
   an index $i$ with $0<i<d$ such that either $M_i$ or $M_d/M_i$ is
   balanced.
 \end{itemize}
\end{lemma}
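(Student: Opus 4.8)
The plan is to prove (a) and (b) simultaneously by induction on $d$, using Lemma~\ref{lem-one-step-a} to locate a ``pinch'' in the chain and Lemma~\ref{lem-gap} to replace the chain by one of length $d-2$ sitting inside $tM_d$.

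First I would dispose of the easy cases. For (a), the indices $i=0$ and $i=d$ settle $k=0$ and $k=\dl(M_d)$. If $M_d$ is cyclic, say $M_d\simeq A_d$, then $M_i=t^{d-i}M_d\simeq A_i$ and $M_d/M_i\simeq A_{d-i}$, so $\dl(M_i)=i$ and $\dl(M_d/M_i)=d-i$; this proves (a) with $i=k$, and makes (b) vacuous since then $\dl(M_d)=d\geq 2$. In particular the cases $d\leq 1$ are done, so from now on assume $d\geq 2$ and $M_d$ non-cyclic. Then $M_d[1]$ is two-dimensional, so $\bt(M_d)\geq 1$ and $M_d/tM_d\simeq A_1\oplus A_1$ is balanced; and Lemma~\ref{lem-one-step-a} yields an index $i_0$ with $1\leq i_0\leq d-1$ such that $M_{i_0+1}/M_{i_0-1}$ is balanced and $M_{i_0-1}=tM_{i_0+1}$.

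Apply Lemma~\ref{lem-gap} with $M:=M_d$, $L:=M_{i_0+1}$, $K:=M_{i_0-1}$ (internal parameter $1$): this gives $M_{i_0-1}\leq tM_d$, $M_d[1]\leq M_{i_0+1}$, and $\dl(tM_d)=\dl(M_d)$. Now define $N_j$ for $0\leq j\leq d-2$ by $N_j=M_j$ for $j\leq i_0-1$ and $N_j=tM_{j+2}$ for $j\geq i_0-1$; the two formulas agree at $j=i_0-1$ by the pinch relation. Because $M_d[1]\leq M_{i_0+1}\leq M_{j+2}$ for $j\geq i_0-1$, one has $M_{j+2}[1]=M_d[1]$, so $\dim N_j=j$ throughout, $N_0=0$, $N_{d-2}=tM_d$, and $0=N_0<\dots<N_{d-2}$ is a chain of thin modules with $\dl(N_{d-2})=\dl(M_d)$. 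The same inclusion gives $\bt(M_{j+2})\geq 1$, hence $\dl(M_{j+2})=\dl(tM_{j+2})$ and an isomorphism $M_d/M_{j+2}\simeq tM_d/tM_{j+2}$; and for $j\leq i_0-1$ the quotient $(M_d/M_j)\big/(tM_d/M_j)\simeq M_d/tM_d$ is balanced, so $\dl(tM_d/M_j)=\dl(M_d/M_j)$ by Lemma~\ref{lem-gap}. Applying the inductive hypothesis to $\{N_j\}$ (length $d-2<d$) with the same $k$ produces $j^*$ with $\dl(N_{j^*})=k$ and $\dl(N_{d-2}/N_{j^*})=\dl(M_d)-k$; putting $i^*=j^*$ when $j^*\leq i_0-1$ and $i^*=j^*+2$ when $j^*\geq i_0-1$, these identities turn into $\dl(M_{i^*})=k$ and $\dl(M_d/M_{i^*})=\dl(M_d)-k$, proving (a).

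For (b) we have $\dl(M_d)=1$ and $d>1$, hence $d\geq 3$ and $M_d$ is non-cyclic. If $i_0=1$ then $M_2=M_{i_0+1}$ is balanced, and if $i_0=d-1$ then $M_d/M_{d-2}$ is balanced, so we may assume $2\leq i_0\leq d-2$; then $d\geq 5$ and the reduced chain $\{N_j\}$ has length $d-2\geq 3>1$, so the inductive hypothesis gives an interior index $j^*$ with $N_{j^*}$ or $N_{d-2}/N_{j^*}$ balanced, which transports (as above, preserving both balancedness and the strict inequalities $0<i^*<d$) to the required conclusion. The step I expect to need genuine care is the construction of $\{N_j\}$: checking that its two defining formulas match at $j=i_0-1$, that $\dim N_j=j$ for all $j$, and that the passage between $M_d$ and $tM_d$ preserves the relevant imbalances. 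All of this rests on Lemma~\ref{lem-gap} together with the two-dimensionality of $M_d[1]$, and once these are in hand the remainder of the argument is routine bookkeeping.
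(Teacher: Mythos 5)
Your proof is correct and follows essentially the same route as the paper's: induction on $d$, using Lemma~\ref{lem-one-step-a} to locate a pinch at some $i_0$ and Lemma~\ref{lem-gap} to pass to the shortened chain $N_j$ (given by $M_j$ below the pinch and $tM_{j+2}$ above it) while preserving imbalances. The only cosmetic difference is in part~(b), where you dispose of $i_0\in\{1,d-1\}$ directly before invoking the induction, whereas the paper singles out the base case $d=3$; both reduce to the same computation.
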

\begin{proof}
 We will argue by induction on $d$.  If $d=1$ then~(a) is clear
 and~(b) is vacuous.  Suppose instead that $d>1$.  If $M_d\simeq A_d$
 then~(b) is again vacuous, and we must have $A_j=M_d[j]=t^{d-j}M_d$
 for all $j$, so in claim~(a) we can take $i=k$.  This just leaves the
 main case where $d>1$ but $M_d$ is not cyclic.  Here the previous
 lemma gives $j$ with $0\leq j-1<j+1\leq d$ such that
 $M_{j+1}/M_{j-1}$ is balanced and $M_{j-1}=tM_{j+1}$.  Put $N_i=M_i$
 for $i<j$, so in particular $N_{j-1}=tM_{j+1}$; then put
 $N_i=tM_{i+2}$ for $j\leq i\leq d-2$.  Using Lemma~\ref{lem-gap} we
 see that
 \[ \dl(N_m/N_i) = \begin{cases}
     \dl(M_m/M_i) & \text{ if } i\leq m < j \\
     \dl(M_{m+2}/M_i) & \text{ if } i\leq j\leq m  \\
     \dl(M_{m+2}/M_{i+2}) & \text{ if } j\leq i\leq m.
    \end{cases}
 \]
 In particular, we have $\dl(N_{d-2})=\dl(M_d)$.  For claim~(a) we can
 apply the induction hypothesis to the chain $(N_i)_{i=0}^{d-2}$; this
 gives an index $i$ such that $\dl(N_i)=k$ and
 $\dl(N_{d-2}/N_i)=\dl(M_d)-k$.  Put $i'=i$ if $i<j$, or $i'=i+2$ if
 $i\geq j$; we find that $\dl(M_{i'})=k$, and
 $\dl(M_d/M_{i'})=\dl(M_d)-k$, as required.

 Now suppose that $\dl(M_d)=1$ so claim~(b) is relevant.  Plainly $d \neq 2$.  If $d=3$
 then either $j=1$ and we can take $i=2$, or $j=2$ and we can take
 $i=1$.  Suppose instead that $d>3$, so the sequence $N_*$ is long
 enough to apply~(b) inductively, giving an index $i$ with $0<i<d-2$
 such that either $\dl(N_i)=0$ or $\dl(N_{d-2}/N_i)=0$.  We define
 $i'$ as before, and by checking the various possible cases we see
 that either $M_{i'}$ or $M_d/M_{i'}$ is balanced.
\end{proof}

\section{Graphs}
\label{sec-graphs}

To fix conventions we review some definitions related to graphs.  We
will use the word ``pregraph'' to refer to what most people call
graphs, so we can reserve the term ``graph'' for connected pregraphs
with specified bipartite structure, which is what we need for the bulk
of the paper.

\begin{definition}
 \begin{itemize}
  \item[(a)] A \emph{pregraph} is a pair $(V,E)$, where $V$ is a finite
   set, and $E$ is a subset of $V\tm V$ such that for all $u,v\in V$
   we have $(u,u)\not\in E$, and $(u,v)\in E$ iff $(v,u)\in E$.  The
   elements of $V$ are called \emph{vertices}, and the elements of $E$
   are called \emph{(directed) edges}.  We also write $\vrt(G)$ for
   $V$ and $\edg(G)$ for $E$.  Given an edge $e=(u,v)\in E$, we write
   $\be$ for the reversed edge $(v,u)$.  An \emph{undirected edge} is
   a set $\{u,v\}$ such that $(u,v)$ and $(v,u)$ are directed edges.
  \item[(b)] A \emph{morphism} from $G=(V,E)$ to $G'=(V',E')$
   will mean a function $f\:V\to V'$ such that $(f\tm f)(E)\sse E'$,
   so whenever there is an edge from $u$ to $v$ in $G$ there is also
   an edge from $f(u)$ to $f(v)$ in $G'$.  (In particular, this means
   that if $(u,v)$ is an edge in $G$ then $f(u)\neq f(v)$.)
  \item[(c)] A \emph{full subpregraph} of $G=(V,E)$ is a pregraph of the
   form $G'=(V',E\cap(V'\tm V'))$ for some subset $V'\sse V$.
  \item[(d)] Now consider an equivalence relation $\sim$ on $V$.
   Let $p\:V\to\bV=V/\sim$ be the associated quotient map, and put
   $\bE=(p\tm p)(E)$ and $\bG=(\bV,\bE)$.  We say that $\sim$ is
   \emph{prefoldable} if whenever $(u,v)\in E$ we have $u\not\sim v$.  If
   so, then $\bG$ is a pregraph and $p$ gives a morphism $G\to\bG$.  We
   say that $\bG$ is a \emph{prefolding} of $G$.  Equivalently, a
   prefolding is a pregraph morphism that is surjective on vertices
   and also surjective on edges.
 \end{itemize}
\end{definition}

\begin{definition}
 Let $G=(V,E)$ be a pregraph.
 \begin{itemize}
  \item[(a)] Let $u$ and $v$ be vertices of $G$.  A \emph{path of
    length $n$ from $u$ to $v$} is a list $u=w_0,w_0,\dotsc,w_n=v$ such
   that each pair $(w_i,w_{i+1})$ is an edge.  We write $d(u,v)$ for
   the minimum length of a path from $u$ to $v$, or $d(u,v)=\infty$ if
   there is no such path.  This gives a metric on $V$.
  \item[(b)] We say that $G$ is \emph{connected} if it is nonempty and
   $d(u,v)<\infty$ for all $u,v\in V$.
  \item[(c)] A \emph{cycle of length $n$} in $G$ is a list
   $c=(u_0,\dotsc,u_{n-1},u_n)$ of vertices such that $u_0=u_n$ and
   each pair $(u_i,u_{i+1})$ is an edge.  We say that $c$ is \emph{basic} if it
   has length two (and so has the form $(u,v,u)$ for some edge
   $(u,v)$).  We say that $c$ is \emph{nondegenerate}
   if $n>2$ and the vertices $u_0,\dotsc,u_{n-1}$ are all different.
  \item[(d)] We say that $G$ is a \emph{pretree} if it is connected
   and has no nondegenerate cycles.
 \end{itemize}
\end{definition}

\begin{definition}
 We write $\B$ for the graph $(\{0,1\},\{(0,1),(1,0)\})$ (which has
 two vertices connected by an edge).  A \emph{graph} is a connected
 pregraph $G$ equipped with a pregraph morphism $\pi\:G\to\B$ (which
 we call the \emph{parity map}).  We write $V_i=\pi^{-1}\{i\}$, so
 $V=V_0\amalg V_1$.  The vertices in $V_0$ are called \emph{even}, and
 those in $V_1$ are called \emph{odd}.  We also put $E_+=E\cap(V_0\tm
 V_1)$ and $E_-=E\cap(V_1\tm V_0)$.  As $\pi$ is a pregraph morphism,
 we see that $E=E_+\amalg E_-$.  Edges in $E_+$ are called
 \emph{positive}, and those in $E_-$ are called \emph{negative}.

 A graph morphism is a parity-preserving pregraph morphism.  We write
 $\Graphs$ for the category of graphs and graph morphisms.
\end{definition}

\begin{example}\label{eg-C-n}
 For any $n>0$ we have a pregraph $C(n)$ with vertex set $V=\Z/2n$
 and edge set
 \[ E = \{(i-1,i)\st i\in \Z/2n\} \cup \{(i+1,i)\st i\in\Z/2n\}. \]
 There is an evident parity map $\pi\:\Z/2n\to\Z/2=\{0,1\}$ making
 this a graph.  We put $e_i=(i-1,i)$ so that
 \[ E = \{ e_1,\dotsc,e_{2n},\be_1,\dotsc,\be_{2n} \}. \]
 We call $C(n)$ the \emph{$2n$-gon}.
\end{example}

\begin{remark}\label{rem-C-one}
 The case $n=1$ has some exceptional features.  For example, when
 $n=1$ we have $e_1=\be_2$ and $\be_1=e_2$ so $|\edg(C(1))|=2$,
 whereas $|\edg(C(n))|=4n$ for $n>1$.  However, this does not cause
 any real problems, and most of our more interesting results work
 uniformly for all $n$.
\end{remark}

\begin{remark}\label{rem-distance-parity}
 It is clear that any path from $u$ to $v$ has length congruent to
 $\pi(u)-\pi(v)$ mod two.  It follows that
 $d(u,v)=\pi(u)-\pi(v)\pmod{2}$.
\end{remark}

\begin{definition}\label{defn-foldable}
 We say that an equivalence relation on $V$ is \emph{foldable} if
 equivalent vertices always have the same parity.  Because every edge
 links vertices of opposite parity, we see that any foldable relation
 is prefoldable.  Moreover, the quotient $\bG=G/\sim$ has a unique
 parity map such that the quotient map $G\to\bG$ is a morphism of
 graphs.  We say that $\bG$ is a \emph{folding} of $G$.  A
 \emph{tree folding} of $G$ is a folding $G\to T$ where $T$ is a
 tree.

 We write $\Fold(G)$ for the set of foldings of $G$, and $\TFold(G)$
 for the set of tree foldings.  Note that if $G\xra{p_0}G_0$ and
 $G\xra{p_1}G_1$ are foldings, then there is at most one function
 $V_0\xra{q}V_1$ with $qp_0=p_1$, and if such a $q$ exists then it is
 a graph morphism and in fact a folding.  We give $\Fold(G)$ a partial
 order by declaring that $G_1\leq G_0$ if $q$ exists as above.  We can
 regard the parity map $\pi\:G\to\B$ as a tree folding, and it is
 the smallest element of $\Fold(G)$ or $\TFold(G)$.
\end{definition}

\section{Foldings from non-crossing matchings}

In this section we identify the set $I=\{1,2,\dotsc,2n\}$ with the set
of undirected edges in $C(n)$, letting $i\in I$ correspond to the edge
$\{i-1,i\}$.  We then write $\NCM(n)$ for the set of non-crossing
matchings on this set.

\begin{definition}\label{defn-T-tau}
 For each non-crossing matching $\tau\in\NCM(n)$ we let $\sim_\tau$ be
 the smallest equivalence relation on $\Z/2n=\vrt(C(n))$ such that
 $i\sim\tau(i)-1$ for all $i\in N_{2n}$.  This is foldable by
 Lemma~\ref{lem-NCM-parity}.  We write $p_\tau\:C(n)\to T_\tau$ for
 the corresponding folding.
\end{definition}

\begin{remark}\label{rem-T-tau}
 Note that if $\tau(i)=j$ then $\tau(j)=i$, so $p_\tau(i)=p_\tau(j-1)$
 and $p_\tau(j)=p_\tau(i-1)$.  This means that on edges we have
 $p_\tau(e_i)=p_\tau(\ov{e}_j)$ and $p_\tau(\ov{e}_i)=p_\tau(e_j)$.
\end{remark}

The following picture shows the non-crossing matching
$(1\;2)(3\;8)(4\;5)(6\;7)\in\NCM(4)$ and the associated folding.
\begin{center}
 \begin{tikzpicture}[scale=1.1]
  \begin{scope}[scale=1.5]
   \def\r{0.924}
   \draw (0:1) -- (45:1) -- (90:1) -- (135:1) --
         (180:1) -- (225:1) -- (270:1) -- (315:1) -- (0:1);

   \foreach \i in {0,...,7} {
    \fill (45*\i : 1) circle(0.02);
   }

   \foreach \i in {1,...,8} {
    \draw (-22.5+45*\i :1.01) node{$\ss\i$};
   }

   \draw[thick,red] (22.5:\r) -- ( 67.5:\r);
   \draw[thick,red] (112.5:\r) -- (337.5:\r);
   \draw[thick,red] (157.5:\r) -- (202.5:\r);
   \draw[thick,red] (247.5:\r) -- (292.5:\r);
  \end{scope}
  \draw[very thick,<->] (2.2,0) -- (2.7,0);
  \begin{scope}[xshift=5cm,yshift=-0.4cm]
   \def\e{0.07}
   \draw (\e,-\e) -- (1+\e,1-\e) -- (2,2) -- (1-\e,1+\e) -- (-\e,\e) --
         (-1.4-\e,0) -- (-\e,-\e) -- (0,-1.4-\e) -- (\e,-\e);
   \fill (  \e, -\e) circle(0.03);
   \fill (1+\e,1-\e) circle(0.03);
   \fill (   2,   2) circle(0.03);
   \fill (1-\e,1+\e) circle(0.03);
   \fill ( -\e,  \e) circle(0.03);
   \fill (-1.4-\e,0) circle(0.03);
   \fill ( -\e, -\e) circle(0.03);
   \fill (0,-1.4-\e) circle(0.03);
   \draw[thick,red] (1.5-0.5*\e,1.5+0.5*\e) -- (1.5+0.5*\e,1.5-0.5*\e);
   \draw[thick,red] (0.5-\e,0.5+\e) -- (0.5+\e,0.5-\e);
   \draw[thick,red] (-0.7-\e,0.5*\e) -- (-0.7-\e,-0.5*\e);
   \draw[thick,red] (0.5*\e,-0.7-\e) -- (-0.5*\e,-0.7-\e);
   \draw ( 1.60, 1.40) node {$\ss 1$};
   \draw ( 1.40, 1.60) node {$\ss 2$};
   \draw ( 0.35, 0.65) node {$\ss 3$};
   \draw (-0.70, 0.15) node {$\ss 4$};
   \draw (-0.70,-0.15) node {$\ss 5$};
   \draw (-0.15,-0.70) node {$\ss 6$};
   \draw ( 0.15,-0.70) node {$\ss 7$};
   \draw ( 0.65, 0.35) node {$\ss 8$};
  \end{scope}
  \draw[very thick,<->] (7,0) -- (7.5,0);
  \begin{scope}[xshift=10cm,yshift=-0.4cm]
   \draw (0,0) -- (2,2);
   \draw (0,0) -- (-1.4,0);
   \draw (0,0) -- (0,-1.4);
   \fill (0,0) circle(0.03);
   \fill (1,1) circle(0.03);
   \fill (2,2) circle(0.03);
   \fill (-1.4,0) circle(0.03);
   \fill (0,-1.4) circle(0.03);
  \end{scope}
 \end{tikzpicture}
\end{center}

\begin{definition}\label{defn-TFold-n}
 We let $\TFold_n(C(n))$ be the set of all tree foldings of $C(n)$
 that have $n$ edges.
\end{definition}

\begin{proposition}\label{prop-max-foldings}
 The construction $\tau\mapsto p_\tau$ gives a bijection
 $\NCM(n)\to\TFold_n(C(n))$.
\end{proposition}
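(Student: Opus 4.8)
The plan is to show that $\tau\mapsto p_\tau$ is (i) well-defined, i.e.\ lands in $\TFold_n(C(n))$, (ii) injective, and (iii) surjective, with (iii) the real work. The tool used throughout is the following remark about \emph{boundary walks}. A graph morphism $f\:C(n)\to G$ amounts to the closed walk $f(0),f(1),\dotsc,f(2n-1),f(0)$ of length $2n$ in $G$, whose sequence of directed edges is $f(e_1),\dotsc,f(e_{2n})$ (and run backwards, $f(\be_1),\dotsc,f(\be_{2n})$). If $f$ is moreover a folding whose target has exactly $n$ undirected edges --- for instance if $f\in\TFold_n(C(n))$, so the target is a tree with $n$ edges --- then this walk covers all of them, and since a closed walk in a tree crosses each edge equally often in the two directions, a walk of length $2n$ must traverse each of the $2n$ directed edges of the target exactly once. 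Hence both $i\mapsto f(e_i)$ and $i\mapsto f(\be_i)$ are bijections from $N_{2n}$ onto the set of directed edges of the target.

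For well-definedness I would induct on $n$, the case $n=1$ being trivial ($C(1)=\B$ and $\tau=(1\;2)$ gives the identity folding). For $n\geq 2$, choose $i$ with $\tau(i)$ minimal subject to $\tau(i)>i$; the non-crossing condition forces $\tau(i)=i+1$ with $1\leq i\leq 2n-1$, so $\{i,i+1\}$ is an innermost arc. The only non-trivial relation of $\sim_\tau$ involving the vertices $i-1,i,i+1$ is $i+1\sim i-1$, and no vertex other than $i$ is identified with $i$; so in $T_\tau$ the vertex $[i]$ is a leaf attached to $[i-1]=[i+1]$, and deleting it yields exactly the folding of the $(2n-2)$-gon obtained from $C(n)$ by collapsing the arc $\{i,i+1\}$. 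Under the order-preserving bijection $N_{2n}\sm\{i,i+1\}\cong N_{2n-2}$ this is $p_{\tau'}$, where $\tau'\in\NCM(n-1)$ is the restriction of $\tau$. By induction $T_{\tau'}$ is a tree with $n-1$ edges, so $T_\tau$ (that tree with a pendant edge adjoined) is a tree with $n$ edges, i.e.\ $p_\tau\in\TFold_n(C(n))$.

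Now I would construct the inverse. Given $p\in\TFold_n(C(n))$ with target $T$, the boundary-walk remark lets us define $\sigma\:N_{2n}\to N_{2n}$ by letting $\sigma(i)$ be the unique $j$ with $p(e_i)=p(\be_j)$. Reversing edges shows $\sigma$ is an involution, and $p(e_i)=p(\be_i)$ is impossible (a directed edge of a tree is not its own reverse), so $\sigma$ is fixed-point-free. Unwinding $p(e_i)=p(\be_{\sigma(i)})$, where $e_i=(i-1,i)$ and $\be_{\sigma(i)}=(\sigma(i),\sigma(i)-1)$, gives $p(i-1)=p(\sigma(i))$ and $p(i)=p(\sigma(i)-1)$; the latter shows $\sim_\sigma\sse\ker(p)$, so $p$ factors as $C(n)\xra{p_\sigma}T_\sigma\xra{q}T$. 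Granting for the moment that $\sigma$ is non-crossing, step (i) makes $T_\sigma$ a tree with $n$ edges; the $\sigma$-pairing (Remark~\ref{rem-T-tau}, which uses only $\sigma^2=1$) shows $T_\sigma$ has at most $n$ edges, so $q$ is bijective on edges, hence (both connected, $T$ a tree with $n+1$ vertices, $q$ surjective) $q$ is an isomorphism and $p=p_\sigma$. Applied to $p=p_\tau$ this gives $\sigma=\tau$ by Remark~\ref{rem-T-tau}, hence injectivity; applied to arbitrary $p$ it gives surjectivity.

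The main obstacle is thus to show the involution $\sigma$ above is non-crossing. Suppose not: $i<j<\sigma(i)<\sigma(j)$ in $N_{2n}$. Split the boundary walk of $p$ into the sub-walk $W_1$ through positions $i-1,i,\dotsc,\sigma(i)$ and the complementary $W_2$ through positions $\sigma(i),\dotsc,2n-1,0,\dotsc,i-1$; since $p(i-1)=p(\sigma(i))$, both are closed walks at $v=p(i-1)=p(\sigma(i))$, and together they still traverse each directed edge of $T$ exactly once. As a closed walk in a tree crosses each edge equally often in each direction, each undirected edge of $T$ lies entirely in $W_1$ or entirely in $W_2$; so $W_1$ and $W_2$ trace edge-disjoint subtrees $T_1\ni v$ and $T_2\ni v$ with $T_1\cup T_2=T$, and since $T$ is a tree, $T_1\cap T_2=\{v\}$. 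But $p(j-1)$ and $p(j)$ are visited by $W_1$ (as $i-1\leq j-1<j\leq\sigma(i)$) while $p(\sigma(j))$ and $p(\sigma(j)-1)$ are visited by $W_2$ (as $\sigma(i)\leq\sigma(j)-1<\sigma(j)\leq 2n$); combined with $p(j-1)=p(\sigma(j))$ and $p(j)=p(\sigma(j)-1)$ this forces $p(j-1)=v=p(j)$, contradicting that $(j-1,j)$ is an edge of $C(n)$. Hence $\sigma$ is non-crossing (the claim being vacuous when $n=1$), and the proof is complete.
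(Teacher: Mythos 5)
Your proof is correct, and while it follows the same overall outline as the paper's (construct a pairing from a folding via counting, show the pairing is non-crossing by a separation argument, and show $T_\tau$ is a tree with $n$ edges), the individual steps take genuinely different routes. The most notable divergence is in the well-definedness step: the paper argues directly that $T_\tau$ has exactly $n$ undirected edges by deleting the matched pair $\{e,\tau(e)\}$ from $C(n)$ and invoking the non-crossing condition to show no vertex of one resulting component is $\sim_\tau$-equivalent to a vertex of the other; you instead use induction on $n$, peeling off an innermost arc $\{i,i+1\}$ and observing that $[i]$ becomes a pendant leaf of $T_\tau$. The inductive peeling is a clean and standard move for Catalan-type objects, and avoids having to reason carefully about which relations $\sim_\tau$ imposes across a cut. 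Your derivation of the pairing from the boundary-walk observation (a closed walk of length $2n$ in a tree with $n$ edges traverses each directed edge exactly once) is essentially the paper's ``each $e$ has exactly two preimages'' argument, just packaged more systematically, and it sets up the non-crossing step nicely. For non-crossing, the paper deletes $e$, $\tau(e)$ and $p(e)$ and compares the resulting decompositions, while you split the single boundary walk into two closed sub-walks at the repeated vertex $v = p(i-1)=p(\sigma(i))$ and use the fact that a closed walk in a tree crosses each edge equally often in both directions to force $T_1\cap T_2=\{v\}$; both are separation arguments exploiting that removing an edge disconnects a tree, but the walk-splitting version is arguably more self-contained. Finally, you spell out the mutual-inverse verification via the factorization $p=q\circ p_\sigma$ and a vertex/edge count, which the paper leaves to the reader. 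In total this is a correct proof that buys a cleaner inductive treatment of well-definedness at the cost of needing the boundary-walk setup.
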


The proof uses the operation of deleting edges.  We spell out what we
mean by this as follows:
\begin{definition}\label{defn-delete}
 Let $G$ be a graph, and let $e=(u,v)$ be an edge in $G$.  As usual we
 write $\be$ for the reversed edge $(v,u)$.  We then let $G-e$ denote
 the pregraph with the same vertex set $\vrt(G-e)=\vrt(G)$, but
 $\edg(G-e)=\edg(G)\sm\{e,\be\}$.  We also use the same notation when
 deleting an undirected edge.
\end{definition}
\begin{remark}\label{rem-tree-delete}
 It is not hard to see that $G$ is a tree iff it is connected but
 $G-e$ is disconnected for all edges $e$.
\end{remark}

\begin{proof}[Proof of Proposition~\ref{prop-max-foldings}]
 We work throughout with undirected edges. Let $p\:C(n)\to T$ be an element of
 $\TFold_n(C(n))$. Consider an edge $e$ in $T$.  As $p$ is a folding
 (thus surjective on edges) we can choose
 an edge $e'=\{i-1,i\}\in C(n)$ such that $p(e')=e$.  The
 edges other than $e'$ in $C(n)$ give another path from $i-1$
 to $i$, so their images under $p$ connect $p(i-1)$ to $p(i)$.
 However, as $T$ is a tree, any path in $T$ from $p(i-1)$ to $p(i)$
 must involve $e$.  This means that there must be a second edge
 $e''\neq e'$ with $p(e'')=e$, so $|p^{-1}\{e\}|\geq 2$.  As
 $C(n)$ has precisely twice as many edges as $T$, we must have
 $|p^{-1}\{e\}|=2$ for all $e$.  It follows that there is a
 unique permutation $\tau$ of $I$ with $p\tau=p$ and $\tau(e')\neq e'$
 for all $e'$, and this satisfies $\tau^2=1$.

 Now suppose we have
 edges $e$ and $e'$ in $C(n)$ such that $e$, $e'$, $\tau(e)$ and
 $\tau(e')$ are distinct.  Put $C_1=C(n)-e-\tau(e)$ and
 $T_1=T-p(e)=T-p(\tau(e))$.  Then $C_1$ will split as $C_2\amalg C_3$,
 where $C_2$ and $C_3$ are connected, with $e'$ in $C_2$ say.  As $T$ is
 a tree we see that $T_1$ must be disconnected, so it will split as
 $p(C_2)\amalg p(C_3)$.  As $p(C_2)$ and $p(C_3)$ are disjoint but
 $e'$ is in $C_2$ with $p(e')=p(\tau(e'))$, we see that $\tau(e')$
 must also be in $C_2$, so $e$ and $e'$ cannot violate the
 non-crossing condition.  We conclude that $\tau\in\NCM(n)$.  This
 construction therefore gives a map $\phi\:\TFold_n(C(n))\to\NCM(n)$.

 Now suppose we start with $\tau\in\NCM(n)$ and we define
 $p_\tau\:C(n)\to T_\tau$ as in Definition~\ref{defn-T-tau}.  It is
 clear by construction that every undirected edge $e$ has
 $p_\tau(e)=p_\tau(\tau(e))$, and thus that $T_\tau$ has at most $n$
 undirected edges.  For any undirected edge $e$, the graph
 $C(n)-e-\tau(e)$ splits into two components, say $C_1\amalg C_2$.
 The non-crossing condition implies that no vertex in $C_1$ is
 equivalent to any vertex in $C_2$, so $T_\tau-p_\tau(e)$ splits as
 $p(C_1)\amalg p(C_2)$.  This means in particular that no other
 undirected edge in $C(n)$ can map to $p(e)$, so $p$ induces a
 bijection $\edg(C(n))/\ip{\tau}\to\edg(T_\tau)$, proving that
 $T_\tau$ has precisely $n$ undirected edges.  We also see that the
 deletion of any edge disconnects $T_\tau$, so $T_\tau$ is a tree.
 This construction therefore gives a map
 $\psi\:\NCM(n)\to\TFold_n(C(n))$ with
 $\psi(\tau)=(C(n)\xra{p_\tau}T_\tau)$.  We leave it to the reader to
 check that this is inverse to $\phi$.
\end{proof}

\section{Rings and spaces associated to graphs}
\label{sec-SY}

\begin{definition}\label{defn-YG}
 For any graph $G$, we put
 \[ Y_1(G) = \{m\:\edg(G)\to \CP^1 \st m(\be)=\chi(m(e))
                \text{ for all } e\in E\}.
 \]
 Given a tree folding $p\:G\to T$, we have an evident closed
 inclusion $p^*\:Y_1(T)\to Y_1(G)$.  We define $Y(G)$ to be the union
 of the subsets $p^*(Y_1(T))$ for all tree foldings of $G$.
\end{definition}

\begin{proposition}\label{prop-Y-functor}
 For any graph morphism $f\:G\to G'$, the resulting map
 $f^*\:Y_1(G')\to Y_1(G)$ sends $Y(G')$ to $Y(G)$.  Thus, $Y$ gives a
 contravariant functor from graphs to spaces.
\end{proposition}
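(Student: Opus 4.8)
The plan is to first establish that $f^*$ is a continuous map $Y_1(G')\to Y_1(G)$ and that $Y_1$ is a contravariant functor, and then to reduce the containment $f^*(Y(G'))\sse Y(G)$ to the graph-theoretic observation that the image of a graph morphism into a tree is again a tree.

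First I would record the formal part. A graph morphism $f\:G\to G'$ induces a map on directed edges by $f(u,v)=(f(u),f(v))$ --- this is legitimate because $f(u)\neq f(v)$ whenever $(u,v)$ is an edge --- and this map satisfies $f(\be)=\ov{f(e)}$. Hence, for $m\in Y_1(G')$, the function $f^*(m)=m\circ f\:\edg(G)\to\CP^1$ satisfies $f^*(m)(\be)=m(\ov{f(e)})=\chi(m(f(e)))=\chi(f^*(m)(e))$, so $f^*(m)\in Y_1(G)$. Viewing $Y_1(G)$ as a subspace of $(\CP^1)^{\edg(G)}$, the map $f^*$ is the restriction of the continuous reindexing map induced by $f\:\edg(G)\to\edg(G')$; and the identities $(gf)^*=f^*g^*$ and $\mathrm{id}^*=\mathrm{id}$ are immediate from the formula $f^*(m)=m\circ f$. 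Thus $Y_1$ is already a contravariant functor to spaces, and it remains only to show that $f^*$ carries $Y(G')$ into $Y(G)$; the functoriality of $Y$ will then follow from that of $Y_1$, since $Y$ acts on morphisms by restricting $f^*$.

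For the main step I would take $m\in Y(G')$. By definition there is a tree folding $q\:G'\to T'$ and an element $m'\in Y_1(T')$ with $m=q^*(m')$, so $f^*(m)=(q\circ f)^*(m')$, where $g:=q\circ f\:G\to T'$ is a graph morphism. Let $T''$ be the subpregraph of $T'$ with vertex set $g(\vrt(G))$ and edge set $g(\edg(G))$; since $G$ has a symmetric edge set and $g$ commutes with edge reversal, $T''$ is a genuine pregraph, and it inherits a parity map by restriction from $T'$. Because $G$ is connected and morphisms take paths to paths, $T''$ is connected; and because any nondegenerate cycle in $T''$ would be a nondegenerate cycle in the tree $T'$, the connected graph $T''$ has none, so $T''$ is a tree. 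By construction $g$ factors as $G\xra{\ov{g}}T''\xra{\iota}T'$, where $\ov{g}$ is parity-preserving and surjective on vertices and on edges, hence a folding; as $T''$ is a tree, $\ov{g}$ is a tree folding. Now
$f^*(m)=f^*q^*(m')=(q\circ f)^*(m')=(\iota\circ\ov{g})^*(m')=\ov{g}^*(\iota^*(m'))$,
and $\iota^*(m')=m'|_{\edg(T'')}$ lies in $Y_1(T'')$ because $\edg(T'')$ is closed under edge reversal; therefore $f^*(m)\in\ov{g}^*(Y_1(T''))\sse Y(G)$. Since $m$ was arbitrary, this proves $f^*(Y(G'))\sse Y(G)$.

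I do not expect a serious obstacle. The only point deserving care is the verification that the image $T''$ of $g$ really is a tree --- i.e. that a connected subpregraph of a tree is a tree, so that $g$ genuinely factors through a tree folding --- after which everything is formal bookkeeping with $Y_1$. One should also keep in mind the degenerate graph $C(1)$ of Remark~\ref{rem-C-one} and the fact that $g$ need not be injective, but neither of these affects the argument.
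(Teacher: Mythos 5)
Your proof is correct and follows essentially the same strategy as the paper: take a tree folding $q\colon G'\to T'$ realizing the given point, pass to the sub-tree of $T'$ determined by the image of $q\circ f$, and show that $q\circ f$ factors as a tree folding of $G$ followed by an inclusion of trees. The only cosmetic difference is that you define the sub-tree as the image subpregraph (so surjectivity on edges is automatic and connectedness must be checked), whereas the paper uses the full subpregraph on the image vertex set (so connectedness is easier but surjectivity on edges needs the path-in-a-tree argument); in a tree these two subgraphs coincide, so the arguments are interchangeable.
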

\begin{proof}
 Consider a point $m'\in Y(G')$.  By definition, there must be a tree
 folding $p'\:G'\to T'$ such that $m'\in (p')^*Y_1(T')$, or
 equivalently $m'(u)=m'(v)$ whenever $p'(u)=p'(v)$.  Let $T$ be the
 full subpregraph of $T'$ with vertex set $p'(f(G))$.  Since $G$ is
 connected, we see that $T$ is also connected, and it inherits a
 parity map from $T'$ and is therefore a tree.  Let $j\:T\to T'$ be
 the inclusion, and let $p\:G\to T$ be the morphism such that
 $jp=p'f$.  For any edge $(u',v')$ in $T$ we can choose vertices
 $u,v\in G$ with $p(u)=u'$ and $p(v)=v'$.  We can then choose a path
 from $u$ to $v$ in $G$, and apply $f$ to it to get a path from $u'$
 to $v'$.  As $T$ is a tree, one of the edges in this path must be
 $(u',v')$.  This shows that the map $p\:G\to T$ is surjective on
 edges, and so is a folding.  We have $f^*(p')^*=p^*j^*\:Y_1(T')\to
 Y_1(G)$, so $f^*(m')\in p^*(Y_1(T)) \subseteq Y(G)$ as required.
\end{proof}

\begin{proposition}\label{prop-YCn}
 There is a homeomorphism $\sg\:X(n)\to Y(C(n))$ given by
 \begin{align*}
  \sg(\un{W})(e_i)   &= \om(W_i \ominus W_{i-1}) \\
  \sg(\un{W})(\be_i) &= \chi\om(W_i \ominus W_{i-1}).
 \end{align*}
\end{proposition}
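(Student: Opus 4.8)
The plan is to recognise $\sg$ as a repackaging of the embedding $\bom$ of Proposition~\ref{prop-om-emb}, and then to identify its image with $Y(C(n))$ by combinatorics. Evaluation on $e_1,\dotsc,e_{2n}$ gives a homeomorphism $Y_1(C(n))\to(\CP^1)^{2n}$, since the relation $m(\be_i)=\chi(m(e_i))$ together with $\chi^2=1$ determines $m$ on the reversed edges; under this homeomorphism $\sg$ becomes $\bom=(\bom_1,\dotsc,\bom_{2n})$, because $\sg(\un W)(e_i)=\om(W_i\ominus W_{i-1})=\bom_i(\un W)$. By Proposition~\ref{prop-om-emb} this map is a closed embedding, so $\sg$ is a homeomorphism onto its image, and by Corollary~\ref{prop-X-prime} that image corresponds to $X'(n)=\bigcup_KX'(n,K)$. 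It therefore suffices to prove that $X'(n)=Y(C(n))$ as subsets of $Y_1(C(n))$.

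For the inclusion $X'(n)\sse Y(C(n))$, fix a sparse set $K$ of size $n$ with associated matching $\tau$ and tree folding $p_\tau\:C(n)\to T_\tau$ (Definition~\ref{defn-T-tau}). By Remark~\ref{rem-T-tau} the only coincidences among the images of the directed edges of $C(n)$ under $p_\tau$ are $p_\tau(e_i)=p_\tau(\be_{\tau(i)})$, so a point $m$ lies in $p_\tau^*(Y_1(T_\tau))$ exactly when $m(e_i)=m(\be_{\tau(i)})=\chi(m(e_{\tau(i)}))$ for all $i$; in the coordinates $u(i)=m(e_i)$ this is precisely the defining condition $u(\tau(i))=\chi(u(i))$ of $X'(n,K)$. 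Hence $X'(n,K)=p_\tau^*(Y_1(T_\tau))\sse Y(C(n))$, and taking the union over all such $K$ gives the inclusion.

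For the reverse inclusion, let $p\:C(n)\to T$ be an arbitrary tree folding; I want $p^*(Y_1(T))\sse X'(n)$. Traversing $C(n)$ once produces a closed walk $p(0),p(1),\dotsc,p(2n)=p(0)$ in $T$ whose $i$-th step is $p(e_i)$, and this walk covers every edge of $T$ since $p$ is surjective on edges. Rooting $T$ at $p(0)$, the depth function $i\mapsto d(p(0),p(i))$ starts and ends at $0$, stays nonnegative, and changes by $\pm1$ at each step, so it is the height function of a Dyck path; the usual non-crossing pairing of the $2n$ steps of a Dyck path then defines a matching $\tau\in\NCM(n)$. If step $i$ descends an edge $\ep$ of $T$, its partner step $\tau(i)$ ascends $\ep$: between these two steps the walk remains at depth exceeding that of the parent endpoint of $\ep$, hence stays inside the subtree beyond $\ep$, which it can only leave by traversing $\ep$ back through its child endpoint. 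Consequently $p(e_i)=p(\be_{\tau(i)})$ for all $i$, which forces $p(k)=p(\tau(k)-1)$ for all $k$, so $p$ is constant on the classes of $\sim_\tau$ and hence factors as $C(n)\xra{p_\tau}T_\tau\to T$ through a folding, i.e.\ $p\le p_\tau$ in $\Fold(C(n))$. Since $p\mapsto p^*(Y_1(T))$ is monotone (if $q\circ p'=p$ for a folding $q$ then $p^*=(p')^*\circ q^*$), we obtain $p^*(Y_1(T))\sse p_\tau^*(Y_1(T_\tau))=X'(n,K)\sse X'(n)$, whence $Y(C(n))\sse X'(n)$. The step I expect to require the most care — and the main obstacle — is the argument that the Dyck partner of a descending step is an ascending step along the \emph{same} edge (this is exactly where the tree structure of $T$ and the choice of root $p(0)$ enter), together with checking that the pairing so obtained satisfies the non-crossing axiom of Definition~\ref{defn-matching} with respect to the linear order on $N_{2n}$, not merely cyclically.
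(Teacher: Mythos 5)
Your proof is correct. In fact it is a bit more careful than the paper's one-line proof, which simply invokes Propositions~\ref{prop-max-foldings} and~\ref{prop-X-prime} and declares the result a ``translation''. Those two propositions give $X(n)\cong\bigcup_{p\in\TFold_n(C(n))}p^*(Y_1(T))$, whereas $Y(C(n))$ is by definition the union over \emph{all} tree foldings of $C(n)$, not only the maximal ($n$-edge) ones; the paper leaves implicit the fact that every tree folding of $C(n)$ factors through a maximal one. Your third paragraph supplies exactly that missing step: the closed walk $p(0),\dotsc,p(2n)$ in $T$ gives a Dyck path via depth, the bracket pairing defines $\tau\in\NCM(n)$, the tree structure forces the partner of each descent to be the ascent of the very same edge of $T$ (so $p(e_i)=p(\be_{\tau(i)})$, hence $p(i)=p(\tau(i)-1)$, hence $p$ factors through $p_\tau$), and the pairing is non-crossing in the linear, not merely cyclic, sense of Definition~\ref{defn-matching} because nesting of Dyck excursions gives $i<j<\tau(i)\Rightarrow\tau(j)<\tau(i)$. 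Your first two paragraphs (identifying $\sg$ with $\bom$ via $Y_1(C(n))\cong(\CP^1)^{2n}$, and matching $X'(n,K)$ with $p_\tau^*(Y_1(T_\tau))$ using Remark~\ref{rem-T-tau} together with the count $|p_\tau^{-1}\{e\}|=2$ from the proof of Proposition~\ref{prop-max-foldings}) are correct unwindings of the definitions, so the whole argument goes through.
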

\begin{proof}
 In view of Proposition~\ref{prop-max-foldings}, this is just a
 translation of Proposition~\ref{prop-X-prime}.
\end{proof}

\begin{definition}\label{defn-SG}
 We again consider a graph $G$, and we let $S_0(G)$ be the
 polynomial ring over $\Z$ with one generator $x_e = x_{uv}$ for each edge
 $e = (u, v)\in\edg(G)$.  We define a grading on this ring by $|x_e|=2$.
 For any cycle $c=(u_0,u_1,\dotsc,u_n=u_0)$ we put
 \[ r_c(t) = \prod_{i=0}^{n-1} (1 + t x_{u_i,u_{i+1}})
     \in S_0(G)[t]
 \]
 and $r^*_c(t)=r_c(t)-1$.  We let $S(G)$ denote the quotient of
 $S_0(G)$ by the ideal generated by coefficients of $r^*_c(t)$ for all
 cycles $c$ in $G$.

 For any graph morphism $f\:G\to G'$ we have a graded ring
 homomorphism $f_*\:S_0(G)\to S_0(G')$ given by
 $f_*(x_{uv})=x_{f(u),f(v)}$.  As $f$ sends edges to edges it also
 sends cycles to cycles and it follows easily that $f_*$ induces a
 homomorphism $S(G)\to S(G')$.  Thus, $S$ gives a functor
 $\Graphs\to\CRings_*$, where $\CRings_*$ is the category of
 commutative graded rings.
\end{definition}

\begin{remark}\label{rem-basic-nondeg}
 Consider a basic cycle $c=(u,v,u)$ in $G$.  We then have
 $r_c^*(t)=(x_{uv}+x_{vu})t+x_{uv}x_{vu}t^2$, so in $S(G)$ we have
 $x_{vu}=-x_{uv}$ and $x_{uv}^2=x_{vu}^2=x_{uv}x_{vu}=0$.  In
 particular, the generators $x_e$ for $e\in E_-$ are the negatives of
 the generators for $E_+$.  We refer to these relations as the
 \emph{basic relations}.  Put
 \[ S_1(G) = S_0(G)/(\text{basic relations}) =
     \frac{\Z[x_e\st e\in E_+]}{(x_e^2\st e\in E_+)}.
 \]
 The relations arising from nondegenerate cycles will be called
 \emph{nondegenerate relations}.
\end{remark}

\begin{remark}\label{rem-HY-one}
 Let $x$ denote the usual generator of $H^2(\CP^1)$.  For any edge
 $e\in\edg(G)$ we have a projection $\pi_e\:Y_1(G)\to\CP^1$ and we
 define $x_e=\pi_e^*(x)\in H^2(Y_1(G))$.  As the map $\chi$
 satisfies $\chi^*(x)=-x$, and $x^2=0$, we see that $x_{\be}+x_e=0$ and
 $x_e^2=0$.  Using this we obtain a natural isomorphism
 $\psi_1\:S_1(G)\to H^*(Y_1(G))$.
\end{remark}

\begin{remark}\label{rem-Y-zero}
 We could also define $Y_0(G)=\Map(\edg(G),\CP^\infty)$ and we would
 then have $H^*(Y_0(G))=S_0(G)$ but as far as we know this is not
 useful.
\end{remark}

\begin{remark}\label{rem-cycle-rot}
 There is an evident rotation operation on cycles, given by
 \[ \rho(u_0,u_1,\dotsc,u_{n-1},u_0) =
     (u_1,u_2,\dotsc,u_{n-1},u_0,u_1).
 \]
 We make the convention that indices are read modulo $n$ by default,
 so this can be written as $\rho(u)_i=u_{i+1}$.  It is clear that
 $r_{\rho(c)}(t)=r_c(t)$ and thus $r^*_{\rho(c)}(t)=r^*_c(t)$, so
 $\rho(c)$ gives the same relations as $c$.
\end{remark}

\begin{lemma}\label{rem-min-relations}
 The basic relations and the nondegenerate relations imply all
 relations in $S(G)$.
\end{lemma}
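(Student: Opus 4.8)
The plan is to argue by strong induction on the length of a cycle, proving the sharper statement that for \emph{every} cycle $c$ in $G$ all the coefficients of $r^*_c(t)$ lie in the ideal $\mathcal{I}\sse S_0(G)$ generated by the basic and nondegenerate relations. Since every basic cycle and every nondegenerate cycle is in particular a cycle, $\mathcal{I}$ is automatically contained in the defining ideal of $S(G)$; the sharper statement shows the reverse containment, so the two ideals agree and the lemma follows. The base cases are immediate: a cycle of length $0$ has $r_c(t)=1$; a cycle of length $1$ would be a self-loop and so cannot occur; and a cycle of length $2$ is basic by definition, so its coefficients are among the basic relations.

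For the inductive step I would take a cycle $c=(u_0,\dotsc,u_n=u_0)$ of length $n\geq 3$. If the vertices $u_0,\dotsc,u_{n-1}$ are pairwise distinct then $c$ is nondegenerate and there is nothing to do, so suppose $u_i=u_j$ for some $0\leq i<j\leq n-1$. Splitting $c$ at this repeated vertex produces the two cycles
\[ c_1 = (u_i,u_{i+1},\dotsc,u_j), \qquad
   c_2 = (u_0,u_1,\dotsc,u_i,u_{j+1},u_{j+2},\dotsc,u_n); \]
note that $(u_i,u_{j+1})=(u_j,u_{j+1})$ is an edge, so $c_2$ really is a cycle. Separating the defining product of $r_c(t)$ at the indices $i$ and $j$ gives the identity $r_c(t)=r_{c_1}(t)\,r_{c_2}(t)$ in $S_0(G)[t]$, whence
\[ r^*_c(t) = r^*_{c_1}(t) + r^*_{c_2}(t) + r^*_{c_1}(t)\,r^*_{c_2}(t). \]
So once the coefficients of $r^*_{c_1}(t)$ and of $r^*_{c_2}(t)$ are known to lie in $\mathcal{I}$, the same follows for the coefficients of $r^*_c(t)$.

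To close the induction I must check that $c_1$ and $c_2$ are genuinely shorter than $c$; their lengths are $j-i$ and $n-(j-i)$, so it suffices to prove $2\leq j-i\leq n-2$. The one ingredient is the no-self-loop axiom: $u_k=u_{k+1}$ never holds, which excludes $j-i=1$, while $j-i=n-1$ would force $i=0$, $j=n-1$ and hence $u_0=u_{n-1}$, contradicting the fact that $(u_{n-1},u_0)$ is an edge. Thus each of $c_1,c_2$ has length between $2$ and $n-1$: a piece of length exactly $2$ is a basic cycle and is handled by the base case, and a piece of length $\geq 3$ falls under the induction hypothesis. I expect this last bit of bookkeeping — using the no-self-loop axiom to be sure the split always strictly decreases length and never produces a ``cycle'' of length $0$ or $1$ — to be the only delicate point; the product identity $r_c=r_{c_1}r_{c_2}$ and the base cases are routine.
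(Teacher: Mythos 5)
Your proof is correct and takes essentially the same route as the paper: induction on cycle length, splitting a degenerate cycle at a repeated vertex, and using the multiplicativity $r_c = r_{c_1} r_{c_2}$. You are slightly more careful than the paper in verifying via the no-self-loop axiom that the two pieces are genuinely shorter, but the core argument is identical.
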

\begin{proof}
 Let $I$ be the ideal generated by the basic relations and the
 nondegenerate relations, and put $Q=S_0(G)/I$.  The claim is that
 $Q=S(G)$, or equivalently that $r_c(t)=1$ in $Q[t]$ for all cycles
 $c$.  We prove this do by induction on the length $n$ of $c$.  If
 $n\leq 2$ or $c$ is nondegenerate then the claim is clear.  If $n>2$
 and $c$ is degenerate then there exist $p,q$ with $0\leq p<q<n$ with
 $u_p=u_q$.  Put $c'=(u_p,u_{p+1},\dotsc,u_q)$ and
 $c''=(u_q,u_{q+1},\dotsc,u_{p+n})$, where the indices are considered
 modulo $n$ as usual.  Now $c'$ and $c''$ are shorter than $c$, so by
 induction we have $r_{c'}(t)=r_{c''}(t)=1$ in $Q[t]$.  It is also
 clear that $r_c(t)=r_{c'}(t)r_{c''}(t)$, so $r_c(t)=1$ in $Q[t]$ as
 required.
\end{proof}

\begin{remark}\label{rem-Y-tree}
 If $T$ is a tree then there are no nondegenerate relations and so
 $S(T)=S_1(T)$.  It is also clear that in this case we have
 $Y(T)=Y_1(T)$ and so $H^*(Y(T))=H^*(Y_1(T))=S_1(T)=S(T)$.

 For $C(n)$ there is essentially only one nondegenerate cycle, and the
 nondegenerate relations are the defining relations for $R(n)$, so
 $S(C(n))=R(n)$.
\end{remark}

\begin{proposition}\label{prop-S-HY}
 The map $\psi_1\:S_1(G)\to H^*(Y_1(G))$ induces a natural
 transformation $\psi\:S(G)\to H^*(Y(G))$, which is an isomorphism
 when $G$ is a tree, and agrees with $\phi\:R(n)\to H^*(X(n))$ when
 $G=C(n)$.
\end{proposition}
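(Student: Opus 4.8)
The plan is to obtain $\psi$ by restricting $\psi_1$. I would first set $\bar\psi_G$ to be the composite of graded ring maps
\[ S_0(G) \to S_1(G) \xra{\psi_1} H^*(Y_1(G)) \xra{\res} H^*(Y(G)), \]
where the first map is the quotient by the basic relations and the last is restriction along the closed inclusion $Y(G)\hookrightarrow Y_1(G)$; here $\psi_1$ is a ring isomorphism by Remark~\ref{rem-HY-one}. Each of the three maps is natural in $G$, so $\bar\psi$ is a natural transformation of functors $\Graphs\to\CRings_*$. Since $\bar\psi_G$ kills the basic relations by construction, Lemma~\ref{rem-min-relations} reduces the problem to showing that $\bar\psi_G$ kills the coefficients of $r^*_c(t)$ for every nondegenerate cycle $c$ in $G$; once this is done $\bar\psi$ descends to the desired $\psi\:S\Rightarrow H^*\circ Y$.

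The main step is to deduce the vanishing of the nondegenerate relations from the case $G=C(\ell)$, which is already understood. Given a nondegenerate cycle $c=(u_0,\dotsc,u_{2\ell-1},u_0)$ — of even length $2\ell$ with $\ell\geq 2$, since $G$ carries a parity map and a nondegenerate cycle has length $>2$ — I would use the rotation of Remark~\ref{rem-cycle-rot} (which preserves the associated relations) to arrange that $u_0$ is even, so that $j\mapsto u_j$ defines a graph morphism $\iota\:C(\ell)\to G$ taking the standard cycle $(0,1,\dotsc,2\ell-1,0)$ of $C(\ell)$ to $c$. Then $\iota_*$ sends the coefficients of $r^*(t)$ for the standard cycle, namely the elementary symmetric functions $\sg_k$ of the generators $x_{e_1},\dotsc,x_{e_{2\ell}}$ of $S_0(C(\ell))$, to the coefficients of $r^*_c(t)$. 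Naturality of $\bar\psi$ with respect to $\iota$ (using Proposition~\ref{prop-Y-functor} to make sense of the induced map $H^*(Y(C(\ell)))\to H^*(Y(G))$) then shows that $\bar\psi_G$ applied to the coefficients of $r^*_c$ equals the image of $\bar\psi_{C(\ell)}(\sg_k)$, so it suffices to prove $\bar\psi_{C(\ell)}(\sg_k)=0$ in $H^*(Y(C(\ell)))$. By Proposition~\ref{prop-YCn} we have $Y(C(\ell))=X(\ell)$, and under this homeomorphism $\bar\psi_{C(\ell)}$ carries $x_{e_i}$ to $\bom_i^*(x)=x_i$, the Euler class of $L_i$ (immediate from the formula for $\sg$ in Proposition~\ref{prop-YCn}, Corollary~\ref{cor-Li-embeds}, and the identity $x_i=\bom_i^*(x)$ used in the proof of Corollary~\ref{cor-squares-zero}). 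Hence $\bar\psi_{C(\ell)}(\sg_k)$ is the $k$-th elementary symmetric function of $x_1,\dotsc,x_{2\ell}$ in $H^*(X(\ell))$, which vanishes because, up to sign, it is a Chern class of $\bigoplus_{i=1}^{2\ell}L_i$, a trivial bundle (the discussion in the introduction; cf.\ Corollaries~\ref{cor-squares-zero} and~\ref{cor-phi-exists}). This produces $\psi$.

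For the remaining assertions: naturality of $\psi$ is inherited from that of $\bar\psi$. If $G=T$ is a tree then $T$ has no nondegenerate cycle, so $S(T)=S_1(T)$ and $Y(T)=Y_1(T)$, both by Remark~\ref{rem-Y-tree}; under these identifications $\psi$ is literally $\psi_1$ and hence an isomorphism. If $G=C(n)$ then $S(C(n))=R(n)$ (Remark~\ref{rem-Y-tree}) and $Y(C(n))=X(n)$ (Proposition~\ref{prop-YCn}), and by the computation just made $\psi$ sends each $x_i$ to the Euler class of $L_i$, i.e.\ to the class denoted $x_i$ in $H^2(X(n))$; since $R(n)$ is generated as a $\Z$-algebra by $x_1,\dotsc,x_{2n}$ and $\phi$ (Corollary~\ref{cor-phi-exists}) is the ring map with the same effect on generators, $\psi=\phi$.

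I expect the substantive points to be combinatorial rather than topological: one must check that a nondegenerate cycle, after rotating to an even starting vertex, really is the image of a graph morphism out of some $C(\ell)$ with $\ell\geq 2$ (so that parities match and the exceptional graph $C(1)$ never appears), and one must carefully track the identification of $\bar\psi_{C(\ell)}$ with the algebra map underlying $\phi$ through the homeomorphism of Proposition~\ref{prop-YCn}. The more naive route — restricting each putative relation to the subspaces $p^*(Y_1(T))$ covering $Y(G)$ and gluing — would meet the usual Mayer--Vietoris obstructions; routing the argument through $C(\ell)$ and the previously proven vanishing of $\sg_k$ in $H^*(X(\ell))$ avoids all of that.
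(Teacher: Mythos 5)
Your proposal is correct and follows essentially the same route as the paper: factor each cycle of length $2\ell$ through a graph morphism $C(\ell)\to G$, then use naturality of $\bar\psi$ and the previously established vanishing of the elementary symmetric functions in $H^*(X(\ell))$ to see that the nondegenerate relations die, after which the tree and $C(n)$ cases are immediate. Your additional care about parity (rotating the cycle so the starting vertex is even) and about reducing first to nondegenerate cycles via Lemma~\ref{rem-min-relations} are sound refinements of points the paper leaves implicit, but they do not change the shape of the argument.
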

\begin{proof}
 The main point to check is that the composite
 \[ S_1(G)\to H^*(Y_1(G)) \xra{\res} H^*(Y(G)) \]
 sends $r_c(t)$ to $1$ for any cycle $c$ in $G$.  Our bipartite
 structure implies that $c$ must have length $2n$ for some $n$, and
 this means that it corresponds to a graph morphism $f\:C(n)\to G$.
 Let $c_0$ be the obvious cycle $(0,1,2,\dotsc,2n=0)$ in $C(n)$.  We
 observed in the introduction that symmetric functions in the elements
 $x_1,\dotsc,x_{2n}\in H^2(X(n))$ are all zero.  Using our
 homeomorphism $X(n)\simeq Y(C(n))$ we deduce that $r_{c_0}(t)=1$ in
 $H^*(Y(C(n)))[t]$.  Using the naturality square
 \[ \xymatrix{
  S(C(n)) \ar[r] \ar[d] & H^*(Y(C(n))) \ar[d] \\
  S(G) \ar[r] & H^*(Y(G))
 } \]
 we deduce that $r_c(t)=1$ in $H^*(Y(G))[t]$.  It follows that the map
 $S_1(G)\to H^*(Y(G))$ factors uniquely through a map
 $S(G)\to H^*(Y(G))$.  The remaining claims are now clear.
\end{proof}

\begin{proposition}\label{prop-graph-union}
 Suppose that $G$ is the union of two connected full subgraphs $G_0$
 and $G_1$ together with an edge $e=(a_0,a_1)$, such that
 $G_0\cap G_1=\emptyset$ but $a_i\in G_i$ for $i=0,1$.  Then
 \begin{align*}
  S(G) &= S(G_0)\ot S(G_1)\ot \Z[x_e]/(x_e^2) \\
  Y(G) &= Y(G_0)\tm Y(G_1)\tm \CP^1.
 \end{align*}
 Thus, if the rings $S(G_i)$ are torsion-free and the maps
 $\psi\:S(G_i)\to H^*(Y(G_i))$ are isomorphisms, then $S(G)$ is also
 torsion-free and $\psi\:S(G)\to H^*(Y(G))$ is an isomorphism.
\end{proposition}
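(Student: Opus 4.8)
The plan is to treat the ring $S(G)$, the space $Y(G)$, and the final combined assertion in turn; the first two reduce to the combinatorics of cycles and of tree foldings of $G$, and the last is a direct application of the K\"unneth theorem.

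For $S(G)$, observe that $\edg(G)=\edg(G_0)\amalg\edg(G_1)\amalg\{e,\be\}$, so $S_0(G)=S_0(G_0)\ot S_0(G_1)\ot\Z[x_e,x_{\be}]$. By Lemma~\ref{rem-min-relations} it suffices to understand the basic and nondegenerate relations. By Remark~\ref{rem-basic-nondeg} the basic relations are those of $G_0$, those of $G_1$, and the single relation $x_{\be}=-x_e$, $x_e^2=0$ coming from the basic cycle $(a_0,a_1,a_0)$. The key point is that no nondegenerate cycle of $G$ can use the undirected edge $\{a_0,a_1\}$: a nondegenerate cycle has distinct vertices and so traverses no undirected edge twice, hence deleting the occurrence of $\{a_0,a_1\}$ from it leaves a path through its remaining edges which joins $a_0$ to $a_1$ and avoids $e$ and $\be$; this would be a path from $a_0$ to $a_1$ in $G-e$, contradicting the fact that $a_0$ and $a_1$ lie in the distinct components $G_0$ and $G_1$ of $G-e$. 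Hence every nondegenerate cycle lies entirely inside $G_0$ or entirely inside $G_1$, so the nondegenerate relations of $S(G)$ are exactly those of $S(G_0)$ together with those of $S(G_1)$, and passing to quotients gives $S(G)=S(G_0)\ot S(G_1)\ot\Z[x_e]/(x_e^2)$.

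For $Y(G)$, the same decomposition of $\edg(G)$ gives $Y_1(G)=Y_1(G_0)\tm Y_1(G_1)\tm\CP^1$ on the nose, the last factor being $Y_1(\B)$. It remains to match up the unions of the pieces $p^*Y_1(T)$. Given tree foldings $p_i\:G_i\to T_i$, I will check that the equivalence relation on $\vrt(G)=\vrt(G_0)\amalg\vrt(G_1)$ which imposes the two given relations and makes no identifications across the two parts is foldable, and that its quotient is $T_0$ and $T_1$ joined by a single edge from $p_0(a_0)$ to $p_1(a_1)$, which is again a tree by Remark~\ref{rem-tree-delete}; this produces a tree folding $p\:G\to T$ restricting to $p_0$ and $p_1$. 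Conversely, given any tree folding $p\:G\to T$, its restriction $p_i=p|_{G_i}$ carries $G_i$ onto the full subpregraph $T_i$ of $T$ on the vertex set $p(\vrt(G_i))$, and a connectivity argument identical to the one in the proof of Proposition~\ref{prop-Y-functor} shows $p_i$ is surjective on edges, hence a folding onto the tree $T_i$. Tracing a point $m\in Y_1(G)$ through both constructions shows that $m$ lies in $p^*Y_1(T)$ for some tree folding $p$ of $G$ if and only if $m|_{\edg(G_i)}\in Y(G_i)$ for $i=0,1$ (with no condition on the value at $e$), which is precisely the identification $Y(G)=Y(G_0)\tm Y(G_1)\tm\CP^1$, compatibly with the product decomposition of $Y_1$. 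Finally, assuming the $S(G_i)$ are torsion-free and the maps $\psi\:S(G_i)\to H^*(Y(G_i))$ are isomorphisms, $S(G)$ is torsion-free (a tensor product over $\Z$ of torsion-free groups), each $H^*(Y(G_i))$ is torsion-free, so the K\"unneth theorem gives $H^*(Y(G))=H^*(Y(G_0))\ot H^*(Y(G_1))\ot H^*(\CP^1)$ with no $\mathrm{Tor}$ terms; since $\psi$ is natural with respect to the inclusions $G_i\hookrightarrow G$ and sends $x_e$ to the standard generator of $H^*(\CP^1)$, under these decompositions $\psi\:S(G)\to H^*(Y(G))$ becomes $\psi\ot\psi\ot\mathrm{id}$ and is therefore an isomorphism.

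The step I expect to be the main obstacle is the bookkeeping of tree foldings: verifying that the restriction of a tree folding of $G$ really does surject onto a \emph{tree} inside each $G_i$ (the connectivity argument, which also has to track the parity maps consistently), and dually that the gluing of two tree foldings of $G_0$ and $G_1$ along the bridge $e$ is again a tree folding of $G$. Once this bijective correspondence of tree foldings is established, everything else is routine.
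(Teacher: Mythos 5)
Your proposal is correct and follows essentially the same route as the paper: analyze nondegenerate cycles (noting that the bridge edge $e$ cannot lie on one) to get the tensor decomposition of $S(G)$, set up a bijective correspondence between tree foldings of $G$ and pairs of tree foldings of $G_0$, $G_1$ to get the product decomposition of $Y(G)$, and then apply the K\"unneth theorem. You supply somewhat more explicit detail in the combinatorial verifications — in particular you unwind the functoriality argument (Proposition~\ref{prop-Y-functor}) that the paper cites for the inclusion $Y(G)\subseteq Y(G_0)\times Y(G_1)\times\CP^1$ — but the underlying ideas are the same.
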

\begin{proof}
 First, it is clear that a nondegenerate cycle in $G$ cannot involve
 $e$ or $\ov{e}$, so it either lives wholly in $G_0$ or wholly in
 $G_1$.  The description of $S(G)$ is clear from this.  Next, there is
 an evident way to identify $Y_1(G)$ with
 $Y_1(G_0)\tm Y_1(G_1)\tm\CP^1$.  As $Y$ is functorial with respect to
 the inclusions $G_i\to G$, we see that
 $Y(G)\sse Y(G_0)\tm Y(G_1)\tm\CP^1$.  In the opposite direction,
 suppose we have $m_i\in Y(G_i)\sse Y_1(G_i)$ for $i=0,1$ and
 $n\in\CP^1$.  Then there must exist tree foldings $p_i\:G_i\to T_i$
 and elements $\ov{m}_i\in Y_1(T_i)$ with $m_i=p_i^*(\ov{m}_i)$.  Let
 $T$ be obtained from $T_0\amalg T_1$ by adding an edge from
 $p_0(a_0)$ to $p_1(a_1)$.  There is an evident tree folding
 $p\:G\to T$, and an evident way to interpret the triple
 $(\ov{m}_0,\ov{m}_1,n)$ as an element of $Y_1(T)$ with
 $p^*(\ov{m}_0,\ov{m}_1,n)=(m_0,m_1,n)$.  This shows that
 $Y(G_0)\tm Y(G_1)\tm\CP^1=Y(G)$.  It is clear by construction that
 we have a commutative diagram
 \[ \xymatrix{
  S(G_0)\ot S(G_1)\ot\left(\Z[x_e]/(x_e^2)\right)
   \ar[d]_{\simeq} \ar[rrr]^{\psi\ot\psi\ot 1}
   &&&
  H^*(Y(G_0))\ot H^*(Y(G_1)) \ot H^*(\CP^1) \ar[d] \\
  S(G) \ar[rrr]_{\psi} &&&
  H^*(Y(G))
 } \]
 If the rings $S(G_i)$ are torsion free and the maps
 $\psi\:S(G_i)\to H^*(Y(G_i))$ are isomorphisms then the top map will
 be an isomorphism by assumption and the right hand map will be an
 isomorphism by the K\"unneth Theorem, so the map
 $\psi\:S(G)\to H^*(Y(G))$ will also be an isomorphism by chasing the
 diagram.
\end{proof}

\begin{corollary}\label{cor-add-edge}
 Suppose that $G$ is obtained from $G_0$ by attaching a single extra
 vertex and a single edge $e$ connecting that vertex to one of the old
 vertices.  Then $S(G)=S(G_0)\ot\Z[x_e]/(x_e^2)$ and
 $Y(G)=Y(G_0)\tm\CP^1$.
\end{corollary}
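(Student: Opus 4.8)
The plan is to obtain this as an immediate special case of Proposition~\ref{prop-graph-union}. Write $v$ for the new vertex and $a_0$ for the old vertex of $G_0$ to which $v$ is joined, so that $e=(a_0,v)$. I would take $G_1$ to be the full subgraph of $G$ on the singleton $\{v\}$. This $G_1$ has no edges, so it is nonempty and trivially connected, and it carries a unique parity map, namely the one opposite to the parity of $a_0$, which is forced by the requirement that $e$ be an edge. Then $G_0$ (connected, as it is an object of $\Graphs$) and $G_1$ are disjoint connected full subgraphs of $G$ with $a_0\in G_0$ and $v\in G_1$, and $G$ is exactly the union of $G_0$, $G_1$ and the single edge $e$, so all hypotheses of Proposition~\ref{prop-graph-union} are met.

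The next step is to evaluate the two invariants on the one-vertex graph $G_1$. Since $\edg(G_1)=\emptyset$, the ring $S_0(G_1)$ is the polynomial ring on no generators, i.e.\ $\Z$, and $G_1$ has no cycles, so $S(G_1)=\Z$. Similarly $Y_1(G_1)=\Map(\emptyset,\CP^1)$ is a single point, and the identity is the only tree folding of $G_1$, so $Y(G_1)$ is that same point. (If one wants the multiplicative statement as well, note that $S(G_1)=\Z$ is torsion-free and $\psi\:S(G_1)\to H^*(Y(G_1))$ is trivially an isomorphism, so the last clause of Proposition~\ref{prop-graph-union} applies; but for the corollary as stated only the first two conclusions are needed.)

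Feeding these into the conclusion of Proposition~\ref{prop-graph-union} then gives
\[ S(G)=S(G_0)\ot\Z\ot\Z[x_e]/(x_e^2)=S(G_0)\ot\Z[x_e]/(x_e^2) \]
together with $Y(G)=Y(G_0)\tm Y(G_1)\tm\CP^1=Y(G_0)\tm\CP^1$, which is exactly the claim. I do not expect any genuine obstacle here; the only point worth a moment's care is checking that the single-vertex pregraph really is an object of $\Graphs$, and that the parity bookkeeping around the new edge works out, both of which are routine.
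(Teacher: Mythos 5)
Your proof is correct and takes exactly the route the paper does: the paper's own proof reads ``This is the case of the Proposition where $G_1$ has a single vertex and no edges,'' and you have simply spelled out the routine verification that $S(G_1)=\Z$ and $Y(G_1)$ is a point.
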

\begin{proof}
 This is the case of the Proposition where $G_1$ has a single vertex
 and no edges.
\end{proof}

We next discuss how to give $Y(G)$ the structure of an ordered
simplicial complex.  (The ordering is necessary because the standard
recipe for triangulating the product of two simplicial complexes
requires an ordering of each factor.)

\begin{construction}\label{cons-triangulation}
 We let $P$ denote the set $\{-3,-2,-1,1,2,3\}$ equipped with the
 partial ordering where $u<v$ iff $|u|<|v|$ in $\N$.  We define
 $\chi\:P\to P$ by $\chi(u)=-u$; this is an automorphism of partially
 ordered sets.  As usual we can regard $P$ as an abstract simplicial
 complex, where the elements of $P$ are the vertices, and the nonempty
 chains are the simplices.  We define $f_0\:|P|\to\R^3$ by putting
 $f_0(\pm 1)=(\pm 1,0,0)$ and $f_0(\pm 2)=(0,\pm 1,0)$ and
 $f_0(\pm 3)=(0,0,\pm 1)$, and then extending linearly over
 simplices.  This gives a homeomorphism from $|P|$ to an octahedron.
 We can thus define a homeomorphism $f\:|P|\to S^2$ by
 $f(a)=f_0(a)/\|f_0(a)\|$.  By construction we have
 $\chi\circ f=f\circ|\chi|$.

 Now put
 \[ YP(G)=
     \{m\:\edg(G)\to P\st m(\ov{e})=\chi(m(e)) \text{ for all } e\}.
 \]
 It is a standard fact that geometric realisation of posets preserves
 finite limits, so $f$ induces a natural homeomorphism
 $|YP(G)|\to Y(G)$.  In particular, any folding $p\:G\to G'$ embeds
 $Y(G')$ as a subcomplex of $Y(G)$.
\end{construction}

\section{Flags give trees}
\label{sec-flags-give-trees}

Throughout this section we fix a flag $\uW\in X(n)$.  We know from
Proposition~\ref{prop-YCn} that there must exist a tree folding
$p\:C(n)\to T$ such that $\bom(\un{W})\in p^*(Y(T))\sse Y(C(n))$.  In
this section we will construct a canonical choice of such a folding,
and show that it is intimately connected with the algebraic structure
of the flag.

\begin{definition}
 We let $\tW$ denote the unrolled flag corresponding to $\un{W}$, as
 in Definition~\ref{defn-unroll}.  For $m,k\in\Z$ with $m\leq k$ we
 put $d_{\uW}(k,m)=d_{\uW}(m,k)=\dl(\tW_k/\tW_m)$ (where $\dl$ is the
 imbalance, as in Definition~\ref{defn-imbalance}).
\end{definition}

Recall that a \emph{pseudometric} on a set $I$ is a function
$d\:I\tm I\to\R$ that satisfies the axioms for a metric except that
$d(i,j)$ may be zero even when $i\neq j$.

\begin{proposition}\label{prop-metric}
 The function $d$ is a pseudometric on $\Z$ with $d(i,j)=d(i',j')$
 whenever $i=i'\pmod{2n}$ and $j=j'\pmod{2n}$.  We also have
 $d(i,j)=j-i\pmod{2}$.
\end{proposition}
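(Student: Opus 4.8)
The plan is to verify, one ingredient at a time, the pieces of the statement: that the imbalances defining $d$ are meaningful, that $d$ is symmetric, non-negative and vanishes on the diagonal, that $d$ is $2n$-periodic in each argument, that it obeys the triangle inequality, and that $d(i,j)\equiv j-i\pmod 2$. The only real inputs will be the two basic properties of unrolled flags from Definition~\ref{defn-unroll} — $\dim(\tW_k/\tW_{k-1})=1$ and $\tW_{k+2n}=t^{-n}\tW_k$ — together with the torsion-module theory of Section~\ref{sec-torsion}. For well-definedness: for $i\leq j$ the quotient $\tW_j/\tW_i$ has dimension $j-i$ (telescoping the one-dimensional steps) and is annihilated by a power of $t$, since $t^{nN}\tW_j=\tW_{j-2nN}\leq\tW_i$ once $N$ is large; hence it is a finite-dimensional torsion module. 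Moreover, after replacing $(i,j)$ by $(i+2nc,j+2nc)$ for $c\geq 0$ large enough that $i+2nc\geq 0$ (which does not change the isomorphism type, by multiplication by $t^{-nc}$), it becomes a subquotient of some $t^{-m}V/V\cong(\C[t]/t^m)^2$, a module of rank $2$; so by Remark~\ref{rem-rank} it is thin and $\dl(\tW_j/\tW_i)$ makes sense. Non-negativity of $d$ is then $\dl\geq 0$ for thin modules, $d(i,i)=\dl(0)=0$ is clear, symmetry is built into the definition, and the parity statement follows from $\dl(M)=2\eta(M)-\dim(M)\equiv\dim(M)\pmod 2$ applied with $\dim(\tW_j/\tW_i)=j-i$.

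The load-bearing observation is that $\tW_{m+2n}/\tW_m$ is \emph{balanced} for every $m$: it has dimension $2n$, and $t^n\tW_{m+2n}=t^n t^{-n}\tW_m=\tW_m$, so $t^n$ annihilates it and its exponent is at most $n$; but a thin module $M$ always satisfies $\dim(M)\leq 2\eta(M)$, which here forces $\eta=n$ and hence $\dl=0$. From Lemma~\ref{lem-triangle} I will extract two consequences: (i) if $N$ is a balanced submodule of a thin module $M$, then $\dl(M)=\dl(M/N)$ (the special case handled at the start of that lemma's proof); and (ii) if $M$ is itself balanced and $N\leq M$, then $\dl(N)=\dl(M/N)$ (from $\dl(N)\leq\dl(M)+\dl(M/N)=\dl(M/N)$ and the symmetric inequality).

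For periodicity, observe first that multiplication by $t^{-n}$ is an automorphism of the $\C[t]$-module $\tV$ carrying $\tW_m$ onto $\tW_{m+2n}$, hence induces an isomorphism $\tW_k/\tW_m\cong\tW_{k+2n}/\tW_{m+2n}$ and so $d(m,k)=d(m+2n,k+2n)$. Granting this, it suffices to prove $d(i+2n,j)=d(i,j)$, and by symmetry we may assume $i\leq j$. If $i+2n\leq j$, then $\tW_{i+2n}/\tW_i$ is a balanced submodule of $M=\tW_j/\tW_i$ and consequence (i) gives $d(i,j)=\dl(\tW_j/\tW_i)=\dl(\tW_j/\tW_{i+2n})=d(i+2n,j)$. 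If instead $j\leq i+2n$, then $M=\tW_{i+2n}/\tW_i$ is balanced with submodule $N=\tW_j/\tW_i$, so consequence (ii) gives $d(i,j)=\dl(N)=\dl(M/N)=\dl(\tW_{i+2n}/\tW_j)=d(i+2n,j)$. Combining $d(i+2n,j)=d(i,j)$ with the diagonal invariance and with symmetry, a short bookkeeping argument — shifting one coordinate at a time by $\pm 2n$, always keeping it on the correct side of the other — then yields $d(i,j)=d(i',j')$ whenever $i\equiv i'$ and $j\equiv j'\pmod{2n}$. The triangle inequality is the remaining point: for $i\leq j\leq k$ we have $\tW_j/\tW_i\leq\tW_k/\tW_i$ with quotient $\tW_k/\tW_j$, and Lemma~\ref{lem-triangle} bounds each of the three imbalances by the sum of the other two, giving $d(i,k)\leq d(i,j)+d(j,k)$; since any three integers can be put in increasing order and $d$ is symmetric, this is the general triangle inequality.

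I do not expect a deep obstacle here: once the torsion-module lemmas are in hand, the only substantive fact is that $\tW_{m+2n}/\tW_m$ is balanced, which is immediate, and everything else is an application of Lemma~\ref{lem-triangle} and Remark~\ref{rem-rank}. The fiddliest parts are making the ``subquotient of a thin module'' claim uniform over all index ranges rather than just $0\leq i\leq j\leq 2n$, and carrying out the elementary bookkeeping that promotes the one-coordinate periodicity to the two-coordinate version stated in the proposition.
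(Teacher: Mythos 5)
Your argument is correct, and the key ingredients — the short exact sequence $\tW_j/\tW_i \to \tW_k/\tW_i \to \tW_k/\tW_j$ fed into Lemma~\ref{lem-triangle} for the triangle inequality, the parity computation $\dl(M)\equiv\dim(M)\pmod 2$, and the observation that $\tW_{m+2n}/\tW_m = \tW_{m+2n}/t^n\tW_{m+2n}$ is balanced — are exactly those the paper uses. The one place you do noticeably more work than necessary is the periodicity step: once the triangle inequality is in hand, the balancedness observation just says $d(m,m+2n)=0$, and then the pseudometric axioms instantly give $d(i,j)\leq d(i,i')+d(i',j')+d(j',j)=d(i',j')$ whenever $i\equiv i'$ and $j\equiv j'\pmod{2n}$ (and the reverse by symmetry). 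You instead prove the diagonal shift $d(m,k)=d(m+2n,k+2n)$ via the $t^{-n}$-automorphism and then $d(i+2n,j)=d(i,j)$ by a two-case argument using the specialisations (i) and (ii) of Lemma~\ref{lem-triangle}; this is all correct, but it re-derives by hand a special case of a triangle-inequality consequence you already have for free, and the order of presentation (periodicity before triangle inequality) is what forces this. Your explicit well-definedness discussion — that $\tW_j/\tW_i$ is a thin torsion module for arbitrary $i\leq j$ in $\Z$ — is a point the paper glosses over and is a reasonable addition.
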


\begin{proof}[Proof of Proposition~\ref{prop-metric}]
 It is clear from the definitions that $d(i,j)\geq 0$ and
 $d(i,j)=d(j,i)$.  For $i\leq j\leq k$ we have a short exact sequence
 of thin modules $\tW_j/\tW_i\to\tW_k/\tW_i\to\tW_k/\tW_j$.
 Lemma~\ref{lem-triangle} therefore tells us that each of the numbers
 $d(i,j)$, $d(j,k)$ and $d(i,k)$ is at most the sum of the other two.
 This gives all possible triangle inequalities involving $i$, $j$ and
 $k$.  From the definitions we have $\tW_i/\tW_{i-2n}=\tW_i/t^n\tW_i$,
 which is a thin module of exponent $n$ and dimension $2n$, so it is
 balanced, so $d(i,i-2n)=0$.  It follows from this and the triangle
 inequality that $d(i,j)=0$ whenever $i=j\pmod{2n}$, or more generally
 that $d(i,j)=d(i',j')$ whenever $i=i'\pmod{2n}$ and $j=j'\pmod{2n}$.

 It is clear from the definitions that $\dl(M)=\dim(M)\pmod{2}$, which
 in the present case gives $d(i,j)=j-i\pmod{2}$.
\end{proof}

It follows from the Proposition that $d$ induces a pseudometric on the
set $\Z/2n=\vrt(C(n))$, which we again denote by $d$.  We define an
equivalence relation on this set by $u\sim v$ iff $d(u,v)=0$.  The
congruence $d(i,j)=j-i\pmod{2}$ shows that this is foldable.  We write
$q\:C(n)\to T$ for the corresponding folding.  We also write $d$
again for the metric on $\vrt(T)$ induced by $d$.  On the other hand,
the graph structure of $T$ gives a path-length metric on $\vrt(T)$,
which we temporarily denote by $d^*$.  We will show below that
$d^*=d$.

\begin{lemma}\label{lem-d-dstar}
 Consider elements $a,b\in\vrt(T)$.  Then the following are
 equivalent:
 \begin{itemize}
  \item[(a)] $(a,b)\in\edg(T)$
  \item[(b)] There are integers $i,j\in\Z$ representing $a$ and $b$
   with $|i-j|=1$
  \item[(c)] $d(a,b)=1$.
 \end{itemize}
\end{lemma}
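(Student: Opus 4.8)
The plan is to prove the cyclic chain of implications (a)$\Rightarrow$(b)$\Rightarrow$(c)$\Rightarrow$(a); the first two are essentially unwindings of the definitions and only the last has real content. For (a)$\Rightarrow$(b): since $q\:C(n)\to T$ is a folding it is surjective on edges, so an edge of $T$ is the image under $q$ of an edge of $C(n)$, and every edge of $C(n)$ joins two vertices of $\Z/2n$ that are represented by integers differing by $1$. For (b)$\Rightarrow$(c): if integers $i$ and $j$ represent $a$ and $b$ with, say, $j=i+1$, then by definition $d(a,b)=\dl(\tW_{i+1}/\tW_i)$; property~(a) of an unrolled flag says $\tW_{i+1}/\tW_i$ has dimension $1$, and since $t\tW_{i+1}\leq\tW_i$ it is killed by $t$, so $\tW_{i+1}/\tW_i\simeq A_1$, which has exponent $1$ and dimension $1$, hence imbalance $2\cdot 1-1=1$; thus $d(a,b)=1$.

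For (c)$\Rightarrow$(a), suppose $d(a,b)=1$. First note that, by the $2n$-periodicity and the triangle inequality from Proposition~\ref{prop-metric}, the number $d(i,j)$ depends only on the classes $q(i)$ and $q(j)$, so it is harmless to argue with integer representatives. Choose integers $i<j$ with $q(i)=a$, $q(j)=b$ and $d(i,j)=1$, and with $j-i$ minimal among all such choices (note $i\neq j$ since $d(a,b)\neq 0$). Suppose for a contradiction that $j-i>1$, and put $d=j-i$. The modules $M_s=\tW_{i+s}/\tW_i$ for $0\leq s\leq d$ form a strictly increasing chain of thin modules with $\dim M_s=s$ and $\dl M_d=d(i,j)=1$. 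Since $d>1$, Lemma~\ref{lem-one-step-b}(b) yields an index $s$ with $0<s<d$ such that either $M_s$ or $M_d/M_s$ is balanced, i.e.\ such that $d(i,i+s)=0$ or $d(i+s,j)=0$. In the first case $q(i+s)=q(i)=a$ and the triangle inequality forces $d(i+s,j)=d(i,j)=1$, so $(i+s,j)$ is a pair representing $(a,b)$ at distance $1$ with $j-(i+s)<j-i$, contradicting minimality. In the second case $q(i+s)=q(j)=b$ and $d(i,i+s)=1$, so $(i,i+s)$ likewise contradicts minimality. Hence $j-i=1$, so $(i,i+1)$, read modulo $2n$, is an edge of $C(n)$ whose image under $q$ is $(a,b)$; therefore $(a,b)\in\edg(T)$.

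The one genuine obstacle is the use of Lemma~\ref{lem-one-step-b}(b): it is exactly the device that detects, inside a distance-$1$ quotient $\tW_j/\tW_i$ of possibly large length, an intermediate stage at distance $0$ from one of the two ends, which is what fuels the minimality argument. Beyond that the proof is bookkeeping with the definitions of $q$, of $\edg(C(n))$, and of the imbalance $\dl$, together with the preliminary observation that $d$ is well defined on $\vrt(T)$ so that these manipulations are legitimate.
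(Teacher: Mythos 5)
Your proof is correct and takes essentially the same route as the paper's: (a)$\Leftrightarrow$(b) comes from the definition of $T$ and the fact that $q$ is a folding, (b)$\Rightarrow$(c) because a one-dimensional module has imbalance $1$, and for (c)$\Rightarrow$(a)/(b) you use the same argument as the paper, minimizing $|i-j|$ and invoking Lemma~\ref{lem-one-step-b}(b) to derive a contradiction if $|i-j|>1$. You spell out a few more of the bookkeeping steps, but the structure and the key ingredient are identical.
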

\begin{proof}
 The equivalence of~(a) and~(b) is the definition of $T$, and is just
 recorded for ease of reference.  It is clear that any $1$-dimensional
 $A$-module $M$ has $\dl(M)=1$, and from this it follows that~(b)
 implies~(c).  Now suppose that~(c) holds.  Choose integers $i$ and
 $j$ representing $a$ and $b$ with $|i-j|$ as small as possible.
 After exchanging $a$ and $b$ if necessary, we may assume that $i<j$.
 If $j-i>1$ we can apply Lemma~\ref{lem-one-step-b}(b) to the modules
 $M_t/M_i$ to get an index $k$ with $i<k<j$ such that either $k\sim i$
 or $k\sim j$, which contradicts our choice of $i$ and $j$.  We must
 thus have $|i-j|=1$ as required.
\end{proof}

\begin{lemma}\label{lem-treelike}
 Suppose that $a,b,x,y\in\vrt(T)$ with $d''(a,x)=d''(a,y)$ and
 $d''(x,b)=d''(y,b)$ and
 $d''(a,x)+d''(x,b)=d''(a,y)+d''(y,b)=d''(a,b)$.  Then $x=y$.
\end{lemma}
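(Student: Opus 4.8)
The statement only concerns the combinatorics of the tree $T$: writing $d''$ for its path‑length metric, we must show that a vertex lying on a geodesic from $a$ to $b$ is pinned down by its distance from $a$. The plan is to first recover the standard structure theory of distances in a tree, and then read off the conclusion. The one step that is not purely formal is the uniqueness of geodesics, which is precisely where we use that $T$ has no nondegenerate cycles.

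First I would prove that between any two vertices $u,v$ of $T$ there is a \emph{unique} simple path (one with pairwise distinct vertices), and that its length is $d''(u,v)$. For existence and the length claim, note that a walk from $u$ to $v$ of minimal length cannot repeat a vertex — excising the sub‑walk between two occurrences would shorten it — so every minimal walk is simple, and conversely any simple path, being minimal, has length $d''(u,v)$. For uniqueness, suppose $P$ and $Q$ were distinct simple paths from $u$ to $v$; let $w$ be the last vertex at which they still agree, and $w'$ the next vertex that lies on both of them after $w$ (which exists since both end at $v$). The portion of $P$ from $w$ to $w'$, followed by the reverse of the portion of $Q$ from $w$ to $w'$, is then a closed walk whose vertices are pairwise distinct and whose length is at least $3$ — it cannot have length $2$, since $P$ and $Q$ leave $w$ along different edges — hence a nondegenerate cycle, contradicting the fact that $T$ is a tree (cf.\ Remark~\ref{rem-tree-delete}).

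Next I would observe that a vertex $w\in\vrt(T)$ satisfies $d''(a,w)+d''(w,b)=d''(a,b)$ exactly when $w$ lies on the geodesic $\sg$ from $a$ to $b$: concatenating the geodesic from $a$ to $w$ with the geodesic from $w$ to $b$ gives a walk from $a$ to $b$ of length $d''(a,w)+d''(w,b)=d''(a,b)$, hence a minimal walk, hence simple, hence equal to $\sg$ by the previous step, so $w$ occurs on $\sg$; and if $\sg=(v_0=a,v_1,\dots,v_\ell=b)$ with $w=v_i$ then $(v_0,\dots,v_i)$ and $(v_i,\dots,v_\ell)$ are the geodesics from $a$ to $w$ and from $w$ to $b$, so $d''(a,w)=i$ and the displayed equality holds. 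Applying this to the hypotheses, $x$ and $y$ both lie on $\sg$, with $x=v_{d''(a,x)}$ and $y=v_{d''(a,y)}$; since $d''(a,x)=d''(a,y)$ we conclude $x=y$. The main obstacle is the tree‑uniqueness argument above; everything else is routine.
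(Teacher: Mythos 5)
Your proof does not establish the lemma because it is circular. At the point where the lemma appears, the paper has defined a folding $q\colon C(n)\to T$ together with the imbalance pseudometric $d$ and the path-length metric $d^*$ on $\vrt(T)$, but it has \emph{not} been established that $T$ is a tree: that is precisely the content of the proposition that follows, whose proof invokes this lemma. Your argument takes for granted that $T$ has unique simple paths between pairs of vertices, which is exactly the no-nondegenerate-cycle condition you are implicitly being asked to help prove. Indeed, read with the path-length metric, the statement of the lemma is essentially equivalent to ``$T$ is a tree'' (for instance in a $4$-cycle $a$--$x$--$b$--$y$--$a$ you get $d(a,x)=d(a,y)=d(x,b)=d(y,b)=1$ and $d(a,b)=2$ with $x\neq y$), so restating tree structure theory does not advance matters.

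There is also a misreading of the metric. The symbol $d''$ in the statement is a typographical slip for $d$, the pseudometric defined by $d(i,j)=\dl(\tW_j/\tW_i)$ in terms of module imbalances, not the path-length metric $d^*$. This is clear both from how the lemma is applied in the proof of the subsequent proposition (which uses $d$ throughout and only afterwards concludes $d^*=d$) and from the paper's own proof, which lifts $a\le x\le y\le b$ to $\Z$, forms the thin modules $K=\tW_x/\tW_a$, $L=\tW_y/\tW_a$, $M=\tW_b/\tW_a$, and carries out a structural analysis using Lemma~\ref{lem-gap} to deduce $\dl(L/K)=0$, hence $x\sim y$. That algebraic computation is what makes the lemma nontrivial and is entirely absent from your proposal; a correct proof must engage with the module-theoretic definition of $d$ rather than with generalities about trees.
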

\begin{proof}
 We first choose numbers $a,b,x,y\in\Z$ representing the specified
 equivalence classes in $\vrt(T)$.  As $i\sim(i+2n)$ for
 all $i$, these lifts can be chosen in such a way that
 $a\leq x\leq y\leq b$.  Put $K=\tW_x/\tW_a$ and $L=\tW_y/\tW_a$ and
 $M=\tW_b/\tW_a$.  The hypotheses now say that there are natural
 numbers $q,s\geq 0$ such that $\dl(K)=\dl(L)=q$ and
 $\dl(M/K)=\dl(M/L)=s$ and $\dl(M)=q+s$.  We must show that
 $\dl(L/K)=0$.  We have $K\simeq A_i\oplus A_{i+q}$ for some $i$.
 This contains a balanced submodule $K[i]\simeq A_i\oplus A_i$, and
 Lemma~\ref{lem-gap} implies that nothing relevant will change if
 we take quotients by $K[i]$ everywhere.  We may thus assume that
 $K\simeq A_q$.  Next, we have $M/L\simeq A_j\oplus A_{j+s}$ for some
 $j\geq 0$.  It follows that the quotient $M/t^jM\simeq A_j\oplus A_j$
 is balanced, and that $L\leq t^jM$.  Again, nothing relevant will
 change if we replace $M$ by $t^jM$, so we may assume that
 $M/L\simeq A_s$.  By assumption we have $\dl(M)=q+s$, so we can
 choose generators $u,v$ for $M$ with $M=A_mu\oplus A_{m+q+s}v$ for
 some $m\geq 0$.  This means that $\dim(M)=2m+q+s$ and
 $\dim(M/L)=\dim(A_s)=s$, so $\dim(L)=2m+q$.  We also have $\dl(L)=q$
 by assumption, so we must have $L\simeq A_m\oplus A_{m+q}$.  Now
 $L[m]\simeq A_m^2\simeq M[m]$ and $L[m]\leq M[m]$ so (by comparing
 dimensions) we must have $L[m]=M[m]$.  Now $L/L[m]$ is isomorphic to
 $A_s$, and this is a submodule of $M/M[m]=A_{q+s}v$.  The only
 submodule of $A_{q+s}v$ isomorphic to $A_q$ is
 $A_q.t^sv=t^s.(M/M[m])$.  From this we conclude that
 \[ L = M[m] + t^s M = A_m.u \oplus A_{m+q}.t^sv.  \]
 Similarly, $K$ is also uniquely determined by the numerical
 invariants.  More precisely, as $\dim(K)=q$ and $\dim(M)=2m+q+s$ we
 have $\dim(M/K)=2m+s$.  As $\dl(M/K)=s$ we also have
 $M/K\simeq A_m\oplus A_{m+s}$.  This means that
 \[ \frac{M}{t^mM+K} \simeq
     \frac{A_m\oplus A_{m+s}}{t^m(A_m\oplus A_{m+s})}
     \simeq A_m\oplus A_m \simeq \frac{M}{t^mM}.
 \]
 As $t^mM\leq t^mM+K$ we can just count dimensions to see that
 $K\leq t^mM=A_{q+s}.t^mv$.  There is only one submodule of $A_{q+s}$
 isomorphic to $A_q$, so we conclude that $K=A_q.t^{m+s}v$.  We now
 have
 \[ \frac{L}{K} =
     \frac{A_mu\oplus A_{m+q}.t^sv}{0\oplus A_q.t^{m+s}v} =
      A_mu \oplus A_mt^sv \simeq A_m^2.
 \]
 We thus have $\dl(L/K)=0$ as required, so $x$ and $y$ represent the
 same point in $\vrt(T)$.
\end{proof}

\begin{proposition}
 The graph $T$ is a tree, and $d^*=d$.
\end{proposition}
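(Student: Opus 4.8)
The plan is to treat the two assertions separately. Both rest on properties of $d$ established earlier: by Proposition~\ref{prop-metric} the pseudometric $d$ descends from $\Z$ to $\Z/2n$, and (by the triangle inequality, since $\sim$ is the relation $d=0$) further to a genuine metric on $\vrt(T)$, satisfying $d(u,v)\equiv\pi(u)+\pi(v)\pmod 2$; and by Lemma~\ref{lem-d-dstar} we have $d(u,v)=1$ exactly when $(u,v)\in\edg(T)$.

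To see that $T$ is a tree, note first that it is connected, being a quotient of the connected graph $C(n)$, and that it carries the parity map inherited from $C(n)$ (as $\sim$ is foldable), so all its cycles have even length. Suppose, for a contradiction, that $(u_0,u_1,\dots,u_{2\ell-1},u_0)$ is a nondegenerate cycle, so $\ell\geq 2$ and the vertices $u_0,\dots,u_{2\ell-1}$ are distinct. Consider the function $s\mapsto d(u_0,u_s)$ on $\Z/2\ell$: consecutive values differ by at most $1$ by the triangle inequality and never by $0$ by the parity congruence (since $\pi(u_s)\neq\pi(u_{s+1})$), so they differ by exactly $\pm 1$, and the function is strictly positive away from $s=0$ because $d$ is a metric. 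Let $r$ be its maximum, attained at some $s^*\neq 0$. If $r=1$ then both cycle-neighbours of $u_{s^*}$ would have distance $0$ from $u_0$, hence equal $u_0$, which is absurd; so $r\geq 2$ and $d(u_0,u_{s^*\pm 1})=r-1$. Now Lemma~\ref{lem-treelike}, applied with $(a,b,x,y)=(u_0,u_{s^*},u_{s^*-1},u_{s^*+1})$, has all its hypotheses met ($d(a,x)=d(a,y)=r-1$, $d(x,b)=d(y,b)=1$ since these are edges, and $(r-1)+1=r=d(a,b)$), so it forces $u_{s^*-1}=u_{s^*+1}$, contradicting distinctness of the cycle's vertices. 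Hence $T$ has no nondegenerate cycle and is a tree.

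For $d^*=d$, the inequality $d\leq d^*$ is immediate: along any path $a=w_0,w_1,\dots,w_\ell=b$ in $T$ each step has $d(w_s,w_{s+1})=1$, so $d(a,b)\leq\ell$ by the triangle inequality. The reverse inequality I would prove by induction on $k=d(a,b)$; the cases $k\in\{0,1\}$ are clear, the case $k=1$ because the pair is then an edge. For $k\geq 2$, pick integer lifts $i<j$ of $a$ and $b$ and consider the chain of thin modules $M_s=\tW_{i+s}/\tW_i$ for $0\leq s\leq j-i$, which satisfies $\dim(M_s)=s$ and $\dl(M_{j-i})=d(i,j)=k$. Lemma~\ref{lem-one-step-b}(a), applied to this chain with the value $1$ (which is $\leq k$), yields an index $s$ — necessarily $0<s<j-i$, since $k\geq 2$ — with $\dl(M_s)=1$ and $\dl(M_{j-i}/M_s)=k-1$; that is, $\dl(\tW_{i+s}/\tW_i)=1$ and $\dl(\tW_j/\tW_{i+s})=k-1$. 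Letting $c$ be the image of $i+s$ in $\vrt(T)$, we get $d(a,c)=1$ and $d(c,b)=k-1$, so $(a,c)$ is an edge of $T$ by Lemma~\ref{lem-d-dstar}, and the inductive hypothesis supplies a path of length $k-1$ from $c$ to $b$; prefixing the edge $(a,c)$ gives $d^*(a,b)\leq k$. Hence $d^*=d$.

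The only genuine work is in the last paragraph: arranging the module chain so that Lemma~\ref{lem-one-step-b}(a) applies, and the bookkeeping of the lifts $i<j$. The tree statement is then an essentially formal consequence of Lemma~\ref{lem-treelike}, obtained by examining the metric near a vertex at which $d(u_0,-)$ is maximal along the cycle.
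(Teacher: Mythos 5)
Your proof is correct. The treatment of $d^* = d$ is essentially identical to the paper's: the inequality $d \leq d^*$ follows from the triangle inequality and the fact that edges have $d=1$; the reverse inequality is proved by induction, using Lemma~\ref{lem-one-step-b}(a) on the chain $\tW_{i+s}/\tW_i$ to split off a step of imbalance $1$ and conclude by Lemma~\ref{lem-d-dstar} and the inductive hypothesis.

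Your argument for the tree half is correct but follows a somewhat different route from the paper's, and is arguably cleaner. The paper establishes $d^*=d$ first, then picks a cycle of \emph{minimal} length and uses minimality together with $d^*=d$ to compute $d(c_0,c_p)=p$, $d(c_0,c_{p\pm1})=p-1$, and so on, before invoking Lemma~\ref{lem-treelike}. You instead take an arbitrary nondegenerate cycle, observe that the function $s\mapsto d(u_0,u_s)$ changes by exactly $\pm 1$ at each step (triangle inequality plus parity) and is strictly positive off $s=0$, pick a vertex $u_{s^*}$ where it is maximal, and feed the triple $(u_0,u_{s^*},u_{s^*\pm1})$ into Lemma~\ref{lem-treelike}. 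This does not rely on $d^*=d$ and avoids the need for a minimal cycle, so the two halves of the proposition become logically independent; the cost is the small bookkeeping required to show $r\geq 2$ and $s^*-1\not\equiv s^*+1\pmod{2\ell}$, which you handle correctly. Both arguments hinge on the same two ingredients, Lemma~\ref{lem-treelike} and Lemma~\ref{lem-one-step-b}(a), so this is a variation in organization rather than a fundamentally new method.
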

\begin{proof}
 It is clear by construction that $d(a,b)=0$ iff $a=b$ iff
 $d^*(a,b)=0$.  Using Lemma~\ref{lem-d-dstar} we also see that
 $d(a,b)=1$ iff there is an edge from $a$ to $b$ iff $d^*(a,b)=1$.
 Using the triangle inequality for $d$ we deduce that
 $d(a,b)\leq d^*(a,b)$ for all $a$ and $b$.

 Next, we can choose representing integers $a,b\in\Z$ with $a\leq b$,
 and put $r=d(a,b)$.  We can then apply Lemma~\ref{lem-one-step-b} to
 the chain of modules $\tW_i/\tW_a$ to see that there exists $x$ with
 $d(a,x)=1$ and $d(x,b)=r-1$.  By an evident inductive extension of
 this, we can choose $a=x_0,x_1,\dotsc,x_r=b$ with $d(x_i,x_{i+1})=1$
 and $d(x_{i+1},b)=r-i-1$ for all $i$, showing that $d^*(a,b)=d(a,b)$.

 Now suppose that $T$ is not a tree.  This means that $T$ contains a
 cycle $c$ of minimal length.  As every edge in $T$ links an odd
 vertex to an even vertex, we see that the length must be even, say
 $c=(c_0,c_1,\dotsc,c_{2p}=c_0)$.  As $c$ is minimal, the shortest
 path in $T$ from $c_i$ to $c_j$ must be the same as the shortest path
 in $C(n)$.  We now see that $d(c_0,c_p)=p$ and
 $d(c_p,c_{p+1})=d(c_p,c_{p-1})=1$ and
 $d(c_{p-1},c_0)=d(c_{p+1},c_0)=p-1$.  This contradicts
 Lemma~\ref{lem-treelike}.
\end{proof}

\begin{proposition}
 Let $0<i<j\leq 2n$.
 \begin{itemize}
  \item[(a)] If $d(i-1,j-1)=d(i,j)=0$ then
   $\om(W_i\ominus W_{i-1})=\om(W_j\ominus W_{j-1})$.
  \item[(b)] If $d(i-1,j)=d(i,j-1)=0$ then
   $\om(W_i\ominus W_{i-1})=\chi(\om(W_j\ominus W_{j-1}))$.
 \end{itemize}
\end{proposition}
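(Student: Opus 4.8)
The plan is to work with the unrolled flag $\tW$ and to convert each hypothesis $d(a,b)=0$ into an exact equality of submodules of $\tV$ of the form $t^c\tW_b=\tW_a$. Recall from Remark~\ref{rem-om-periodic} that $\om(W_k\ominus W_{k-1})=\om(\tW_k\ominus\tW_{k-1})$ for $1\leq k\leq 2n$, so it is enough to prove the corresponding statements for $\tW$. By Proposition~\ref{prop-metric} the difference $j-i$ is even in part~(a), say $j-i=2e$, and odd in part~(b), say $j-i=2e+1$ with $e\geq 0$. The basic lemma I would prove first is: \emph{if $0\leq a<b\leq 2n$ with $b-a=2c$ and $d(a,b)=0$, then $t^c\tW_b=\tW_a$.} Indeed $\tW_b/\tW_a$ is thin of dimension $2c$ and imbalance $0$, hence isomorphic to $A_c\oplus A_c$, so $t^c\tW_b\leq\tW_a$; on the other hand $\tW_b$ is a finitely generated torsion-free module over the principal ideal domain $\C[t]$ (it contains $\tW_0=\C[t]^2$ with finite-dimensional quotient), hence free of rank two, so $\dim(\tW_b/t^c\tW_b)=2c=\dim(\tW_b/\tW_a)$, which forces the inclusion $t^c\tW_b\leq\tW_a$ to be an equality. (One can also obtain this by reducing modulo the thin module $t^n\tW_j$ and invoking Lemma~\ref{lem-gap}.)

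For part~(a) the two hypotheses give $t^e\tW_{j-1}=\tW_{i-1}$ and $t^e\tW_j=\tW_i$. Since $\tW_j$ is torsion-free, multiplication by $t^e$ is injective on it, hence is a $\C[t]$-linear isomorphism $\tW_j\to\tW_i$ carrying $\tW_{j-1}$ onto $\tW_{i-1}$. Multiplication by $t$ is moreover a bijective isometry of $\tV$ with $\om\circ t=\om$, so $t^e$ carries the line $\tW_j\ominus\tW_{j-1}=\tW_j\cap\tW_{j-1}^\perp$ onto $\tW_i\cap\tW_{i-1}^\perp=\tW_i\ominus\tW_{i-1}$, and applying $\om$ gives $\om(\tW_i\ominus\tW_{i-1})=\om(\tW_j\ominus\tW_{j-1})$, which is the assertion.

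For part~(b) the hypotheses give instead $t^{e+1}\tW_j=\tW_{i-1}$ and $t^e\tW_{j-1}=\tW_i$ (the latter reading $\tW_{j-1}=\tW_i$ when $e=0$). Put $U=t^e\tW_j$; since $t^e$ is injective on $\tW_j$, $U$ is again a free $\C[t]$-module of rank two, $tU=t^{e+1}\tW_j=\tW_{i-1}$, and $\tW_i=t^e\tW_{j-1}\leq t^e\tW_j=U$, so $tU=\tW_{i-1}<\tW_i<U$ with $\dim(U/tU)=2$. As in part~(a), $t^e$ is a bijective isometry of $\tV$ taking $\tW_j$ to $U$ and $\tW_{j-1}$ to $\tW_i$, hence taking $\tW_j\ominus\tW_{j-1}$ to $U\ominus\tW_i$, so $\om(\tW_j\ominus\tW_{j-1})=\om(U\ominus\tW_i)$. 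Proposition~\ref{prop-isometry}, extended to $\tV$ by Remark~\ref{rem-ominus} and applied to the pair $tU\leq U$, shows that $\om$ restricts to an isometric embedding of $U\ominus tU$ into $\C^2$, and since $\dim(U/tU)=2$ this is an isometric isomorphism. Inside the two-dimensional space $U\ominus tU$ (which contains both lines, as $tU\leq\tW_i$), the line $\tW_i\ominus\tW_{i-1}=\tW_i\ominus tU$ lies in $\tW_i$ while the line $U\ominus\tW_i$ lies in $\tW_i^\perp$, so these two lines are orthogonal, hence are orthogonal complements of one another; an isometric isomorphism preserves orthogonal complements, so $\om(U\ominus\tW_i)=\chi(\om(\tW_i\ominus\tW_{i-1}))$. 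Putting the pieces together yields $\om(W_j\ominus W_{j-1})=\chi(\om(W_i\ominus W_{i-1}))$, and applying the involution $\chi$ gives the stated identity.

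The only delicate point is the bookkeeping with orthogonal complements in the infinite-dimensional $\tV$: one must check that $t^c(S^\perp)=(t^cS)^\perp$, that $U=tU\oplus(U\ominus tU)$, and that the conclusion of Proposition~\ref{prop-isometry} remain valid. These all hold because every subspace appearing is trapped between $t^mV$ and $t^{-m}V$ for a suitable $m$, so Remark~\ref{rem-ominus} applies; beyond that the argument is a dimension count together with the elementary fact that multiplication by $t$ is a norm-preserving bijection of $\tV$ commuting with $\om$.
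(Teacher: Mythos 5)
Your proof is correct, and it takes a cleaner route than the one in the paper, though the underlying mechanism is the same (convert $d(a,b)=0$ into an equality $t^c\tW_b=\tW_a$, then push line bundles around via $t^c$, which commutes with $\om$). The paper works inside $V(n)$, where $t$ is nilpotent; this forces a more delicate argument in which one must verify directly that a chosen generator $a\in W_j\ominus W_{j-1}$ satisfies $t^d a\neq 0$ and $\om(t^d a)=\om(a)$ (using the support of $a$), and that $t^d a\perp W_{i-1}$ (using the adjoint $t^*$ and the relation $(t^*)^dt^d x\in x+V(n)[d]$). You instead pass to the unrolled flag in $\tV=\C[t,t^{-1}]^2$, where multiplication by $t$ is a genuine bijective isometry commuting with $\om$, so the fact that $t^e$ carries $\tW_j\ominus\tW_{j-1}$ isometrically onto $\tW_i\ominus\tW_{i-1}$ (for part (a)) or onto $U\ominus\tW_i$ (for part (b)) is immediate. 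The small prices you pay are (i) proving $t^c\tW_b=\tW_a$ by the rank-$2$-free-module/dimension-count argument rather than by Lemma~\ref{lem-gap} applied inside $V(n)$ (but your argument is correct, and you note the Lemma~\ref{lem-gap} alternative), and (ii) needing the extension of Proposition~\ref{prop-isometry} to $\tV$, which is legitimate because the defining series $\ip{\om(a),\om(b)}=\ip{a,b}+\sum_{k\geq 1}(\ip{t^ka,b}+\ip{a,t^kb})$ is a finite sum for $a,b$ with finitely many nonzero coefficients, and the orthogonality argument goes through unchanged. The bookkeeping with orthogonal complements in $\tV$ is fine: $t^c(S^\perp)=(t^cS)^\perp$ holds for any $S$ simply because $t^c$ is a bijective isometry, and the decompositions $P\oplus P^\perp$ are legitimate because all the relevant subspaces are trapped between $t^mV$ and $t^{-m}V$, exactly as Remark~\ref{rem-ominus} requires. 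Overall your version is a mild but genuine simplification of the paper's argument for this proposition.
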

\begin{proof}
 \begin{itemize}
  \item[(a)] In this case the assumption is that $W_{j-1}/W_{i-1}$ and
   $W_j/W_i$ are balanced.  This implies that $j-i$ must be even, say
   $j-i=2d$.  We then have $W_{j-1}/W_{i-1}\simeq W_j/W_i\simeq A_d^2$,
   and it follows using Lemma~\ref{lem-gap} that
   $V(n)[d]\leq W_{j-1}<W_j$ and $W_{i-1}=t^dW_{j-1}$ and
   $W_i=t^dW_j$.  Choose a nonzero element $a\in W_j\ominus W_{j-1}$.
   We can write this as $a=\sum_{k=0}^{n-1}a_kt^k$ for some elements
   $a_k\in\C^2$.  This is orthogonal to $W_{j-1}$ and thus to
   $V(n)[d]$, which implies that $a_k=0$ for $k\geq n-d$.  It follows
   that $t^da\neq 0$ and $\om(t^da)=\om(a)$.  We also claim that
   $t^da\in W_i\ominus W_{i-1}$.  Indeed, it is clear that
   $t^da\in t^dW_j=W_i$.  For $b'\in W_{i-1}$ we can choose
   $b\in W_{j-1}$ with $b'=t^db$.  We then have
   $\ip{t^da,b'}=\ip{t^da,t^db}=\ip{a,(t^*)^dt^db}$.  However, for any
   $x\in V(n)$ it is easy to see that $(t^*)^dt^dx\in x+V(n)[d]$.  As
   $V(n)[d]\leq W_{j-1}$ and $\ip{a,W_{j-1}}=0$ we conclude that
   $\ip{t^da,b'}=0$, as required.  It now follows that
   \[ \om(W_j\ominus W_{j-1}) = \C.\om(t^da) = \C.\om(a)
        = \om(W_i\ominus W_{i-1}).
   \]
  \item[(b)] In this case we have $W_{j-1}/W_i\simeq A_d^2$ and
   $W_j/W_{i-1}\simeq A_{d+1}^2$ for some $d\geq 0$.  This implies
   that $W_i=t^dW_{j-1}$ and $W_{i-1}=t^{d+1}W_j$.  Put
   $U=t^dW_j=t^{-1}W_{i-1}$, so $W_{i-1}<W_i=t^dW_{j-1}<U$ and
   $U/W_{i-1}\simeq A_1^2$.  By the same argument as in case~(a) we
   have
   \[ \om(W_j\ominus W_{j-1}) =
      \om(t^dW_j\ominus t^dW_{j-1}) =
      \om(U\ominus W_i).
   \]
   Proposition~\ref{prop-isometry} tells us that the map
   $\om\:U\ominus W_{i-1}\to\C^2$ is an isometric isomorphism, so in
   particular it preserves orthogonal complements.  It follows that
   $\om(U\ominus W_i)=\chi(W_i\ominus W_{i-1})$ as claimed.
 \end{itemize}
\end{proof}

\section{Hedgehog foldings}
\label{sec-hedgehog}

We next describe a special class of foldings of $C(n)$ which will be
used to prove that the map $R(n)\to H^*(X(n))$ is an isomorphism.

\begin{definition}\label{defn-Ln}
 We let $L(n)$ denote the linear graph with vertex set
 $\{0,\dotsc,2n\}$, and an edge from $i$ to $j$ iff $|i-j|=1$.
 As with $C(n)$, we denote the edge $(i-1, i)$ by $e_i$.
\end{definition}

Note that $C(n)$ can be regarded as a folding of $L(n)$ where $0$ is
identified with $2n$.  We will primarily be interested in foldings of
$C(n)$, but to handle various special and degenerate cases it is
convenient to work with $L(n)$ as well.

\begin{definition}
 For any subset $A\sse\{1,2,\dotsc,2n-1\}$ we let $\sim_A$ denote the
 smallest equivalence relation on $\vrt(L(n))$ such that
 $(i-1)\sim_A(i+1)$ for all $i\in A$.  This is clearly foldable, and
 we write $\tp_A\:L(n)\to\tH(A)$ for the resulting folding.  This
 induces a folding $p_A\:C(n)\to H(A)$, where $H(A)$ is obtained from
 $\tH(A)$ by identifying $0$ with $2n$.

 Foldings arising this way are called \emph{hedgehog foldings} of
 $L(n)$ or $C(n)$.  The points in $A$ are called \emph{pinch points}.
\end{definition}

\begin{remark}\label{rem-YHA-intersection}
 We note that $Y(H(A))=\bigcap_{i\in A}X(n,i)$.
\end{remark}

\begin{proposition}\label{prop-SHA}
 For any $A$ as above we have
 \[ S(H(A)) = R(n)/(x_i+x_{i+1}\st i\in A). \]
 Moreover, if the maps $\phi\:R(m)\to H^*(X(m))$ are isomorphisms for
 all $m<n$, then the maps $\psi\:S(H(A))\to H^*(Y(H(A)))$ are
 isomorphisms for all $A\sse\{1,2,\dotsc,2n-1\}$ with
 $A\neq\emptyset$.
\end{proposition}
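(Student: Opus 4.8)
The plan is to prove the ring identity first, and then deduce the homological statement from it by reducing the graph $H(A)$ to a shorter polygon with pendant edges attached and applying the inductive hypothesis through Proposition~\ref{prop-graph-union}.

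\emph{The ring identity.} The folding $p_A\colon C(n)\to H(A)$ is surjective on edges, so the induced map $(p_A)_*\colon R(n)=S(C(n))\to S(H(A))$ is surjective. For $i\in A$ we have $p_A(i-1)=p_A(i+1)$, hence $p_A(e_{i+1})=\overline{p_A(e_i)}$, and the basic relation attached to the edge $p_A(e_i)$ of $H(A)$ gives $(p_A)_*(x_i+x_{i+1})=0$; thus $(p_A)_*$ factors as a surjection
\[ \bar\psi\colon R(n)/(x_i+x_{i+1}\st i\in A)\longrightarrow S(H(A)). \]
To see that $\bar\psi$ is injective I would use the structure of $H(A)$. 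First, the hedgehog folding $\tp_A\colon L(n)\to\tH(A)$ is always a tree folding: identifying $i-1$ with $i+1$ in the linear graph $L(n)$ merges precisely the two edges through $i$ and has no other effect, so $\tH(A)$ remains a pretree. Since $H(A)=\tH(A)/([0]\sim[2n])$ is obtained by identifying two vertices of a pretree, it is either a tree or a pretree with one extra edge, i.e.\ a single nondegenerate cycle $\gamma$ with subtrees attached; by Lemma~\ref{rem-min-relations}, $S(H(A))$ is then $S_1(H(A))$ modulo the relations coming from $\gamma$ alone. Decomposing the (possibly degenerate) closed walk $p_A(c_0)$, where $c_0=(0,1,\dots,2n)$ is the standard cycle of $C(n)$, into shorter pieces as in the proof of Lemma~\ref{rem-min-relations} yields $\gamma$ once (up to orientation and rotation, because $H(A)$ has first Betti number at most $1$ and $p_A$ hits a generator) together with basic cycles; hence, modulo the basic relations of $H(A)$, the relations of $\gamma$ are the coefficients of $(p_A)_*(r^*_{c_0}(t))$, i.e.\ the images of the defining relations $\sigma_1,\dots,\sigma_{2n}$ of $R(n)$. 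A routine check of how the defining relations of $S(H(A))$ pull back along $(p_A)_*$ then shows $\ker(p_A)_*=(x_i+x_{i+1}\st i\in A)$, so $\bar\psi$ is an isomorphism.

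\emph{The homological statement.} Assume $\phi\colon R(m)\to H^*(X(m))$ is an isomorphism for all $m<n$, and fix a nonempty $A\subseteq\{1,\dots,2n-1\}$. As above, $H(A)$ is either a tree, or a copy of $C(q)$ with some number $e\geq 0$ of extra pendant edges attached, where $q\geq 2$ and $2q$ is the length of $\gamma$. Moreover $q<n$: for any $i\in A$ the identification $i-1\sim i+1$ lets one shortcut past $i$, so $2q=d_{\tH(A)}([0],[2n])\leq d_{L(n)}(0,2n)-2=2n-2$. If $H(A)$ is a tree then $\psi\colon S(H(A))=S_1(H(A))\to H^*(Y_1(H(A)))=H^*(Y(H(A)))$ is the tautological isomorphism provided by Proposition~\ref{prop-S-HY}. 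Otherwise, peeling the leaves of $H(A)$ one at a time and applying Corollary~\ref{cor-add-edge} repeatedly gives compatible decompositions $S(H(A))=R(q)\otimes(\Z[x]/x^2)^{\otimes e}$ and $Y(H(A))=X(q)\times(\CP^1)^e$. Here $R(q)=S(C(q))$ is free over $\Z$ by Theorem~\ref{thm-BR-basis}, the map $\psi$ for $C(q)$ is $\phi\colon R(q)\to H^*(X(q))$ by Proposition~\ref{prop-S-HY}, which is an isomorphism by the inductive hypothesis since $q<n$, and each map $\Z[x]/x^2\to H^*(\CP^1)$ is an isomorphism; so iterating the last clause of Proposition~\ref{prop-graph-union} shows that $\psi\colon S(H(A))\to H^*(Y(H(A)))$ is an isomorphism.

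The main obstacle is the combinatorial bookkeeping in the first part: checking that $\tH(A)$ is always a pretree, that $H(A)$ consequently carries at most one essential cycle, and that the single nondegenerate relation this imposes on $S_1(H(A))$ is exactly the image of the relations defining $R(n)$. Once that is in place the second part is essentially formal, resting only on the product decompositions of Section~\ref{sec-SY}, the identification $\psi=\phi$ for $C(q)$, and the inductive hypothesis.
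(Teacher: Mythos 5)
Your proposal follows essentially the same strategy as the paper: surjectivity of $(p_A)_*$ plus the observation that the ``total'' relation $\prod_i(1+tx_i)=1$ maps to the unique nondegenerate relation of $S(H(A))$ gives the ring identity, and then the homological statement is obtained by splitting $H(A)$ into a smaller polygon $C(q)$ ($q<n$) with pendant edges and iterating Corollary~\ref{cor-add-edge} / Proposition~\ref{prop-graph-union} together with the inductive hypothesis and Proposition~\ref{prop-S-HY}. The one place where the paper does more work than your sketch is exactly the ``combinatorial bookkeeping'' you flag: rather than a winding-number argument that $p_A(c_0)$ contributes $\gamma$ once, the paper introduces the body/spine decomposition of $\tH(A)$ (via $A^\#$, Lemma~\ref{lem-body-edge} and Corollary~\ref{cor-hedgehog-structure}) and then computes $(p_A)_*\bigl(\prod_i(1+tx_i)\bigr)$ directly: each spine contributes $(1+tx_e)^k(1-tx_e)^k=1$ and each body edge contributes $(1+tx_e)^{k+1}(1-tx_e)^k=1+tx_e$, which makes both the tree structure of $\tH(A)$ and the identification of the cycle relation completely explicit; it also records the low-$m$ edge cases (body $\cong L(0)$ or $L(1)$, where $H(A)$ is already a tree) that your $q\ge 2$ dichotomy quietly handles.
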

The rest of this section will form the proof of this proposition.

We can analyse the structure of $\tH(A)$ and $H(A)$ as follows.  Put
\[ A^\# = \{i\in\{0,\dotsc,2n\}\st i-1\not\in A\}, \]
so $0,1\in A^\#$ and $|A^\#|=2n+1-|A|$.  Note that every vertex in
$L(n)$ comes from an integer in $i\in\{0,\dotsc,2n\}$.  If $i\not\in A^\#$
then $i\sim_A i-2$.  Using this rule repeatedly we find that
\[ A^\# = \{\min(\tp_A^{-1}\{v\})\st v\in\vrt(\tH(A))\} \]
and thus that $\tp_A$ restricts to give a bijection
$A^\#\to\vrt(\tH(A))$.  We use this to identify $A^\#$ with
$\vrt(\tH(A))$.

We now list the elements of $A^\#$ as $i_0,\dotsc,i_r$, so $i_0=0$ and
$i_1=1$.  We take $i_{r+1}$ to be $2n+1$.  For $1\leq t\leq r$ we let
$d_t$ denote the edge $\tp_A(e_{i_t})\in\edg(\tH(A))$.  Note that if
$i\not\in A^\#$ then $i\sim_A(i-2)$ so $\tp_A(e_i)=\tp_A(\ov{e}_{i-1})$.
It follows that if $i_t\leq i<i_{t+1}$ then $\tp_A(e_i)$ is the same as
$d_t$ (if $i-i_t$ is even) or $\ov{d}_t$ (if $i-i_t$ is odd).  If
$i_{t+1}-i_t$ is even we say that $d_t$ is a \emph{spine} and $i_t$ is
a \emph{spine vertex}.  If $i_{t+1}-i_t$ is odd we say that $d_t$ is a
\emph{body edge} and $i_t$ is a \emph{body vertex}.  We also consider
$i_0=0$ to be a body vertex (but $d_0$ is not defined).  We let
$B\tH(A)$ denote the full subgraph of $\tH(A)$ on the body vertices,
and call this the \emph{body} of $\tH(A)$.

The following picture illustrates the case where $n=9$ and
\begin{align*}
 A    &= \{2,3,4,7,10,12,15,16\} \\
 A^\# &= \{0,1,2,6,7,9,10,12,14,15,18\}.
\end{align*}

\begin{center}
 \begin{tikzpicture}[scale=2,
    vtxa/.style={circle,draw=black,fill=black,inner sep=0pt,minimum size=3pt},
    vtxb/.style={circle,draw=black,fill=white,inner sep=0pt,minimum size=3pt}
  ]
  \begin{scope}
   \foreach \i in {0,...,17} {
    \draw (10*\i:1) -- (10+10*\i:1);
   }
   \foreach \i in {2,3,4,7,10,12,15,16} {
    \draw[thick,red] (180-10*\i:0.98) -- (180-10*\i:0.90);
    \draw[red] (180-10*\i:0.8) node {$\scriptscriptstyle \i$};
    \draw (170-10*\i:1) node[vtxb] {};
   }
   \foreach \i in {0,1,2,6,7,9,10,12,14,15,18} {
    \draw (180-10*\i:1) node[vtxa] {};
   }
   \draw[very thick,->] (1.25,0.3) -- (1.45,0.3);
   \draw (0,-0.2) node{$L(9)$};
  \end{scope}
  \begin{scope}[xshift=3cm]
   \draw[white] (-0.1,1.7) -- (0.1,1.7);
   \draw[dotted] (-1,0) -- (1,0);
   \draw (180:1)    node[vtxa] {} --
         (155:1)    node[vtxa] {} --
         (152:1.5)  node[vtxa] {} --
         (150:1)    node[vtxb] {} --
         (148:1.5)  node[vtxb] {} --
         (145:1)    node[vtxb] {} --
         (122:1)    node[vtxa] {} --
         (120:1.5)  node[vtxa] {} --
         (118:1)    node[vtxb] {} --
         (95:1)     node[vtxa] {} --
         (-0.3,1.4) node[vtxa] {} --
         (0,1)      node[vtxb] {} --
         (0.3,1.4)  node[vtxa] {} --
         (85:1)     node[vtxb] {} --
         (60:1.03)  node[vtxa] {} --
         (30:1.03)  node[vtxa] {} --
         (60:0.97)  node[vtxb] {} --
         (30:0.97)  node[vtxb] {} --
         (0:1)      node[vtxa] {};
   \draw[very thick,->] (1.25,0.3) -- (1.45,0.3);
  \end{scope}
  \begin{scope}[xshift=6cm]
   \draw[dotted] (-1,0) -- (1,0);
   \draw (180:1)    node[vtxa] {} --
         (150:1)    node[vtxa] {} --
         (150:1.5)  node[vtxa] {} --
         (150:1)    node[vtxa] {} --
         (120:1)    node[vtxa] {} --
         (120:1.5)  node[vtxa] {} --
         (120:1)    node[vtxa] {} --
         (90:1)     node[vtxa] {} --
         (-0.3,1.4) node[vtxa] {} --
         (0,1)      node[vtxa] {} --
         (0.3,1.4)  node[vtxa] {} --
         (90:1)     node[vtxa] {} --
         (60:1)     node[vtxa] {} --
         (30:1)     node[vtxa] {} --
         (0:1)      node[vtxa] {};
   \draw (165:0.87)  node {$\scriptstyle d_1$};
   \draw (-1.1,0.55) node {$\scriptstyle d_2$};
   \draw (135:0.87)  node {$\scriptstyle d_3$};
   \draw (-0.7,1.05) node {$\scriptstyle d_4$};
   \draw (105:0.87)  node {$\scriptstyle d_5$};
   \draw (-0.25,1.2) node {$\scriptstyle d_6$};
   \draw ( 0.25,1.2) node {$\scriptstyle d_7$};
   \draw ( 75:0.87)  node {$\scriptstyle d_8$};
   \draw ( 45:0.87)  node {$\scriptstyle d_9$};
   \draw ( 15:0.85)  node {$\scriptstyle d_{10}$};
   \draw (0,-0.2) node{$\tH(A)$};
  \end{scope}
 \end{tikzpicture}
\end{center}
In the left hand picture, the radial lines indicate the elements of
$A$.  The solid circles indicate the points of $A^\#$, which become
the vertices of the right hand picture.  The empty circles indicate
the remaining vertices of $L(9)$, which become identified in $\tH(A)$
with the solid circles.  The edges $d_2$, $d_4$, $d_6$ and $d_7$ are
spines, whereas $d_1$, $d_3$, $d_5$, $d_8$, $d_9$ and $d_{10}$ form
the body.  It is clear in this case that the body is isomorphic to
$L(3)$.  We will see below that the body is always isomorphic to
$L(m)$ for some $m\leq n$.

\begin{lemma}\label{lem-body-edge}
 For all $t\in\{1,\dotsc,r\}$, the vertex $\tp_A(i_t-1)$ is the largest
 body vertex that is less than $i_t$.
\end{lemma}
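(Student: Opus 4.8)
The plan is to compute $\tp_A(i_t-1)$ directly from the folding map and then recognise the answer as a body vertex.

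Recall from the discussion of $A^\#$ that $\tp_A(j)$ is the smallest element of the $\sim_A$-class of $j$, and that this class is obtained by repeatedly replacing an integer $\ell$ by $\ell-2$ whenever $\ell-1\in A$. So $\tp_A(j)$ is found by walking down from $j$ in steps of $2$, stopping at the first integer whose predecessor is not in $A$. I would also first record the relevant structural fact: for consecutive elements $i_{t-1}<i_t$ of $A^\#$, the integers strictly between them are exactly the $\ell$ with $\ell-1\in A$, so $\{i_{t-1},i_{t-1}+1,\dots,i_t-2\}\sse A$, while $i_{t-1}-1\notin A$ and $i_t-1\notin A$.

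The main argument would be an induction on $t$, split according to the parity of $i_t-i_{t-1}$. If $i_t-i_{t-1}$ is odd then $i_{t-1}$ is a body vertex by definition; walking down in steps of $2$ from $i_t-1$, the subtracted integers all lie in $\{i_{t-1},\dots,i_t-2\}\sse A$, so the walk reaches $i_{t-1}$ and then halts because $i_{t-1}-1\notin A$. Hence $\tp_A(i_t-1)=i_{t-1}$, which is the largest body vertex below $i_t$ since it is already the largest element of $A^\#$ below $i_t$. (The base case $t=1$, where $i_1=1$, $i_0=0$ and the down-walk is empty, is an instance of this.) If instead $i_t-i_{t-1}$ is even then $i_{t-1}$ is a spine vertex; now the walk passes through $i_{t-1}+1$ and, since $i_{t-1}\in A$, continues one more step to $i_{t-1}-1$, so $i_t-1\sim_A i_{t-1}-1$ and $\tp_A(i_t-1)=\tp_A(i_{t-1}-1)$. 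By the inductive hypothesis this equals the largest body vertex below $i_{t-1}$; and since $i_{t-1}$ is the only element of $A^\#$ lying in the interval $[i_{t-1},i_t)$ and it is a spine, not a body, vertex, that coincides with the largest body vertex below $i_t$. This closes the induction.

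The step I expect to need the most care is the bookkeeping in the degenerate case $i_{t-1}=0$: because $0\notin A$ the structural fact above needs a moment's thought, but this case forces $t=1$ and $i_1-i_0=1$, so it falls squarely under the odd case with an empty down-walk, and nothing goes wrong. The remaining verifications are routine.
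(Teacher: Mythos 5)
Your argument is correct and follows essentially the same route as the paper: induction on $t$, splitting according to the parity of $i_t - i_{t-1}$, with the odd case giving $\tp_A(i_t-1)=i_{t-1}$ directly and the even case reducing to $\tp_A(i_{t-1}-1)$ and invoking the inductive hypothesis. You spell out slightly more carefully why the even case preserves the ``largest body vertex below'' property (namely that $i_{t-1}$ is a spine vertex, the only element of $A^\#$ in $[i_{t-1},i_t)$), but the substance matches the paper's proof.
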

\begin{proof}
 This is clear for $t=1$ because $i_1-1=0=i_0$ and this is a body
 vertex by definition.  Now suppose that $t>1$.  If $i_{t-1}$ is a
 body vertex then $i_t-i_{t-1}$ is odd, and by the definition of
 $A^\#$ we have $j\in A$ for $i_{t-1}\leq j\leq i_t-2$.  It follows
 that $\tp_A(i_t-1)=\tp_A(i_{t-1})$ (which we identify with $i_{t-1}$), so
 the claim holds.  Otherwise $i_t-i_{t-1}$ is even and a similar
 argument gives $\tp_A(i_t-1)=\tp_A(i_{t-1}-1)$; the claim therefore holds
 by induction on $t$.
\end{proof}

\begin{corollary}\label{cor-hedgehog-structure}
 The body $B\tH(A)$ is isomorphic to $L(m)$ for some $m\leq n$.  Each
 spine has one end in the body and one end outside the body.
\end{corollary}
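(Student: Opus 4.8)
The plan is to read the structure of $B\tH(A)$ directly off Lemma~\ref{lem-body-edge}, together with a parity count of the gaps $g_t := i_{t+1} - i_t$ for $0 \le t \le r$ (so $g_0 = 1$ and $\sum_{t=0}^{r} g_t = i_{r+1} - i_0 = 2n+1$). I would first record that $\tH(A)$ is a tree with undirected edge set $\{d_1,\dots,d_r\}$: it is connected (being a folding of the connected graph $L(n)$), it has $|A^\#| = r+1$ vertices, and every edge is $\tp_A(e_i)$ for some $i$ and hence occurs among the $d_t$ up to reversal; a connected graph on $r+1$ vertices with at most $r$ edges must be a tree with exactly those $r$ distinct edges. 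Translating the definitions, and using that $i_0$ is a body vertex with $g_0 = 1$ odd, the body vertices are exactly the $i_t$ (for $0 \le t \le r$) with $g_t$ odd, and the body edges are exactly the $d_t$ (for $1 \le t \le r$) with $g_t$ odd.

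The main step is to identify these ``body edges'' with the edges of the full subgraph $B\tH(A)$ and to see that they form a path. The edge $d_t$ has endpoints $i_t$ and, by Lemma~\ref{lem-body-edge}, the largest body vertex less than $i_t$; the second endpoint is automatically a body vertex, so $d_t$ is an edge of $B\tH(A)$ iff $i_t$ is a body vertex iff $g_t$ is odd. Listing the body vertices in increasing order as $0 = b_0 < b_1 < \dots < b_N$ and writing $i_t = b_k$ for a body edge $d_t$, Lemma~\ref{lem-body-edge} gives the other endpoint of $d_t$ as $b_{k-1}$; as $t$ runs over the relevant indices, $b_k$ runs over $b_1,\dots,b_N$, so the edges of $B\tH(A)$ are exactly $b_0b_1, b_1b_2,\dots, b_{N-1}b_N$ and $B\tH(A)$ is the path on these vertices.

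Since $\sum_t g_t = 2n+1$ is odd, the number $N+1$ of odd gaps --- equivalently of body vertices --- is odd, say $N = 2m$; then $b_k \mapsto k$ is a graph isomorphism $B\tH(A) \to L(m)$, which is parity-preserving because consecutive body vertices have opposite parity in $\tH(A)$, so $b_k$ has the parity of $k$. Moreover $2m+1 = N+1 \le |A^\#| \le 2n+1$, so $m \le n$. For the final assertion, a spine is a $d_t$ with $1 \le t \le r$ and $g_t$ even; then $i_t$ is a spine vertex, hence not a vertex of $B\tH(A)$, while its other endpoint $\tp_A(i_t-1)$ is a body vertex by Lemma~\ref{lem-body-edge}; so exactly one end of each spine lies in the body.

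The only point that needs care is the identification of $\edg(B\tH(A))$ with the combinatorial set of body edges: this is precisely where Lemma~\ref{lem-body-edge} is used, since it guarantees that the ``downward'' endpoint $\tp_A(i_t-1)$ of every $d_t$ is a body vertex, so that whether $d_t$ joins two body vertices depends only on the parity of $g_t$. The parity count and the bound $m \le n$ are then routine.
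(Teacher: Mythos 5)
Your proof is correct and follows essentially the same line as the paper: use Lemma~\ref{lem-body-edge} to see that body edges string the body vertices together into a path, then count parities of the gaps $i_{t+1}-i_t$ to conclude that the number of body vertices is odd, giving $B\tH(A)\simeq L(m)$. The paper sums the gaps over $t=1,\dots,r$ to get $2n$ (even) and then adds $i_0$ as a separate body vertex, while you sum over $t=0,\dots,r$ to get $2n+1$ (odd) and absorb $i_0$ into the count directly; these are the same calculation packaged differently. A small improvement in your version is that you actually verify the bound $m\le n$ (via $2m+1\le|A^\#|\le 2n+1$), which the paper's proof leaves implicit.
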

\begin{proof}
 Let $j_0,\dotsc,j_q$ be the body vertices, so the upper end of the
 $k$'th body edge is $j_k$.  It follows from the Lemma that the lower
 end is $j_{k-1}$.  Note also that
 \[ \sum_{t=1}^{r} (i_{t+1}-i_t) = i_{r+1}-i_1 = (2n+1)-1 = 2n, \]
 so the number of odd summands must be even.  There is one odd summand
 for each body edge, so $q$ must be even, say $q=2m$.  It follows that
 $B\tH(A)\simeq L(m)$.  The upper end of each spine is by definition
 not a body vertex, but the lower end is a body vertex by the Lemma.
\end{proof}

\begin{remark}\label{rem-rolling}
 The graph $H(A)$ is then obtained from $\tH(A)$ by connecting the two
 ends of $B\tH(A)$ together (``rolling the hedgehog into a ball'').
 This produces a copy of $C(m)$ with attached spines.  If $m\neq 1$ we
 can say that the spines are precisely the edges $e$ such that
 $|p_A^{-1}\{e\}|$ is even.  However, this formulation is incorrect
 when $m=1$ because of the special features of $C(1)$.
\end{remark}

Corollary~\ref{cor-add-edge} allows us to describe the ring $S(H(A))$
as $R(m)\ot\left(\Z[x]/x^2\right)^{\ot k}$, where $m$ is half the
number of body edges, and $k$ is the number of spines.  We can be more
precise about the indexing as follows.  Put $K=A^\#\sm\{0\}$, so the
map $k\mapsto\tp_A(e_k)$ gives a bijection $K\to\edg(\tH(A))$.  Let
$K_0$ be the subset corresponding to spines, and let $K_1$ be the
subset corresponding to body edges.  Note that $K_1$ inherits a total
order from $N_{2n}$ and $|K_1|=2m$.  We find that
$S(H(A))=R(K_1)\ot E(K_0)$, and this has a basis consisting of
monomials $x_J$ where $J\sse K$ and $J\cap K_1$ is a sparse
subset of $K_1$.  When $m\neq 1$ we see that $K$ also bijects with
$\edg(H(A))$.  When $m=1$ we need a few additional words but nothing
significant changes.  The two body edges in $\tH(A)$ get collapsed
down to a single edge in $H(A)$ but this does not matter because
\[ S(C(1)) = R(1) = \frac{\Z[x_1,x_2]}{(x_1+x_2,x_1x_2)}
    = \frac{\Z[x_1]}{x_1}^2 = S(\text{an edge}).
\]

Now put
\[ S' = R(n)/(x_i+x_{i+1}\st i\in A). \]
If $i\in A$ then $p_A(e_i)=p_A(\ov{e}_{i+1})$ and so $x_i+x_{i+1}$
maps to zero in $S(H(A))$.  We therefore have an induced homomorphism
$\tht\:S'\to S(H(A))$, which is surjective because $p_A$ is surjective
on edges.  We can apply this to the relation
$\prod_{i=1}^{2n}(1+tx_i)=1$ in $R(n)[t]$.  For any spine $e$ there
are an even number (say $2k$) of edges $e_i$ that map to $e$, half
with one orientation and half with the other orientation.  The
corresponding terms in the product therefore give
$(1+tx_e)^k(1-tx_e)^k=(1-t^2x_e^2)^k$ but $x_e^2=0$ so this is just
$1$.  Similarly, if $e$ is a positively oriented body edge then the
corresponding terms in the product will be $(1+tx_e)^{k+1}(1-tx_e)^k$
for some $k$, but this just simplifies to $1+tx_e$.  Thus, $\tht$
sends the unique nondegenerate relation in $R(n)=S(C(n))$ to the
unique nondegenerate relation in $S(H(A))$, and it follows that the
map $S'\to S(H(A))$ is an isomorphism as claimed.

Now suppose that for all $m<n$ the map $\phi\:R(m)\to H^*(X(m))$ is an
isomorphism, or equivalently the map $S(C(m))\to H^*(Y(C(m)))$ is an
isomorphism.  Provided that $A\neq\emptyset$, the body of $H(A)$ will
be isomorphic to $C(m)$ for some $m<n$, so the map
$\psi\:S(BH(A))\to H^*(Y(BH(A)))$ is an isomorphism.  After applying
Corollary~\ref{cor-add-edge} once for each spine, we deduce that the
map $\psi\:S(H(A))\to H^*(Y(H(A)))$ is an isomorphism.  We leave it to
the reader to check that nothing goes wrong if there are $0$ or $2$
body edges.

\section{The Mayer-Vietoris spectral sequence}
\label{sec-MVSS}

We will analyse $H^*(X(n))$ using a version of the Mayer-Vietoris
spectral sequence.  It will be convenient to explain this spectral
sequence more abstractly in the present section, and then specialise
to the Khovanov-Springer context in the next section.

Let $X$ be an ordered simplicial complex, with a list of subcomplexes
$X_1,\dotsc,X_m$.  We assume that $|X|=\bigcup_i|X_i|$, or
equivalently that every simplex in $X$ is contained in $X_i$ for some
$i$.  For any subset $I\sse N_m$ we put $X_I=\bigcap_{i\in I}X_i$,
with the convention that $X_\emptyset=X$.  In the case $m=2$ we have a
Mayer-Vietoris sequence
\[ \dotsb \to H^*(X_\emptyset)\to H^*(X_1)\tm H^*(X_2) \to
    H^*(X_{12}) \xra{} H^{*+1}(X_\emptyset) \to \dotsb.
\]
The Mayer-Vietoris spectral sequence will be a generalisation that
works for $m>2$.  It is possible to construct a version that converges
to $H^*(X)$, but we prefer to use a different version where $H^*(X)$
appears as part of the initial page, and the spectral sequence
converges to zero.

Let $E^*$ be the exterior algebra on generators $e_1,\dotsc,e_m$.  We
can also write the elements of $I$ in increasing order as
$\{i_1,\dotsc,i_r\}$ and put $e_I=e_{i_1}\dotsb e_{i_r}$.  We put
$E_I=\Z e_I\leq E^*$ and $HT^{**}=\prod_IE_I\ot H^*(X_I)$.  This is
bigraded, with $E_I\ot H^q(X_I)$ in bidegree $(|I|,q)$. Using the
usual cup product and restriction maps we have pairings $H^*(X_I)\ot
H^*(X_J)\to H^*(X_{I\cup J})$, which we combine with the ring
structure on $E^*$ to make $HT^{**}$ into a bigraded ring.  We also
have an element $u=\sum_ie_i\in E^1=HT^{1,0}$, and it is
straightforward to check that $u^2=0$.

\begin{proposition}\label{prop-MVSS}
 There is a spectral sequence converging to the zero group, with
 $HT^{**}$ as the $E_1$ page and $d_1(a)=au$.
\end{proposition}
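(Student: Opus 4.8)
The plan is to obtain the spectral sequence as one of the two spectral sequences associated to an explicit double complex, and then to prove that it converges to zero by showing that the rows of that double complex are exact. Write $C^q(Z)$ for the group of simplicial $q$-cochains of an ordered simplicial complex $Z$, and set
$D^{p,q} = \prod_{I\sse N_m,\,|I|=p} C^q(X_I)$ for $p,q\geq 0$, so that $D^{0,q}=C^q(X)$. I would give $D^{**}$ the vertical differential $(-1)^p$ times the simplicial coboundary (acting factorwise), and the horizontal differential $\delta_h\colon D^{p,q}\to D^{p+1,q}$ defined by the usual alternating sum of restriction maps, with Koszul signs read off the given order of $N_m$. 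Because each $X_i$ is a subcomplex, restriction of cochains commutes with the coboundary, and $\delta_h^2=0$ is the standard sign computation, so $D^{**}$ is a genuine double complex. The key finiteness point is that $D^{p,q}=0$ for $p>m$, so the filtration of the total complex of $D^{**}$ by the first degree is finite in each total degree.

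First I would run the spectral sequence of that filtration. Its $E_0$-page is $D^{**}$ with $d_0$ the signed vertical differential, so $E_1^{p,q}=\prod_{|I|=p}H^q(X_I)$; the bijection sending $a\in H^q(X_I)$ to $a\,e_I$ identifies this with $HT^{p,q}$. Under this identification $d_1$ is induced by $\delta_h$, and matching the Koszul signs of the \v Cech differential with the wedge-product signs in $E^*$ shows that $d_1$ is exactly the operation $a\mapsto au$, where $u=\sum_i e_i$; in particular the relation $u^2=0$ reproves that $d_1^2=0$. Since the filtration is finite in each total degree, this spectral sequence converges (strongly) to $H^*$ of the total complex of $D^{**}$.

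It therefore remains to show that the total complex of $D^{**}$ is acyclic, and for this I would prove that each row $(D^{\bullet,q},\delta_h)$ is exact; since $D^{**}$ is concentrated in the strip $0\leq p\leq m$, exact rows force the total complex to be acyclic (the other spectral sequence of $D^{**}$, taking horizontal cohomology first, then has $E_1=0$, and its filtration is again finite in each total degree because $p$ is bounded). To see that a row is exact, note that a $q$-cochain on $X_I$ is a family indexed by the $q$-simplices of $X_I$, and that a simplex $\sigma$ lies in $X_I$ precisely when $I\sse N(\sigma)$, where $N(\sigma)=\{i\in N_m\st\sigma\in X_i\}$; the hypothesis that the $X_i$ cover $X$ is exactly the statement that $N(\sigma)\neq\emptyset$ for every $\sigma$. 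Hence $D^{\bullet,q}$ splits as the product, over the $q$-simplices $\sigma$ of $X$, of the complexes $D^\bullet_\sigma$ with $D^p_\sigma=\prod_{I\sse N(\sigma),\,|I|=p}\Z$, and $\delta_h$ respects this splitting. But $D^\bullet_\sigma$ is precisely the augmented simplicial cochain complex of the simplex on the nonempty vertex set $N(\sigma)$, which is exact because that simplex is contractible; a product of exact complexes of abelian groups is exact, so each row is exact and we are done.

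The real content of the argument is the contractibility of the simplices on the index sets $N(\sigma)$, which is where the covering hypothesis enters; everything else is bookkeeping. I expect the only points needing genuine care to be two: checking that finiteness of the cover ($0\leq p\leq m$) makes \emph{both} filtrations of $D^{**}$ finite in each total degree, so that both spectral sequences of $D^{**}$ converge strongly and may be compared; and verifying that the sign conventions in $\delta_h$ really do reproduce multiplication by $u$ in the exterior-algebra ring structure on $HT^{**}$, rather than multiplication by $u$ up to an unwanted sign.
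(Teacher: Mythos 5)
Your proof is correct and takes essentially the same approach as the paper's: build the double complex $\prod_I E_I\ot C^*(X_I)$ (equivalently $\prod_{|I|=p}C^q(X_I)$) with simplicial coboundary in one direction and multiplication by $u$ (the \v{C}ech differential) in the other, read off the first spectral sequence to get $HT^{**}$ with $d_1(a)=au$, and show the total complex is acyclic by splitting the other $E_0$ page over the simplices $\sigma$ of $X$ and observing that each summand is the augmented cochain complex of the nonempty simplex on $N(\sigma)$ (which the paper phrases equivalently as the exterior algebra $E[e_i\st i\in I_s]$ with differential multiplication by $u$). Your explicit attention to the finiteness of both filtrations and to the sign bookkeeping is a reasonable precaution, but does not change the argument.
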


Before proving this, we explain how it works out in the case $m=2$.
We label the inclusion maps as follows:
\[ \xymatrix{
 X_{12} \ar[r]^{i_1} \ar[d]_{i_2} & X_1 \ar[d]^{j_1} \\
 X_2 \ar[r]_{j_2} & X.
} \]
We have
\begin{align*}
 E_1^{0*} &= H^*(X) \\
 E_1^{1*} &= H^*(X_1)\tm H^*(X_2) \\
 E_1^{2*} &= H^*(X_{12}).
\end{align*}
The differential $d_1$ is given by $d_1(a)=(j_1^*(a),j_2^*(a))$ on the
first column, and $d_1(a_1,a_2)=i_1^*(a_1)-i_2^*(a_2)$ on the second
column.  The $E_2$ page has $\ker(j_1^*)\cap\ker(j_2^*)$ in the zeroth
column and $\img(i_1^*)+\img(i_2^*)$ in the second column.  The first
column is zero because the usual Mayer-Vietoris sequence is exact in
the middle.  The connecting homomorphism of the usual Mayer-Vietoris
sequence provides an isomorphism between the two remaining columns,
which can be identified with $d_2$, so the $E_3$ page is zero.

\begin{proof}
 Put $CT^{**}=\prod_IE_I\ot C^*(X_I)$ (where $C^*(\cdot)$ refers to
 simplicial cochains).  This is again bigraded, with $E_I\ot C^q(X_I)$
 in bidegree $(|I|,q)$. Using the usual cup product and restriction
 maps we have pairings $C^*(X_I)\ot C^*(X_J)\to C^*(X_{I\cup J})$,
 which we combine with the ring structure on $E^*$ to make $CT^{**}$
 into a bigraded ring.  There are the usual differentials
 $C^q(X_I)\to C^{q+1}(X_I)$, which can be combined in an obvious way
 to give differential $\dl'$ on $CT^{**}$ of bidegree $(0,1)$.  We can
 interpret $u$ as an element of $CT^{1,0}$ and define another
 differential $\dl''$ of bidegree $(1,0)$ by $\dl''(a)=ua$.  The
 differentials $\dl'$ and $\dl''$ anticommute, so the sum
 $\dl=\dl'+\dl''$ is another differential.  Note that $\dl'$ satisfies
 a Leibniz rule $\dl'(ab)=\dl'(a)b\pm a\dl'(b)$, whereas
 $\dl''(ab)=\dl''(a)b$, so $\dl$ does not have any straightforward
 interaction with the product structure.

 As usual, the double complex $CT^{**}$ gives rise to two spectral
 sequences, both converging to $H^*(CT^{**};\dl)$.  For one of them the
 $E_1$ page is $CT^{**}=\prod_IH^*(X_I)e_I$.  This still has a ring
 structure, and the differential $d_1$ is given by multiplication by
 $u$.

 The $E_0$ page of the other spectral sequence splits, as a module
 over $E^*$, into a product indexed by the simplices in $X$.  Fix a
 simplex $s$, and let $I_s$ be the set of indices $i$ such that
 $s\in\text{simp}(X_i)$.  By hypothesis we have $I_s\neq\emptyset$.
 The summand corresponding to $s$ is just the exterior algebra
 $E[e_i\st i\in I_s]$ and the differential is multiplication by the
 image of $u$ in this ring.  It follows that the cohomology for this
 summand is zero, and thus that the $E_1$ page is zero, so both
 spectral sequences converge to zero.
\end{proof}

\begin{corollary}\label{cor-MVSS-iso}
 Suppose we have a first quadrant bigraded abelian group $A^{**}$
 equipped with a differential $d_1$ of bidegree $(1,0)$ such that
 $H^*(A^{**};d_1)=0$.  Suppose we also have a map $f\:A^{**}\to
 HT^{**}$ that is compatible with the bigrading and with $d_1$, such
 that $f\:A^{pq}\to HT^{pq}$ is an isomorphism whenever $p>0$.  Then
 $f\:A^{0*}\to HT^{0*}=H^*(X)$ is also an isomorphism.
\end{corollary}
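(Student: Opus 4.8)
The plan is to show that $f$ is both injective and surjective in the column $p=0$, using the exactness of $A^{**}$ under $d_1$ together with the fact that the spectral sequence of Proposition~\ref{prop-MVSS} converges to zero. The key observation is that the cokernel of $f$ will be forced to be concentrated in column $0$, and that this cokernel is exactly the $E_2^{0*}$ term of the Mayer--Vietoris spectral sequence.

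First I would check that $f\:A^{**}\to HT^{**}$ is injective everywhere. In the columns with $p>0$ this is immediate, since $f$ is an isomorphism there. For the column $p=0$, suppose $a\in A^{0q}$ with $f(a)=0$. Since $f$ commutes with $d_1$ we get $f(d_1a)=d_1f(a)=0$, and $f$ is injective on $A^{1q}$, so $d_1a=0$; thus $a$ lies in $\ker(d_1\:A^{0q}\to A^{1q})$. Because $A^{**}$ is first quadrant there is nothing in column $-1$, so this kernel is precisely $H^0(A^{*q};d_1)$, which vanishes by hypothesis. Hence $a=0$.

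Given injectivity, I would form the cokernel $Q^{**}$ of $f$; it inherits a differential of bidegree $(1,0)$, and for each fixed $q$ we obtain a short exact sequence of cochain complexes $0\to A^{*q}\to HT^{*q}\to Q^{*q}\to 0$ (in the variable $p$). As $f$ is an isomorphism in every column $p>0$, the complex $Q^{*q}$ is concentrated in degree $0$, so its differential is zero; hence $H^p(Q^{*q})=0$ for $p>0$ and $H^0(Q^{*q})=Q^{0q}$. Feeding the hypothesis $H^*(A^{**};d_1)=0$ into the long exact cohomology sequence then gives $H^p(HT^{*q};d_1)\cong H^p(Q^{*q})$ for all $p$ and $q$, so the $d_1$-cohomology of $HT^{**}$ is concentrated in the column $p=0$, where it equals $Q^{0*}$.

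Finally I would invoke Proposition~\ref{prop-MVSS}: its spectral sequence has $E_1$ page $HT^{**}$ with $d_1$ equal to multiplication by $u$, which is the differential just considered, so the $E_2$ page is concentrated in the column $p=0$. Since each differential $d_r$ with $r\geq 2$ raises the first degree by $r\geq 2$, all such differentials vanish and $E_2=E_\infty$. But the spectral sequence converges to the zero group, so $E_\infty=0$, and therefore $E_2^{0*}=Q^{0*}=0$. Thus $f\:A^{0*}\to HT^{0*}$ is surjective as well as injective, hence an isomorphism, and $HT^{0*}=H^*(X)$ by construction. I expect the only slightly delicate point to be the injectivity of $f$ in column $0$, since it is what makes the cokernel complex available and is not among the hypotheses; after that the argument is a formal consequence of the long exact sequence and the convergence statement of Proposition~\ref{prop-MVSS}.
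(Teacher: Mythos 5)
Your proof is correct, and its core is the same as the paper's: establish that the $E_2$ page of the Mayer--Vietoris spectral sequence vanishes in columns $p>0$, observe that no higher differentials can act, and use convergence to zero to force $E_2^{0*}=0$ as well. The packaging differs mildly. The paper proves $E_2^{pq}=0$ for $p>0$ by a direct diagram chase (lift a $d_1$-cycle in $HT^{pq}$ to $A^{pq}$ via the isomorphism $f$, use exactness of $A$ to write it as a boundary, push the bounding element back down), and then identifies $HT^{0*}$ with $\ker(d_1\:HT^{1*}\to HT^{2*})$, identifies $A^{0*}$ with the corresponding kernel in $A$, and transfers the isomorphism using $f$ in columns $1$ and $2$. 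You instead verify injectivity of $f$ explicitly in column $0$ (a short argument that the paper folds into the kernel comparison at the end), then introduce the cokernel complex $Q^{**}$ and apply the long exact cohomology sequence of $0\to A\to HT\to Q\to 0$; since $A$ is acyclic and $Q$ is concentrated in column $0$, this identifies $E_2$ with $Q^{0*}$, and convergence forces $Q^{0*}=0$, i.e.\ surjectivity. Both routes are valid; yours isolates the cokernel as the obstruction, which some readers may find cleaner, while the paper's is slightly more economical in that it never needs to construct $Q$ or check injectivity separately.
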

\begin{proof}
 Suppose we have $a\in HT^{pq}$ with $p>0$ and $d_1(a)=0$ in
 $HT^{p+1,q}$.  As $f$ is an isomorphism for $p>0$ we see that
 there exists $a'\in A^{pq}$ with $f(a')=a$ and $d_1(a')=0$.  As
 $H^*(A^{**};d_1)=0$ this means that there exists $a''\in A^{p-1,q}$
 with $a'=d_1(a'')$ and so $a=d_1(f(a''))$.  This proves that in the
 MVSS we have $E_2^{pq}=0$ for $p>0$.  As the $E_2$ page is
 concentrated in a single column, there can be no further
 differentials.  As the spectral sequence converges to zero, we
 conclude that the $E_2$ page must already be zero.  This means that
 $d_1$ must identify $H^*(X)=HT^{0*}$ with the kernel of
 $d_1\:HT^{1*}\to HT^{2*}$, but also $A^{0*}$ is the kernel of
 $d_1\:A^{1*}\to A^{2*}$ and $f\:A^{p*}\to HT^{p*}$ is iso for $p=1,2$
 so we conclude that $f\:A^{0*}\to HT^{0*}$ must also be iso, as
 claimed.
\end{proof}

\section{The MVSS for Khovanov-Springer varieties}

In this section we assume that $n>1$.  We apply the theory developed
in the previous section to the spaces $Y(C(n))=X(n)$ and the subspaces
$X(n,i)$ for $1\leq i\leq 2n-1$.  These cover $X(n)$ by
Lemma~\ref{lem-Xni-cover}, and they have the required simplicial
structure by Construction~\ref{cons-triangulation}.  We also recall
from Remark~\ref{rem-YHA-intersection} that $\bigcap_{i\in A}X(n,i)$
is the hedgehog folding space $Y(H(A))$.

Now put
\[ TS^{**} = \frac{R(n)\ot E[e_1,\dotsc,e_{2n-1}]}{
              (e_i(x_i+x_{i+1})\st 0<i<2n)}.
\]
As $x_i+x_{i+1}$ maps to zero in $X(n,i)$ there is an evident map
$f\:TS^{**}\to TH^{**}$ of bigraded rings.  We can interpret
$u=\sum_ie_i$ as an element of $TS^{1,0}$ and define $d_1(a)=au$; then
$f$ also respects $d_1$.

As before, for any $A\sse\{1,\dotsc,2n-1\}$ we put
$e_A=\prod_{a\in A}e_a$ and $E_A=\Z e_A$.  It follows from
Proposition~\ref{prop-SHA} that
\[ TS^{**} = \bigoplus_I S(Y(H(A))) \ot E_A \]

At the end of this section we will prove the following:
\begin{proposition}\label{prop-trivial-homology}
 $H^*(TS^{**};d_1)=0$.
\end{proposition}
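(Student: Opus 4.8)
The plan is to treat this as a purely algebraic statement about the ring $R(n)$ and the elements $y_i=x_i+x_{i+1}$ for $1\le i\le 2n-1$. Writing $C_i^\bullet$ for the two–term cochain complex $\bigl[R(n)\to R(n)/(y_i)\bigr]$ in degrees $0$ and $1$, the ring $TS^{**}$ with the differential $d_1=\cdot\,u$ is precisely the tensor product $\bigotimes_{i=1}^{2n-1}C_i^\bullet$ over $R(n)$, with $d_1$ the total differential. It is convenient to reduce first to coefficient fields: every group $TS^{pq}$ is a finitely generated free $\Z$–module, since each $S(H(A))$ is free over $\Z$ by Theorem~\ref{thm-BR-basis} and the product description following Proposition~\ref{prop-SHA}. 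Hence by the universal coefficient theorem it suffices to show $H^*(TS^{**}\ot k;d_1)=0$ for $k=\Q$ and for $k=\Z/p$ with $p$ prime; the groups $H^*(TS^{**};d_1)$ are then finitely generated and divisible, so zero. From now on I work over a field $k$.

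Next I would peel off one tensor factor. The relation $y_1=x_1+x_2$ gives $R(n)/(y_1)\cong R(n-1)\ot k[z]/z^2$ — the algebraic shadow of $X(n,1)\cong X(n-1)\tm\CP^1$ (Proposition~\ref{prop-Xni-bundle}) — with $z$ the image of $x_1$ and $R(n-1)$ generated by the images of $x_3,\dots,x_{2n}$. Let $G^\bullet=\bigotimes_{i=2}^{2n-1}C_i^\bullet$, so $TS^{**}=\operatorname{cone}\bigl(G^\bullet\xra{\text{red}}G^\bullet\ot_{R(n)}R(n)/(y_1)\bigr)[-1]$. A direct computation identifies $G^\bullet\ot_{R(n)}R(n)/(y_1)$, as a complex of $k$–modules, with the tensor product of the complex $\bigl[R(n-1)[z]/z^2\to R(n-1)\bigr]$ (contributed by $C_2^\bullet$, which absorbs $z$ into the image of $x_3$) with $TS^{**}(n-1)$ (the factors $C_3^\bullet,\dots,C_{2n-1}^\bullet$, after reindexing $x_3,\dots,x_{2n}$, become exactly the two–term complexes defining $TS^{**}(n-1)$); since the first factor is chain–homotopy–equivalent to $R(n-1)$ placed in degree $0$, this base–changed complex is chain–homotopy–equivalent to $TS^{**}(n-1)$. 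By induction on $n$ (the case $n=1$ being trivial, as $y_1=0$ in $R(1)$) it is acyclic, so $H^*(TS^{**};d_1)\cong H^*(G^\bullet;d_1)$ and we are reduced to the same statement for the ``gap set'' $\{2,\dots,2n-1\}$.

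The bottom of the complex is easy: $H^0(TS^{**};d_1)=\bigcap_{i=1}^{2n-1}(y_i)$ inside $R(n)$, and this is $0$. Indeed, by Proposition~\ref{prop-BR-basis} the map $\rho\colon R(n)\hookrightarrow\prod_{K\in\SS_n(n)}E(K)$ is injective, so it is enough to produce, for each sparse $K$ of size $n$, some $i$ with $\rho_K(y_i)=0$. With $\tau=\lm^{-1}(K)$ the associated non–crossing matching we have $\rho_K(x_j)=x_j$ for $j\in K$ and $\rho_K(x_j)=-x_{\tau(j)}$ for $j\notin K$, so $\rho_K(y_i)=0$ exactly when $\tau(i)=i+1$; and every non–crossing matching has an ``innermost'' arc, i.e.\ a pair $\{i,i+1\}$ with $\tau(i)=i+1$ and $1\le i\le 2n-1$ (take a pair $\{k,\tau(k)\}$ with $\tau(k)-k$ minimal, as in the proof of Lemma~\ref{lem-NCM-parity}). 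The same argument, applied to $\tau$ restricted to suitable subintervals, shows more generally that the complex $\bigotimes_{i\in[a,b]}C_i^\bullet$ is forced to be acyclic whenever every non–crossing matching of the relevant size has an innermost arc lying in $[a,b]$.

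The middle–degree vanishing is the crux, and is where I expect the main difficulty. The natural approach is to iterate the peeling: each step rewrites the complex for an interval $[a,b]$ as a mapping cone built from the complex for $[a+1,b]$ (or $[a,b-1]$) together with a complex which, after the same base–change manoeuvre as above, is chain–homotopy–equivalent to the complex for a shorter interval in $R(n-1)$. One then completes the proof by an interlocking induction on $n$ and on the length of the interval: one proves simultaneously that whenever $[a,b]$ satisfies the non–crossing–matching condition of the previous paragraph the corresponding complex is acyclic, the base cases ($n=1$, or very short intervals combined with the condition) being direct computations, and the inductive step being exactly the end–peeling just described. The delicate point — and the place where the weight of the argument lies — is the bookkeeping: one must check that the two complexes appearing in each mapping cone are themselves covered by the induction (equivalently, that their gap intervals still satisfy the combinatorial condition), so that the cones collapse. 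Each individual reduction, however, is a routine computation of the kind already carried out for $C_1^\bullet$ above.
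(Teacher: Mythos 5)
Your reduction to fields is fine, and the first ``peeling'' step is correct: writing $y_1=x_1+x_2$, one does indeed have $R(n)/(y_1)\cong R(n-1)\ot k[z]/z^2$ with $z$ the image of $x_1$, the identification $TS^{**}(n)\cong\operatorname{cone}(G^\bullet\to G^\bullet\ot_{R(n)}R(n)/(y_1))[-1]$ with $G^\bullet=\bigotimes_{i=2}^{2n-1}C_i^\bullet$ holds, and your base--change computation showing $G^\bullet\ot_{R(n)}R(n)/(y_1)\simeq TS^{**}(n-1)$ is sound (the retraction of $[R(n-1)[z]/z^2\to R(n-1)]$ onto the kernel is $R(n-1)$--linear, which is all that is needed to tensor against the $R(n-1)$--complex $TS^{**}(n-1)$). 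So the statement is reduced to the acyclicity of $G^\bullet$. But the rest of the argument is explicitly deferred: you yourself flag the middle--degree vanishing as ``the crux'' and ``the place where the weight of the argument lies,'' and you do not carry it out. This is a genuine gap, not merely a matter of filling in routine detail.

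Worse, the inductive scheme you sketch is not correct as stated. You propose to iterate the peel, proving that whenever an interval $[a,b]$ satisfies your non--crossing--matching condition, the complex $\bigotimes_{i\in[a,b]}C_i^\bullet$ is acyclic, with the inductive step consisting of peeling one end and checking that ``the two complexes appearing in each mapping cone are themselves covered by the induction (equivalently, that their gap intervals still satisfy the combinatorial condition), so that the cones collapse.'' This fails already at $n=2$. After the first peel, one must show $G^\bullet=C_2^\bullet\ot C_3^\bullet$ over $R(2)$ is acyclic, and the interval $[2,3]$ does satisfy your condition (every non--crossing matching of $\{1,2,3,4\}$ has an innermost arc starting at $2$ or $3$). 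But neither $[2,2]$ nor $[3,3]$ satisfies the condition: the matching $(1\;2)(3\;4)$ gives $\rho_K(y_2)\neq 0$, and $(1\;4)(2\;3)$ gives $\rho_K(y_3)\neq 0$. Correspondingly, $C_2^\bullet$ and $C_3^\bullet$ each have $H^0=(y_i)\neq 0$, so neither complex in the second mapping cone is acyclic. The cone ``collapses'' here only because the connecting map $H^0(C_3^\bullet)=(y_3)\to H^0(C_3^\bullet\ot R(2)/(y_2))=(y_2,y_3)/(y_2)$ is an isomorphism, which is a statement about the map, not about either source or target being zero, and which your combinatorial condition does not see. So the bookkeeping you defer is not a routine check of a condition that propagates; it needs a different and finer argument at each stage.

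The paper's proof takes a completely different route that avoids this issue. It builds an explicit basis $BTS$ of $TS^{**}$ from monomials $x_Je_A$ (Definition~\ref{defn-BTS}), splits $BTS$ into ``extendable'' and ``unextendable'' elements, constructs a bijection $\eta\:BTS'\to BTS''$ (Proposition~\ref{prop-eta-bijection}), and checks that, for a suitable total ordering, $d_1(x_Je_A)=\pm\,\eta(x_Je_A)+\text{lower terms}$; the abstract Lemma~\ref{lem-trivial-homology} then gives acyclicity in one step. This upper--triangularity argument sidesteps the long exact sequence entirely and is what makes the inductive vanishing a closed--form verification rather than a cascade of cone collapses. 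If you want to salvage your approach, you would need either to replace the ``interval satisfies the condition'' hypothesis by a statement about the behaviour of the connecting homomorphism in the cone, or to verify the small base cases by direct computation as you suggest and then show that the family of ``good'' intervals is closed under the moves the induction actually performs; neither is done here.
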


Assuming this for the moment, we can prove our main theorem:
\begin{theorem}\label{thm-main-bis}
 The map $\phi\:R(n)\to H^*(X(n))$ is an isomorphism for all $n$.
\end{theorem}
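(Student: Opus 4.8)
The plan is to prove the theorem by induction on $n$, with the inductive step powered by the Mayer--Vietoris spectral sequence of Section~\ref{sec-MVSS} together with Proposition~\ref{prop-trivial-homology}. The base cases $n\leq 1$ I would dispose of by hand: $X(0)$ is a point with $R(0)=\Z$, and $X(1)=\CP^1$ with $R(1)=\Z[x_1]/x_1^2$, so in both cases $\phi$ is the evident isomorphism, as already noted in Remark~\ref{rem-main-examples}. So suppose $n>1$ and that $\phi\:R(m)\to H^*(X(m))$ is an isomorphism for all $m<n$.

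Under this hypothesis Proposition~\ref{prop-SHA} gives that $\psi\:S(H(A))\to H^*(Y(H(A)))$ is an isomorphism for every nonempty $A\sse\{1,\dotsc,2n-1\}$. I would then assemble the inputs of Corollary~\ref{cor-MVSS-iso}, taking for $A^{**}$ the bigraded differential ring $TS^{**}$, for $HT^{**}$ the object $TH^{**}$ built from the cover $X(n)=\bigcup_i X(n,i)$ (which is a cover by Lemma~\ref{lem-Xni-cover} and is simplicial by Construction~\ref{cons-triangulation}, with intersections the hedgehog spaces $Y(H(A))$ by Remark~\ref{rem-YHA-intersection}), and for $f$ the comparison map $TS^{**}\to TH^{**}$. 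Three things need checking. First, $TS^{**}$ is first quadrant, being concentrated in bidegrees $(p,q)$ with $0\leq p\leq 2n-1$ and $q\geq 0$, and $f$ respects the bigrading and $d_1$; both are built into the construction. Second, $H^*(TS^{**};d_1)=0$, which is exactly Proposition~\ref{prop-trivial-homology}. Third, $f$ is an isomorphism in bidegree $(p,q)$ for all $p>0$: here $TS^{pq}=\bigoplus_{|A|=p}(S(H(A))\ot E_A)^q$ and $f$ is the direct sum of the maps $\psi\ot 1$, and since $p>0$ forces each such $A$ to be nonempty and each $E_A$ is free of rank one over $\Z$, this follows from the previous paragraph.

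Corollary~\ref{cor-MVSS-iso} then yields that $f$ is an isomorphism in bidegree $(0,*)$ as well. To finish I would observe that $TH^{0*}=H^*(X(n))$, that $TS^{0*}$ is the single summand $S(H(\emptyset))\ot E_\emptyset=S(C(n))=R(n)$ (since the empty pinch set gives $H(\emptyset)=C(n)$), and that the restriction of $f$ to this summand is $\psi\:S(C(n))\to H^*(Y(C(n)))$, which is $\phi$ under the homeomorphism $X(n)\simeq Y(C(n))$ by Proposition~\ref{prop-S-HY}. Hence $\phi\:R(n)\to H^*(X(n))$ is an isomorphism, closing the induction.

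The deduction itself is light; the genuine difficulty is packaged into the results being quoted. Proposition~\ref{prop-SHA} carries the inductive leverage, and it is crucial that $n>1$ so that every hedgehog body is a polygon $C(m)$ with $m<n$ and the inductive hypothesis applies. The hardest single ingredient is Proposition~\ref{prop-trivial-homology}: the vanishing of the $d_1$-homology of $TS^{**}$ is precisely what makes the spectral sequence degenerate after the $E_1$ page, and I would expect to prove it by exhibiting an explicit contracting homotopy for $(TS^{**},d_1)$ adapted to the combinatorics of the pinch sets and the sparse-monomial basis of $R(n)$. The one point of care in the present argument is to be sure that the $p=0$ column of $TS^{**}$ reproduces $R(n)$ on the nose rather than merely a quotient of it, which is immediate from $H(\emptyset)=C(n)$ and $S(C(n))=R(n)$.
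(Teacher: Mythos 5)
Your argument is the same as the paper's: induction on $n$, with Proposition~\ref{prop-SHA} giving the isomorphism on the $p>0$ columns of the comparison map $f\colon TS^{**}\to TH^{**}$, Proposition~\ref{prop-trivial-homology} giving acyclicity of $(TS^{**},d_1)$, and Corollary~\ref{cor-MVSS-iso} then forcing $f$ to be an isomorphism on the $p=0$ column, which is $\phi$. The extra details you supply (base cases, identification of $TS^{0*}$ with $R(n)$ via $H(\emptyset)=C(n)$, the role of $n>1$) are all correct and consistent with the paper.
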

\begin{proof}
 We will work by induction on $n$, noting that the cases $n=0$ and
 $n=1$ are easy.  We may thus assume that the maps
 $\phi\:R(m)\to H^*(X(m))$ are isomorphisms for $m<n$.  Using
 Proposition~\ref{prop-SHA} again we deduce that the map
 $f\:TS^{pq}\to TH^{pq}$ is an isomorphism for $p>0$.  It follows by
 Corollary~\ref{cor-MVSS-iso} that the map $f\:TS^{0*}\to TH^{0*}$ is
 also an isomorphism, but this is the same as
 $\phi\:R(n)\to H^*(X(n))$.
\end{proof}

We now start working towards the proof of
Proposition~\ref{prop-trivial-homology}.

\begin{definition}\label{defn-BTS}
 We let $BTS$ denote the set of monomials $x_Je_A$ where
 \begin{itemize}
  \item[(a)] $A\sse N_{2n-1}$
  \item[(b)] $J\sse A^\#\sm\{0\}$
  \item[(c)] $J\cap B$ is a sparse subset of $B$, where
   $B\sse N_{2n}$ is the set of nonzero body vertices in $\tH(A)$.
 \end{itemize}
 It follows from Theorem~\ref{thm-BR-basis} and
 Proposition~\ref{prop-SHA} that $BTS$ is a basis for $TS^{**}$ over
 $\Z$.  We say that $x_Je_A\in BTS$ is \emph{extendable} if there
 exists $a\in N_{2n-1}$ such that $a<\min(A)$ and $x_Je_{A\cup\{a\}}$
 is also in $BTS$.  (We interpret $\min(\emptyset)$ as $2n$, so the
 first condition is automatic if $A=\emptyset$.)  We write $BTS'$ for
 the set of extendable elements of $BTS$, and $BTS''=BTS\sm BTS'$ for
 the set of unextendable elements.  If $x_Je_A$ is extendable we let
 $a$ be the smallest possible index in $N_{2n-1}$ such that
 $x_Je_{A\cup\{a\}}\in BTS$, and put
 $\eta(x_Je_A)=x_Je_{A\cup\{a\}}$.  This defines a map
 $\eta\:BTS'\to BTS$.
\end{definition}

\begin{lemma}\label{lem-eta-cases}
 Suppose that $x_Je_A\in BTS$ and put $p=\min(A)$ (with
 $\min(\emptyset)=2n$ as before).
 \begin{itemize}
  \item[(a)] The set $Q=\{2,\dotsc,2n\}\sm J$ is nonempty, so we can
   define $q=\min(Q)-1$.  Moreover, we have $q\leq p$.
  \item[(b)] If $r\in N_{2n-1}$ and $r\not\in A$ with
   $x_Je_{A\cup\{r\}}\in BTS$ then we must have $r\geq q$.
  \item[(c)] If $q<p$ then $x_Je_A$ is extendable with
   $\eta(x_Je_A)=x_Je_{A\cup\{q\}}$.
  \item[(d)] If $q=p$ then $x_Je_A$ is not extendable.
 \end{itemize}
\end{lemma}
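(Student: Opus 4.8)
The plan is to prove (a) first (since (b) follows quickly from it, (d) follows quickly from (b), and the ``$\eta$'' clause of (c) also reduces to (b), so the only real work is the $BTS$-membership assertion inside (c)). For (a), write $B\sse N_{2n}$ for the set of nonzero body vertices of $\tH(A)$, so that $B\tH(A)\simeq L(m)$ with $|B|=2m$ by Corollary~\ref{cor-hedgehog-structure}, and the number of spine vertices is $|A^\#\sm\{0\}|-2m=2n-|A|-2m$. Decomposing $J\sse A^\#\sm\{0\}$ as $(J\cap B)\amalg(J\sm B)$ and using that $J\cap B$ is sparse in $B$ (so $|J\cap B|\leq m$ by Corollary~\ref{cor-sparse-small}) while $|J\sm B|$ is at most the number of spine vertices, we get $|J|\leq 2n-|A|-m$. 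If $|A|\geq 2$ the cruder bound $|J|\leq 2n-|A|\leq 2n-2$ already suffices; if $A=\emptyset$ then $m=n$ and $|J|\leq n\leq 2n-2$; and if $|A|=1$ a direct computation shows the set of nonzero body vertices of $\tH(\{i\})$ is $\{1,\dots,2n\}\sm\{i,i+1\}$, so $m=n-1\geq 1$ and $|J|\leq 2n-1-m\leq 2n-2$. In every case $|J|<2n-1=|\{2,\dots,2n\}|$, so $Q$ is nonempty and $q$ is defined; and for $q\leq p$, if $A=\emptyset$ then $p=2n>2n-1\geq q$, while if $A\neq\emptyset$ and $q>p$ then $p+1<\min(Q)$ with $p+1\in\{2,\dots,2n\}$, so $p+1\notin Q$, hence $p+1\in J\sse A^\#$, forcing $p\notin A$ and contradicting $p=\min(A)$.

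For (b), note that if $r\notin A$ then $(A\cup\{r\})^\#=A^\#\sm\{r+1\}$, so $x_Je_{A\cup\{r\}}\in BTS$ forces $r+1\notin J$; since $2\leq r+1\leq 2n$ this puts $r+1$ in $Q$, whence $r+1\geq\min(Q)=q+1$ and $r\geq q$. Part (d) is then immediate: an extension of $x_Je_A$ would require some $a\in N_{2n-1}$ with $a<\min(A)=p$ and $x_Je_{A\cup\{a\}}\in BTS$, but $a<\min(A)$ gives $a\notin A$, so (b) gives $a\geq q=p$, contradicting $a<p$. Similarly, once we know $x_Je_{A\cup\{q\}}\in BTS$ (next paragraph), part (b) shows that no index strictly below $q$ can extend $x_Je_A$, since every such index lies below $\min(A)$ (as $q<p$) and so is outside $A$, hence gives an element outside $BTS$ by (b); therefore $\eta(x_Je_A)=x_Je_{A\cup\{q\}}$.

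For the membership claim in (c), assume $q<p$, so $1\leq q<p=\min(A)$, whence $q\in N_{2n-1}$ and $q\notin A$. The first two $BTS$ conditions for $x_Je_{A\cup\{q\}}$ are immediate ($A\cup\{q\}\sse N_{2n-1}$, and $J\sse(A^\#\sm\{0\})\sm\{q+1\}$ because $q+1=\min(Q)\notin J$), so it remains to see that $J\cap B'$ is sparse in $B'$, where $B'$ is the set of nonzero body vertices of $\tH(A\cup\{q\})$. Since $q<\min(A)$ we have $\{0,1,\dots,p\}\sse A^\#$ with $i_t=t$ there, and $\tH(A\cup\{q\})=\tH(A)/(q-1\sim q+1)$. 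Comparing the enumerations of $A^\#$ and $A^\#\sm\{q+1\}$ and tracking the parities of consecutive gaps $i_{t+1}-i_t$, one finds that either $B'=B\sm\{q,q+1\}$ or $B'=B$, the latter occurring precisely when $q=p-1$ and $p$ is a spine vertex of $\tH(A)$; in the second case there is nothing to prove. In the first case $q,q+1\in B$, and writing $S=J\cap B$ we have $q+1\notin S$ (as $q+1\notin J$), while $q\in S$ unless $q=1\notin J$, in which case $\min(S)\geq 3>q+1$. A check against the definition of sparseness then shows that deleting the consecutive pair $\{q,q+1\}$ from $B$ (and hence from $S$) preserves sparseness: for $j\in S\sm\{q\}$ with $j>q+1$ nothing changes, and for the remaining such $j$ (necessarily $j<q$, which forces $q\in S$) both $|(B\sm S)_{>j}|$ and $|S_{>j}|$ drop by exactly one, so the strict inequality persists. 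Hence $J\cap B'$ is sparse in $B'$, so $x_Je_{A\cup\{q\}}\in BTS$, and (c) follows.

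The main obstacle is the identification of $B'$ in part (c) --- understanding precisely how the body of $\tH(A)$ changes when a new pinch point is inserted below all the existing ones --- together with the verification that sparseness survives the deletion of the pair $\{q,q+1\}$ (a fact that is \emph{not} true for arbitrary deletions, but works here because $\{2,\dots,q\}\sse J$ and $q+1\notin J$ force the deleted pair into exactly the right position). Everything else is bookkeeping with the definitions of $A^\#$, of body and spine vertices, and of $Q$.
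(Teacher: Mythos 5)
Your proof is correct, and for parts (b), (c) and (d) it follows essentially the same route as the paper: (b) via the observation that $(A\cup\{r\})^\#=A^\#\sm\{r+1\}$ while $r+1\in J$ whenever $r<q$; (d) and the identification $\eta(x_Je_A)=x_Je_{A\cup\{q\}}$ as immediate consequences of (b); and (c) by the same case split into $B'=B\sm\{q,q+1\}$ (where deleting a consecutive pair straddling the boundary between $\{2,\dots,q\}\sse J$ and its complement shifts both $|S_{>j}|$ and $|(B\sm S)_{>j}|$ by one for $j<q$, so sparsity survives) versus $B'=B$ when $q=p-1$ and $p$ is a spine vertex, where nothing changes. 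The only genuine difference is in (a): you bound $|J|\leq 2n-|A|-m$ by case analysis on $|A|$ and conclude $|J|<2n-1$, whereas the paper argues by a short reductio: if $Q=\emptyset$ then $\{2,\dots,2n\}\sse J\sse A^\#\sm\{0\}$ forces $A=\emptyset$, hence $B=N_{2n}$, and sparsity of $J$ in $B$ forbids $2n\in J$, a contradiction. Your version of (a) is correct (relying, as the paper does throughout this section, on $n>1$), but the paper's is more direct. Overall this is a faithful and complete argument.
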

\begin{proof}
 First note that $\{0,\dotsc,p-1\}\sse A^c$ but $p\in A$, so
 $\{0,\dotsc,p\}\sse A^\#$ but $p+1\not\in A^\#$.  We let $B$ and $S$
 denote the sets of indices for body edges and spines in $\tH(A)$.  It
 is clear from the definitions that $\{1,\dotsc,p-1\}\sse B$.

 As $x_Je_A\in BTS$ we must have $A\sse\{1,\dotsc,2n-1\}$ and
 $J\sse A^\#$, so for $j\in J$ we have $j-1\not\in A$.  It follows
 that if $Q=\emptyset$ we must have $A=\emptyset$.  This means that
 the set $B$ of body edges for $\tH(A)$ is all of $N_{2n}$.  Moreover,
 $J\cap B$ must be sparse in $B$, so $2n\not\in J$, which contradicts
 $Q=\emptyset$.  It follows that $Q\neq\emptyset$ after all, so we can
 put $q=\min(Q)-1$, and we find that $\{2,\dotsc,q\}\sse J$ but
 $q+1\not\in J$.  As $x_Je_A\in BTS$ we must have $J\sse A^\#$, and it
 follows that $q\leq p$.  This proves~(a).

 Suppose that $r\not\in A$ and $1\leq r<q$ and we put $A'=A\cup\{r\}$.
 We then have $(A')^\#=A^\#\sm\{r+1\}$ but $r+1\in J$ so
 $J\not\sse(A')^\#$ so $x_Je_{A'}\not\in BTS$.  This proves (the
 contrapositive of) claim~(b), and claim~(d) follows directly.
 Moreover, in~(c) we need only prove that $x_Je_{A\cup\{q\}}\in BTS$,
 because~(b) will show that $q$ is minimal subject to this property.

 We next discuss the case of~(c) where we have the stronger inequality
 $q<p-1$.  We put $A'=A\cup\{q\}$, and write $B'$ and $S'$ for the
 sets of indices of body edges and spines in $\tH(A')$.  Adding $q$ as
 an extra pinch point has the effect of folding the body edges $e_q$
 and $e_{q+1}$ together to make a new spine, so
 $(A')^\#=A^\#\sm\{q+1\}\supseteq J$ and $B'=B\sm\{q,q+1\}$ and
 $S'=S\cup\{q\}$.  The assumption $x_Je_A\in BTS$ means that $J$ is
 sparse in $B$.  We must show that the set $J'=J\cap B'$ is sparse in
 $B'$.  Recall that $q\in J$ but $q+1\not\in J$.  If $j\in J'$ with
 $j<q$ then $|J'_{>j}|=|J_{>j}|-1$ and $|(B'\sm J')_{>j}|=|(B\sm
 J)_{>j}|-1$, whereas for $j>q+1$ we have $|J'_{>j}|=|J_{>j}|$ and
 $|(B'\sm J')_{>j}|=|(B\sm J)_{>j}|$.  The sparsity condition is clear
 from this, so $x_Je_{A'}\in BTS$ as required.

 Finally, we consider the case of~(c) where $q=p-1$, so $p-1\in J$ but
 $p\not\in J$.  Let $m$ be the next element of $A^\#$ after $p$ (or
 $m=2n+1$ if there is no such element).  If $m-p$ is odd then $p\in B$
 and $B'=B\sm\{p-1,p\}$ and the argument is essentially the same as in the
 $q<p-1$ case.  Suppose instead that $m-p$ is even, so $p\in S$.  We
 then have $(A')^\#=A^\#\sm\{p\}\supseteq J$ and $B'=B$ and
 $S'=S\sm\{p\}$.  The sparsity condition is therefore unchanged, and
 again we have $x_Je_{A'} \in BTS$.
\end{proof}
\begin{remark}\label{rem-empty-extendable}
 As a special case, we see that any element of the form
 $x_Je_\emptyset$ in $BTS$ is extendable with
 $\eta(x_Je_\emptyset)=x_Je_q$, because the inequality
 $q<\min(\emptyset)=2n$ holds automatically.
\end{remark}

\begin{lemma}\label{lem-BTS-subset}
 If $x_Je_A\in BTS$ and $A'\sse A$ then $x_Je_{A'}\in BTS$.
\end{lemma}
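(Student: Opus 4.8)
The plan is to reduce immediately to the case where $A'$ is obtained from $A$ by deleting a single pinch point, and then to track how the set of body vertices changes under such a deletion. A general $A'\sse A$ can be reached from $A$ by deleting one element at a time, so it suffices to treat $A'=A\sm\{a\}$ for a single $a\in A$; and at each stage conditions~(a) and~(b) of Definition~\ref{defn-BTS} are automatic, since shrinking $A$ only enlarges $A^\#$, and so $J\sse A^\#\sm\{0\}\sse(A')^\#\sm\{0\}$. Thus all the content is in condition~(c). Writing $B$ and $B'$ for the sets of nonzero body vertices of $\tH(A)$ and $\tH(A')$, the goal is to show that if $J\cap B$ is sparse in $B$ then $J\cap B'$ is sparse in $B'$.

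Fix $a\in A$ and put $A'=A\sm\{a\}$. First I would record the elementary combinatorics of the change: $(A')^\#=A^\#\amalg\{a+1\}$ with $a+1\notin A^\#$; the predecessor $i_s$ of $a+1$ in $(A')^\#$ is nonzero (because $1\in A^\#$ and $a\geq1$); and inserting $a+1$ splits the gap $g=i_{s+1}-i_s$ beneath the successor $i_{s+1}$ (taken to be $2n+1$ if there is none) into $g_1=(a+1)-i_s$ and $g_2=i_{s+1}-(a+1)$. Since a nonzero element of $A^\#$ is a body vertex exactly when the gap to its successor is odd, the only vertices whose body-status can change are $i_s$ (successor gap $g\to g_1$) and the new vertex $a+1$ (successor gap $g_2$). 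A parity check in $g=g_1+g_2$ then leaves three outcomes: $B'=B$ (if $g_2$ is even); $B'=(B\sm\{i_s\})\cup\{a+1\}$ with $i_s\in B$ (if $g_2$ is odd and $g$ is odd); and $B'=B\cup\{i_s,a+1\}$ with neither $i_s$ nor $a+1$ previously in $B$ (if $g_2$ is odd and $g$ is even). In all three outcomes $a+1\notin J$ since $a+1\notin A^\#$, whereas $i_s\in A^\#$ may or may not belong to $J$.

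The first outcome needs no argument. For the second, the key observation is that $B'$ is just $B$ with $i_s$ replaced — in the same order-position — by the larger element $a+1$ (using that no element of $A^\#$ lies strictly between $i_s$ and $a+1$, and that the elements of $B$ above $i_s$ are all $\geq i_{s+1}>a+1$); under the resulting order-isomorphism $B\xra{\simeq}B'$, the set $J\cap B'$ corresponds to $(J\cap B)\sm\{i_s\}$, a subset of the sparse set $J\cap B$. Since any subset of a sparse set is again sparse — the inequality $|J^c_{>j}|>|J_{>j}|$ for $j\in J$ is only strengthened when an element is dropped from $J$ — and sparsity is preserved by order-isomorphisms, $J\cap B'$ is sparse in $B'$.

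The third outcome is the substance of the proof. Here $B'$ is $B$ with two new consecutive elements $i_s<a+1$ inserted, $a+1$ lying outside $J$ and $i_s$ possibly inside it. I would isolate and prove the following: if $T$ is sparse in a finite totally ordered set $O$ and $O'$ is obtained by inserting two new consecutive elements $u<v$, then any $T'\sse O'$ with $T'\cap O=T$ and $v\notin T'$ is sparse in $O'$. This is a short check of the defining inequalities via Lemma~\ref{lem-sparse-alt}: for $j\in T$, the elements $u,v$ above $j$ either cancel in the count ($u\in T'$, $v\notin T'$) or are both $\leq j$ and irrelevant, so one falls back on sparsity of $T$ at $j$; and for $j=u$ (when $u\in T'$) the surplus of non-$T'$ elements over $T'$ elements above $u$ equals $1+\bigl(|(O\sm T)_{\geq w}|-|T_{\geq w}|\bigr)\geq1$, where $w$ is the first element of $O$ beyond the insertion point (the surplus being $1$ if no such $w$ exists). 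Applying this with $O=B$, $O'=B'$, $T=J\cap B$, $u=i_s$, $v=a+1$ completes the third outcome, and hence the lemma. The main obstacle is exactly the reason this must be stated for two simultaneously inserted elements: inserting the non-$J$ vertex $a+1$ by itself is harmless, but it may become the maximum of the ordered set, and then inserting $i_s\in J$ directly below it genuinely breaks sparsity — it is essential that $a+1$ is kept out of $J$ so as to absorb the extra $J$-element $i_s$.
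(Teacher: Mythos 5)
Your proof is correct and follows essentially the same strategy as the paper's: reduce to deleting a single pinch point, track how the body vertex set changes via a parity analysis of the gaps around the new element of $A^\#$, and verify the sparsity condition in each case. The only differences are organizational (you merge the paper's two cases with $g_2$ even into one, and you package the hardest case into a reusable claim about inserting a consecutive pair $u<v$ with $v$ kept out of the sparse set), but the underlying argument is the same.
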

\begin{proof}
 By an evident inductive reduction, we need only treat the case where
 $|A'|=|A|-1$.  As in Section~\ref{sec-hedgehog}, we write
 \[ A^\# = \{i\in \{0,\dotsc,2n\} \st i-1\not\in A\}
     = \{i_0,\dotsc,i_r\}
 \]
 with $0=i_0<i_1<\dotsb<i_r$ and $i_1=1$, and we put $i_{r+1}=2n+1$.  We then
 put
 \begin{align*}
  E &= A^\#\sm\{0\} = \{i_1,\dotsc,i_r\} \\
  B &= \{i_k\in E\st i_{k+1}-i_k \text{ is odd } \} \\
  S &= \{i_k\in E\st i_{k+1}-i_k \text{ is even } \},
 \end{align*}
 so $B$ bijects with the set of body edges in $\tH(A)$, and $S$
 bijects with the set of spines.  We write $E'$, $B'$ and $S'$ for the
 corresponding sets defined in terms of $A'$.

 By hypothesis we have $J\sse E$, and the set $K=J\cap B$ is sparse in
 $B$.  We must show that $J\sse E'$, and that the set $K'=J\cap B'$ is
 sparse in $B'$.

 As $A'\sse A$ with $|A'|=|A|-1$ we have $E'=E\cup\{p\}$ for some
 $p$.  It is thus clear that $J\sse E'$.  Note also that we must have
 $i_u<p<i_{u+1}$ for some $u\in\{1,\dotsc,r\}$.  For the sparsity
 condition there are four cases to consider.
 \begin{itemize}
  \item[(a)] Suppose that $p-i_u$ and $i_{u+1}-p$ are both even.  Then
   $B'=B$, so $K'=K$ and this is certainly sparse in $B'$.
  \item[(b)] Suppose that $p-i_u$ is even and $i_{u+1}-p$ is odd.  Then
   $B'=(B\sm\{i_u\})\cup\{p\}$, and $p$ has the same position as $i_u$
   relative to the rest of $B$.  Note that $p\not\in E$ so
   $p\not\in J$.  This means that the pair $(B',K')$ of ordered
   sets is isomorphic to $(B,K\sm\{i_u\})$, so $K'$ is again sparse in
   $B'$.
  \item[(c)] Suppose that $p-i_u$ is odd and $i_{u+1}-p$ is even.  Then
   again $B'=B$ and $K'=K$, so the set $K'$ is sparse in $B'$.
  \item[(d)] Suppose that $p-i_u$ and $i_{u+1}-p$ are both odd.  Then
   $B'=B\cup\{i_u,p\}$.  If $i_u\not\in J$ then $K'=K$, and this set
   is sparse in $B$, so it is certainly sparse in the larger set $B'$.
   Suppose instead that $i_u\in J$, so $K'=K\cup\{i_u\}$.  Consider a
   point $k\in K'$.
   \begin{itemize}
    \item[(i)] If $k<i_u$ then $K'_{>k}=K_{>k}\cup\{i_u\}$ and
     $(B'\sm K')_{>k}=(B\sm K)_{>k}\cup\{p\}$.
    \item[(ii)] If $k>i_u$ then $k\geq i_{u+1}>p$ so $K'_{>k}=K_{>k}$
     and $(B'\sm K')_{>k}=(B\sm K)_{>k}$.
    \item[(iii)] Now suppose that $k=i_u$.  If $K_{>k}$ is nonempty,
     we let $q$ be the smallest element of $K_{>k}$.  This will be
     equal to $i_v$ for some $v>u$ and so will be larger than $p$.  We
     will thus have $K'_{>k}=K_{>k}=K_{>q}\cup\{q\}$ and
     \[ (B'\sm K')_{>k}=(B\sm K)_{>k}\cup\{p\}\supseteq
         (B\sm K)_{>q}\cup\{p\}.
     \]
     On the other hand, if $K_{>k}=\emptyset$ we have
     $K'_{>k}=\emptyset$ and $(B'\sm K')_{>k}\supseteq\{p\}$.
   \end{itemize}
   In all of these cases the sparsity condition
   $|K'_{>k}|<|(B'\sm K')_{>k}|$ follows immediately from the assumed
   sparsity condition $|K_{>k}|<|(B\sm K)_{>k}|$. \qedhere
 \end{itemize}
\end{proof}

\begin{proposition}\label{prop-eta-bijection}
 The map $\eta$ gives a bijection $BTS'\to BTS''$.
\end{proposition}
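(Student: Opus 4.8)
The plan is to write down an explicit two-sided inverse to $\eta$, namely the operation $\delta$ that deletes the smallest pinch point. Everything hinges on one observation about Lemma~\ref{lem-eta-cases}: the quantity $q=\min(\{2,\dots,2n\}\sm J)-1$ appearing there depends only on $J$ and not on $A$. Combining this with parts~(a), (c) and~(d) of that lemma, an element $x_Je_A\in BTS$ lies in $BTS''$ precisely when $\min(A)=q$ (with the convention $\min(\emptyset)=2n$), and when $\min(A)>q$ we have $x_Je_A\in BTS'$ with $\eta(x_Je_A)=x_Je_{A\cup\{q\}}$. In particular, in the extendable case $q\notin A$ and $\min(A\cup\{q\})=q$, so the $q$-value of $\eta(x_Je_A)$ is again $q=\min(A\cup\{q\})$; hence $\eta(x_Je_A)\in BTS''$, so $\eta$ really does map $BTS'$ into $BTS''$.

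Next I would define $\delta\colon BTS''\to BTS$ by $\delta(x_Je_A)=x_Je_{A\sm\{\min A\}}$. By Remark~\ref{rem-empty-extendable} no element of $BTS''$ has empty pinch set, so $\min A$ is defined; and since $\min A=q$ for such an element, $\delta$ simply deletes the index $q$. Lemma~\ref{lem-BTS-subset}, applied to $A\sm\{q\}\sse A$, shows that $x_Je_{A\sm\{q\}}$ is again in $BTS$. Its $q$-value is still $q$ (it only depends on $J$), while $\min(A\sm\{q\})>q$ (it is the second-smallest element of $A$, or $2n$ if $A=\{q\}$); so Lemma~\ref{lem-eta-cases}(c) gives $x_Je_{A\sm\{q\}}\in BTS'$ with $\eta(x_Je_{A\sm\{q\}})=x_Je_A$. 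Thus $\delta$ lands in $BTS'$ and $\eta\circ\delta=\mathrm{id}_{BTS''}$.

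It then remains to check $\delta\circ\eta=\mathrm{id}_{BTS'}$, which is immediate: for $x_Je_A\in BTS'$ we have $\eta(x_Je_A)=x_Je_{A\cup\{q\}}$ with $q\notin A$ and $q=\min(A\cup\{q\})$, so $\delta(x_Je_{A\cup\{q\}})=x_Je_{(A\cup\{q\})\sm\{q\}}=x_Je_A$. Hence $\eta$ and $\delta$ are mutually inverse, proving the proposition.

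I do not expect any serious obstacle: all the combinatorial content has been isolated in Lemmas~\ref{lem-eta-cases} and~\ref{lem-BTS-subset}, and what remains is the bookkeeping above. The one point requiring care is to notice that the index deleted by $\delta$ and the index added by $\eta$ are the same $J$-dependent number $q$ --- this is exactly why the two operations cancel --- and to invoke Remark~\ref{rem-empty-extendable} so that $\delta$ is defined on all of $BTS''$.
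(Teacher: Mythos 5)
Your proof is correct and takes essentially the same approach as the paper: both use the deletion map (the paper calls it $\zt$, you call it $\delta$) as a two-sided inverse, supported by Lemmas~\ref{lem-eta-cases} and~\ref{lem-BTS-subset}. Your observation that the quantity $q$ in Lemma~\ref{lem-eta-cases} depends only on $J$ is a nice streamlining: it lets you verify $\eta(BTS')\sse BTS''$ directly from Lemma~\ref{lem-eta-cases}(d), whereas the paper establishes that containment by a short contradiction argument invoking Lemma~\ref{lem-BTS-subset} and the minimality in the definition of $\eta$.
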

\begin{proof}
 First consider an element $x_Je_A\in BTS'$.  Put
 \[ U = \{u\in N_{2n-1} \st u<\min(A) \text{ and }
           x_Je_{A\cup\{u\}} \in BTS \}.
 \]
 By the definition of $BTS'$ we have $U\neq\emptyset$.  We put
 $q=\min(U)$, so $\eta(x_Je_A)=x_Je_{A\cup\{q\}}$.  We claim that this
 element lies in $BTS''$.  If not, there would exist $r<q$ such that
 $x_Je_{A\cup\{q,r\}}\in BTS$.  By Lemma~\ref{lem-BTS-subset} this
 would give $x_Je_{A\cup\{r\}}\in BTS$, so $r\in U$, contradicting the
 definition $q=\min(U)$.  Thus, $\eta$ at least gives a map
 $BTS'\to BTS''$.

 In the opposite direction, consider an element $x_Je_A\in BTS''$.  By
 Remark~\ref{rem-empty-extendable} we must have $A\neq\emptyset$.  We
 put $p=\min(A)$ and $\zt(x_Je_A)=x_Je_{A\sm\{p\}}$.
 Lemma~\ref{lem-BTS-subset} ensures that this lies in $BTS$, and it is
 clearly extendable by $p$, so this construction gives a map
 $\zt\:BTS''\to BTS'$.  It is visible that $\zt\eta=1\:BTS'\to BTS'$.

 We claim that $\eta\zt\:BTS''\to BTS''$ is also the identity.  To see
 this, we consider again an element $x_Je_A\in BTS''$ and put
 $p=\min(A)$.  As in Lemma~\ref{lem-eta-cases} we also put
 $Q=\{2,\dotsc,2n\}\sm J$ and $q=\min(Q)-1$.  As $x_Je_A$ is not
 extendable, that Lemma tells us that $q=p$.  Now put
 $A'=A\sm\{p\}=A\sm\{q\}$ and $p'=\min(A')>p=q$.  Using
 Lemma~\ref{lem-eta-cases} again we find that
 $\eta(x_Jx_{A'})=x_Jx_{A'\cup\{q\}}=x_Jx_A$ as claimed.
\end{proof}

\begin{lemma}\label{lem-trivial-homology}
 Suppose we have
 \begin{itemize}
  \item[(a)] A free abelian group $A$
  \item[(b)] An endomorphism $d\:A\to A$ with $d^2=0$
  \item[(c)] A basis $\{a_i\}_{i\in I}$ for $A$, for some finite,
   totally ordered set $I$
  \item[(d)] A splitting $I=J\amalg K$
  \item[(e)] A bijection $\eta\:J\to K$ such that
   $d(a_j)=\pm a_{\eta(j)}+\text{ lower terms }$ for all $j\in J$.
 \end{itemize}
 Then the homology group $H(A,d)=\ker(d)/\img(d)$ is trivial.
\end{lemma}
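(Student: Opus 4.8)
The plan is to perform a triangular change of basis on $A$ that splits the complex $(A,d)$ as a direct sum of copies of the elementary acyclic complex $\Z\xra{\simeq}\Z$; the vanishing of the homology is then immediate.

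First I would set $b_j=d(a_j)$ for each $j\in J$ and claim that
\[ \mathcal{B}=\{a_j\st j\in J\}\cup\{b_j\st j\in J\} \]
is again a basis for $A$ over $\Z$. Since $\eta\:J\to K$ is a bijection and $I=J\amalg K$, the set $\mathcal{B}$ has exactly $|I|$ elements, so — $A$ being free of rank $|I|$ — it suffices to check that $\mathcal{B}$ generates $A$. Each $a_j$ with $j\in J$ lies in the generated subgroup by definition. For $k\in K$, write $k=\eta(j)$; hypothesis~(e) says $b_j=\pm a_k+\sum_{i<k}c_ia_i$ for some $c_i\in\Z$, so $a_k=\pm\bigl(b_j-\sum_{i<k}c_ia_i\bigr)$. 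Arguing by induction on $k$ with respect to the total order on $I$, and noting that every $a_i$ with $i<k$ is either $a_j$ for some $j\in J$ (already available) or $a_i$ for some $i\in K$ with $i<k$ (available by the inductive hypothesis), I conclude that $a_k$ also lies in the generated subgroup. Hence all the original basis vectors are generated, so $\mathcal{B}$ is indeed a basis.

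With respect to $\mathcal{B}$ the differential takes the form $d(a_j)=b_j$ and $d(b_j)=d^2(a_j)=0$ for $j\in J$, so $A$ decomposes as a complex into the direct sum of the subcomplexes $C_j=\Z a_j\oplus\Z b_j$ (for $j\in J$), where $d(a_j)=b_j$ and $d(b_j)=0$. In $C_j$ we have $\ker(d)=\Z b_j=\img(d)$, so $H(C_j,d)=0$, and therefore $H(A,d)=\bigoplus_{j\in J}H(C_j,d)=0$.

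The one point that needs care is the verification that $\mathcal{B}$ is a basis — the triangular elimination of the second paragraph — together with the standard fact that a generating set of size equal to the rank of a finitely generated free $\Z$-module is automatically a basis, which is what lets us avoid checking linear independence directly. Everything after that is automatic.
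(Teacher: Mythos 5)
Your proof is correct, and it takes a genuinely different route from the paper's.

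The paper works directly with the original basis. It splits $A=B\oplus C$ with $B$ spanned by $\{a_j\st j\in J\}$ and $C$ by $\{a_k\st k\in K\}$, writes $d$ as a $2\times 2$ block matrix $\bigl(\begin{smallmatrix}p&q\\ r&s\end{smallmatrix}\bigr)$, observes that the triangularity in hypothesis~(e) makes $r\:B\to C$ an isomorphism, uses $d^2=0$ to express $q$ and $s$ in terms of $p$ and $r$, and then verifies directly that every cycle is a boundary, via the formula $d(b,c)=\bigl(p(b-pr^{-1}(c)),\,r(b-pr^{-1}(c))\bigr)$.

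You instead change basis to $\{a_j\st j\in J\}\cup\{d(a_j)\st j\in J\}$, using the unitriangular relation in~(e) together with the generating-set-of-the-right-cardinality criterion to see this is again a $\Z$-basis, and then the complex visibly splits as a direct sum of two-term acyclic pieces $\Z a_j\xra{\sim}\Z d(a_j)$. This is the standard ``Gaussian elimination'' / algebraic discrete Morse theory argument. It is arguably more conceptual: the acyclicity is manifest once the new basis is in place, with no need to manipulate the block equations coming from $d^2=0$ (you use $d^2=0$ only to see that each summand is a subcomplex). The paper's version is more explicit about exhibiting a preimage of a given cycle, which can be convenient if one later wants a contracting homotopy in closed form, but for the statement as given your route is at least as efficient and perhaps easier to motivate.

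One small remark: you do need the observation that $\mathcal{B}$ has \emph{exactly} $|I|$ elements (no coincidences among the $a_j$ and $b_j$), but this comes for free from your own argument, since a proper subset of $\mathcal{B}$ could not generate a free group of rank $|I|$; so there is no gap.
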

\begin{proof}
 Let $B$ be the subgroup generated by $\{a_j\st j\in J\}$, and let $C$
 be the subgroup generated by $\{a_k\st k\in K\}$, so $A=B\oplus C$.
 We can thus decompose $d$ into four homomorphisms
 \begin{align*}
  p \: & B\to B & q \: & C\to B \\
  r \: & B\to C & s \: & C\to C
 \end{align*}
 such that $d(b,c)=(p(b)+q(c),\;r(b)+s(c))$ for all $b\in B$ and
 $c\in C$.  The condition $d^2=0$ becomes
 \begin{align*}
  p^2+qr &= 0 & pq+qs  &= 0 \\
  rp+sr  &= 0 & rq+s^2 &= 0.
 \end{align*}
 Condition~(e) implies that $r\:B\to C$ is an isomorphism, so we can
 rewrite these relations as $q=-p^2r^{-1}$ and $s=-rpr^{-1}$, so
 \[ d(b,c) = (p(b-pr^{-1}(c)),\;r(b-pr^{-1}(c))). \]
 It follows that if $d(b,c)=0$ we have $b=pr^{-1}(c)$ and so
 $(b,c)=d(r^{-1}(c),0)$ as required.
\end{proof}

\begin{proof}[Proof of Proposition~\ref{prop-trivial-homology}]
 We will apply Lemma~\ref{lem-trivial-homology} with $A=TS^{**}$,
 $I=BTS$, $J=BTS'$ and $K=BTS''$.  We need only introduce a suitable
 ordering and verify condition~(e).  We order subsets of $N_{2n}$
 lexicographically as before, and declare that $x_Je_A<x_Ke_B$ iff
 either $J<K$, or ($J=K$ and $A>B$).  (Note the reversal in the second clause.)

 Consider an element $x_Je_A\in BTS'$, so
 $\eta(x_Je_A)=x_Je_{A\cup\{p\}}$ for some $p<\min(A)$.  Note that
 $d_1(x_Je_A)=\sum_{t=1}^{2n-1}z_t$, where $z_t=x_Je_Ae_t$.  Note that
 $e_Ae_t$ is $0$ (if $t\in A$) or $\pm e_{A\cup\{t\}}$ (if $t\not\in
 A$).  If $t<p$ then (by the definition of $\eta$) the monomial $x_J$
 cannot satisfy the sparsity condition for $x_Je_{A\cup\{t\}}$ to be
 in $BTS$, so Lemma~\ref{lem-BR-spans} implies that
 $x_Je_{A\cup\{t\}}$ can be written as a sum of terms that are lower
 with respect to our ordering on $BTS$.  The same applies to any terms
 where $t>p$ but $x_Je_{A\cup\{t\}}\not\in BTS$.  This just leaves
 terms where $t>p$ and $t\not\in A$ and $x_Je_{A\cup\{t\}}\in BTS$.
 These are lower than the main term $\eta(x_Je_A)=x_Je_{A\cup\{p\}}$,
 by the second clause in our definition of the order.
\end{proof}

\begin{bibdiv}
\begin{biblist}

\bib{bokr:ubd}{article}{
   author={Borho, Walter},
   author={Kraft, Hanspeter},
   title={\"Uber Bahnen und deren Deformationen bei linearen Aktionen
   reduktiver Gruppen},
   language={German, with English summary},
   journal={Comment. Math. Helv.},
   volume={54},
   date={1979},
   number={1},
   pages={61--104},
   issn={0010-2571},
   review={\MR{522032 (82m:14027)}},
   doi={10.1007/BF02566256},
}

\bib{caka:khd}{article}{
   author={Cautis, Sabin},
   author={Kamnitzer, Joel},
   title={Knot homology via derived categories of coherent sheaves. I. The
   ${\germ{sl}}(2)$-case},
   journal={Duke Math. J.},
   volume={142},
   date={2008},
   number={3},
   pages={511--588},
   issn={0012-7094},
   review={\MR{2411561 (2009i:57025)}},
   doi={10.1215/00127094-2008-012},
}

\bib{copr:sfc}{article}{
   author={De Concini, Corrado},
   author={Procesi, Claudio},
   title={Symmetric functions, conjugacy classes and the flag variety},
   journal={Invent. Math.},
   volume={64},
   date={1981},
   number={2},
   pages={203--219},
   issn={0020-9910},
   review={\MR{629470 (82m:14030)}},
   doi={10.1007/BF01389168},
}
	
\bib{kh:cmc}{article}{
   author={Khovanov, Mikhail},
   title={Crossingless matchings and the cohomology of $(n,n)$ Springer
   varieties},
   journal={Commun. Contemp. Math.},
   volume={6},
   date={2004},
   number={4},
   pages={561--577},
   issn={0219-1997},
   review={\MR{2078414 (2005g:14090)}},
   doi={10.1142/S0219199704001471},
}

\bib{ruty:srk}{article}{
   author={Russell, Heather M.},
   author={Tymoczko, Julianna S.},
   title={Springer representations on the Khovanov Springer varieties},
   journal={Math. Proc. Cambridge Philos. Soc.},
   volume={151},
   date={2011},
   number={1},
   pages={59--81},
   issn={0305-0041},
   review={\MR{2801314}},
   doi={10.1017/S0305004111000132},
}

\bib{st:ecii}{book}{
   author={Stanley, Richard P.},
   title={Enumerative combinatorics. Vol. 2},
   series={Cambridge Studies in Advanced Mathematics},
   volume={62},
   note={With a foreword by Gian-Carlo Rota and appendix 1 by Sergey Fomin},
   publisher={Cambridge University Press},
   place={Cambridge},
   date={1999},
   pages={xii+581},
   isbn={0-521-56069-1},
   isbn={0-521-78987-7},
   review={\MR{1676282 (2000k:05026)}},
   doi={10.1017/CBO9780511609589},
}

\bib{we:rtn}{article}{
    title = {A remark on the topology of $(n,n)$ Springer varieties},
    author = {Wehrli, Stephan M.},
    eprint = {arXiv:0908.2185}
}

\end{biblist}
\end{bibdiv}

\end{document}